\numberwithin{equation}{section}
\newtheorem{thm}{Theorem}[section]
\newtheorem{prop}[thm]{Proposition}
\newtheorem{lem}[thm]{Lemma}
\newtheorem{cor}[thm]{Corollary}
\theoremstyle{definition}
\newtheorem{defn}[thm]{Definition}
\newtheorem{assump}[thm]{Assumption}
\theoremstyle{remark}
\newtheorem{rem}[thm]{Remark}
\DeclareMathOperator{\grk}{grk}
\DeclareMathOperator{\Hom}{Hom}
\DeclareMathOperator{\End}{End}
\DeclareMathOperator{\Ima}{Im}
\DeclareMathOperator{\supp}{supp}
\DeclareMathOperator{\ch}{ch}
\DeclareMathOperator{\Aut}{Aut}
\DeclareMathOperator{\Ker}{Ker}
\DeclareMathOperator{\Frac}{Frac}
\DeclareMathOperator{\Stab}{Stab}
\DeclareMathOperator{\Proj}{Proj}
\DeclareMathOperator{\Rep}{Rep}
\DeclareMathOperator{\Mod}{Mod}
\newcommand*{\Coeff}{\mathbb{K}}
\newcommand*{\Z}{\mathbb{Z}}
\newcommand{\Sbimod}{\mathcal{S}\mathrm{Bimod}}
\newcommand*{\R}{\mathbb{R}}
\newcommand*{\aff}{\mathrm{aff}}
\newcommand*{\property}[1]{{\normalfont(#1)}}
\DeclareMathOperator{\id}{id}
\newcommand*{\AJS}{\mathrm{AJS}}
\DeclareMathOperator{\rank}{rank}
\newcommand*{\integer}{\mathrm{int}}
\title{A Hecke action on $G_1T$-modules}
\author{Noriyuki Abe}
\address{Graduate School of Mathematical Sciences, the University of Tokyo, 3-8-1 Komaba, Meguro-ku, Tokyo 153-8914, Japan.}
\email{abenori@ms.u-tokyo.ac.jp}
\subjclass[2010]{20G05,22E47}
\begin{document}
\begin{abstract}
We construct an action of the Hecke category on the principal block $\mathrm{Rep}_0(G_1T)$ of $G_1T$-modules where $G$ is a connected reductive group over an algebraically closed field of characteristic $p > 0$, $T$ a maximal torus of $G$ and $G_1$ the Frobenius kernel of $G$.
To define it, we define a new category with a Hecke action which is equivalent to the combinatorial category defined by Andersen-Jantzen-Soergel.
\end{abstract}

\maketitle

\section{Introduction}
Let $G$ be a connected reductive group over an algebraically closed field $\Coeff$ of characteristic $p > 0$.
One of the most important problems in representation theory is to describe the characters of irreducible representations.
In the case of algebraic representations of $G$, Lusztig gave a conjectural formula on the characters of irreducible representations of $G$ in terms of Kazhdan-Lusztig polynomials of the affine Weyl group for $p > h$ where $h$ is the Coxeter number. 
Thanks to the works of Kazhdan-Lusztig~\cite{MR1186962,MR1239506,MR1239507}, Kashiwara-Tanisaki~\cite{MR1317626,MR1408544} and Andersen-Jantzen-Soergel~\cite{MR1272539}, this is proved for $p$ large enough.
An explicit bound on $p$ is known by Fiebig~\cite{MR2999126}.

However, as Williamson~\cite{MR3671935} showed, Lustzig's conjecture is not true for many $p$.
Therefore we need a new approach for such $p$.
Riche-Williamson~\cite{MR3805034} gave it and we explain their approach.
Assume that $p > h$.
Let $\Rep_0(G)$ be the principal block of the category of algebraic representations of $G$.
For each affine simple reflection $s$, we have the wall-crossing functor $\theta_s\colon \Rep_0(G)\to \Rep_0(G)$.
The Grothendieck group of $\Rep_0(G)$ is isomorphic to the anti-spherical quotient of the group algebra of the affine Weyl group.
Here the structure of a representation of the affine Weyl group is given by $[M](s + 1) = [\theta_s(M)]$ for $M\in \Rep_0(G)$.
Riche-Williamson~\cite{MR3805034} conjectured the existence of the categorification of this anti-spherical quotient.
More precisely, they conjectured that there is an action of $\mathcal{D}$ on $\Rep_0(G)$ where $\mathcal{D}$ is the diagrammatic Hecke category defined by Elias-Williamson~\cite{MR3555156}.
Assuming this conjecture, they proved that the anti-spherical quotient of $\mathcal{D}$ is a graded version of the category of tilting modules in $\Rep_0(G)$.
In particular, their result gives a character formula for indecomposable tilting modules in terms of $p$-Kazhdan-Lusztig polynomials.
Recently this character formula was proved by Achar-Makisumi-Riche-Williamson~\cite{MR3868004} when $p > h$ and for any $p$ by Riche-Williamson~\cite{arXiv:2003.08522}.
We note that if $p\ge 2h - 2$ then a character formula for indecomposable tilting modules implies a character formula for irreducible modules.
We also remark that Sobaje~\cite{MR4092982} gave an algorithm to calculate the character of irreducible modules by the character of indecomposable tilting modules.

Achar-Makisumi-Riche-Williamson also proved a big part of the conjecture, but not a full statement.
In the case of $G = \mathrm{GL}_n$, the original conjecture is proved by Riche-Williamson~\cite{MR3805034}.
Recently, the conjecture is proved by Bezrukavnikov-Riche~\cite{arXiv:2009.10587}.
%However, for the other types, the conjecture is still open.
In this paper, we consider the $G_1T$-version of this conjecture where $T\subset G$ is a maximal torus and $G_1$ is the Frobenius kernel of $G$.
Namely, we define an action of the category $\mathcal{D}$ on the principal block of $G_1T$-modules.

Next, we state our main theorem.
We remark that we have an object $B_s\in \mathcal{D}$ for any affine simple reflection $s$ (see the next subsection for the precise definition).
Let $\Rep_0(G_1T)$ be the principal block of $G_1T$-modules.
\begin{thm}[Theorem~\ref{thm:Hecke action on Representations}]\label{thm:Main Theorem}
The category $\mathcal{D}$ acts on $\Rep_0(G_1T)$ where $B_s\in \mathcal{D}$ acts as the wall-crossing functor for any affine simple reflection $s$.
\end{thm}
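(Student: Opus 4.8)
The strategy is to transport a Hecke action across the combinatorial equivalence of Andersen--Jantzen--Soergel. Recall from \cite{MR1272539} that $\Rep_0(G_1T)$ --- or rather a graded and deformed enhancement of it --- is equivalent to a purely combinatorial category $\mathcal{K}$ built from the affine Weyl group $W_{\aff}$, its action on the set of alcoves, and the associated structure algebra, and that under this equivalence the wall-crossing (translation) functor $\theta_s$ is carried to an explicit combinatorial endofunctor of $\mathcal{K}$. The plan has three steps: (i) introduce a new category $\mathcal{C}$ on which $\Sbimod$ acts in a manifest way, by honest tensor product with Bott--Samelson bimodules, so that the braid relations and the Elias--Williamson relations hold automatically; (ii) prove an equivalence $\mathcal{C}\simeq\mathcal{K}$; and (iii) identify, through this equivalence, the action of $B_s$ with the combinatorial incarnation of $\theta_s$, and conclude Theorem~\ref{thm:Main Theorem}.

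For step (i): fix the realization of $(W_{\aff},S)$ over $\Coeff$ relevant to $G_1T$, let $R$ be its underlying polynomial ring and $\mathcal{Z}$ the associated structure algebra (alcove-wise polynomial functions satisfying the wall divisibility conditions, equivalently global sections of the structure sheaf on the affine moment graph). I will take $\mathcal{C}$ to be a full subcategory of graded $\mathcal{Z}$-modules (or modules Morita-equivalent to these) cut out by a support/reflexivity condition, chosen so that it is stable under $-\otimes B$ for every Bott--Samelson bimodule $B$. Using the bimodule-theoretic presentation of $\Sbimod$ --- so that a diagram is sent to an actual bimodule homomorphism --- this tensoring defines a monoidal functor $\Sbimod\to\End(\mathcal{C})$ to the category of endofunctors of $\mathcal{C}$, that is, a $\Sbimod$-module structure on $\mathcal{C}$, with $B_s$ acting by $-\otimes B_s$. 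Monoidality and all defining relations of $\Sbimod$ hold on the nose because the whole construction takes place inside a genuine bimodule category; this is precisely why it is convenient to introduce $\mathcal{C}$ rather than attempt to act directly on $\mathcal{K}$.

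Step (ii) is the technical core and the step I expect to be the main obstacle. Both $\mathcal{C}$ and $\mathcal{K}$ are organized by the same combinatorial datum --- the affine moment graph / alcove pattern of $W_{\aff}$ --- so I would construct functors in both directions (on the $\mathcal{K}$ side using its generators and the classification of its indecomposable objects, on the $\mathcal{C}$ side using the support filtration of $\mathcal{Z}$-modules) and verify that they are mutually quasi-inverse by comparing on generators and on morphism spaces. The real work is the bookkeeping: matching the $\Z$-gradings, reconciling the choice of base ring and any completion or localization (the AJS category is naturally defined over a completed base, while $\Sbimod$ is graded over $R$), and checking that the ``special/reflexive'' conditions on the two sides genuinely correspond. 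One must also confirm that the chosen realization satisfies the hypotheses needed to run Soergel-type arguments (Demazure surjectivity, and a substitute for reflection-faithfulness) in characteristic $p$; this is where small-$p$ pathologies would intervene if anywhere.

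Finally, step (iii): Andersen--Jantzen--Soergel describe $\theta_s$ on the combinatorial side as translation across the $s$-wall, which is exactly the effect of $-\otimes B_s$ on $\mathcal{C}$, both being the ``push--pull'' along the $s$-wall in the structure-algebra picture; matching them is a computation on the generating objects. Composing the equivalences $\Rep_0(G_1T)\simeq\mathcal{K}\simeq\mathcal{C}$ then transports the $\Sbimod$-module structure to $\Rep_0(G_1T)$ with $B_s$ acting as $\theta_s$, which is the assertion of Theorem~\ref{thm:Main Theorem}. One residual point to check is exactness and idempotent-completion: if the AJS equivalence is phrased for a subcategory (objects with a baby-Verma flag, or graded/completed versions), one must verify that the $\Sbimod$-action descends to, or extends uniquely from, that subcategory to all of $\Rep_0(G_1T)$; since the wall-crossing functors are exact and preserve the principal block, this is a compatibility check rather than a new difficulty.
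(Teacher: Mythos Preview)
Your three-step outline---build an auxiliary category carrying a manifest $\Sbimod$-action by tensor product, compare it to the Andersen--Jantzen--Soergel category, and transport through the AJS equivalence---is precisely the architecture of the paper, and your identification of step~(ii) as the technical core is accurate.

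Two points where the paper's execution differs from or sharpens your sketch. First, for step~(i) the paper does \emph{not} use modules over the structure algebra $\mathcal{Z}$ in the moment-graph sense; that is the Fiebig--Lanini framework, which the paper acknowledges as related but logically independent. Instead the paper introduces a category $\widetilde{\mathcal{K}}'$ of graded $(S,R)$-bimodules equipped with an alcove-indexed decomposition of their localization $S^\emptyset\otimes_S M$, where $R=S(\Lambda_\Coeff)$ carries the $W_{\aff}$-action; the action is then the honest tensor product $M*B=M\otimes_R B$, using the author's bimodule realization of $\Sbimod$ (so no diagrammatic relations need checking, as you anticipated). That morphisms are preserved rests on the combinatorial fact that the generic Bruhat order restricted to each $\Lambda_{\aff}$-orbit is preserved by the right $W_{\aff}$-action. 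For step~(ii) the paper does not aim for an equivalence with all of $\mathcal{K}_{\AJS}$: it builds a fully faithful functor $\mathcal{F}\colon\mathcal{K}_P\to\mathcal{K}_{\AJS}$ from a suitable ``projective'' subcategory (defined via a quotient of Hom-spaces), proves $\mathcal{F}(M*B_s)\simeq\vartheta_s(\mathcal{F}(M))$ by direct computation, and reduces full faithfulness via localization at height-one primes to an explicit rank-one calculation.

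Second, your dismissal of the passage from projectives to all of $\Rep_0(G_1T)$ as ``a compatibility check'' is correct in spirit but the paper does this concretely by a Morita argument: fix a projective $\Z\Delta$-generator $P$, form the graded algebra $\mathcal{E}=\bigoplus_{\lambda\in\Z\Delta}\Hom(P,P\otimes L(p\lambda))$, identify $\Rep_0(G_1T)$ with graded right $\mathcal{E}$-modules, and set $M*B=M\otimes_{\mathcal{E}}\mathcal{E}(B)$ for the bimodule $\mathcal{E}(B)=\bigoplus_\lambda\Hom(P,(P*B)\otimes L(p\lambda))$. This is the step where the action is extended beyond projectives, and it requires checking compatibility with the $\Z\Delta$-category structure (tensoring by $L(p\lambda)$), which the paper verifies.
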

Kaneda (private communication) proved this theorem for $\mathrm{GL}_n$ using the arguments of Riche-Williamson~\cite{MR3805034}. 

Let $X^\vee$ be the cocharacter group of $T$ and set $X^\vee_{\Coeff} = X^\vee\otimes_{\Z}\Coeff$.
Put $S = S(X^\vee_{\Coeff})$.
This is a graded algebra via $\deg(X^{\vee}_{\Coeff}) = 2$.
Andersen-Jantzen-Soergel defined a combinatorial category $\mathcal{K}_{\AJS,P}$.
This category is an $S$-linear category with a grading.
We define a category $\Coeff\otimes_{S}\mathcal{K}_{\AJS}^{\mathrm{f}}$ with the same objects as $\mathcal{K}_{\AJS}$, however the space of morphisms is defined as $\Hom_{\Coeff\otimes_{S}\mathcal{K}_{\AJS}}(M,N) = \Coeff\otimes_{S}\bigoplus_{i\in\Z}\Hom_{\mathcal{K}_{\AJS}}(M,N(i))$ where $N(i)$ denotes the grading shift.
Let $\Proj(\Rep_0(G_1T))$ be the category of projective objects in $\Rep_0(G_1T)$.
Andersen-Jantzen-Soergel constructed a functor $\mathcal{V}\colon \Proj(\Rep_0(G_1T))\to \Coeff\otimes_{S}\mathcal{K}_{\AJS}^{\mathrm{f}}$ and proved that it is fully-faithful.
They also determined the essential image of $\mathcal{V}$ and using this functor they proved Lusztig's conjecture for large $p$.

In order to obtain an action of $\mathcal{D}$ on $\Rep_0(G_1T)$, it is sufficient to define an action on $\Proj(\Rep_0(G_1T))$ (see \ref{subsec:Representation Theory}).
Therefore, by the results of Andersen-Jantzen-Soergel, it is sufficient to construct the action of $\mathcal{D}$ on the essential image of $\mathcal{V}$.
The main obstructions to do it are the following.
\begin{enumerate}
\item Elias-Williamson defined $\mathcal{D}$ via generators and relations.
Since the relations are very complicated, it is hard to check that the action is well-defined.
\item The category $\mathcal{K}_{\AJS}$ contains only `local' information. Hence, it is difficult to construct the action directly.
\end{enumerate}

\subsection{The category $\Sbimod$}
We use the category $\Sbimod$~\cite{arXiv:1901.02336_accepted} instead of the category $\mathcal{D}$.
The category $\Sbimod$ is equivalent to the category $\mathcal{D}$.
We recall the definition of $\Sbimod$.
Let $W_{\aff}$ be the affine Weyl group attached to $G$.
An object we consider is a graded $S$-bimodule with a decomposition $M\otimes_{S}\Frac(S) = \bigoplus_{x\in W_{\aff}}M_x^{\Frac(S)}$ such that $mf = \overline{x}(f)m$ for $f\in S$ and $m\in M_x^{\Frac(S)}$.
Here $\overline{x}$ is the image of $x$ in the finite Weyl group.
For such objects $M$ and $N$, we have the tensor product $M\otimes N = M\otimes_{S}N$ with the decomposition $(M\otimes N)\otimes_{S}\Frac(S) = \bigoplus_{x\in W_{\aff}}(M\otimes N)_x^{\Frac(S)}$ where $(M\otimes N)_x^{\Frac(S)} = \bigoplus_{yz = x}M_{y}^{\Frac(S)}\otimes_{\Frac(S)} N_{z}^{\Frac(S)}$.
A homomorphism $M\to N$ is a degree zero $S$-bimodule homomorphism which sends $M_x^{\Frac(S)}$ to $N_x^{\Frac(S)}$ for any $x\in W_{\aff}$.

Let $X$ be the character group of $T$.
An alcove is a connected component of $X\otimes_{\Z}\R\setminus \bigcup_t H_t$ where $t$ runs through the affine reflections in $W_{\aff}$ and $H_t$ is the fixed hyperplane of $t$.
We fix an alcove $A_0$ and let $S_{\aff}$ be the reflections with respect to the walls of $A_0$.
Then $(W_{\aff},S_{\aff})$ is a Coxeter system.
For each $s\in S_{\aff}$, put $S^s = \{f\in S\mid s(f) = f\}$.
Then the $S$-bimodule $S\otimes_{S^s}S(1)$ has the unique decomposition as described above such that $(S\otimes_{S^s}S(1))_w^{\Frac(S)}\ne 0$ only when $w = e,s$.
We denote this object by $B_s$.
Now $\Sbimod$ consists of the objects $M$ which is a direct summand of a direct sum of objects of a form $B_{s_1}\otimes\dotsm\otimes B_{s_l}(n)$ where $s_1,\dots,s_l\in S_{\aff}$ and $n\in\Z$.
It is proved in \cite{arXiv:1901.02336_accepted} that the category $\Sbimod$ is equivalent to the diagrammatic Hecke category defined by Elias-Williamson.
As showed in \cite{MR3555156,arXiv:1901.02336_accepted}, this gives a categorification of the Hecke algebra of affine Weyl group, namely the split Grothendieck group of $\Sbimod$ is isomorphic to the Hecke algebra.

\subsection{Another combinatorial category}
We also give another realization of the category of Andersen-Jantzen-Soergel $\mathcal{K}_{\AJS}$~\cite{MR1272539}.
As in \cite{MR591724}, we use the combinatorics of alcoves to define the category.
Let $\mathcal{A}$ be the set of alcoves.
We fix a positive system $\Delta^+$ of the root system $\Delta$ of $G$.
Then this defines an order on $\mathcal{A}$ \cite{MR591724}.
Recall that we have fixed $A_0\in \mathcal{A}$.
The action of $W_{\aff}$ on $X\otimes_{\Z}\R$ induces the action of $W_{\aff}$ on $\mathcal{A}$.
The map $w\mapsto w(A_0)$ gives a bijection $W_{\aff}\to \mathcal{A}$.

Set $S^{\emptyset} = S[(\alpha^\vee)^{-1}\mid \alpha\in\Delta]$.
We define the category $\widetilde{\mathcal{K}}'$ as follows: 
An object of $\widetilde{\mathcal{K}}'$ is a graded $S$-bimodule $M$ with a decomposition $S^\emptyset\otimes_{S}M = \bigoplus_{A\in \mathcal{A}}M_A^{\emptyset}$ such that $mf = \overline{x}(f)m$ for  $m\in M_{A}^{\emptyset}$, $f\in S^{\emptyset}$, $x\in W_{\aff}$ such that $A = x(A_0)$ and $\overline{x}$ the image of $x$ in the finite Weyl group.
A morphism $f\colon M\to N$ is a degree zero $S$-bimodule homomorphism such that $f(M_A^{\emptyset})\subset \bigoplus_{A' \ge A}N_{A'}^{\emptyset}$.
We will also define some subcategories of $\widetilde{\mathcal{K}}'$.
Especially the category denoted by $\widetilde{\mathcal{K}}_P$ plays an important role in our construction.
Since it is technical, we do not say anything about its definitions in the introduction.
We only note that for each $A\in \mathcal{A}$ the module $M_{\{A\}} = (M\cap \bigoplus_{A'\ge A}M_{A'}^{\emptyset})/(M\cap \bigoplus_{A' > A}M_{A'}^{\emptyset})$ is graded free for $M\in\widetilde{\mathcal{K}}_P$.

We define an action $B\in \Sbimod$ on $\widetilde{\mathcal{K}}'$ as follows.
Note that we have a submodule $B^{\emptyset}_{x}\subset S^{\emptyset}\otimes_{S}B$ such that $B^{\emptyset}_x\otimes_{S^{\emptyset}}\Frac(S) = B_x^{\Frac(S)}$.
Let $M\in \widetilde{\mathcal{K}}'$.
Then we define $M*B$ by $M*B = M\otimes_{S}B$ as a graded $S$-bimodule and $(M*B)_{w(A_0)}^{\emptyset} = \bigoplus_{x\in W_{\aff}}M_{wx^{-1}(A_0)}^{\emptyset}\otimes_{S^{\emptyset}}B_{x}^{\emptyset}$ for $w\in W_{\aff}$.
We can prove the action preserves the subcategory $\widetilde{\mathcal{K}}_{P}$ (Proposition~\ref{prop:K_Delta is stable under the Hecke actions}).
Therefore the split Grothendieck group $[\widetilde{\mathcal{K}}_{P}]$ of $\widetilde{\mathcal{K}}_{P}$ has a structure of $[\Sbimod]$-module defined by $[M][B] = [M*B]$.
Hence $[\widetilde{\mathcal{K}}_{P}]$ is a module of the Hecke algebra.
This category satisfies the following.

\begin{thm}[Theorem~\ref{thm:indecomposables in widetilde(K)}, \ref{thm:categorification for tilde(K)}]\label{thm:theorem in introduction, categorification for tilde(K)}
We have the following.
\begin{enumerate}
\item For each $A\in \mathcal{A}$ we have an indecomposable module $Q(A)\in \widetilde{\mathcal{K}}_{P}$ such that $Q(A)_{\{A\}}\simeq S$ and $Q(A)_{\{A'\}}\ne 0$ implies $A'\ge A$.
\item Any object in $\widetilde{\mathcal{K}}_{P}$ is isomorphic to a direct sum of $Q(A)(n)$ for $A\in \mathcal{A}$ and $n\in \Z$.
\item The split Grothendieck group $[\widetilde{\mathcal{K}}_P]$ is isomorphic to a certain submodule $\mathcal{P}^0$ of the periodic Hecke module. (The submodule was introduced in \cite{MR591724}.)
\end{enumerate}
\end{thm}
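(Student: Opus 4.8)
The plan is to reduce the whole statement to a single character map and then to Lusztig's combinatorics of the periodic Hecke module. For $M\in\widetilde{\mathcal{K}}_P$ put $\mathbf{ch}(M)=\sum_{A\in\mathcal{A}}\grk(M_{\{A\}})\,\underline A$, an element of the periodic Hecke module $\mathcal{P}$ with its standard alcove basis $\{\underline A\}$; this is well defined precisely because the $M_{\{A\}}$ are graded free, the coefficients being Laurent polynomials in the variable $v$ recording the grading shift. One first checks that $\mathbf{ch}$ is additive, depends only on the isomorphism class, and intertwines the Hecke action with the right action on $\mathcal{P}$, i.e.\ $\mathbf{ch}(M*B_s)=\mathbf{ch}(M)\cdot[B_s]$. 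This is exactly where the formula $(M*B)^\emptyset_{w(A_0)}=\bigoplus_{x}M^\emptyset_{wx^{-1}(A_0)}\otimes_{S^\emptyset}B^\emptyset_x$ and the proof of Proposition~\ref{prop:K_Delta is stable under the Hecke actions} enter: they describe the alcove filtration of $M*B_s$ in terms of that of $M$, and reading off graded ranks reproduces the formula for right multiplication by $[B_s]$ on $\mathcal{P}$. In particular $\mathbf{ch}$ makes $[\widetilde{\mathcal{K}}_P]$ an $H$-module.

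Next one records that $\widetilde{\mathcal{K}}_P$ is Krull--Schmidt: using the alcove filtration on source and target, each $\bigoplus_i\Hom_{\widetilde{\mathcal{K}}'}(M,N(i))$ is a finitely generated graded $S$-module, so the degree-zero endomorphism ring of any object is a finite-dimensional $\Coeff$-algebra and idempotents split. For part~(1) I would build $Q(A)$ by a Braden--MacPherson-type recursion along the alcove order: put the stalk at $A$ equal to $S$, and for each $A'>A$ in turn let $Q(A)_{\{A'\}}$ be a graded free module projecting minimally onto the sections of the part already constructed over the alcoves below $A'$. One verifies this produces an object of $\widetilde{\mathcal{K}}_P$ with $Q(A)_{\{A\}}\simeq S$ (in degree $0$ after normalisation) and $Q(A)_{\{A'\}}=0$ unless $A'\ge A$; minimality forces $\End(Q(A))^0=\Coeff$, so $Q(A)$ is indecomposable and is the unique object with these properties. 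Equivalently, $Q(A)$ is the unique (up to grading shift) indecomposable direct summand of a Bott--Samelson-type object $Q(A_0)*B_{s_1}*\dots*B_{s_\ell}$ (for a suitable word) whose support has minimum $A$, and this is what links $Q(A)$ to the Hecke action.

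Part~(2) is proved by induction on $\#\supp M$. Taking $A$ minimal in $\supp M$, the module $M_{\{A\}}$ is a nonzero graded free module, and the projectivity property built into the definition of $\widetilde{\mathcal{K}}_P$ lets one lift a homogeneous basis of $M_{\{A\}}$ to a morphism $\varphi\colon\bigoplus_i Q(A)(n_i)\to M$ inducing an isomorphism on the $A$-stalk; a standard argument (the cokernel of $\varphi$ again lies in $\widetilde{\mathcal{K}}_P$, no longer has $A$ in its support, and the relevant $\mathrm{Ext}^1$ into a sum of copies of $Q(A)$ vanishes for support reasons) shows $\varphi$ is a split injection, so $M\simeq\bigoplus_i Q(A)(n_i)\oplus M'$ with $\#\supp M'<\#\supp M$ and the induction closes. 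Part~(3) is then formal: by (1), (2) and Krull--Schmidt, $[\widetilde{\mathcal{K}}_P]$ is free over $\Z[v,v^{-1}]$ on $\{[Q(A)]\}_{A\in\mathcal{A}}$, and $\mathbf{ch}$ is a $\Z[v,v^{-1}]$-linear, $H$-equivariant map to $\mathcal{P}$ which is injective because each $\mathbf{ch}(Q(A))=\underline A+\sum_{A'>A}(\ast)\underline{A'}$ is unitriangular for the alcove order. Its image is the $\Z[v,v^{-1}]$-span of the $\mathbf{ch}(Q(A))$; using that the $Q(A)$ are self-dual (so the $\mathbf{ch}(Q(A))$ are fixed by the bar involution of $\mathcal{P}$) together with the unitriangularity and a degree bound --- or, alternatively, matching the recursion for the $[Q(A)]$ coming from the functors $*B_s$ with Lusztig's --- one identifies $\mathbf{ch}(Q(A))$ with the element attached to $A$ in the canonical basis of $\mathcal{P}^0$ from \cite{MR591724}, so that $\mathbf{ch}$ is an isomorphism $[\widetilde{\mathcal{K}}_P]\simeq\mathcal{P}^0$ of $H$-modules.

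The routine parts here --- the character map, Krull--Schmidt, the filtration and splitting arguments --- are standard in this circle of ideas; the genuine obstacle is combinatorial and lies in two places. First, one must make the interaction between the right action of the $[B_s]$ and Lusztig's order on $\mathcal{A}$ precise enough to guarantee that the Bott--Samelson objects involved, hence the $Q(A)$, have character unitriangular with the correct apex. Second, one must identify the resulting submodule of $\mathcal{P}$ with Lusztig's $\mathcal{P}^0$ --- equivalently, match the recursion produced by the functors $*B_s$ with the one defining (the canonical basis of) $\mathcal{P}^0$ in \cite{MR591724}. If $\widetilde{\mathcal{K}}_P$ is given by intrinsic axioms rather than as a Karoubian closure, verifying in part~(2) that its projectivity condition is strong enough to peel off $Q(A)$ is a further point requiring care.
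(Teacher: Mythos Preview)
Your overall architecture --- a Hecke-equivariant character map, Krull--Schmidt, Bott--Samelson objects, and unitriangularity for injectivity --- matches the paper's. But two points are genuine gaps rather than routine.

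First, you never construct a starting projective object. Your Braden--MacPherson recursion is only sketched, and your alternative ``$Q(A_0)*B_{s_1}*\dots*B_{s_\ell}$'' is circular: $Q(A_0)$ is one of the objects you are trying to build. The paper resolves this by writing down, for each integral weight $\lambda$, an explicit module
\[
Q_\lambda=\{(z_A)\in S^{W'_\lambda A_\lambda^-}\mid z_A\equiv z_{s_{\alpha,\langle\lambda,\alpha^\vee\rangle}A}\pmod{\alpha}\},
\]
and proving by hand that $\Hom^\bullet(Q_\lambda,M)\simeq M_{\{A'\ge A_\lambda^-\}}$, so $Q_\lambda\in\widetilde{\mathcal{K}}_P$. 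Then $Q(A)$ is extracted as the unique indecomposable summand of $Q_\lambda*B_{s_1}*\dots*B_{s_l}$ with bottom stalk $S$ at $A$ (where $A=A_\lambda^- s_1\cdots s_l$ lies in the box $\Pi_\lambda$). This explicit bootstrap is the missing ingredient in your plan, and it is also what drives part~(3).

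Second, your identification of the image of $\mathbf{ch}$ with $\mathcal{P}^0$ is incorrect as stated. You propose to use self-duality of the $Q(A)$ and bar-invariance to match $\mathbf{ch}(Q(A))$ with Lusztig's canonical basis of $\mathcal{P}^0$. No duality on $\widetilde{\mathcal{K}}_P$ is set up in the paper, and more seriously, over a field of positive characteristic the $\mathbf{ch}(Q(A))$ are in general \emph{not} the canonical basis elements (they form a $p$-canonical type basis). The paper avoids this entirely: since $\mathcal{P}^0$ is by definition $\sum_\lambda e_\lambda\mathcal{H}$, one simply computes $\ch(Q_\lambda)=v^{2\ell(A_\lambda^-)}e_\lambda$ from the explicit description of $(Q_\lambda)_{\{A\}}$, which immediately gives both inclusions $\mathrm{Im}(\ch)\subset\mathcal{P}^0$ (everything in $\widetilde{\mathcal{K}}_P$ is a summand of a $Q_\lambda$-Bott--Samelson) and $\mathcal{P}^0\subset\mathrm{Im}(\ch)$. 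Injectivity then follows, as you say, from unitriangularity.

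A minor technical point: your $\mathbf{ch}$ should carry a normalisation factor $v^{\ell(A)}$ in front of $\grk(M_{\{A\}})$ (for a length function $\ell$ on $\mathcal{A}$) to be Hecke-equivariant; compare the formula in \S\ref{subsec:The categorification}.
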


\subsection{A relation with a work of Fiebig-Lanini}
Fiebig-Lanini~\cite{arXiv:1504.01699} had a similar work (earlier than this work) and defined a certain category.
Logically, results in this paper does not depend on their work.
However, in the proof in this paper, we borrow many ideas from their work.
Moreover, in subsection~\ref{subsec:A relation with a work of Fiebig-Lanini}, we prove that our category $\widetilde{\mathcal{K}}_P$ is equivalent to the category of Fiebig-Lanini.
The author thinks it is possible to establish the theory on top of the theory of Fiebig-Lanini, but the existence of a Hecke action does not soon follow from their theory.

\subsection{Relations with representation theory}
The category $\widetilde{\mathcal{K}}_{P}$ is not the category we really need.
We modify this category as follows.
Objects in $\mathcal{K}_{P}$ are the same as those in $\widetilde{\mathcal{K}}_{P}$ and the space of homomorphisms is defined by
\[
\Hom_{\mathcal{K}_P}(M,N) = \Hom_{\widetilde{\mathcal{K}}_{P}}(M,N)/\{\varphi\colon M\to N\mid \varphi(M_{A}^{\emptyset})\subset \bigoplus_{A' > A}N_{A'}^{\emptyset}\}.
\]
We can prove that the action of $B\in \Sbimod$ on $\mathcal{K}_{P}$ is well-defined.
\begin{thm}[Proposition~\ref{prop:indecomposable is indecomposable}, Theorem~\ref{thm:categorification for K}]
We have the following.
\begin{enumerate}
\item The object $Q(A)$ is also indecomposable as an object of $\mathcal{K}_{P}$.
\item We have $[\mathcal{K}_{P}]\simeq [\widetilde{\mathcal{K}}_P]$.
Hence $[\mathcal{K}_{P}]$ is also isomorphic to $\mathcal{P}^0$.
\end{enumerate}
\end{thm}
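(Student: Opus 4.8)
The plan is to exploit that $\mathcal{K}_{P}$ is the quotient of $\widetilde{\mathcal{K}}_{P}$ by the two-sided ideal $\mathcal{I}$ of morphisms $\varphi$ with $\varphi(M_{A}^{\emptyset})\subset\bigoplus_{A'>A}N_{A'}^{\emptyset}$, together with the structural Theorem~\ref{thm:theorem in introduction, categorification for tilde(K)}. The key preliminary observation is a nilpotency statement: for $M\in\widetilde{\mathcal{K}}_{P}$, whose support $\{A\in\mathcal{A}\mid M_{A}^{\emptyset}\neq 0\}$ is finite, the ideal $\mathcal{I}(M,M)$ of $\End_{\widetilde{\mathcal{K}}_{P}}(M)$ is nilpotent, since a product of $k$ elements of $\mathcal{I}$ sends $M_{A}^{\emptyset}$ into the span of the $M_{A'}^{\emptyset}$ with $A'$ the top of a strictly increasing chain $A<A_{1}<\dots<A'$ of length $k$ inside the support, and such chains have bounded length. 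So first I would establish this.

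For assertion~(1): $\End_{\mathcal{K}_{P}}(Q(A))$ is the quotient of $\End_{\widetilde{\mathcal{K}}_{P}}(Q(A))$ by the nilpotent ideal $\mathcal{I}(Q(A),Q(A))$; since idempotents lift along nilpotent ideals, every idempotent of $\End_{\mathcal{K}_{P}}(Q(A))$ is the image of an idempotent of $\End_{\widetilde{\mathcal{K}}_{P}}(Q(A))$, hence of $0$ or $\id_{Q(A)}$ because $Q(A)$ is indecomposable in $\widetilde{\mathcal{K}}_{P}$. Finally $Q(A)\neq 0$ in $\mathcal{K}_{P}$, i.e.\ $\id_{Q(A)}\notin\mathcal{I}(Q(A),Q(A))$: indeed $Q(A)_{\{A\}}\simeq S\neq 0$ forces $Q(A)_{A}^{\emptyset}\neq 0$, which the identity does not carry into $\bigoplus_{A'>A}Q(A)_{A'}^{\emptyset}$. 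Thus $\End_{\mathcal{K}_{P}}(Q(A))$ is a nonzero ring with only the idempotents $0,1$, so $Q(A)$ stays indecomposable in $\mathcal{K}_{P}$.

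For assertion~(2): I would first check that each assignment $M\mapsto M_{\{A\}}$ descends to a functor on $\mathcal{K}_{P}$. A morphism $f$ of $\widetilde{\mathcal{K}}'$ carries $M\cap\bigoplus_{A'\geq A}M_{A'}^{\emptyset}$ into $N\cap\bigoplus_{A'\geq A}N_{A'}^{\emptyset}$ and $M\cap\bigoplus_{A'>A}M_{A'}^{\emptyset}$ into $N\cap\bigoplus_{A'>A}N_{A'}^{\emptyset}$, so it induces $f_{\{A\}}\colon M_{\{A\}}\to N_{\{A\}}$; and if $f\in\mathcal{I}$ it already carries $M\cap\bigoplus_{A'\geq A}M_{A'}^{\emptyset}$ into $N\cap\bigoplus_{A'>A}N_{A'}^{\emptyset}$ (because $A'\geq A$ and $A''>A'$ imply $A''>A$), so $f_{\{A\}}=0$. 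Since each $M_{\{A\}}$ is graded free, $\ch(M):=\sum_{A}\grk(M_{\{A\}})A$, valued in the periodic module, is additive and isomorphism-invariant on $\mathcal{K}_{P}$, hence defines a homomorphism on $[\mathcal{K}_{P}]$ whose composite with the map $[\pi]\colon[\widetilde{\mathcal{K}}_{P}]\to[\mathcal{K}_{P}]$ induced by the quotient functor $\pi$ is the analogous character map on $[\widetilde{\mathcal{K}}_{P}]$ (the two categories share their objects, and $M_{\{A\}}$ depends on $M$ alone). Now $[\pi]$ is surjective; it is injective provided $\ch$ is injective on $[\widetilde{\mathcal{K}}_{P}]$. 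By Theorem~\ref{thm:theorem in introduction, categorification for tilde(K)} the group $[\widetilde{\mathcal{K}}_{P}]$ is free abelian on $\{[Q(A)(n)]\}_{A\in\mathcal{A},\,n\in\Z}$, and $\ch(Q(A)(n))\in v^{-n}A+\sum_{A'>A}\Z[v,v^{-1}]A'$ (its $A$-coefficient being $\grk(S(n))$ since $Q(A)_{\{A\}}\simeq S$, and all other terms sitting at alcoves $A'>A$ since $Q(A)_{\{A'\}}\neq 0\Rightarrow A'\geq A$; here $v$ records the grading shift, its normalization being irrelevant). Given a finite relation $\sum c_{A,n}\ch(Q(A)(n))=0$, a pair $(A_{0},n_{0})$ with $c_{A_{0},n_{0}}\neq 0$ and $A_{0}$ minimal among the occurring alcoves contributes to the $A_{0}$-component only through its terms with $A=A_{0}$, and that component equals $\sum_{n}c_{A_{0},n}v^{-n}=0$, forcing all $c_{A_{0},n}=0$ — a contradiction. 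Hence $[\pi]$ is an isomorphism; it is $[\Sbimod]$-linear since $\pi$ intertwines the $\Sbimod$-actions, and with Theorem~\ref{thm:theorem in introduction, categorification for tilde(K)}(3) this yields $[\mathcal{K}_{P}]\simeq[\widetilde{\mathcal{K}}_{P}]\simeq\mathcal{P}^{0}$.

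The step I expect to be the real obstacle is the finiteness input behind the first paragraph — verifying from the definitions that objects of $\widetilde{\mathcal{K}}_{P}$ have finite support, equivalently that strictly increasing endomorphisms are nilpotent — since this is exactly what prevents the quotient $\mathcal{K}_{P}$ from collapsing the indecomposability of $Q(A)$ (it is what makes idempotent lifting applicable). The remaining arguments are formal, given Theorem~\ref{thm:theorem in introduction, categorification for tilde(K)} and some bookkeeping with the order on $\mathcal{A}$.
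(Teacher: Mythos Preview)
Your argument is correct and matches the paper's approach: part~(1) is exactly Proposition~\ref{prop:indecomposable is indecomposable}, proved via nilpotence of the ideal and idempotent lifting; for part~(2) the paper likewise observes that $M\mapsto M_{\{A\}}$ descends to $\mathcal{K}_P$ and that $\ch$ is therefore well-defined there, then invokes Theorem~\ref{thm:categorification for tilde(K)}. Your factorization $\ch_{\widetilde{\mathcal{K}}}=\ch_{\mathcal{K}}\circ[\pi]$ is a clean way to package this, slightly more explicit than the paper's one-line deduction but logically the same.

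One correction to your expectations: the ``real obstacle'' you flag is not an obstacle at all. Finiteness of $\supp_{\mathcal{A}}(M)$ is part of the \emph{definition} of $\widetilde{\mathcal{K}}_\Delta$ (and hence of $\widetilde{\mathcal{K}}_P\subset\widetilde{\mathcal{K}}_\Delta$); see the sentence introducing $\widetilde{\mathcal{K}}_\Delta(S_0)$ in Section~\ref{sec:Our combinatorial category}. So the nilpotence of $\mathcal{I}(M,M)$ is immediate from the definitions, and the rest of your argument goes through without further input. (Also a cosmetic point: with the paper's convention $\grk(S(n))=v^n$, your leading term should read $v^nA$ rather than $v^{-n}A$; as you note, this normalization is immaterial to the triangularity argument.)
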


We also define a functor $\mathcal{F}\colon \mathcal{K}_{P}\to \mathcal{K}_{\AJS}$.
Recall that we have the wall-crossing functor $\vartheta_s\colon \mathcal{K}_{\AJS}\to \mathcal{K}_{\AJS}$ for each $s\in S_{\aff}$.
\begin{thm}[Proposition~\ref{prop:compativility of translation functors}, \ref{prop:fully-faithfulness}]
We have the following.
\begin{enumerate}
\item We have $\mathcal{F}(M*B_s)\simeq \vartheta_s(\mathcal{F}(M))$.
\item The functor $\mathcal{F}$ is fully-faithful.
\end{enumerate}
\end{thm}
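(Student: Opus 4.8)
The plan is to establish (1) first and then to use it to organise the proof of (2).

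\emph{Compatibility with wall-crossing.} Because $B_s=S\otimes_{S^s}S(1)$ we have $M*B_s=M\otimes_{S^s}S(1)$ as graded $S$-bimodules, and since $(B_s)^{\emptyset}_x\ne 0$ only for $x\in\{e,s\}$ the definition of the $*$-action gives $(M*B_s)^{\emptyset}_{w(A_0)}=\bigl(M^{\emptyset}_{w(A_0)}\otimes_{S^{\emptyset}}(B_s)^{\emptyset}_e\bigr)\oplus\bigl(M^{\emptyset}_{ws(A_0)}\otimes_{S^{\emptyset}}(B_s)^{\emptyset}_s\bigr)$. Thus $*B_s$ merges the data attached to the pair of $s$-adjacent alcoves $\{w(A_0),ws(A_0)\}$, which share the wall $w(H_s)$. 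On the other side $\vartheta_s$ factors as translation onto the wall $H_s$ followed by translation back, both steps being base change along $S^s\hookrightarrow S$, so $\vartheta_s$ merges exactly the same adjacent pairs. I would match $\mathcal{F}(M*B_s)$ with $\vartheta_s(\mathcal{F}(M))$ by first identifying their underlying $S$-modules, then checking that the prescribed alcove decompositions agree pair by pair, using the explicit rank-one descriptions of $(B_s)^{\emptyset}_e$, $(B_s)^{\emptyset}_s$ and of the corresponding pieces of $\vartheta_s$, and finally verifying naturality in $M$. This is an unwinding of the definitions and should not be the hard part.

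\emph{Faithfulness.} The functor $\mathcal{F}$ is compatible with the alcove filtration: for $M\in\mathcal{K}_P$ there is a natural identification of $M_{\{A\}}$ with the local piece of $\mathcal{F}(M)$ at $A$, so that $\mathcal{F}(\varphi)$ in particular records the induced maps $\varphi_{\{A\}}\colon M_{\{A\}}\to N_{\{A\}}$. If $\mathcal{F}(\varphi)=0$ then every $\varphi_{\{A\}}$ vanishes; since any morphism of $\widetilde{\mathcal{K}}'$ already maps $M^{\emptyset}_A$ into $\bigoplus_{A'\ge A}N^{\emptyset}_{A'}$, the vanishing of all leading terms forces $\varphi(M^{\emptyset}_A)\subset\bigoplus_{A'>A}N^{\emptyset}_{A'}$, that is $\varphi=0$ already in $\mathcal{K}_P$ by the definition of its morphism spaces.

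\emph{Fullness and the main obstacle.} I would reduce to Bott--Samelson-type objects. By Theorem~\ref{thm:theorem in introduction, categorification for tilde(K)} and the periodic-module picture, each object of $\mathcal{K}_P$ is a direct sum of shifts of the $Q(A)$, and each $Q(A)$ is a direct summand of a shift of some $Q(A_0)*B_{s_1}*\dotsm*B_{s_l}$; by (1) such an object goes to $\vartheta_{s_1}\dotsm\vartheta_{s_l}(\mathcal{F}(Q(A_0)))$, and one identifies $\mathcal{F}(Q(A_0))$ with the object of $\mathcal{K}_{\AJS}$ attached to the alcove $A_0$ from which the $\vartheta_s$ build the Bott--Samelson objects. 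Since each $B_s$ is self-biadjoint up to shift and $\mathcal{F}$ is a module functor over $\Sbimod$, every $\Hom$-space in $\mathcal{K}_P$ reduces to one of the form $\Hom_{\mathcal{K}_P}(Q(A_0),Q(A_0)*B_{s_1}*\dotsm*B_{s_l})$, and likewise in $\mathcal{K}_{\AJS}$; so it is enough to prove $\mathcal{F}$ is an isomorphism between these. Both are graded free $S$-modules, and their ranks are computed from the pairing on the periodic Hecke module --- on the $\mathcal{K}_{\AJS}$ side by Andersen--Jantzen--Soergel's structure theory, on the $\mathcal{K}_P$ side from the filtration of these objects by copies of $S$ coming from the layers $Q(A)_{\{A\}}\simeq S$ --- and by (1) the two module structures are the same, so the ranks coincide; faithfulness then promotes the injection $\mathcal{F}\colon\Hom_{\mathcal{K}_P}(-,-)\hookrightarrow\Hom_{\mathcal{K}_{\AJS}}(\mathcal{F}(-),\mathcal{F}(-))$ to an isomorphism. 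The main obstacle is exactly this rank comparison: it requires reconciling the explicit description of $\Hom$-spaces in $\mathcal{K}_{\AJS}$ with the one in $\mathcal{K}_P$, and in particular checking that $\mathcal{F}$ carries the standard filtrations of $\mathcal{K}_P$ to those of $\mathcal{K}_{\AJS}$, so that a leading term of a morphism downstairs can be lifted and the remainder dispatched by induction on the alcove filtration.
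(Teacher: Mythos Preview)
Your treatment of part~(1) is too breezy. The object $\mathcal{F}(M)$ carries not only the data $M_A^\emptyset$ but also the rank-one pieces $\mathcal{F}(M)(A,\alpha)=\Ima(M_{[A,\alpha\uparrow A]}^\alpha\to M_A^\emptyset\oplus M_{\alpha\uparrow A}^\emptyset)$, and it is the matching of \emph{these} with $\vartheta_s(\mathcal{F}(M))(A,\alpha)$ that carries all the content. The definition of $\vartheta_s(\mathcal{M})(A,\alpha)$ has three distinct formulas according to whether $As\notin W'_{\alpha,\aff}A$, $As=\alpha\uparrow A$, or $As=\alpha\downarrow A$, and the paper checks each case separately; the last one in particular requires a careful computation inside $N_{I'}\otimes_{R^s}R$. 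Your sketch does not mention this data or this trichotomy at all.

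Your faithfulness argument is correct and agrees with the paper's.

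Your fullness argument has a genuine gap. You assert that $\Hom^\bullet_{\mathcal{K}_{\AJS}}(\mathcal{F}(M),\mathcal{F}(N))$ is graded free with graded rank computable ``by Andersen--Jantzen--Soergel's structure theory,'' but nothing of the sort is available at this point: the AJS category is defined by local conditions and no a priori freeness or rank formula for its Hom spaces is established. Without that, the inequality you need goes the wrong way. And even granting both freeness and equality of graded ranks, an injective degree-zero map between graded free $S$-modules of the same graded rank is not automatically surjective when $\Coeff$ is only a Dedekind domain (look at a single graded piece: an injection of free $\Coeff$-modules of the same rank need not be onto).

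The paper's route is different and avoids all of this. It localizes at height-one primes: for each $\nu\in X_\Coeff$ the ring $S_{(\nu)}$ is an $S^\alpha$-algebra for some $\alpha$, and one first proves that $\mathcal{F}^\alpha\colon\mathcal{K}_P^\alpha\to\mathcal{K}_{\AJS}^\alpha$ is fully faithful. This is done by classifying the indecomposables of $\mathcal{K}_P^\alpha$ as the explicit rank-one objects $Q_{A,\alpha}^\alpha$ and then computing, by hand, every Hom space between two such objects on both sides (there are only four cases: $A_2=A_1$, $\alpha\uparrow A_1$, $\alpha\downarrow A_1$, or anything else). Since $\Hom^\bullet_{\mathcal{K}_P}(M,N)$ is graded free one has $\Ima(\mathcal{F})=\bigcap_\nu S_{(\nu)}\otimes_S\Ima(\mathcal{F})$, and the rank-one fullness plus base change then gives $\Ima(\mathcal{F})\supset\Hom^\bullet_{\mathcal{K}_{\AJS}}(\mathcal{F}(M),\mathcal{F}(N))$. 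The point is that the hard comparison happens only after localization, where both sides become small enough to compute directly; your global rank argument tries to skip this and does not go through.
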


Let $\mathcal{K}_{\AJS,P}$ be the essential image of $\mathcal{F}$.
One of the main results in \cite{MR1272539} says that $\Coeff\otimes_{S}\mathcal{K}^{\mathrm{f}}_{\AJS,P}\simeq \Proj(\Rep_0(G_1T))$.
Since the action of $\Sbimod$ on $\mathcal{K}_P\simeq \mathcal{K}_{\AJS,P}$ gives an action on $\Coeff\otimes_{S}\mathcal{K}_{\AJS,P}^{\mathrm{f}}$, we now get the action of $\Sbimod$ on $\Proj(\Rep_0(G_1T))$.
We can extend this action to $\Rep_0(G_1T)$, see \ref{subsec:Representation Theory}.

Let $A_0$ be the alcove containing $\rho/p$ where $\rho$ is the half sum of positive roots.
We have an equivalence $\Coeff\otimes_{S}\mathcal{K}_{P}^{\mathrm{f}}\simeq \Coeff\otimes_{S}\mathcal{K}_{\AJS,P}^{\mathrm{f}}\simeq \Proj(\Rep_0(G_1T))$ and $Q(A)$ corresponds to $P(\lambda_A)$ where $\lambda_{w(A_0)} = pw(\rho/p) - \rho$ for $w\in W_{\aff}$ and $P(\lambda_A)$ is the projective cover of the irreducible representation with the highest weight $\lambda_A$.
Let $Z(\mu)\in \Rep(G_1T)$ be the baby Verma module with the highest weight $\mu$ and $(P(\lambda):Z(\mu))$ the multiplicity of $Z(\mu)$ in a Verma flag of $P(\lambda)$.
By the constructions, we have the following.
\begin{thm}[Corollary~\ref{cor:multiplicity}]
The multiplicity $(P(\lambda_A):Z(\lambda_{A'}))$ is equal to the rank of $Q(A)_{\{A'\}}$.
\end{thm}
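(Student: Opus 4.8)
The plan is to reduce the statement to the way in which Verma flags are recorded inside the category $\mathcal{K}_{\AJS}$, transported through the fully faithful functor $\mathcal{F}$. Recall that, with $A_0$ the alcove containing $\rho/p$, the composite equivalence
\[
\Coeff\otimes_{S}\mathcal{K}_{P}^{\mathrm{f}}\xrightarrow{\ \sim\ }\Coeff\otimes_{S}\mathcal{K}_{\AJS,P}^{\mathrm{f}}\xrightarrow{\ \sim\ }\Proj(\Rep_0(G_1T))
\]
sends $Q(A)$ to $P(\lambda_A)$; the first arrow is induced by $\mathcal{F}$ and the second is the equivalence $\Coeff\otimes_{S}\mathcal{K}_{\AJS,P}^{\mathrm{f}}\simeq\Proj(\Rep_0(G_1T))$ of \cite{MR1272539}. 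First I would recall from \cite{MR1272539} the description of Verma flags on the combinatorial side: for a projective object $P\in\Proj(\Rep_0(G_1T))$ and an alcove $A'$, the multiplicity $(P:Z(\lambda_{A'}))$ equals the rank over $S$ of the local piece $\mathcal{V}(P)_{\{A'\}}$ of the Andersen--Jantzen--Soergel datum $\mathcal{V}(P)\in\mathcal{K}_{\AJS}$. Since $\Rep_0(G_1T)$ carries no grading this is an ungraded rank, and as $\mathcal{V}(P)_{\{A'\}}$ is graded free over $S$ the rank is unchanged under the base change $\Coeff\otimes_{S}(-)$, so the statement is insensitive to whether one works with $\mathcal{K}_{\AJS}$ or $\Coeff\otimes_{S}\mathcal{K}_{\AJS}^{\mathrm{f}}$.

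The key step is to check that $\mathcal{F}\colon\mathcal{K}_{P}\to\mathcal{K}_{\AJS}$ is compatible with the formation of these local pieces, namely that
\[
\rank_{S}\mathcal{F}(M)_{\{A'\}}=\rank_{S}M_{\{A'\}}\qquad\text{for all }M\in\mathcal{K}_{P},\ A'\in\mathcal{A},
\]
where on the left $M_{\{A'\}}=\bigl(M\cap\bigoplus_{A''\ge A'}M_{A''}^{\emptyset}\bigr)/\bigl(M\cap\bigoplus_{A''>A'}M_{A''}^{\emptyset}\bigr)$ is the subquotient already used in Theorem~\ref{thm:theorem in introduction, categorification for tilde(K)}. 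This should be read off directly from the construction of $\mathcal{F}$ in Section~\ref{sec:Our combinatorial category}: $\mathcal{F}$ is defined so as to intertwine the alcove decomposition $S^{\emptyset}\otimes_{S}M=\bigoplus_{A'}M_{A'}^{\emptyset}$ with the corresponding data on the AJS side, whence it matches $M_{\{A'\}}$ with $\mathcal{F}(M)_{\{A'\}}$ up to a grading shift; taking ungraded ranks removes the shift.

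Granting these two inputs, the corollary follows at once:
\[
(P(\lambda_A):Z(\lambda_{A'}))=\rank_{S}\mathcal{V}(P(\lambda_A))_{\{A'\}}=\rank_{S}\mathcal{F}(Q(A))_{\{A'\}}=\rank_{S}Q(A)_{\{A'\}},
\]
where the middle equality uses that $\mathcal{F}(Q(A))$ is the object of $\mathcal{K}_{\AJS,P}=\mathcal{F}(\mathcal{K}_{P})$ corresponding to $P(\lambda_A)$ under the equivalence of \cite{MR1272539}, and the last equality is the compatibility of the previous paragraph. As a consistency check, $Q(A)_{\{A\}}\simeq S$ gives $(P(\lambda_A):Z(\lambda_A))=1$, as it must.

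I expect the main obstacle to be bookkeeping rather than substance: one has to line up the alcove order coming from $\Delta^{+}$ as in \cite{MR591724} with the partial order and the weight parametrization $\lambda_{w(A_0)}=pw(\rho/p)-\rho$ used on the representation side, and to track the grading shifts in the definition of $\mathcal{F}$ so that the ungraded ranks match exactly. Once this dictionary is in place — in particular the identification $\mathcal{K}_{\AJS,P}=\mathcal{F}(\mathcal{K}_{P})$ and the precise form of the equivalence of \cite{MR1272539} — no further representation-theoretic argument is needed.
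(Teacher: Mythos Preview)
Your approach is correct and essentially coincides with the paper's. The paper packages the argument as: a lemma asserting that whenever $\mathcal{V}(P)\simeq\mathcal{F}(M)$ one has $(P:Z_S(w\cdot_p\lambda_0))=\rank(M_{\{wA_0\}})$ (declared ``obvious from the constructions'', i.e.\ exactly your compatibility $\mathcal{F}(M)(A')=M_{A'}^{\emptyset}$ has the same rank as $M_{\{A'\}}$); then a proposition identifying $\mathcal{V}(P_S(w\cdot_p\lambda_0))\simeq\mathcal{F}(Q(wA_0))$ by checking the characterization of $P_S(\lambda)$ via that lemma; the corollary is then immediate. You take the identification $Q(A)\leftrightarrow P(\lambda_A)$ as input and invoke the same rank compatibility, which is the same content in a slightly different order. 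One small point of precision: the ``local piece'' on the AJS side is $\mathcal{F}(M)(A')=M_{A'}^{\emptyset}$, an $S^{\emptyset}$-module, not literally $M_{\{A'\}}$; there is no grading shift, and the ranks agree because $M$ admits a standard filtration.
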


In the final part, we discuss Lusztig's conjecture on irreducible characters of algebraic representations.
We give a proof of the conjecture based on the theory developed in this paper.

\subsection*{Acknowledgment}
The question treated in this paper was asked by Masaharu Kaneda.
The author had many helpful discussions with him.
He also thank the referees giveng helpful comments and pointing out errors.
The author was supported by JSPS KAKENHI Grant Number 18H01107.

\section{Our combinatorial category}\label{sec:Our combinatorial category}
We use a slightly different notation from the introduction.
In particular, we do not fix the alcove $A_0$.
So we distinguished two actions (from the right and left) of $W_{\aff}$ on $\mathcal{A}$ as in \cite{MR591724}.

\subsection{Notation}
Let $(X,\Delta,X^\vee,\Delta^\vee)$ be a root datum.
Let $\mathcal{A}$ the set of alcoves, namely the set of connected components of $X_\R\setminus\bigcup_{\alpha\in\Delta,n\in\Z}\{\lambda\in X_\R\mid \langle \lambda,\alpha^\vee\rangle = n\}$ where $X_{\R} = X\otimes_{\Z}\R$.
Let $W_{\mathrm{f}}$ be the finite Weyl group and $W'_\aff = W_{\mathrm{f}}\ltimes \Z\Delta$ the affine Weyl group with the natural surjective homomorphism $W'_\aff\to W_{\mathrm{f}}$.
For each $\alpha\in\Delta$ and $n\in\Z$, let $s_{\alpha,n}\colon X\to X$ be the reflection with respect to $\{\lambda\in X_\R\mid \langle \lambda,\alpha^\vee\rangle = n\}$.
As in \cite{MR591724}, let $S_\aff$ be the set of $W'_\aff$-orbits on the set of faces.
Then for each $s\in S_\aff$ and $A\in \mathcal{A}$, we set $As$ as the alcove $\ne A$ which has a common face of type $s$ with $A$.
The subgroup of $\Aut(\mathcal{A})$ (permutations of elements in $\mathcal{A}$) generated by $S_\aff$ is denoted by $W_\aff$.
Then $(W_\aff,S_\aff)$ is a Coxeter system isomorphic to the affine Weyl group.
The Bruhat order on $W_{\aff}$ is denoted by $\ge$.
The group $W_\aff$ acts on $\mathcal{A}$ from the right.

We give related notation and also some facts.
If we fix an alcove $A_0$, then $W'_\aff\simeq \mathcal{A}$ via $w\mapsto wA_0$ and $W'_\aff$ acts on $\mathcal{A}$ by $(w(A_0))x = wx(A_0)$.
This gives an isomorphism $W'_\aff\simeq W_\aff$.
The facts stated below are obvious from this description.

Let $\Lambda$ be the set of maps $\mathcal{A}\to X$ such that $\lambda(xA) = \overline{x}\lambda(A)$ for any $x\in W'_\aff$ and $A\in \mathcal{A}$ where $\overline{x}\in W_{\mathrm{f}}$ is the image of $x$.
We write $\lambda_A = \lambda(A)$ for $\lambda\in \Lambda$ and $A\in \mathcal{A}$.
For each $A\in \mathcal{A}$, $\lambda\mapsto \lambda_A$ gives an isomorphism $\Lambda\xrightarrow{\sim}X$ and the inverse of this isomorphism is denoted by $\nu\mapsto \nu^A$.
The group $W_\aff$ acts on $\Lambda$ by $(x(\lambda))(A) = \lambda(Ax)$.

Let $\Lambda_\aff$ be the set of $\lambda\in\Lambda$ such that $\lambda_A\in \Z\Delta$ for any, or equivalently, some $A\in \mathcal{A}$.
For $\lambda\in \Lambda_\aff$ and $A\in \mathcal{A}$, we define $A\lambda = A + \lambda_A$.
Then for $\lambda_1,\lambda_2\in \Lambda_\aff$, $(A\lambda_1)\lambda_2 = (A + (\lambda_1)_A)\lambda_2 = A + (\lambda_1)_A + (\lambda_2)_{A + (\lambda_1)_A}$.
Since elements in $\Lambda$ are constant on $\Z\Delta$-orbits, we have $(\lambda_2)_{A + (\lambda_1)_A} = (\lambda_2)_A$.
Hence $(A\lambda_1)\lambda_2 = A + (\lambda_1 + \lambda_2)_A$, namely $(A,\lambda)\mapsto A\lambda$ gives an action of $\Lambda_\aff$ on $\mathcal{A}$.
Therefore we get $\Lambda_\aff\hookrightarrow \Aut(\mathcal{A})$ and the image is contained in $W_\aff$.
We regard $\Lambda_\aff\subset W_\aff$.

Let $\lambda\in \Lambda$ and $A,A'\in \mathcal{A}$ and assume that $A,A'$ are in the same $\Lambda_\aff$-orbit.
Namely there exists $\mu\in \Lambda_\aff$ such that $A = A' \mu = A' + \mu_{A'}$.
Since elements in $\Lambda$ are constant on $\Z\Delta$-orbits, we get $\lambda_{A'} = \lambda_A$.
Namely the isomorphism $\lambda\mapsto \lambda_A$ only depends on $\Lambda_{\aff}$-orbit in $\mathcal{A}$.
Hence we also denote the isomorphism by $\lambda\mapsto\lambda_\Omega$ where $\Omega\in \mathcal{A}/\Lambda_\aff$.
The inverse is denoted by $\lambda\mapsto \lambda^\Omega$.
The $\Lambda_\aff$-orbit through $A$ is equal to $\{A + \lambda\mid \lambda\in\Z\Delta\}$.
We denote this by $A + \Z\Delta$.

The following lemma is obvious from the definitions.
\begin{lem}
Let $\lambda\in \Lambda$, $\nu\in X$, $x\in W_\aff$, $y\in W'_\aff$ and $A\in \mathcal{A}$.
\begin{enumerate}
\item $x(\lambda)_A = \lambda_{Ax}$.
\item $y(\lambda_A) = \lambda_{yA}$.
\item $\nu^{A} = x(\nu^{Ax})$.
\item $\nu^A = y(\nu)^{yA}$.
\end{enumerate}
\end{lem}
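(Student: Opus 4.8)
The plan is to read all four identities off directly from the definition of $\Lambda$ and of the $W_\aff$-action on it, using throughout that for a fixed $A\in\mathcal{A}$ the evaluation map $\lambda\mapsto\lambda_A$ is an isomorphism $\Lambda\xrightarrow{\sim}X$; in particular, two elements of $\Lambda$ agree as soon as they take the same value at a single alcove.

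First I would dispatch (1) and (2), which are pure unwindings. For (1), the $W_\aff$-action on $\Lambda$ is defined by $x(\lambda)(A)=\lambda(Ax)$, so $x(\lambda)_A=x(\lambda)(A)=\lambda(Ax)=\lambda_{Ax}$. For (2), the defining condition on $\lambda\in\Lambda$ is $\lambda(yA)=\overline{y}\,\lambda(A)$ for $y\in W'_\aff$; since $y(\nu)$ for $\nu\in X$ is by convention $\overline{y}(\nu)$ (a translation in $W'_\aff$ acting trivially on $X$), this reads precisely $y(\lambda_A)=\lambda_{yA}$.

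Then (3) and (4) follow formally from (1) and (2) together with the injectivity of evaluation at one alcove. For (3), I would apply (1) to $\lambda=\nu^{Ax}$ to get $\bigl(x(\nu^{Ax})\bigr)_A=(\nu^{Ax})_{Ax}=\nu=(\nu^A)_A$, and then conclude $x(\nu^{Ax})=\nu^A$ because evaluation at $A$ is injective on $\Lambda$. For (4), I would apply (2) to $\lambda=\nu^A$, obtaining $(\nu^A)_{yA}=y\bigl((\nu^A)_A\bigr)=y(\nu)=\bigl(y(\nu)^{yA}\bigr)_{yA}$, and conclude $\nu^A=y(\nu)^{yA}$ by injectivity of evaluation at $yA$.

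Since the entire argument is a short manipulation of the definitions, I do not expect any genuine obstacle; the only thing to watch is notational, namely the two conventions that $y(\nu)$ for $y\in W'_\aff$, $\nu\in X$ denotes $\overline{y}(\nu)$, and that equality in $\Lambda$ may be tested at a single alcove.
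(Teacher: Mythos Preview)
Your proof is correct and matches the paper's approach exactly: the paper simply states that the lemma is obvious from the definitions and gives no further argument, and your proposal carries out precisely that unwinding. Your flag about the convention $y(\nu)=\overline{y}(\nu)$ for $y\in W'_\aff$, $\nu\in X$ is the right one to watch; it is consistent with how the paper uses the lemma later (e.g.\ in checking that $(\alpha_s,\alpha_s^\vee)$ is well-defined up to sign).
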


Fix a positive system $\Delta^+ \subset \Delta$.
Let $\alpha\in\Delta^+$ and $n\in\Z$.
We say $A\le s_{\alpha,n}(A)$ if for any $\lambda\in A$ we have $\langle \lambda,\alpha^\vee\rangle < n$.
The generic Bruhat order $\le$ on $\mathcal{A}$ is the partial order generated by the relations $A\le s_{\alpha,n}(A)$.
The following lemma is obvious from the definition.
\begin{lem}\label{lem:order as a vector in A}
Let $A\in \mathcal{A}$, $w\in W'_{\aff}$ and $\lambda$ is in the closure of $A$.
If $A \le w(A)$, then $w(\lambda) - \lambda\in\R_{\ge 0}\Delta^+$.
\end{lem}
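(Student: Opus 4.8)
The plan is to unfold the definition of the generic Bruhat order into a chain of single affine reflections and then to control each step through the explicit formula $s_{\alpha,n}(x) = x - (\langle x,\alpha^\vee\rangle - n)\alpha$, which gives $s_{\alpha,n}(x) - x = (n - \langle x,\alpha^\vee\rangle)\alpha$.

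First I would use that, by definition, $\le$ on $\mathcal{A}$ is the partial order generated by the relations $B \le s_{\alpha,n}(B)$ with $\alpha\in\Delta^+$, $n\in\Z$, and $\langle\mu,\alpha^\vee\rangle < n$ for all $\mu\in B$. Hence $A\le w(A)$ yields a finite chain of alcoves $A = A_0, A_1,\dots, A_k = w(A)$ together with data $\alpha_i\in\Delta^+$ and $n_i\in\Z$ such that $A_i = s_{\alpha_i,n_i}(A_{i-1})$ and $\langle\mu,\alpha_i^\vee\rangle < n_i$ for every $\mu\in A_{i-1}$. Setting $w_i = s_{\alpha_i,n_i}\cdots s_{\alpha_1,n_1}\in W'_\aff$, one has $w_i(A) = A_i$; since the action of $W'_\aff$ on $\mathcal{A}$ is simply transitive, this forces $w_k = w$ in $W'_\aff$.

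Next I would put $\mu_i = w_i(\lambda)$, so that $\mu_0 = \lambda$ and $\mu_k = w(\lambda)$. Each $w_i$ is an affine homeomorphism of $X_\R$ carrying $A$ onto $A_i$, hence the closure of $A$ onto the closure of $A_i$; therefore $\mu_i$ lies in the closure of $A_i$, and by continuity the strict inequality valid on the open alcove $A_{i-1}$ degenerates to $\langle\mu_{i-1},\alpha_i^\vee\rangle \le n_i$. Consequently
\[
\mu_i - \mu_{i-1} = s_{\alpha_i,n_i}(\mu_{i-1}) - \mu_{i-1} = \bigl(n_i - \langle\mu_{i-1},\alpha_i^\vee\rangle\bigr)\,\alpha_i \in \R_{\ge 0}\alpha_i \subseteq \R_{\ge 0}\Delta^+ .
\]
Summing the telescoping identity $w(\lambda) - \lambda = \sum_{i=1}^{k}(\mu_i - \mu_{i-1})$ and using that $\R_{\ge 0}\Delta^+$ is closed under addition concludes the argument.

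I do not anticipate a genuine obstacle: the proof is bookkeeping around the reflection formula, in line with the remark that the lemma is obvious from the definition. The only two points that warrant an explicit word are the passage from the generating relations to an honest chain (so that each step may indeed be taken to be one affine reflection obeying the defining inequality), and the closure/continuity step that promotes the strict inequality on $A_{i-1}$ to the non-strict inequality at the boundary point $\mu_{i-1} = w_{i-1}(\lambda)$; both are immediate.
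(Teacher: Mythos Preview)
Your proof is correct and is precisely the natural unfolding of the definition that the paper has in mind; the paper itself gives no explicit argument, simply remarking that the lemma ``is obvious from the definition.'' Your chain decomposition, the passage to the closure via continuity, and the telescoping sum are exactly the routine steps one would supply, and there are no gaps.
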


\begin{lem}\label{lem:homeo on each orbit}
Let $A,A'$ such that $A + \lambda = A'$ for $\lambda\in \Z\Delta$.
Then $A\le A'$ if and only if $\lambda\in \Z_{\ge 0}\Delta^+$.
\end{lem}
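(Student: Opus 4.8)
The plan is to prove the two implications separately. For ``$\Leftarrow$'' I will exhibit, for each $\alpha\in\Delta^+$, an explicit length-two chain in the generic Bruhat order from $A$ to $A+\alpha$, and then deduce the general statement by transitivity after writing $\lambda$ as a sum of positive roots. For ``$\Rightarrow$'' I will feed the hypothesis $A\le A'$ into Lemma~\ref{lem:order as a vector in A}, applied to the translation by $\lambda$, and then intersect the resulting cone condition with the root lattice.

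For ``$\Leftarrow$'', fix $\alpha\in\Delta^+$. Since the alcove $A$ meets no hyperplane $\{\langle\,\cdot\,,\alpha^\vee\rangle=n\}$ with $n\in\Z$, the function $\mu\mapsto\langle\mu,\alpha^\vee\rangle$ takes all of its values on $A$ in a single open interval $(k,k+1)$ with $k\in\Z$. A direct computation with the formula $s_{\alpha,n}(\mu)=\mu-(\langle\mu,\alpha^\vee\rangle-n)\alpha$ gives $s_{\alpha,k+2}\circ s_{\alpha,k+1}=t_\alpha$ (translation by $\alpha$) as maps on $X_\R$; note that $\langle\alpha,\alpha^\vee\rangle=2$ is exactly what forces two hyperplanes to be crossed. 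Hence $A+\alpha=s_{\alpha,k+2}(s_{\alpha,k+1}(A))$. Now $\langle\mu,\alpha^\vee\rangle<k+1$ for all $\mu\in A$, which is precisely the relation $A\le s_{\alpha,k+1}(A)$; and $s_{\alpha,k+1}(A)$ lies in the strip where $\langle\,\cdot\,,\alpha^\vee\rangle\in(k+1,k+2)$, so $s_{\alpha,k+1}(A)\le s_{\alpha,k+2}(s_{\alpha,k+1}(A))=A+\alpha$. Therefore $A\le A+\alpha$. Writing an arbitrary $\lambda\in\Z_{\ge0}\Delta^+$ as a sum $\alpha^{(1)}+\dots+\alpha^{(m)}$ of positive roots and applying this successively to $A$, then to $A+\alpha^{(1)}$, then to $A+\alpha^{(1)}+\alpha^{(2)}$, and so on, gives $A\le A+\lambda=A'$ by transitivity.

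For ``$\Rightarrow$'', since $\lambda\in\Z\Delta$ the translation $t_\lambda$ lies in $W'_\aff$ and satisfies $t_\lambda(A)=A+\lambda=A'$. Pick any $\mu$ in the closure of $A$; as $A\le A'=t_\lambda(A)$, Lemma~\ref{lem:order as a vector in A} yields $\lambda=t_\lambda(\mu)-\mu\in\R_{\ge0}\Delta^+$. It then remains to observe that $\Z\Delta\cap\R_{\ge0}\Delta^+=\Z_{\ge0}\Delta^+$: expanding $\lambda$ in the basis of simple roots, which is a $\Z$-basis of $\Z\Delta$ and positively spans $\R_{\ge0}\Delta^+$, membership in $\Z\Delta$ forces the coefficients to be integers and membership in the cone forces them to be nonnegative, so $\lambda\in\Z_{\ge0}\Delta^+$.

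I expect the ``$\Leftarrow$'' direction to be the only part with genuine content: one has to produce the correct pair of affine reflections whose composite is the translation $t_\alpha$, and then check that both links of the resulting chain satisfy the defining inequality $\langle\,\cdot\,,\alpha^\vee\rangle<n$. The ``$\Rightarrow$'' direction and the final lattice-theoretic remark are routine once Lemma~\ref{lem:order as a vector in A} is available.
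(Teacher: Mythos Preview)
Your proof is correct and follows essentially the same approach as the paper's. Both directions match: for ``$\Leftarrow$'' the paper also reduces to a single positive root $\alpha$, locates the integer $n$ with $n-1<\langle\mu,\alpha^\vee\rangle<n$ (your $k+1$), and uses the chain $A\le s_{\alpha,n}(A)\le s_{\alpha,n+1}s_{\alpha,n}(A)=A+\alpha$; for ``$\Rightarrow$'' the paper likewise applies Lemma~\ref{lem:order as a vector in A} to the translation and then intersects with the root lattice.
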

\begin{proof}
We assume $\lambda\in\Z_{\ge 0}\Delta^+$ and prove that $A\le A'$.
We may assume $\lambda = \alpha\in\Delta^+$.
Take $n\in \Z$ such that $n - 1  < \langle \mu,\alpha^\vee\rangle < n$ for any $\mu\in A$.
For $\mu\in A$, we have $\langle s_{\alpha,n}(\mu),\alpha^\vee\rangle = \langle\mu - (\langle\mu,\alpha^\vee\rangle - n)\alpha,\alpha^\vee \rangle = 2n - \langle \mu,\alpha^\vee\rangle$.
Hence $n < \langle s_{\alpha,n}(\mu),\alpha^\vee\rangle < n + 1$.
Therefore $A \le s_{\alpha,n}(A)\le s_{\alpha,n + 1}s_{\alpha,n}(A) = A + \alpha$.

On the other hand, assume that $A\le A'$.
Take $\nu\in A$.
Then by Lemma~\ref{lem:order as a vector in A}, we have $(\nu + \lambda) - \nu \in\R_{\ge 0}\Delta^+$.
Hence $\lambda\in \R_{\ge 0}\Delta^+$.
Since $\lambda\in\Z\Delta$, we get $\lambda\in\Z_{\ge 0}\Delta^+$.
\end{proof}
A subset $I\subset \mathcal{A}$ is called open (resp.\ closed) if $A\in I$, $A'\le A$ (resp.\ $A'\ge A$) implies $A'\in I$.
This defines a topology on $\mathcal{A}$.
The following lemma is an immediate consequence of the previous lemma and it plays an important role throughout this paper.
\begin{lem}
For each $\Omega\in \mathcal{A}/\Lambda_\aff$ and $x\in W_{\aff}$, the map $x\colon \Omega\to \Omega x$ preserves the order.
\end{lem}

For $A,A'\in \mathcal{A}$, set $[A,A'] = \{A''\in \mathcal{A}\mid A\le A''\le A'\}$.
For $\alpha\in \Delta^+$ and $A\in \mathcal{A}$, take $n\in \Z$ such that $n - 1 < \langle \lambda,\alpha^\vee\rangle < n$ for all $\lambda\in A$ and define $\alpha\uparrow A = s_{\alpha,n}(A)$.
By the definition, $A\le \alpha\uparrow A$.
We define $\alpha\downarrow A$ as the unique element such that $\alpha\uparrow (\alpha\downarrow A) = A$.

Let $M = \bigoplus_i M^i$ be a graded module.
We define $M(k)$ by $M(k)^i = M^{i + k}$.
A graded $S$-module $M$ is called graded free if it is isomorphic to $\bigoplus_i S(n_i)$ where $n_1,\dots,n_r\in\Z$.
(In this paper, graded free means graded free of finite rank.)
We set $\grk(M) = \sum_{i}v^{n_i}\in \Z[v,v^{-1}]$ where $v$ is the indeterminate.

\subsection{The categories}
Fix a noetherian integral domain $\Coeff$.
We define $\Lambda^\vee$ using $X^\vee$ exactly in the same way as we defined $\Lambda$ using $X$.
We put $\Lambda^\vee_{\Coeff} = \Lambda^\vee\otimes_{\Z}\Coeff$, $X^\vee_{\Coeff} = X^\vee\otimes_{\Z}\Coeff$ and $R = S(\Lambda^\vee_{\Coeff})$.
The algebra $R$ is equipped with a grading such that $\deg(\Lambda^\vee_{\Coeff}) = 2$.
\begin{assump}
In the rest of this section, we assume the following.
\begin{enumerate}
\item  We have $2\in \Coeff^\times$ and any $\alpha^\vee\ne\beta^\vee\in(\Delta^\vee)^+$ are linearly independent in $X^\vee_{\Coeff/\mathfrak{m}}$ for any maximal ideal $\mathfrak{m}\subset \Coeff$. This condition is called the BKM condition.
\item The torsion primes of the root system $(X^\vee,\Delta^\vee,X,\Delta)$ are invertible in $\Coeff$.
\end{enumerate}
\end{assump}

\begin{lem}\label{lem:faithful on finite Weyl group}
The representation $X^\vee_{\Coeff}$ of $W_{\mathrm{f}}$ is faithful.
\end{lem}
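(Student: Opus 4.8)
The plan is to show that the representation $X_{\Coeff}$ of $W_{\mathrm{f}}$ has trivial kernel by exploiting the GKM condition, which guarantees that the roots are in sufficiently general position modulo $p$. First I would recall that $W_{\mathrm{f}}$ is generated by the simple reflections, and that for a simple root $\alpha$ the reflection $s_\alpha$ acts on $X_{\Coeff}$ by $v\mapsto v - \langle v,\alpha^\vee\rangle\,\alpha$. In particular the $(-1)$-eigenspace of $s_\alpha$ on $X_{\Coeff}$ is spanned by the image of $\alpha$, and since $2\in\Coeff^\times$ this image is nonzero, so each $s_\alpha$ acts nontrivially. The real work is to handle an arbitrary element $w$ of the kernel.

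The key step is the following. Suppose $w\in W_{\mathrm{f}}$ acts trivially on $X_{\Coeff}$. Then $w$ fixes the image of every root $\alpha\in\Delta^+$ in $X_{\Coeff}$, and in particular, for any reflection $s_\beta$, the element $ws_\beta w^{-1} = s_{w\beta}$ acts on $X_{\Coeff}$ exactly as $s_\beta$ does (since $w$ is trivial there). Now I would argue at the level of root lines: the $(-1)$-eigenline of $s_\beta$ on $X_{\Coeff}$ is $\Coeff\cdot\overline{\beta}$ where $\overline{\beta}$ denotes the image of $\beta$, and the GKM hypothesis says that distinct positive roots span distinct lines in $X_{\Coeff}$ (they are linearly independent, hence a fortiori not proportional). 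Therefore $s_{w\beta}$ and $s_\beta$ having the same $(-1)$-eigenline forces $\overline{w\beta}$ and $\overline{\beta}$ to be proportional in $X_{\Coeff}$; combined with $w\beta\in\Delta$ and the linear-independence clause, this yields $w\beta = \pm\beta$ as actual roots (the two roots $\beta$ and $-\beta$ being the only ones proportional to $\beta$). Hence $w$ permutes the set of root lines in $\Delta$ trivially, i.e.\ $w\beta\in\{\beta,-\beta\}$ for every $\beta\in\Delta$.

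To conclude I would rule out the remaining possibility that $w$ sends some roots to their negatives. If $w\beta = -\beta$ for some $\beta$, then on $X_{\Coeff}$ we would have $\overline{\beta} = w\overline{\beta} = \overline{-\beta} = -\overline{\beta}$, so $2\overline{\beta} = 0$, contradicting $2\in\Coeff^\times$ and $\overline{\beta}\neq 0$. Hence $w\beta = \beta$ for every $\beta\in\Delta$, so $w$ fixes the root system pointwise; since $W_{\mathrm{f}}$ acts faithfully on the root lattice $\Z\Delta\subset X$ (a standard fact: the only element of a finite Weyl group fixing all roots is the identity, as the reflections in the roots generate the group and an element commuting with all reflections must be central, and the centre acts by $\pm1$ on each irreducible factor, again excluded by fixing a root), we get $w = 1$. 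An alternative, cleaner endgame: $w$ fixes $\overline{\alpha}$ for all simple $\alpha$, and the simple roots' images span $X_{\Coeff}\otimes\Frac(\Coeff)$ over the fraction field after tensoring with $\mathbb{Q}$-coefficients upstairs — but to stay inside the given hypotheses I prefer the root-line argument above, which only uses the GKM condition and $2$ being invertible.

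The main obstacle I anticipate is the middle step: passing from ``$w$ acts trivially on $X_{\Coeff}$'' to ``$w$ fixes every root as an element of $\Delta$ (not merely its image)''. One must be careful that two different roots could a priori have the same or proportional image in $X_{\Coeff}$ — this is precisely what the GKM assumption forbids, so the proof hinges on invoking that hypothesis at exactly the right moment, namely to identify $(-1)$-eigenlines of reflections with individual root lines. Everything else — the eigenspace computation for a single reflection, and the final exclusion of the $\beta\mapsto-\beta$ case — is elementary and uses only $2\in\Coeff^\times$.
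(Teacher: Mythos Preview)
Your proof is correct and follows the same overall strategy as the paper: show that an element $w$ in the kernel fixes every root of $\Delta$, then conclude $w=1$. The paper's argument, however, is considerably shorter. It simply observes that if $w$ acts trivially on $X_{\Coeff}$ then $\overline{w\alpha} = w\cdot\overline{\alpha} = \overline{\alpha}$ for every $\alpha\in\Delta$; since the GKM assumption together with $2\in\Coeff^\times$ makes the map $\Delta\to X_{\Coeff}$ injective, this immediately forces $w\alpha = \alpha$.

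Your detour through conjugation and $(-1)$-eigenlines is unnecessary: you already use the identity $w\cdot\overline{\beta} = \overline{w\beta}$ in your final step (ruling out $w\beta = -\beta$), and applying that same identity at the outset gives $\overline{w\beta} = \overline{\beta}$ directly, bypassing the eigenline comparison. What you call ``the main obstacle'' is in fact no obstacle at all once you notice that injectivity of $\Delta\to X_{\Coeff}$ turns equality of images into equality of roots in one line.
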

\begin{proof}
If $w\in W_{\mathrm{f}}$ fixes any element in $X^\vee_{\Coeff}$, it fixes any image of $\alpha\in\Delta$.
By the assumption, $\Delta^\vee\to X^\vee_{\Coeff}$ is injective.
Therefore $w$ fixes any coroot.
Hence $w$ is identity.
\end{proof}

The image of $\alpha^\vee\in \Delta^\vee$ in $X^\vee_{\Coeff}$ is denoted by the same letter.
We also put $S = S(X^\vee_{\Coeff})$.
We give a grading to $S$ via $\deg(X^\vee_{\Coeff}) = 2$.
Let $S_0$ be a commutative flat graded $S$-algebra.
Set $S^\emptyset = S[(\alpha^\vee)^{-1}\mid \alpha\in\Delta]$.
For an $S$-module $M$, we denote $M^\emptyset = S^\emptyset\otimes M$.
Let $S_0$ be a graded $S$-algebra.
We consider the category $\widetilde{\mathcal{K}}'(S_0)$ consisting of $M = (M,\{M^{\emptyset}_{A}\}_{A\in \mathcal{A}})$ such that
\begin{itemize}
\item $M$ is a graded $(S_0,R)$-bimodule which is finitely generated torsion-free as a left $S_0$-module.
\item $M^{\emptyset}_{A}$ is an $(S_0^{\emptyset},R)$-bimodule such that $mf = f_Am$ for any $m\in M^{\emptyset}_{A}$ and $f\in R$.
\item $M^\emptyset = \bigoplus_{A\in \mathcal{A}}M^{\emptyset}_{A}$.
\end{itemize}
A morphism $\varphi\colon M\to N$ is an $(S_0,R)$-bimodule homomorphism of degree zero such that 
\[
\varphi(M_A^{\emptyset})\subset \bigoplus_{A'\ge A}N_{A'}^{\emptyset}
\]
for any $A\in \mathcal{A}$.
We put $\Hom^{\bullet}_{\widetilde{\mathcal{K}}'(S_0)}(M,N) = \bigoplus_i \Hom_{\widetilde{\mathcal{K}}'(S_0)}(M,N(i))$.
This is a graded $(S_{0},R)$-bimodule.
For $M\in \widetilde{\mathcal{K}}'(S_0)$, we put $\supp_{\mathcal{A}}(M) = \{A\in \mathcal{A}\mid M_A^{\emptyset}\ne 0\}$.

\begin{rem}\label{rem:decomposition is preserved automatically}
Let $\Omega\in \mathcal{A}/\Lambda_{\aff}$.
For any $m\in \bigoplus_{A\in\Omega}M_{A}^{\emptyset}$ and $f\in R$ we have $mf = f_{\Omega}m$.
The action of $W'_{\aff}$ on $\mathcal{A}/\Lambda_{\aff}$ factors through $W'_{\aff}\to W_{\mathrm{f}}$ and $W_{\mathrm{f}}$ acts on $\mathcal{A}/\Lambda_{\aff}$ simply transitively.
We have $M^{\emptyset} = \bigoplus_{w\in W_{\mathrm{f}}}(\bigoplus_{A\in w(\Omega)}M_{A}^{\emptyset})$ and for $m\in \bigoplus_{A\in w(\Omega)}M_{A}^{\emptyset}$, $mf = w(f_{\Omega})m$.
Therefore the decomposition of $M^{\emptyset}$ into $\bigoplus_{A\in w(\Omega)}M_{A}^{\emptyset}$ is determined by the $(S_0,R)$-bimodule structure.
Hence any $(S_0,R)$-bimodule homomorphism $M\to N$ sends $\bigoplus_{A\in\Omega}M_{A}^{\emptyset}$ to $\bigoplus_{A\in\Omega}N_{A}^{\emptyset}$.
We will often use this fact.
\end{rem}

\begin{rem}
Here we do not assume that a morphism $M\to N$ sends $M_A^{\emptyset}$ to $N^{\emptyset}_{A}$.
Therefore a submodule $M_A^{\emptyset}\subset M^{\emptyset}$ is not functorial
\end{rem}

For each closed subset $I\subset \mathcal{A}$, we define $M_{I} = M\cap \bigoplus_{A\in I}M_A^{\emptyset}$.
Set
\[
(M_{I})_{A}^{\emptyset}
=
\begin{cases}
M_{A}^{\emptyset} & (A\in I),\\
0 & (A\notin I).
\end{cases}
\]
By the following lemma, $M_{I}\in \widetilde{\mathcal{K}}'(S_0)$.
Hence $M\mapsto M_{I}$ is an endofunctor of $\widetilde{\mathcal{K}}'(S_0)$.

\begin{lem}\label{lem:generic decomposition of M_I}
The module $M_I$ is a submodule of $M$ and we have
\[
(M_{I})^\emptyset = \bigoplus_{A\in I}M_{A}^{\emptyset}.
\]
We also have $M_{I_1\cap I_2} = M_{I_1}\cap M_{I_2}$.
\end{lem}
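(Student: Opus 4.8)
The plan is to base everything on the observation that, by definition, $M^\emptyset=\Sigma^{-1}M$, where $\Sigma\subset S_0$ is the multiplicative set generated by the images of the roots, so that $\Sigma^{-1}(-)$ is an exact functor on $(S_0,R)$-bimodules. Since the roots are non-zero-divisors in $S_0$ (flatness of $S_0$ over $S$) and $M$ is torsion-free over $S_0$, the canonical map $M\to M^\emptyset$ is injective, and I freely identify $M$ with its image; the intersection $M_I=M\cap\bigoplus_{A\in I}M_A^\emptyset$ is then literally a submodule of $M$.

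First I would record the easy structural points. Each $M_A^\emptyset$ is an $(S_0^\emptyset,R)$-bimodule, so $\bigoplus_{A\in I}M_A^\emptyset$ is an $(S_0^\emptyset,R)$-graded sub-bimodule of $M^\emptyset$; hence $M_I$ is an $(S_0,R)$-graded sub-bimodule of $M$, torsion-free over $S_0$ as a submodule of $M$ and finitely generated over $S_0$. Together with the formula for $(M_I)^\emptyset$ proved below, this shows that $M_I$ with the prescribed pieces $(M_I)_A^\emptyset$ satisfies the three defining conditions, so $M_I\in\widetilde{\mathcal{K}}'(S_0)$. I would point out that closedness of $I$ is not used for any assertion of the lemma itself; it is needed only to see that a morphism $f\colon M\to N$ restricts to $M_I\to N_I$, since $f(M_A^\emptyset)\subseteq\bigoplus_{A'\ge A}N_{A'}^\emptyset$ is contained in $\bigoplus_{A'\in I}N_{A'}^\emptyset$ for all $A\in I$ precisely because $I$ is closed.

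Next, for $(M_I)^\emptyset=\bigoplus_{A\in I}M_A^\emptyset$: applying the exact functor $\Sigma^{-1}(-)$ to $M_I\hookrightarrow M$ realizes $(M_I)^\emptyset=\Sigma^{-1}M_I$ as the $S_0^\emptyset$-submodule of $M^\emptyset$ generated by $M_I$. Since $M_I\subseteq\bigoplus_{A\in I}M_A^\emptyset$ and the right-hand side is already an $S_0^\emptyset$-submodule, this gives the inclusion $\subseteq$. For $\supseteq$ it suffices, by additivity, to take $x\in M_A^\emptyset$ with $A\in I$ and write $x=\sigma^{-1}m$ with $\sigma\in\Sigma$ and $m\in M$; then the image of $m$ in $M^\emptyset$ equals $\sigma x\in M_A^\emptyset\subseteq\bigoplus_{A'\in I}M_{A'}^\emptyset$, so $m\in M_I$ and $x=\sigma^{-1}m\in(M_I)^\emptyset$.

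Finally, $M_{I_1\cap I_2}=M_{I_1}\cap M_{I_2}$ is formal: inside the genuine direct sum $M^\emptyset=\bigoplus_{A\in\mathcal{A}}M_A^\emptyset$ one has $\bigl(\bigoplus_{A\in I_1}M_A^\emptyset\bigr)\cap\bigl(\bigoplus_{A\in I_2}M_A^\emptyset\bigr)=\bigoplus_{A\in I_1\cap I_2}M_A^\emptyset$ by comparing coordinates, and intersecting both sides with $M$ yields the claim. The only step that is not pure bookkeeping is the clearing-of-denominators argument for the inclusion $\supseteq$ above, and even that is short once one works consistently with $M^\emptyset=\Sigma^{-1}M$; I do not anticipate any genuine obstacle.
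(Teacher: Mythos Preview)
Your proof is correct and follows essentially the same approach as the paper's. The paper's argument is terser but identical in substance: for the only nontrivial inclusion $\bigoplus_{A\in I}M_A^\emptyset\subseteq (M_I)^\emptyset$, it takes an element $m$ on the right, clears denominators by choosing $f\in S$ with $fm\in M$, observes $fm\in M_I$, and concludes $m\in (M_I)^\emptyset$; your version does the same clearing-of-denominators step, reduced by additivity to a single summand $M_A^\emptyset$. Your additional remarks on torsion-freeness, finite generation, and the role of closedness of $I$ for functoriality are correct and useful context, though not strictly required for the lemma as stated.
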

\begin{proof}
The first part is obvious and for the second part, the left hand side is contained in  the right hand side.
Take $m$ from the right hand side and let $f\in S$ such that $fm\in M$.
Then we have $fm\in M_{I}$ and $m$ is in the left hand side.
The last assertion is obvious.
\end{proof}

For each $\alpha\in\Delta$, set $W_{\alpha,\aff}' = \{1,s_{\alpha}\}\ltimes \Z\alpha\subset W_{\aff}'$.
We also put $S^{\alpha} = S[(\beta^\vee)^{-1}\mid \beta\in \Delta\setminus\{\pm \alpha\}]$ and $M^\alpha = S^{\alpha}\otimes_S M$ for any left $S$-module.
Note that, from our assumption, $\bigcap_{\alpha\in\Delta^+}S^{\alpha} = S$~\cite[9.1 Lemma]{MR1272539}.
We say $M\in \widetilde{\mathcal{K}}(S_0)$ if $M\in \widetilde{\mathcal{K}}'(S_0)$ and satisfies the following two conditions which are taken from \cite{arXiv:1504.01699}.
These are important properties in our arguments.
\begin{itemize}
\item[\property{S}] $M_{I_1\cup I_2} = M_{I_1} + M_{I_2}$ for any closed subsets $I_1,I_2$.
\item[\property{LE}] For any $\alpha\in\Delta$, there exist $M_{\Omega}$ for all $\Omega\in W'_{\alpha,\aff}\backslash \mathcal{A}$ with an injective morphism $M_{\Omega}\hookrightarrow M^{\alpha}$ in $\widetilde{\mathcal{K}}'(S_0^{\alpha})$ such that $\supp_{\mathcal{A}}M_{\Omega}\subset \Omega$ and the induced morphism $\bigoplus_{\Omega\in W'_{\alpha,\aff}\backslash \mathcal{A}}M_{\Omega}\to M^{\alpha}$ is an isomorphism in $\widetilde{\mathcal{K}}'(S_0^{\alpha})$.
\end{itemize}

Assume that $M^{\alpha}\to M^{\emptyset}$ is injective.
If $M^{\alpha} = \bigoplus_{\Omega\in W_{\alpha,\aff}'\backslash \mathcal{A}}(\bigoplus_{A\in \Omega}M^{\emptyset}_{A}\cap M^{\alpha})$ for any $\alpha\in\Delta$, $M$ satisfies \property{LE}.
The converse is not true.
The correct statement is that $M$ satisfies \property{LE} if and only if for any $\alpha\in\Delta$ there exists $N\in \widetilde{\mathcal{K}}'(S_0^{\alpha})$ which is isomorphic to $M^{\alpha}$ and satisfies $N = \bigoplus_{\Omega\in W_{\alpha,\aff}'\backslash \mathcal{A}}(\bigoplus_{A\in\Omega}N^{\emptyset}_{A}\cap N)$.

\begin{lem}\label{lem:S holds for rank 1}
Let $M\in \widetilde{\mathcal{K}}'(S_0)$, $\alpha\in\Delta$ and $A\in \mathcal{A}$.
Assume that $\supp_{\mathcal{A}}(M)\subset W'_{\alpha,\aff}A$.
Then $M$ satisfies \property{S}.
In particular, if $M$ satisfies \property{LE}, then $M^{\alpha}$ satisfies \property{S}.
\end{lem}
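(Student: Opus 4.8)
The plan is to reduce \property{S} to the fact that the generic Bruhat order restricts to a \emph{total} order on any single $W_{\alpha,\aff}'$-orbit in $\mathcal{A}$; once this is established, \property{S} is essentially formal. We may assume $\alpha\in\Delta^+$, since $W_{-\alpha,\aff}'=W_{\alpha,\aff}'$; put $\Omega=W_{\alpha,\aff}'A$, so that $\supp_{\mathcal{A}}(M)\subseteq\Omega$. To see that any $B,B'\in\Omega$ are $\le$-comparable, write $B'=wB$ with $w\in W_{\alpha,\aff}'=\{1,s_\alpha\}\ltimes\Z\alpha$ and distinguish two cases. If $w$ is a translation, then $B'=B+k\alpha$ for some $k\in\Z$, and since $\alpha\in\Delta^+$ one of $k\alpha$, $-k\alpha$ lies in $\Z_{\ge0}\Delta^+$, so Lemma~\ref{lem:homeo on each orbit} gives $B\le B'$ or $B'\le B$. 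Otherwise $w$ is a reflection $s_{\alpha,m}$; as $B$ is an alcove, $\langle\cdot,\alpha^\vee\rangle$ takes no integral value on $B$, hence is everywhere $<m$ or everywhere $>m$ on $B$, and the definition of $\le$ gives $B\le s_{\alpha,m}(B)=B'$ in the first case and $B'\le s_{\alpha,m}(B')=B$ in the second (in the second case $\langle\cdot,\alpha^\vee\rangle<m$ on $B'$ by a one-line computation). Since $\le$ is a partial order, it is total on $\Omega$.

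Now let $I_1,I_2\subseteq\mathcal{A}$ be closed. Each $I_j\cap\Omega$ is an up-set of the totally ordered set $(\Omega,\le)$, and up-sets of a totally ordered set are linearly ordered by inclusion, so we may assume $I_1\cap\Omega\subseteq I_2\cap\Omega$. Because $M_A^{\emptyset}=0$ for $A\notin\Omega$, for every closed $I$ we have $M_I=M\cap\bigoplus_{A\in I\cap\Omega}M_A^{\emptyset}$; hence $M_{I_1}\subseteq M_{I_2}$, and $M_{I_1\cup I_2}=M\cap\bigoplus_{A\in I_2\cap\Omega}M_A^{\emptyset}=M_{I_2}=M_{I_1}+M_{I_2}$, which is \property{S}.

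For the last assertion, \property{LE} gives $M^{\alpha}=\bigoplus_{\Omega\in W_{\alpha,\aff}'\backslash\mathcal{A}}M^{\alpha}_{\Omega}$ with $M^{\alpha}_{\Omega}=M^{\alpha}\cap\bigoplus_{A\in\Omega}M_A^{\emptyset}$; localizing shows $(M^{\alpha}_{\Omega})^{\emptyset}=\bigoplus_{A\in\Omega}M_A^{\emptyset}$, so $M^{\alpha}_{\Omega}$ is again an object of the relevant category $\widetilde{\mathcal{K}}'$, supported in the single $W_{\alpha,\aff}'$-orbit $\Omega$, and hence satisfies \property{S} by the first part. Since the orbit decomposition is compatible with the operators $M\mapsto M_I$, i.e.\ $(M^{\alpha})_I=\bigoplus_{\Omega}(M^{\alpha}_{\Omega})_I$ for closed $I$, \property{S} for the summands propagates to \property{S} for $M^{\alpha}$. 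The combinatorial heart of the argument is the comparability claim, which is immediate from the definition of $\le$ and Lemma~\ref{lem:homeo on each orbit}; I expect the only place needing a little care to be the bookkeeping that $M_I$ depends on $I$ only through $I\cap\supp_{\mathcal{A}}(M)$ and that passing to $M^{\alpha}$ and decomposing into $W_{\alpha,\aff}'$-orbits commutes with the operators $M\mapsto M_I$.
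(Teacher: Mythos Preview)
Your proof is correct and follows essentially the same approach as the paper: both arguments hinge on the observation that a single $W'_{\alpha,\aff}$-orbit $\Omega$ is totally ordered, so that the closed sets $I_1\cap\Omega$ and $I_2\cap\Omega$ are comparable and \property{S} reduces to the trivial case of nested subsets. Your treatment is slightly more explicit in two places---you verify the total order by splitting into translations (via Lemma~\ref{lem:homeo on each orbit}) and reflections, and you spell out the ``in particular'' clause by decomposing $M^\alpha$ via \property{LE}---whereas the paper simply asserts the chain description $\Omega=\{\dots,\alpha\downarrow A,A,\alpha\uparrow A,\dots\}$ and leaves the last sentence to the reader.
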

\begin{proof}
Set $\Omega = W'_{\alpha,\aff}A$ and let $I_1,I_2\subset \mathcal{A}$ be closed subsets.
We have $\Omega = \{A,\alpha\uparrow A,\alpha\uparrow(\alpha\uparrow A),\dots\}\cup \{\alpha\downarrow A,\alpha\downarrow(\alpha\downarrow A),\dots\}$ and $\Omega$ is a totally ordered subset of $\mathcal{A}$.
Since $\Omega$ is totally ordered, $I_1\cap \Omega\subset I_2\cap \Omega$ or $I_2\cap \Omega\subset I_1\cap\Omega$.
We may assume $I_1\cap \Omega\subset I_2\cap \Omega$.
We can take closed subsets $I'_1$ and $I'_2$ such that $I'_1\subset I'_2$, $I'_1\cap \Omega = I_1\cap \Omega$ and $I'_2\cap \Omega = I_2\cap \Omega$.
Then we have $M_{I'_1} = M_{I_1}$, $M_{I'_2} = M_{I_2}$ and $M_{I'_1\cup I'_2} = M_{I_1\cup I_2}$.
Hence we may assume $I_1 = I'_1$ and $I_2 = I'_2$.
In this case \property{S} obviously holds.
\end{proof}

Let $K\subset \mathcal{A}$ be a locally closed subset, namely $K$ is the intersection of a closed subset $I$ with an open subset $J$.
It is easy to see that $M_{I}/M_{I\setminus J}\simeq M_{I'}/M_{I'\setminus J'}$ naturally for a closed subset $I$ and an open subset $J$ such that $K = I\cap J$.
We define $M_{K} = M_{I}/M_{I\setminus J}$ for $M\in \widetilde{\mathcal{K}}(S_{0})$.
By Lemma~\ref{lem:generic decomposition of M_I}, we have
\[
\bigoplus_{A\in K}M_A^{\emptyset}\xrightarrow{\sim}M^{\emptyset}_K.
\]
By putting $(M_K)^{\emptyset}_A$ as the image of $M_A^{\emptyset}$ by this isomorphism, we have an object $M_K$ of $\widetilde{\mathcal{K}}'(S_0)$.
The following lemma is obvious.
\begin{lem}
We have $\supp_{\mathcal{A}}(M_K) = \supp_{\mathcal{A}}(M)\cap K$ for any locally closed subset $K\subset \mathcal{A}$.
\end{lem}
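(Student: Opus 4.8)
The plan is to read the claim off directly from the description of $M_K^\emptyset$ established immediately above the statement. Recall that for a locally closed $K = I\cap J$ with $I$ closed and $J$ open, the object $M_K = M_I/M_{I\setminus J}$ was shown (independence of the presentation via \property{S}, together with Lemma~\ref{lem:generic decomposition of M_I}) to satisfy
\[
M_K^{\emptyset} = \bigoplus_{A\in K} M_A^{\emptyset},
\]
and this is precisely the decomposition making $M_K$ an object of $\widetilde{\mathcal{K}}'(S_0)$; that is, its alcove-indexed components are $(M_K)_A^{\emptyset} = M_A^{\emptyset}$ for $A\in K$ and $(M_K)_A^{\emptyset} = 0$ for $A\notin K$. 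So the first (and essentially only) step is to invoke this displayed identity.

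Given this, I would finish in one line: by definition $\supp_{\mathcal{A}}(M_K) = \{A\in\mathcal{A}\mid (M_K)_A^{\emptyset}\ne 0\}$, and by the previous paragraph this equals $\{A\in K\mid M_A^{\emptyset}\ne 0\} = \{A\in\mathcal{A}\mid M_A^{\emptyset}\ne 0\}\cap K = \supp_{\mathcal{A}}(M)\cap K$. The only substantive input — the only thing one could reasonably call an obstacle — is the identification of the $A$-th generic component of $M_K$ with the \emph{full} $M_A^{\emptyset}$ rather than merely a submodule of it; but that is exactly the content of the formula $M_K^{\emptyset}=\bigoplus_{A\in K}M_A^{\emptyset}$, which itself rests on \property{S} (to know $M_K$ is well-defined) and on Lemma~\ref{lem:generic decomposition of M_I}. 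Once those are in hand there is no remaining calculation, which is why the statement is flagged as obvious.
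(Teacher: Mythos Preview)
Your proposal is correct and matches the paper's approach: the paper simply declares the lemma obvious immediately after displaying $M_K^{\emptyset} = \bigoplus_{A\in K} M_A^{\emptyset}$, and your argument spells out precisely why that formula makes the claim immediate.
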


\begin{lem}\label{lem:composition of M_K}
Let $K_1,K_2\subset \mathcal{A}$ be locally closed subsets.
If $M\in \widetilde{\mathcal{K}}(S_{0})$, then $(M_{K_1})_{K_2} \simeq M_{K_1\cap K_2}$
\end{lem}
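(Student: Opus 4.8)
The plan is to decompose the operation $M\mapsto M_K$ into its two building blocks — restriction to a closed subset and restriction to an open subset — and to keep careful track of which subquotient of $M$ each term represents, using property \property{S} and Lemma~\ref{lem:generic decomposition of M_I} as the main tools. Fix presentations $K_i = I_i\cap J_i$ with $I_i$ closed and $J_i$ open. First I would record the elementary identity $N_K = (N_I)_J$ for $N\in\widetilde{\mathcal{K}}(S_0)$ and $K=I\cap J$: indeed $(N_I)_J = N_I/(N_I\cap\bigoplus_{A\in I\setminus J}N_A^\emptyset)$ and by Lemma~\ref{lem:generic decomposition of M_I} the denominator is $N_{I\setminus J}$, so $(N_I)_J = N_{I\cap J}$. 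Alongside this I would check that $M_K$ satisfies \property{S} and \property{LE}, hence lies in $\widetilde{\mathcal{K}}(S_0)$, so that the iterated object $(M_{K_1})_{K_2}$ is legitimately defined; both verifications reduce, via Lemma~\ref{lem:generic decomposition of M_I} and flatness of the localizations $S^\emptyset$ and $S^\alpha$ over $S$, to the corresponding properties of $M$.

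The heart of the proof is the two compatibilities, for $M\in\widetilde{\mathcal{K}}(S_0)$, a locally closed $K=I\cap J$, a closed $I'$ and an open $J'$:
\[
(M_K)_{I'}\simeq M_{K\cap I'},\qquad (M_K)_{J'}\simeq M_{K\cap J'}.
\]
For the closed case, a direct computation with the $\emptyset$-decompositions shows that for $m\in M_I$ the class $\overline{m}\in M_K = M_I/M_{I\setminus J}$ lies in $(M_K)_{I'}$ if and only if $m\in M_{(I\cap I')\cup(I\setminus J)}$; since $(I\cap I')\cup(I\setminus J)$ is a closed subset of $I$, property \property{S} rewrites $M_{(I\cap I')\cup(I\setminus J)}$ as $M_{I\cap I'}+M_{I\setminus J}$, so $(M_K)_{I'} = (M_{I\cap I'}+M_{I\setminus J})/M_{I\setminus J}$. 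The second isomorphism theorem together with $M_{I\cap I'}\cap M_{I\setminus J} = M_{(I\cap I')\setminus J}$ (Lemma~\ref{lem:generic decomposition of M_I}) then identifies this with $M_{I\cap I'}/M_{(I\cap I')\setminus J} = M_{(I\cap I')\cap J} = M_{K\cap I'}$. The open case follows by passing to quotients: $(M_K)_{J'} = M_K/(M_K)_{\mathcal{A}\setminus J'}$, and the computation just made (with $I'=\mathcal{A}\setminus J'$) gives $(M_K)_{\mathcal{A}\setminus J'} = M_{I\setminus(J\cap J')}/M_{I\setminus J}$, whence $(M_K)_{J'} = M_I/M_{I\setminus(J\cap J')} = M_{I\cap(J\cap J')} = M_{K\cap J'}$.

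It remains to assemble: applying $N_{K_2} = (N_{I_2})_{J_2}$ with $N = M_{K_1}$ and then the two compatibilities in turn,
\[
(M_{K_1})_{K_2} = \bigl((M_{K_1})_{I_2}\bigr)_{J_2}\simeq (M_{K_1\cap I_2})_{J_2}\simeq M_{(K_1\cap I_2)\cap J_2} = M_{K_1\cap K_2},
\]
where the isomorphisms are the canonical identifications of subquotients of $M$, in particular morphisms in $\widetilde{\mathcal{K}}(S_0)$, so they survive the further restriction by $J_2$. I expect the main difficulty to be organizational rather than conceptual: the Boolean identities among the closed and open pieces must be handled with care, and one has to invoke \property{S} at exactly the places where a union of closed subsets is to be replaced by a sum of submodules — the argument genuinely breaks without it.
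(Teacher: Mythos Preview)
Your proof is correct and follows essentially the same route as the paper: both arguments reduce to repeated applications of property \property{S}, Lemma~\ref{lem:generic decomposition of M_I}, and the second isomorphism theorem. The only difference is organizational --- the paper proceeds in four incremental steps (both closed; $K_1$ open and $K_2$ closed; $K_2$ closed; general) while you package the content into two clean compatibilities $(M_K)_{I'}\simeq M_{K\cap I'}$ and $(M_K)_{J'}\simeq M_{K\cap J'}$ before assembling; your closed-case lemma is exactly the paper's step~(3), and your open-case plus assembly recovers step~(4). One minor remark: the check that $M_K\in\widetilde{\mathcal{K}}(S_0)$ is not actually needed for your argument (you always fall back to the original $M$ satisfying \property{S}), and the paper postpones it to the following lemma.
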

\begin{proof}
The proof is divided into several steps.

\noindent (1)
Assume that both $K_1,K_2$ are closed.
Then the lemma follows from the definitions.

\noindent (2)
Assume that $K_1$ is open and $K_2$ is closed.
Set $I_1 = \mathcal{A}\setminus K_1$.
Then we have
\[
(M_{K_1})_{K_2} = M/M_{I_1}\cap \bigoplus_{A\in K_2}(M/M_{I_1})_{A}^{\emptyset}.
\]
Note that $M_{K_2}/(M_{K_2}\cap M_{I_1}) = M_{K_2}/M_{K_2\cap I_1} = M_{K_1\cap K_2}$.
There is a canonical embedding from $M_{K_2}/(M_{K_2}\cap M_{I_1})$ to $(M_{K_1})_{K_2}$.
Let $m\in M$ such that $m + M_{I_1}\in \bigoplus_{A\in K_2}(M/M_{I_1})_{A}^{\emptyset}$.
Then $M_A^{\emptyset}$-component $m_{A}$ of $m$ is $0$ for $A\notin I_1\cup K_2$.
Hence $m\in M_{I_1\cup K_2} = M_{I_1} + M_{K_2}$.
Therefore the canonical embedding is surjective.
We get the lemma.

\noindent (3)
Assume that $K_2$ is closed.
Take a closed subset $I_1$ and an open subset $J_1$ such that $K_1 = I_1\cap J_1$.
Then by (2), $(M_{J_1})_{I_1} \simeq M_{K_1}$.
Hence $(M_{K_1})_{K_2} \simeq ((M_{J_1})_{I_1})_{K_2} = (M_{J_1})_{I_1\cap K_2}$ by (1).
This is isomorphic to $M_{J_1\cap I_1\cap K_2} = M_{K_1\cap K_2}$ by (2).

\noindent (4)
Now we prove the lemma in general.
Let $I_i$ be a closed subset and $J_i$ be an open subset such that $K_i = I_i \cap J_i$ and put $J_i^c = \mathcal{A}\setminus J_i$ for $i = 1,2$.
Then 
\[
(M_{K_1})_{K_2} = (M_{K_1})_{I_2}/(M_{K_1})_{I_2\cap J_2^c} \simeq M_{K_1\cap I_2}/M_{K_1\cap I_2\cap J_2^c}
\]
by (3).
We have $M_{K_1\cap I_2} = M_{I_1\cap I_2}/M_{I_1\cap I_2\cap J_1^c}$ and $M_{K_1\cap I_2\cap J_2^c} = M_{I_1\cap I_2\cap J_2^c}/M_{I_1\cap I_2\cap J_2^c\cap J_1^c}$.
Hence
\[
(M_{K_1})_{K_2} \simeq M_{I_1\cap I_2}/(M_{I_1\cap I_2\cap J_1^c} + M_{I_1\cap I_2\cap J_2^c}).
\]
Since $M_{I_1\cap I_2\cap J_1^c} + M_{I_1\cap I_2\cap J_2^c} = M_{(I_1\cap I_2\cap J_1^c)\cup (I_2\cap I_2\cap J_2^c)} = M_{(I_1\cap I_2)\setminus (J_1\cap J_2)}$, we get the lemma.
\end{proof}
\begin{lem}
If $M\in\widetilde{\mathcal{K}}(S_0)$, then $M_K\in\widetilde{\mathcal{K}}(S_0)$.
\end{lem}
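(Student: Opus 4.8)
The paragraph just before the lemma already shows that $M_K$ is an object of $\widetilde{\mathcal{K}}'(S_0)$, so the plan is only to verify the two extra axioms \property{S} and \property{LE}. I would fix once and for all a closed set $I$ and an open set $J$ with $K = I\cap J$, so that $I\setminus J = I\cap(\mathcal{A}\setminus J)$ is closed and $M_K = M_I/M_{I\setminus J}$; write $\pi\colon M_I\to M_K$ for the projection. Everything rests on the elementary remark that for each closed $I'\subset\mathcal{A}$ the set $C(I'):=(I\cap I')\cup(I\setminus J)=I\cap(I'\cup(\mathcal{A}\setminus J))$ is again closed and contains $I\setminus J$, so that $M_{C(I')}$ is a submodule of $M_I$ containing $M_{I\setminus J}$.

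For \property{S} the first step is to identify, for closed $I'$, the submodule $(M_K)_{I'}\subseteq M_K$ as the image $\pi(M_{C(I')})$. This is a short check on generic supports: since $(M_K)_A^\emptyset=M_A^\emptyset$ for $A\in K$ and $0$ otherwise, an element $m\in M_I$ already lies in $M_{C(I')}$ precisely when $m_A=0$ for all $A\in K\setminus I'=I\setminus C(I')$, which says exactly that $\pi(m)\in(M_K)_{I'}$. Granting this, \property{S} for $M_K$ drops out: for closed $I_1,I_2$ apply \property{S} for $M$ to the closed sets $C(I_1),C(I_2)$, use $C(I_1)\cup C(I_2)=C(I_1\cup I_2)$, and apply $\pi$ to the resulting identity of submodules of $M_I$.

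For \property{LE} I would localize at a root $\alpha\in\Delta$. Since $S\to S^\alpha$ is a localization, $(-)^\alpha=S^\alpha\otimes_S(-)$ is exact, and it commutes with stalks over closed sets: applying it to $M/M_{I'}\hookrightarrow\bigoplus_{A\notin I'}M_A^\emptyset$ and using that each $M_A^\emptyset$ is already an $S^\emptyset$-module shows $(M_{I'})^\alpha=M^\alpha\cap\bigoplus_{A\in I'}M_A^\emptyset$ for closed $I'$. Writing $M_\Omega^\alpha=M^\alpha\cap\bigoplus_{A\in\Omega}M_A^\emptyset$ for a $W_{\alpha,\aff}'$-orbit $\Omega$, \property{LE} for $M$ reads $M^\alpha=\bigoplus_\Omega M_\Omega^\alpha$; because the $\bigoplus_{A\in\Omega}M_A^\emptyset$ occupy disjoint coordinates of $M^\emptyset$, intersecting with $\bigoplus_{A\in I'}M_A^\emptyset$ distributes over this decomposition, so $(M_{I'})^\alpha=\bigoplus_\Omega(M_\Omega^\alpha\cap\bigoplus_{A\in I'\cap\Omega}M_A^\emptyset)$. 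Feeding the exact sequence $0\to M_{I\setminus J}\to M_I\to M_K\to0$ through $(-)^\alpha$ gives $M_K^\alpha=(M_I)^\alpha/(M_{I\setminus J})^\alpha$; substituting $I'=I$ and $I'=I\setminus J$ above and taking the quotient orbitwise presents $M_K^\alpha$ as $\bigoplus_\Omega Q_\Omega$, where each $Q_\Omega$ maps into $\bigoplus_{A\in K\cap\Omega}M_A^\emptyset=\bigoplus_{A\in\Omega}(M_K)_A^\emptyset$. Since these sub-sums of $M_K^\emptyset$ are again disjoint for distinct $\Omega$, one reads off $M_K^\alpha\cap\bigoplus_{A\in\Omega}(M_K)_A^\emptyset=Q_\Omega$, which is \property{LE} for $M_K$.

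I do not anticipate a real obstacle; the difficulty is purely organizational. The two places to be careful are (a) that $C(I')$ is closed and that it realizes $(M_K)_{I'}$ as $\pi(M_{C(I')})$, and (b) that forming a locally closed stalk commutes with localization at $\alpha$ and respects the $W_{\alpha,\aff}'$-orbit decomposition — which in turn uses exactness of $(-)^\alpha$, the fact that each $M_A^\emptyset$ is already an $S^\alpha$-module, and the disjointness of the orbit pieces inside $M^\emptyset$. A more modular alternative, parallel to the proof of Lemma~\ref{lem:composition of M_K}, would be to use $M_K\simeq(M_J)_I$ and check separately that $M\mapsto M_I$ (for $I$ closed) and $M\mapsto M/M_{\mathcal{A}\setminus J}$ (for $J$ open) each preserve \property{S} and \property{LE}; these are the same computations specialized to the two cases.
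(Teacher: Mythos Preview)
Your proof is correct and follows essentially the same route as the paper's. For \property{S}, the paper invokes Lemma~\ref{lem:composition of M_K} to identify $(M_K)_{I_i}$ with $M_{K\cap I_i}$ and then uses \property{S} for $M$ applied to the closed sets $I\cap I_1,\ I\cap I_2$; your argument with $C(I')=(I\cap I')\cup(I\setminus J)$ amounts to the same computation done by hand, and your identity $(M_K)_{I'}=\pi(M_{C(I')})$ is precisely what that lemma gives once unwound. For \property{LE}, the paper proves it first for $M_I$ by decomposing elements along orbits and then concludes in one line that the property passes to the quotient $M_K$; your version makes that last step explicit by tracking the orbitwise pieces through the exact sequence, which is a welcome expansion but not a different argument.
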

\begin{proof}
Take a closed subset $I$ and an open subset $J$ such that $K = I \cap J$.

We prove $M_{K}$ satisfies \property{S}.
Let $I_1,I_2$ be closed subsets.
Since $(M_{K})_{I_i} = M_{K\cap I_i}$ is a quotient of $M_{I\cap I_i}$, it is sufficient to prove that $M_{I\cap I_1}\oplus M_{I\cap I_2}\to (M_{K})_{I_1\cup I_2}$ is surjective.
The module $(M_{K})_{I_1\cup I_2} = M_{K\cap (I_1\cup I_2)}$ is a quotient of $M_{I\cap (I_1\cup I_2)}$ and since $M_{I\cap (I_1\cup I_2)} = M_{I\cap I_1} + M_{I\cap I_2}$, the map is surjective.

We prove $M_{K}$ satisfies \property{LE}.
We may assume $M = \bigoplus_{\Omega\in W'_{\alpha,\aff}\backslash \mathcal{A}}(\bigoplus_{A\in\Omega}M_{A}^{\emptyset}\cap M^{\alpha})$.
Let $m \in M_{I}^{\alpha}$.
Then for each $\Omega\in W'_{\alpha,\aff}\backslash \mathcal{A}$, we have $m_{\Omega}\in M^{\alpha}\cap \bigoplus_{A\in \Omega}M_{A}^{\emptyset}$ such that $m = \sum m_{\Omega}$.
Then for each $A\in \mathcal{A}$, we have $m_{A} = (m_{\Omega})_{A}$ where $\Omega$ is the unique $W'_{\alpha,\aff}$-orbit containing $A$.
Therefore, since $m\in M_{I}^{\alpha}$, we have $m_{\Omega}\in M_{I}^{\alpha}$.
Hence $m_{\Omega}\in M^{\alpha}_I\cap \bigoplus_{A\in \Omega}(M_I)_{A}^{\emptyset}$.
Namely $M_{I}$ satisfies \property{LE}.
Since $M_{K}$ is a quotient of $M_{I}$, it also satisfies \property{LE}.
\end{proof}

\subsection{Standard filtration}
Note that $\{A\} = \{A'\in \mathcal{A}\mid A'\ge A\}\cap \{A'\in \mathcal{A}\mid A'\le A\}$ is locally closed.
We say that an object $M$ of $\widetilde{\mathcal{K}}(S_0)$ admits a standard filtration if $M_{\{A\}}$ is a graded free $S_0$-module for any $A\in \mathcal{A}$.
Let $\widetilde{\mathcal{K}}_\Delta(S_0)$ be a full subcategory of $\widetilde{\mathcal{K}}(S_0)$ consisting of an object $M$ which admits a standard filtration and $\supp_{\mathcal{A}}(M)$ is finite.
By Lemma~\ref{lem:composition of M_K}, if $M\in \widetilde{\mathcal{K}}_{\Delta}(S_0)$ then $M_{K}\in \widetilde{\mathcal{K}}_{\Delta}(S_0)$ for any locally closed subset $K\subset \mathcal{A}$.

\begin{lem}\label{lem:filtration for FD module}
Let $M_1,\dots,M_l\in \widetilde{\mathcal{K}}(S_0)$ and assume that $\supp_{\mathcal{A}}(M_1),\dots,\supp_{\mathcal{A}}(M_l)$ are all finite.
Let $I\subset \mathcal{A}$ be a closed subset and $A\in I$ such that $I\setminus\{A\}$ is closed.
Then there exist closed subsets $I_0\subset I_1\subset \dotsm \subset I_r$ and $k \in \{1,\dots,r\}$ such that $\#(I_j\setminus I_{j - 1}) = 1$ for any $j = 1,\dots,r$, $I_k\cap (\bigcup_i\supp_{\mathcal{A}}(M_i)) = I\cap (\bigcup_i\supp_{\mathcal{A}}(M_i))$, $I_{k - 1} = I_k\setminus\{A\}$, $(M_i)_{I_0} = 0$ and $(M_i)_{I_r} = M$ for any $i =1,\dots,l$.
In particular, we have $(M_i)_I\simeq (M_i)_{I_k}$ for all $i = 1,\dots,l$.
\end{lem}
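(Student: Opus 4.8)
The plan is to strip away the representation theory and reduce to a purely combinatorial statement about closed subsets, then prove that by the standard ``peel off maximal elements'' argument. Set $F = \bigcup_{i = 1}^{l}\supp_{\mathcal{A}}(M_i)$ and $G = F\cup\{A\}$; both are finite. By Lemma~\ref{lem:generic decomposition of M_I}, for any closed subset $J\subseteq\mathcal{A}$ we have $(M_i)_{J}^{\emptyset} = \bigoplus_{A'\in J\cap\supp_{\mathcal{A}}(M_i)}(M_i)_{A'}^{\emptyset}$, so $(M_i)_J = M_i\cap(M_i)_J^{\emptyset}$ depends only on $J\cap F$; in particular $(M_i)_J = 0$ when $J\cap F = \emptyset$, $(M_i)_J = M_i$ when $F\subseteq J$, and $(M_i)_J = (M_i)_{J'}$ whenever $J\cap F = J'\cap F$. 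Hence it suffices to produce closed subsets $I_0\subseteq I_1\subseteq\dotsm\subseteq I_r$ with $\#(I_j\setminus I_{j - 1}) = 1$ and an index $k$ such that $I_0\cap F = \emptyset$, $F\subseteq I_r$, $I_k\cap F = I\cap F$, and $I_{k - 1} = I_k\setminus\{A\}$; the ``in particular'' statement $(M_i)_I\simeq(M_i)_{I_k}$ is then the last of the displayed properties.

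First I would produce four closed subsets $I_0\subseteq I_{k - 1}\subseteq I_k\subseteq I_r$ with $I_r\setminus I_0$ finite and the correct intersections with $F$. Let $\overline{G} = \{z\in\mathcal{A}\mid z\ge g\text{ for some }g\in G\}$ (a closed subset) and $D = \{z\in\mathcal{A}\mid z\le g\text{ for some }g\in G\}$ (an open subset), and set
\[
I_0 = \mathcal{A}\setminus D,\qquad I_k = I_0\cup(I\cap\overline{G}),\qquad I_r = I_0\cup\overline{G}.
\]
These are closed and $I_0\subseteq I_k\subseteq I_r$. From $G\subseteq D$ one gets $I_0\cap F = \emptyset$; from $F\subseteq G\subseteq\overline{G}$ one gets $I_k\cap F = I\cap F$ and $F\subseteq I_r$. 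Since $A\in I$ and $A\in G\subseteq\overline{G}$ we have $A\in I_k$, while $A\in D$ gives $A\notin I_0$; and if $B\in I_k$ with $B\le A$, then $B\le A\in G$ forces $B\in D$, so $B\notin I_0$, so $B\in I\cap\overline{G}\subseteq I$, whence $B = A$ because $A$ is a minimal element of $I$ (this is exactly the hypothesis that $I\setminus\{A\}$ is closed). Thus $A$ is minimal in $I_k$, so $I_{k - 1} := I_k\setminus\{A\}$ is closed and $I_0\subseteq I_{k - 1}\subseteq I_k$. Finally $I_r\setminus I_0 = \overline{G}\cap D = \bigcup_{g, g'\in G}\{z\in\mathcal{A}\mid g\le z\le g'\}$ is finite because intervals of the generic Bruhat order on $\mathcal{A}$ are finite; this can be extracted from Lemma~\ref{lem:order as a vector in A} (if $g\le z\le g'$, then a point of $z$ lies in a fixed bounded region, since $\R_{\ge0}\Delta^+$ is a pointed cone, and only finitely many alcoves meet a bounded set) or cited from \cite{MR591724}.

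To finish I would refine $I_0\subseteq I_{k - 1}$ and $I_k\subseteq I_r$ into saturated chains using the elementary fact that if $J\subseteq J'$ are closed subsets with $J'\setminus J$ finite, then there is a chain $J = J_0\subseteq J_1\subseteq\dotsm\subseteq J_m = J'$ of closed subsets with $\#(J_i\setminus J_{i - 1}) = 1$: induct on $\#(J'\setminus J)$, taking at each stage a maximal element $B$ of the finite nonempty set $J'\setminus J$ and noting $J\cup\{B\}$ is again closed. Applying this to $I_0\subseteq I_{k - 1}$ (whose difference lies in the finite set $I_r\setminus I_0$), then inserting the one-step extension $I_{k - 1}\subseteq I_{k - 1}\cup\{A\} = I_k$, then applying it again to $I_k\subseteq I_r$, and concatenating gives the required chain, with $k$ the step at which $A$ is adjoined. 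I expect the one real subtlety to be the choice in the middle paragraph: one must not take ``the open subset generated by $F$'', which is infinite, but rather the complement of the down-set $D$, so that $I_r\setminus I_0$ is finite by interval-finiteness of the alcove order; granting that, everything else is routine.
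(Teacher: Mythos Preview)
Your proof is correct and follows essentially the same strategy as the paper: pick a closed $I_0$ disjoint from the supports, a closed $I_r\supseteq F$ with $I_r\setminus I_0$ finite, and then fill in elements one at a time so that those in $I\setminus\{A\}$ come first, then $A$, then the rest. The only cosmetic difference is that the paper invokes \cite[Proposition~3.7]{MR591724} to trap all the supports in a single interval $[A_0^-,A_0^+]$ and takes $I_0$ to be (essentially) the complement of $\{A'\le A_0^+\}$, whereas you work directly with the up- and down-closures of $G=F\cup\{A\}$ and appeal to interval-finiteness of the alcove order; both routes yield the required finite ``window'' in which to enumerate.
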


\begin{proof}
There exist $A_0^-,A_0^+$ such that $\supp_{\mathcal{A}}(M_i)\subset [A_0^-,A_0^+]$ for any $i = 1,\dots,l$ by \cite[Proposition~3.7]{MR591724}.
Put $I_0 = \{A'\in \mathcal{A}\mid A'\nless A_0^+\}\cap I$.
We enumerate the elements in $(I\setminus \{A\})\cap [A_0^-,A_0^+]$ (resp.\ $[A_0^-,A_0^+]\setminus I$) as $\{A_1,\dots,A_{k - 1}\}$ (resp.\ $\{A_{k + 1},\dots,A_r\}$) such that $A_i\ge A_j$ implies $i\le j$.
Put $A_k = A$.
Then it is easy to see that $I_i = I_0\cup\{A_1,\dots,A_i\}$ is closed and satisfies the conditions of the lemma.
\end{proof}

\begin{lem}
Let $M\in \widetilde{\mathcal{K}}_\Delta(S_0)$ and $K$ a locally closed subset.
Then $M_K$ is graded free as a left $S_0$-module.
\end{lem}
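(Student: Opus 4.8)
The plan is to peel alcoves off $K$ one at a time, reducing to the graded free modules $M_{\{A\}}$ ($A\in\mathcal{A}$) supplied by the hypothesis $M\in\widetilde{\mathcal{K}}_\Delta(S_0)$.

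I would begin with two preliminary reductions. First, if $\supp_{\mathcal{A}}(M)\cap K=\emptyset$ then $M_K^\emptyset=\bigoplus_{A\in K}M_A^\emptyset=0$; since $M_K\in\widetilde{\mathcal{K}}'(S_0)$ is finitely generated torsion-free over $S_0$ and every $\alpha\in\Delta$ acts injectively on it (being a non-zero-divisor of $S$, and $S_0$ being $S$-flat), the localization map $M_K\hookrightarrow M_K^\emptyset$ is injective, so $M_K=0$ is graded free. Second, for $K=I\cap J$ with $I$ closed and $J$ open, the submodules $M_I=M\cap\bigoplus_{A\in I}M_A^\emptyset$ and $M_{I\setminus J}$ depend only on $I\cap\supp_{\mathcal{A}}(M)$ and $(I\setminus J)\cap\supp_{\mathcal{A}}(M)$, because $M_A^\emptyset=0$ off the support; hence $M_K$ is unchanged if we replace $I$ by $I\cap\{A'\mid A'\ge A''\text{ for some }A''\in\supp_{\mathcal{A}}(M)\}$ and $J$ by $J\cap\{A'\mid A'\le A''\text{ for some }A''\in\supp_{\mathcal{A}}(M)\}$. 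After this replacement $K$ lies in a finite union of Bruhat intervals, so we may assume $K$ is finite.

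Then I would induct on $\#K$. If $K\neq\emptyset$, pick $A$ minimal in $K$. The open set $\{A'\mid A'\le A\}$ meets $K$ only in $A$, so $\{A\}$ is open in $K$ and $K\setminus\{A\}$ is again locally closed with $\#(K\setminus\{A\})<\#K$. Moreover $A$ is minimal even in $I$: any $B\in I$ with $B<A$ would lie in the downward closed set $J$ (as $A\in J$), hence in $K$, contradicting minimality. Therefore $I\setminus\{A\}$ is closed, and since $A\in J$ one has $(I\setminus\{A\})\cap J=K\setminus\{A\}$ and $(I\setminus\{A\})\setminus J=I\setminus J$, so $M_{K\setminus\{A\}}=M_{I\setminus\{A\}}/M_{I\setminus J}$ while $M_{\{A\}}=M_I/M_{I\setminus\{A\}}$. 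From the inclusions of graded $S_0$-modules $M_{I\setminus J}\subseteq M_{I\setminus\{A\}}\subseteq M_I$ inside $M$ (together with Lemma~\ref{lem:generic decomposition of M_I} and Lemma~\ref{lem:composition of M_K}) we obtain a short exact sequence
\[
0\longrightarrow M_{K\setminus\{A\}}\longrightarrow M_K\longrightarrow M_{\{A\}}\longrightarrow 0 .
\]
By induction $M_{K\setminus\{A\}}$ is graded free, and $M_{\{A\}}$ is graded free by the definition of $\widetilde{\mathcal{K}}_\Delta(S_0)$ (or is $0$ if $A\notin\supp_{\mathcal{A}}(M)$). Since a graded free $S_0$-module is projective in the category of graded $S_0$-modules, the sequence splits, and $M_K\simeq M_{K\setminus\{A\}}\oplus M_{\{A\}}$ is graded free.

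The step I expect to be the main obstacle is the combinatorial bookkeeping in the last paragraph: verifying that $\{A\}$ is open in $K$ and that $A$ is automatically minimal in the closed part $I$, so that the three-term sequence is genuinely exact. Everything else — the vanishing when $K$ misses the support, the reduction to finite $K$, and the splitting — is routine. Alternatively one could treat the closed case first by applying Lemma~\ref{lem:filtration for FD module} and splitting the resulting one-alcove-at-a-time filtration, and then deduce the locally closed case from it.
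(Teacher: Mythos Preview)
Your argument is correct and rests on the same idea as the paper's proof: filter $M_K$ by peeling off one alcove at a time, so that each subquotient is some $M_{\{A\}}$, which is graded free by hypothesis, and then split. The paper packages this more economically by first observing that $M_K\in\widetilde{\mathcal{K}}_\Delta(S_0)$ (this was established just before the lemma, together with $(M_K)_{\{A\}}\simeq M_{K\cap\{A\}}$ from Lemma~\ref{lem:composition of M_K}), which allows one to replace $M$ by $M_K$ and assume $K=\mathcal{A}$ outright; then Lemma~\ref{lem:filtration for FD module} supplies the filtration in one stroke. Your hands-on shrinking of $I$ and $J$ and induction on $\#K$ accomplish the same thing directly, and your closing remark about applying Lemma~\ref{lem:filtration for FD module} is exactly the paper's route.
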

\begin{proof}
Since $M_K\in \widetilde{\mathcal{K}}_\Delta(S_0)$, we may assume $K = \mathcal{A}$.
Take closed subsets $I_0\subset I_1\subset\dotsm \subset I_r$ such that $I_{i + 1}\setminus I_i = \{A_i\}$, $M_{I_0} = 0$ and $M_{I_r} = M$.
Then $M_{I_{i + 1}}/M_{I_{i}} = M_{\{A_i\}}$ is a graded free $S_0$-module.
Hence $M_{I_r}/M_{I_0} = M$ is also graded free.
\end{proof}

Finally we define the category $\widetilde{\mathcal{K}}_P(S_0)$ which plays an important role later.
The definitions are taken from \cite{arXiv:1504.01699}.
\begin{defn}
We say a sequence $M_1\to M_2\to M_3$ in $\widetilde{\mathcal{K}}_{\Delta}(S_0)$ satisfies \property{ES} if the composition $M_1\to M_2\to M_3$ is zero and
\[
0\to (M_1)_{\{A\}}\to (M_2)_{\{A\}}\to (M_3)_{\{A\}}\to 0
\]
is exact for any $A\in \mathcal{A}$.

We define the category $\widetilde{\mathcal{K}}_P(S_0)\subset \widetilde{\mathcal{K}}_{\Delta}(S_0)$ as follows: $M\in \widetilde{\mathcal{K}}_P(S_0)$ if and only if for any sequence $M_1\to M_2\to M_3$ in $\widetilde{\mathcal{K}}_{\Delta}(S_0)$ with \property{ES}, the induced sequence
\[
0\to \Hom^{\bullet}_{\widetilde{\mathcal{K}}_{\Delta}(S_0)}(M,M_1)\to \Hom^{\bullet}_{\widetilde{\mathcal{K}}_{\Delta}(S_0)}(M,M_2)\to \Hom^{\bullet}_{\widetilde{\mathcal{K}}_{\Delta}(S_0)}(M,M_3)\to 0
\]
is exact.
\end{defn}

\begin{lem}\label{lem:exact sequence for any locally closed subset}
Assume that $M_1,M_2,M_3\in \widetilde{\mathcal{K}}(S_0)$ satisfy $\#\supp_{\mathcal{A}}(M_i) < \infty$ ($i = 1,2,3$) and a sequence $M_1\to M_2\to M_3$ satisfies \property{ES}.
Then $0\to (M_1)_{K}\to (M_2)_{K}\to (M_3)_{K}\to 0$ is exact for any locally closed subset $K$.
\end{lem}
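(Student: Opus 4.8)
The plan is to deduce the statement for an arbitrary locally closed $K$ from the special case in which $K$ is closed, and to treat the closed case by induction along a filtration whose successive jumps each consist of a single alcove. The one homological input I will use repeatedly is the elementary fact that in a short exact sequence $0\to C'\to C\to C''\to 0$ of three-term complexes, exactness of any two of $C',C,C''$ forces exactness of the third (immediate from the long exact cohomology sequence). For a locally closed $L\subset\mathcal{A}$ write $C(L)$ for the three-term sequence $(M_1)_L\to (M_2)_L\to (M_3)_L$. This is a genuine complex: the composite $M_1\to M_2\to M_3$ vanishes by \property{ES}, and any morphism $g\colon M\to N$ of $\widetilde{\mathcal{K}}(S_0)$ satisfies $g(M_I)\subset N_I$ for a closed subset $I$ --- since $g(M_A^{\emptyset})\subset\bigoplus_{A'\ge A}N_{A'}^{\emptyset}\subset\bigoplus_{A'\in I}N_{A'}^{\emptyset}$ whenever $A\in I$ --- hence induces maps on the subquotients $(-)_L$ for every locally closed $L$. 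The task is to prove $C(L)$ is exact for all locally closed $L$.

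First I would reduce to the case of a closed subset. Write $L=I\cap J$ with $I$ closed and $J$ open; then $I\setminus J=I\cap(\mathcal{A}\setminus J)$ is closed and $(M_i)_L=(M_i)_I/(M_i)_{I\setminus J}$ by definition. The tautological exact sequences $0\to (M_i)_{I\setminus J}\to (M_i)_I\to (M_i)_L\to 0$ ($i=1,2,3$) assemble into a short exact sequence of complexes
\[
0\longrightarrow C(I\setminus J)\longrightarrow C(I)\longrightarrow C(L)\longrightarrow 0.
\]
Granting the closed case, $C(I\setminus J)$ and $C(I)$ are exact, and therefore so is $C(L)$.

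For the closed case, let $I$ be closed. If $I$ is disjoint from $\bigcup_i\supp_{\mathcal{A}}(M_i)$ then $(M_i)_I=M_i\cap\bigoplus_{A\in I}(M_i)_A^{\emptyset}=0$ for all $i$ and there is nothing to prove. Otherwise I would apply Lemma~\ref{lem:filtration for FD module} to $M_1,M_2,M_3$ and $I$ to obtain closed subsets $I_0\subset I_1\subset\dots\subset I_r$ with $I_j\setminus I_{j-1}=\{A_j\}$ a single alcove, together with an index $k$ such that $(M_i)_{I_0}=0$ and $I_k$ has the same intersection with each $\supp_{\mathcal{A}}(M_i)$ as $I$ does; the latter gives $(M_i)_{I_k}=(M_i)_I$ as submodules of $M_i^{\emptyset}$. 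I would then show by induction on $j$ that $C(I_j)$ is exact. The base case $j=0$ is the zero complex. For the inductive step, $I_{j-1}$ is closed, so $\{A_j\}=I_j\cap(\mathcal{A}\setminus I_{j-1})$ is locally closed and $(M_i)_{\{A_j\}}=(M_i)_{I_j}/(M_i)_{I_{j-1}}$, giving a short exact sequence of complexes
\[
0\longrightarrow C(I_{j-1})\longrightarrow C(I_j)\longrightarrow C(\{A_j\})\longrightarrow 0;
\]
here $C(I_{j-1})$ is exact by the inductive hypothesis and $C(\{A_j\})$ is exact by \property{ES}, hence $C(I_j)$ is exact. Taking $j=k$ and using $(M_i)_{I_k}=(M_i)_I$ completes the closed case and with it the lemma.

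I do not anticipate a serious obstacle. The only steps that call for a little care are the identifications $(M_i)_{\{A_j\}}=(M_i)_{I_j}/(M_i)_{I_{j-1}}$ for closed $I_{j-1}\subset I_j$ --- which uses the well-definedness of $M_L$ for locally closed $L$, i.e.\ property \property{S}, together with Lemma~\ref{lem:generic decomposition of M_I} --- and checking that Lemma~\ref{lem:filtration for FD module} does provide a single filtration adapted simultaneously to $M_1,M_2,M_3$. Both are already in hand, so the entire content of the proof lies in that lemma and in the two-out-of-three principle for short exact sequences of complexes.
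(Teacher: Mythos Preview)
Your proof is correct and follows essentially the same approach as the paper: reduce to a closed set, then induct along a filtration by closed subsets with singleton jumps (Lemma~\ref{lem:filtration for FD module}), using the two-out-of-three principle for short exact sequences of complexes. The only cosmetic difference is the first reduction: the paper replaces each $M_i$ by $(M_i)_K$ and thereby assumes $K=\mathcal{A}$ outright (invoking Lemma~\ref{lem:composition of M_K} to see that the new sequence again satisfies \property{ES}), whereas you instead reduce an arbitrary locally closed $L=I\cap J$ to the pair of closed subsets $I$ and $I\setminus J$ via the short exact sequence $0\to C(I\setminus J)\to C(I)\to C(L)\to 0$; both routes are equivalent and the inductive core is identical.
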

\begin{proof}
Replacing $M_i$ with $(M_i)_K$ for $i = 1,2,3$, we may assume $K = \mathcal{A}$.
We can take closed subsets $I_0\subset I_1\subset\dotsb\subset I_r$ such that $(M_i)_{I_0} = 0$, $(M_i)_{I_r} = M_i$ and $\#(I_{j + 1}\setminus I_j) = 1$ for $i = 1,2,3$ and $j = 0,\dots,r$, as in Lemma~\ref{lem:filtration for FD module}.
Then the exactness of $0\to (M_1)_{I_j}\to (M_2)_{I_j}\to (M_3)_{I_j}\to 0$ follows from the induction on $j$ and a standard diagram argument.
\end{proof}

\begin{lem}
Let $M\in \widetilde{\mathcal{K}}_{\Delta}(S_0)$, $I_1\supset I_2$ closed subsets.
Then $M_{I_2}\to M_{I_1}\to M_{I_1}/M_{I_2}$ satisfies \property{ES}.
\end{lem}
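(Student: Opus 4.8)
The plan is to verify the two requirements of \property{ES} directly. First, the composition $M_{I_2}\to M_{I_1}\to M_{I_1}/M_{I_2}$ is zero by construction, since the second map is the canonical quotient and the image of the first map is exactly $M_{I_2}$. Second, I must show that for every $A\in\mathcal{A}$ the sequence
\[
0\to (M_{I_2})_{\{A\}}\to (M_{I_1})_{\{A\}}\to (M_{I_1}/M_{I_2})_{\{A\}}\to 0
\]
is exact. Here I would use Lemma~\ref{lem:composition of M_K}: for a closed subset $I$, the functor $N\mapsto N_{I}$ composes correctly, so $(M_{I_j})_{\{A\}}\simeq M_{I_j\cap\{A\}}$, which is either $M_{\{A\}}$ (if $A\in I_j$) or $0$ (if $A\notin I_j$).

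The argument then splits into three cases according to the position of $A$. If $A\notin I_1$, then $A\notin I_2$ either, all three terms vanish, and exactness is trivial. If $A\in I_2$ (hence $A\in I_1$), then the first two terms are both $M_{\{A\}}$, the first map is an isomorphism, and it remains to see $(M_{I_1}/M_{I_2})_{\{A\}}=0$; this follows since $M_{I_1}/M_{I_2}$ is supported on $I_1\setminus I_2$ (its generic decomposition has $(M_{I_1}/M_{I_2})_A^{\emptyset}=0$ for $A\in I_2$ by Lemma~\ref{lem:generic decomposition of M_I}), so its stalk at $A$ is zero. If $A\in I_1\setminus I_2$, then $(M_{I_2})_{\{A\}}=0$ and I need $(M_{I_1})_{\{A\}}\to(M_{I_1}/M_{I_2})_{\{A\}}$ to be an isomorphism. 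For this I would apply the stalk functor $(-)_{\{A\}}$, which is exact on the relevant short exact sequence $0\to M_{I_2}\to M_{I_1}\to M_{I_1}/M_{I_2}\to 0$: since all three modules have finite support and lie in $\widetilde{\mathcal{K}}(S_0)$, Lemma~\ref{lem:exact sequence for any locally closed subset} gives exactness of $0\to(M_{I_2})_{\{A\}}\to(M_{I_1})_{\{A\}}\to(M_{I_1}/M_{I_2})_{\{A\}}\to 0$ for every locally closed $\{A\}$ — provided the original sequence itself satisfies \property{ES}, which is circular, so instead I would argue stalk-exactness by hand.

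Concretely, for the third case I would note that $M_{I_1}/M_{I_2}$ is, by definition of the locally-closed construction, exactly $M_{I_1\setminus I_2}$ viewed as an object of $\widetilde{\mathcal{K}}(S_0)$ when $I_1\setminus I_2$ is open in $I_1$; taking $A$ to be minimal (so that $I_1\setminus\{A\}$ is closed, reducing to the generating case) one computes $(M_{I_1}/M_{I_2})_{\{A\}} = (M_{I_1})_{\{A\}}$ from Lemma~\ref{lem:composition of M_K} applied to $K_1 = I_1\setminus I_2$ (or $I_1$) and $K_2=\{A\}$, since $\{A\}\cap(I_1\setminus I_2)=\{A\}=\{A\}\cap I_1$. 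General $A\in I_1\setminus I_2$ then follows by the filtration argument of Lemma~\ref{lem:filtration for FD module}, peeling off elements one at a time. Finally, exactness in the middle — that the kernel of $(M_{I_1})_{\{A\}}\to(M_{I_1}/M_{I_2})_{\{A\}}$ equals the image of $(M_{I_2})_{\{A\}}$ — follows because $\{A\}$ is disjoint from $I_2$, so $(M_{I_2})_{\{A\}}=0$, and the map $(M_{I_1})_{\{A\}}\to(M_{I_1}/M_{I_2})_{\{A\}}$ is the identity.

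**Main obstacle.** The only genuinely delicate point is controlling $(M_{I_1}/M_{I_2})_{\{A\}}$: one must be careful that the quotient $M_{I_1}/M_{I_2}$, a priori only an $(S_0,R)$-bimodule, actually lies in $\widetilde{\mathcal{K}}_{\Delta}(S_0)$ with the expected generic decomposition, so that its stalks make sense and agree with $M_{I_1\setminus I_2}$'s. This is where property \property{S} for $M$ enters (to identify $M_{I_1}/M_{I_2}$ with the locally-closed object $M_{I_1\setminus I_2}$ and to know it admits a standard filtration), together with Lemma~\ref{lem:composition of M_K}; once that identification is in place, the three-case check above is routine.
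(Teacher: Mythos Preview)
Your proposal is correct and follows the same route as the paper, but you are working much harder than necessary. The paper's proof is one line: identify $M_{I_1}/M_{I_2}$ with $M_{I_1\setminus I_2}$ (by definition of the locally closed construction), and then Lemma~\ref{lem:composition of M_K} immediately gives $(M_{I_j})_{\{A\}}\simeq M_{I_j\cap\{A\}}$ and $(M_{I_1\setminus I_2})_{\{A\}}\simeq M_{(I_1\setminus I_2)\cap\{A\}}$, so in each of your three cases all terms are either $M_{\{A\}}$ or $0$ and the exactness is trivial. Your circular detour through Lemma~\ref{lem:exact sequence for any locally closed subset}, the minimality reduction, and the filtration argument from Lemma~\ref{lem:filtration for FD module} are all unnecessary --- Lemma~\ref{lem:composition of M_K} already computes the stalk of the quotient directly, without any induction.
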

\begin{proof}
Note that $M_{I_1}/M_{I_2} = M_{I_1\setminus I_2}$.
The lemma follows from Lemma~\ref{lem:composition of M_K}.
\end{proof}

\subsection{Base change}
Let $S_1$ be a flat commutative graded $S_0$-algebra.
For $M\in \widetilde{\mathcal{K}}'(S_0)$, $S_1\otimes_{S_0}M$ is an $(S_1,R)$-module.
Setting $(S_1\otimes_{S_0}M)_A^{\emptyset} = S_1\otimes_{S_0}M_{A}^{\emptyset}$, we get an object $S_1\otimes_{S_0}M\in \widetilde{\mathcal{K}}'(S_1)$.
Obviously we have $(S_1\otimes_{S_0}M)_{K} \simeq S_1\otimes_{S_0}M_{K}$ for any locally closed subset $K\subset \mathcal{A}$.
Using this, we have $S_1\otimes_{S_0}\widetilde{\mathcal{K}}(S_0)\subset \widetilde{\mathcal{K}}(S_1)$, $S_1\otimes_{S_0}\widetilde{\mathcal{K}}_{\Delta}(S_0)\subset \widetilde{\mathcal{K}}_{\Delta}(S_1)$.

We put $\widetilde{\mathcal{K}}' = \widetilde{\mathcal{K}}'(S)$, $\widetilde{\mathcal{K}} = \widetilde{\mathcal{K}}(S)$, $\widetilde{\mathcal{K}}_{\Delta} = \widetilde{\mathcal{K}}_{\Delta}(S)$ and $\widetilde{\mathcal{K}}_P = \widetilde{\mathcal{K}}_P(S)$.
We also put $(\widetilde{\mathcal{K}}')^* = \widetilde{\mathcal{K}}'(S^*)$, $\widetilde{\mathcal{K}}^* = \widetilde{\mathcal{K}}(S^*)$, $\widetilde{\mathcal{K}}^*_{\Delta} = \widetilde{\mathcal{K}}_{\Delta}(S^*)$ and $\widetilde{\mathcal{K}}^*_P = \widetilde{\mathcal{K}}_P(S^*)$ for $*\in \Delta\cup \{\emptyset\}$.

\subsection{Hecke action}
Let $s\in S_{\aff}$ and we define $\alpha_s\in \Lambda_{\Coeff}$ and $\alpha_s^\vee\in \mathcal{L}_{\Coeff}$ as follows: let $A\in \mathcal{A}$ and $\alpha\in\Delta^+$ such that $s_{\alpha,n} = As$ for some $n\in\Z$.
Then we put $\alpha_s = \alpha^A$ and $\alpha_s^\vee = (\alpha^\vee)^A$.
These depend on a choice of $A$ and $\alpha$.
For each $s\in S_{\aff}$ we fix such $A$ and $\alpha$ and define $\alpha_s,\alpha_s^\vee$.
\begin{lem}
The pair $(\alpha_s,\alpha_s^\vee)$ does not depend on $A,\alpha$ up to sign.
\end{lem}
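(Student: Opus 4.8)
The plan is to pin the data down by evaluating at one fixed alcove. Fix $A_0\in\mathcal{A}$, so that $w\mapsto w(A_0)$ is a bijection $W'_\aff\xrightarrow{\sim}\mathcal{A}$; in particular $W'_\aff$ acts simply transitively on $\mathcal{A}$, and the left action of $W'_\aff$ commutes with the right action of $W_\aff$. The alcoves $A_0$ and $A_0s$ share a wall, which is a hyperplane $\{\lambda\mid\langle\lambda,\gamma^\vee\rangle=k\}$ with $\gamma\in\Delta^+$ and $k\in\Z$, and the orthogonal reflection in it is $s_{\gamma,k}$, which sends $A_0$ to $A_0s$. Given an arbitrary alcove $A$, write $A=w(A_0)$ with $w\in W'_\aff$ and let $\overline{w}\in W_{\mathrm{f}}$ be its image. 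I will show that $(\alpha_s,\alpha_s^\vee)=\varepsilon\cdot(\gamma^{A_0},(\gamma^\vee)^{A_0})$ for a sign $\varepsilon=\varepsilon_A\in\{\pm1\}$; since the right-hand pair does not involve $A$, this proves the lemma (with ``up to sign'' meaning up to a common sign on the two entries).

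First identify the root $\alpha$. Using the compatibility of the two actions, $As=(w(A_0))s=w(A_0s)=w\,s_{\gamma,k}(A_0)=\bigl(ws_{\gamma,k}w^{-1}\bigr)(A)$, so $ws_{\gamma,k}w^{-1}$ is the orthogonal reflection carrying $A$ to $As$; its wall is $w\bigl(\{\langle\lambda,\gamma^\vee\rangle=k\}\bigr)$. Since $\overline{w}$ is the linear part of $w$ and $\langle\,,\rangle$ is $W_{\mathrm{f}}$-invariant, a routine affine computation shows this hyperplane equals $\{\mu\mid\langle\mu,\overline{w}(\gamma^\vee)\rangle=n'\}$ for some $n'\in\Z$. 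Writing $\overline{w}(\gamma)=\varepsilon\alpha$ with $\varepsilon\in\{\pm1\}$ and $\alpha\in\Delta^+$ (possible since $\overline{w}$ permutes $\Delta$), we get $\overline{w}(\gamma^\vee)=\varepsilon\alpha^\vee$, and the above says $s_{\alpha,n}(A)=As$ for the corresponding $n\in\Z$. Hence the $\alpha\in\Delta^+$ of the statement satisfies $\alpha=\varepsilon\,\overline{w}(\gamma)$ and $\alpha^\vee=\varepsilon\,\overline{w}(\gamma^\vee)$ with one and the same sign $\varepsilon$.

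Now evaluate the resulting sections at $A_0$. By definition $\alpha^A\in\Lambda$ is the unique section with value $\alpha$ at $A$; since $A_0=w^{-1}(A)$ and sections in $\Lambda$ are $W'_\aff$-equivariant, $(\alpha^A)(A_0)=\overline{w}^{-1}(\alpha)=\varepsilon\gamma$, and likewise $((\alpha^\vee)^A)(A_0)=\overline{w}^{-1}(\alpha^\vee)=\varepsilon\gamma^\vee$. Because $\lambda\mapsto\lambda_{A_0}$ is an isomorphism $\Lambda_{\Coeff}\xrightarrow{\sim}X_{\Coeff}$ (and the analogous statement holds on the coroot side), two elements of $\Lambda_{\Coeff}$ that agree at $A_0$ coincide; therefore $\alpha_s=\alpha^A=\varepsilon\,\gamma^{A_0}$ and $\alpha_s^\vee=(\alpha^\vee)^A=\varepsilon\,(\gamma^\vee)^{A_0}$, as claimed. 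The only genuinely non-formal step is the geometric identification of the wall of $A$ and $As$ with the $w$-translate of the wall of $A_0$ and $A_0s$, together with the extraction from it of the relation $\overline{w}(\gamma,\gamma^\vee)=\pm(\alpha,\alpha^\vee)$; everything after that is a direct unwinding of the definition of $\Lambda$.
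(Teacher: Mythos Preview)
Your proof is correct and follows essentially the same route as the paper's. The paper compares two arbitrary alcoves $A$ and $A'$ directly via $x\in W'_\aff$ with $A'=xA$, conjugates the reflection to get $\beta=\varepsilon\,\overline{x}(\alpha)$, and then invokes the identity $\nu^A=y(\nu)^{yA}$; you do the same thing but anchor everything at a fixed base alcove $A_0$ instead, which is only a cosmetic reorganization of the same argument.
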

\begin{proof}
Let $A'\in \mathcal{A}$ and take $\beta\in\Delta^+$ and $m\in\Z$ such that $A's = s_{\beta,m}A'$.
Take $x\in W'_\aff$ such that $A' = xA$.
Then $A's = xAs = xs_{\alpha,n}A = s_{x(\alpha,n)}xA = s_{x(\alpha,n)}A'$.
Hence $\beta = \varepsilon x(\alpha)$ for $\varepsilon = 1$ or $\varepsilon = -1$.
We have $\beta^{A'} = \varepsilon x(\alpha)^{xA} = \varepsilon \alpha^A$ and $(\beta^\vee)^{A'} = \varepsilon (\alpha^\vee)^{A}$.
\end{proof}

We have that $(\Lambda_{\Coeff},\{\alpha_s\}_{s\in S_{\aff}},\{\alpha_s^\vee\}_{s\in S_{\aff}})$ is a realization with Demazure surjectivity~\cite[Definition~3.1]{MR3555156}.
Let $\Sbimod$ be the category introduced in \cite{arXiv:1901.02336_accepted}.
Set $R^{\emptyset} = R[((\alpha^\vee)^A)^{-1}\mid \alpha\in \Delta]$ for $A\in \mathcal{A}$.
It is easy to see that this does not depend on $A$.
We put $B^{\emptyset} = R^{\emptyset}\otimes_{R}B$ for $B\in \Sbimod$.

Recall that we have an object $B_s\in \Sbimod$.
Set $R^s = \{f\in R\mid s(f) = f\}$.
As an $R$-bimodule, $B_s = R\otimes_{R^s}R(1)\simeq \{(f,g)\in R^2\mid f\equiv g\pmod{\alpha_s}\}$ and we have the decomposition of $B_s^{\emptyset} = \bigoplus_{w\in W}(B_s)^{\emptyset}_{w}$ where
\begin{align*}
(B_s)^{\emptyset}_e & = R^{\emptyset}(\delta_s\otimes 1 - 1\otimes s(\delta_s)),\\
(B_s)^{\emptyset}_s & = R^{\emptyset}(\delta_s\otimes 1 - 1\otimes \delta_s),\\
(B_s)^{\emptyset}_w & = 0\quad (w\ne e,s).
\end{align*}
Here $\delta_s\in \Lambda^\vee_{\Coeff}$ satisfies $\langle\alpha_s,\delta_s\rangle = 1$.
The decomposition does not depend on a choice of $\delta_s$.

Let $\Sbimod$ be the category defined in \cite{arXiv:1901.02336_accepted} for $(W_\aff,S_\aff)$ and the representation $\Lambda_{\Coeff}$ of $W_\aff$ with $\{(\alpha_s,\alpha^\vee_s)\mid s\in S_\aff\}$.
We remark that \cite[Assumption~3.2]{arXiv:1901.02336_accepted} is satisfied in this case by \cite[Theorem~1.2, Proposition~3.7]{arXiv:2012.09414}.

\begin{lem}
Let $B\in \Sbimod$.
Then there exists a decomposition $B^{\emptyset} = \bigoplus_{x\in W_\aff}B_x^{\emptyset}$ such that $\Frac(R)\otimes_{R^\emptyset}B_x^{\emptyset}\simeq B_x^{\Frac(R)}$.
Here $B_x^{\Frac(R)}$ is the $\Frac(R)$-bimodule as in the definition of $\Sbimod$.
\end{lem}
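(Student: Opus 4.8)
The plan is to reduce to the generators $B_s$, where the decomposition is the explicit one $B_s^{\emptyset}=(B_s)_e^{\emptyset}\oplus(B_s)_s^{\emptyset}$ recorded just above, and then to propagate the statement along the operations generating $\Sbimod$: tensor product over $R$, grading shift, finite direct sum, and passage to a direct summand. Two preliminary observations fix the framework. Every object $B$ of $\Sbimod$ is graded free as a left $R$-module (each $B_{s_1}\otimes\dotsm\otimes B_{s_l}(n)$ is, since $R$ is free of rank $2$ over every $R^s$, and a graded direct summand of a graded free module is graded free), so $B$ is torsion-free over $R$ and $B^{\emptyset}=R^{\emptyset}\otimes_R B$ embeds into $B^{\Frac(R)}:=\Frac(R)\otimes_R B=\bigoplus_{x\in W_\aff}B_x^{\Frac(R)}$; I therefore set $B_x^{\emptyset}:=B^{\emptyset}\cap B_x^{\Frac(R)}$, which is the only possible decomposition. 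One always has $\Frac(R)\otimes_{R^{\emptyset}}B_x^{\emptyset}\subseteq B_x^{\Frac(R)}$, and once the identity $B^{\emptyset}=\bigoplus_x B_x^{\emptyset}$ is proved, applying $\Frac(R)\otimes_{R^{\emptyset}}-$ and comparing the two direct-sum decompositions of $B^{\Frac(R)}$ forces equality there; so the whole content is this identity. Next, by the relation $\nu^A=x(\nu^{Ax})$ ($x\in W_\aff$) noted above, together with the independence of $R^{\emptyset}$ from $A$, the $W_\aff$-action on $\Lambda_{\Coeff}$ sends $R^{\emptyset}$ onto itself; hence each $B_x^{\emptyset}$ acquires a right $R^{\emptyset}$-module structure extending its right $R$-action, via $m\cdot f=x(f)m$.

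Now I run the induction on the way $B$ is built. The cases $B=R$ and $B=B_s$ are the base cases. If $B=B_1\otimes_R B_2$ with $B_1,B_2$ already satisfying the conclusion, I write $B^{\emptyset}=B_1^{\emptyset}\otimes_R B_2=\bigoplus_y\bigl((B_1)_y^{\emptyset}\otimes_R B_2\bigr)$ using the decomposition of $B_1^{\emptyset}$; since $(B_1)_y^{\emptyset}$ is now an $(R^{\emptyset},R^{\emptyset})$-bimodule and $B_2^{\emptyset}=R^{\emptyset}\otimes_R B_2$, the natural map identifies $(B_1)_y^{\emptyset}\otimes_R B_2$ with $(B_1)_y^{\emptyset}\otimes_{R^{\emptyset}}B_2^{\emptyset}=\bigoplus_z(B_1)_y^{\emptyset}\otimes_{R^{\emptyset}}(B_2)_z^{\emptyset}$, whence $B^{\emptyset}=\bigoplus_{y,z}(B_1)_y^{\emptyset}\otimes_{R^{\emptyset}}(B_2)_z^{\emptyset}$. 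Grouping the summands with $yz=x$ gives $\widehat B_x^{\emptyset}:=\bigoplus_{yz=x}(B_1)_y^{\emptyset}\otimes_{R^{\emptyset}}(B_2)_z^{\emptyset}\subseteq B^{\emptyset}$; applying $\Frac(R)\otimes_{R^{\emptyset}}-$ and using the induction hypothesis together with the defining formula $(M\otimes N)_x^{\Frac(R)}=\bigoplus_{yz=x}M_y^{\Frac(R)}\otimes_{\Frac(R)}N_z^{\Frac(R)}$ for $\Sbimod$ identifies $\Frac(R)\otimes_{R^{\emptyset}}\widehat B_x^{\emptyset}$ with $B_x^{\Frac(R)}$, so $\widehat B_x^{\emptyset}\subseteq B^{\emptyset}\cap B_x^{\Frac(R)}=B_x^{\emptyset}$, and then $B^{\emptyset}=\sum_x\widehat B_x^{\emptyset}\subseteq\sum_x B_x^{\emptyset}\subseteq B^{\emptyset}$ gives $B^{\emptyset}=\bigoplus_x B_x^{\emptyset}$. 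Grading shifts and finite direct sums are immediate. Finally, if $B$ is a direct summand of some $B_1$ satisfying the conclusion, its splitting idempotent is by definition a morphism in $\Sbimod$, hence preserves the $\Frac(R)$-decomposition; so its localization $e^{\emptyset}$ over $R^{\emptyset}$ preserves $(B_1)_x^{\emptyset}=B_1^{\emptyset}\cap(B_1)_x^{\Frac(R)}$, and applying $e^{\emptyset}$ to $B_1^{\emptyset}=\bigoplus_x(B_1)_x^{\emptyset}$ exhibits $B^{\emptyset}=\bigoplus_x e^{\emptyset}\bigl((B_1)_x^{\emptyset}\bigr)$ with each summand inside $B_x^{\emptyset}$, giving $B^{\emptyset}=\bigoplus_x B_x^{\emptyset}$.

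The tensor step is the only real obstacle, and inside it the module-theoretic bookkeeping: to form $(B_1)_y^{\emptyset}\otimes_{R^{\emptyset}}(B_2)_z^{\emptyset}$ and recognize it as a direct summand of $B^{\emptyset}=R^{\emptyset}\otimes_R(B_1\otimes_R B_2)$ one must commute the localization past the inner tensor factor, which is legitimate precisely because $W_\aff$ carries $R^{\emptyset}$ into itself, so that $(B_1)_y^{\emptyset}$ really is an $(R^{\emptyset},R^{\emptyset})$-bimodule. Everything else — that the pieces built by the induction are the intrinsic ones $B^{\emptyset}\cap B_x^{\Frac(R)}$ and that they recover $B_x^{\Frac(R)}$ after inverting all of $R$ — is then formal, using only the flatness of $\Frac(R)$ over $R^{\emptyset}$ and the already-known structure of $\otimes$ in $\Sbimod$.
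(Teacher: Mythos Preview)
Your proof is correct and follows the same approach as the paper: reduce to the generator $B_s$, where the explicit formula gives the decomposition. The paper's own proof is a single sentence (``We may assume $B = B_s$ and the lemma follows from the formula before the lemma''), leaving the entire reduction through tensor products, grading shifts, direct sums, and direct summands to the reader; you have spelled out precisely that reduction, including the point that $W_\aff$ preserves $R^{\emptyset}$ so that the summands $(B_1)_y^{\emptyset}$ are genuine $R^{\emptyset}$-bimodules and the localization commutes past the tensor. One minor remark: the claim that a graded direct summand of a graded free $R$-module is graded free may fail when $\Coeff$ is a Dedekind domain that is not a PID, but you only use this to get torsion-freeness, which follows directly from being a summand of a free module; so this does not affect the argument.
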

\begin{proof}
Assume that $B_1\in \Sbimod$ is a direct summand of $B\in \Sbimod$ and let $e\in \End_{\Sbimod}(B)$ be the idempotent such that $B_1 = e(B)$.
If $B$ satisfies the lemma, then by putting $(B_1)_x^{\emptyset} = e(B_x^{\emptyset})$, we see that $B_1$ also satisfies the lemma.
Therefore we may assume $B = B_{s_1}\otimes\cdots\otimes B_{s_l}$ for $s_i\in S_{\aff}$.
Note that for $B = B_s$, the lemma holds as we have seen in the above.
Hence it is sufficient to prove that if $B_1,B_2$ satisfies the lemma then $B_1\otimes B_2$ also satisfies the lemma.

For $x\in W_{\aff}$ and $b\in (B_1)_x^{\emptyset}$, we have $bf = x(f)b$ for $f\in R$.
Since $\{(\alpha^\vee)^{A}\mid \alpha\in\Delta\}$ is stable under the action of $x$, the formula says that $(B_1)_x^{\emptyset}$ is also a right $R^{\emptyset}$-module.
Therefore $B_1^{\emptyset}$ is also a right $R^{\emptyset}$-module.
Hence $R^{\emptyset}\otimes_{R}B_1\otimes_{R}B_2\simeq B_1^{\emptyset}\otimes_{R}B_2\simeq B_1^{\emptyset}\otimes_{R^{\emptyset}}R^{\emptyset}\otimes_{R}B_2 \simeq B_1^{\emptyset}\otimes_{R^{\emptyset}}B_2^{\emptyset}$.
We put $B_{x}^{\emptyset} = \bigoplus_{yz = x}(B_1)_y^{\emptyset}\otimes_{R^{\emptyset}}(B_2)_z^{\emptyset}$.
Then we get $B^{\emptyset} = \bigoplus_{x\in W_{\aff}}B_x^{\emptyset}$ and we have $\Frac(R)\otimes_{R^{\emptyset}}B_x^{\emptyset}\simeq B_x^{\Frac(R)}$.
\end{proof}

For $M\in \widetilde{\mathcal{K}}'(S_0)$ and $B\in \Sbimod$, we define $M*B\in \widetilde{\mathcal{K}}'(S_0)$ by
\begin{itemize}
\item As an $(S_0,R)$-bimodule, $M*B = M\otimes_R B$.
\item We put $(M*B)_A^\emptyset = \bigoplus_{x\in W_\aff}M_{Ax^{-1}}^{\emptyset}\otimes_{R^\emptyset} B_x^{\emptyset}$.
\end{itemize}
Let $f\colon M\to N$ be a morphism in $\widetilde{\mathcal{K}}'(S_0)$.
We have $f(M_{Ax^{-1}}^{\emptyset})\subset \bigoplus_{A'\in Ax^{-1} + \Z\Delta,A'\ge Ax^{-1}}N_{A'}^{\emptyset}$.
By Lemma~\ref{lem:homeo on each orbit}, for $A'\in Ax^{-1} + \Z\Delta$, $A'\ge Ax^{-1}$ if and only if $A'x\ge A$.
Therefore $\bigoplus_{A'\in Ax^{-1} + \Z\Delta,A'\ge Ax^{-1}}N_{A'}^{\emptyset} = \bigoplus_{A'\in A + \Z\Delta,A'\ge A}N_{A'x^{-1}}^{\emptyset}$ by replacing $A'x$ with $A'$.
Hence 
\[
(f\otimes \id)(M_{Ax^{-1}}^{\emptyset}\otimes B_x^{\emptyset})\subset\bigoplus_{A'\in A + \Z\Delta,A'\ge A}N_{A'x^{-1}}^{\emptyset}\otimes B_x^{\emptyset} \subset \bigoplus_{A'\ge A}(N*B)_{A'}^{\emptyset}.
\]
Therefore $(f\otimes \id)$ gives a morphism in $\widetilde{\mathcal{K}}'(S_0)$.
Similarly, if $f\colon B_1\to B_2$ is a morphism in $\Sbimod$, then $\id\otimes f\colon M*B_1\to M*B_2$ is a morphism in $\mathcal{K}'(S_0)$.

For each $B\in \Sbimod$, $B_x^{\emptyset}$ is free as a left $R^{\emptyset}$-module.
The following lemma follows.
\begin{lem}
We have $\supp_{\mathcal{A}}(M*B) = \{Ax\mid A\in \supp_{\mathcal{A}}(M),x\in \supp_{W_\aff}(B)\}$.
\end{lem}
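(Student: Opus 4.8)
The plan is to read off the support of $M*B$ directly from the defining formula
\[
(M*B)_A^\emptyset = \bigoplus_{x\in W_\aff}M_{Ax^{-1}}^{\emptyset}\otimes_{R^\emptyset} B_x^{\emptyset},
\]
so that $A\in\supp_{\mathcal{A}}(M*B)$ exactly when at least one summand on the right-hand side is nonzero. Thus the whole matter reduces to deciding when $M_{Ax^{-1}}^{\emptyset}\otimes_{R^\emptyset} B_x^{\emptyset}$ vanishes.

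First I would invoke the statement recorded just above the lemma: for every $B\in\Sbimod$ and every $x\in W_\aff$ the component $B_x^{\emptyset}$ is free as a left $R^{\emptyset}$-module, say $B_x^{\emptyset}\simeq (R^{\emptyset})^{\oplus n_x}$ for some $n_x\ge 0$. Using the right $R^\emptyset$-module structure on $M_{Ax^{-1}}^{\emptyset}$ that enters the formula together with this freeness on the left, one gets
\[
M_{Ax^{-1}}^{\emptyset}\otimes_{R^\emptyset} B_x^{\emptyset}\simeq (M_{Ax^{-1}}^{\emptyset})^{\oplus n_x},
\]
which is nonzero precisely when $n_x\ge 1$ and $M_{Ax^{-1}}^{\emptyset}\ne 0$, i.e.\ when $Ax^{-1}\in\supp_{\mathcal{A}}(M)$. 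To match the condition $n_x\ge 1$ with $x\in\supp_{W_\aff}(B)$ I would apply $\Frac(R)\otimes_{R^\emptyset}(-)$ and the preceding lemma $\Frac(R)\otimes_{R^\emptyset}B_x^{\emptyset}\simeq B_x^{\Frac(R)}$, obtaining $B_x^{\Frac(R)}\simeq \Frac(R)^{\oplus n_x}$; hence $n_x\ge 1$ if and only if $B_x^{\Frac(R)}\ne 0$, i.e.\ $x\in\supp_{W_\aff}(B)$.

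Putting these together, $(M*B)_A^\emptyset\ne 0$ if and only if there is some $x\in\supp_{W_\aff}(B)$ with $Ax^{-1}\in\supp_{\mathcal{A}}(M)$. Writing $A' = Ax^{-1}$, this says $A = A'x$ for some $A'\in\supp_{\mathcal{A}}(M)$ and $x\in\supp_{W_\aff}(B)$, which is exactly the claimed equality $\supp_{\mathcal{A}}(M*B) = \{A'x\mid A'\in\supp_{\mathcal{A}}(M),\ x\in\supp_{W_\aff}(B)\}$.

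I do not expect a genuine obstacle here. The only point deserving a line of care is the bookkeeping of module sides in the tensor product $M_{Ax^{-1}}^{\emptyset}\otimes_{R^\emptyset} B_x^{\emptyset}$: one should confirm that $M_{Ax^{-1}}^{\emptyset}$ enters through its right $R^\emptyset$-module structure and $B_x^{\emptyset}$ through its left one, so that freeness of $B_x^{\emptyset}$ as a \emph{left} $R^\emptyset$-module is precisely what licenses the identification with $(M_{Ax^{-1}}^{\emptyset})^{\oplus n_x}$, and that this right structure is the one already used to make sense of the definition of $M*B$. Everything else is a formal manipulation of finite direct sums.
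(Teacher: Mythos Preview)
Your proposal is correct and is exactly the argument the paper has in mind: the paper's entire proof is the single remark that each $B_x^{\emptyset}$ is free as a left $R^{\emptyset}$-module, from which the lemma is said to follow. You have simply written out the implicit details of that claim.
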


We regard $M\otimes_R B_s = M\otimes_{R^s}R(1) = M(1)\otimes 1\oplus M(1)\otimes \delta_s$.
Inside it, we have
\begin{equation}\label{eq:decomposition of M*B_s}
\begin{split}
(M*B_s)^{\emptyset}_A & = \{m\delta_s\otimes 1 - m\otimes s(\delta_s)\mid m\in M_{A}^{\emptyset}\}\oplus \{m\delta_s\otimes 1 - m\otimes \delta_s\mid m\in M_{As}^{\emptyset}\}\\
& \simeq M_{A}^{\emptyset}\oplus M_{As}^{\emptyset}.
\end{split}
\end{equation}
The isomorphism is given by $m\otimes f\mapsto (mf,ms(f))$.
Note that the last isomorphism is an isomorphism as left $S_0^{\emptyset}$-modules.
As right $R$-modules, if $m\in (M*B_s)^{\emptyset}_A$ corresponds to $(m_1,m_2)\in M_{A}^{\emptyset}\oplus M_{As}^{\emptyset}$, then $mf$ corresponds to $(m_1f,m_2s(f))$.

\begin{prop}\label{prop:adjointness}
Let $M,N\in \widetilde{\mathcal{K}}'(S_0)$.
We have $\Hom^{\bullet}_{\widetilde{\mathcal{K}}'(S_0)}(M,N*B_s)\simeq \Hom^{\bullet}_{\widetilde{\mathcal{K}}'(S_0)}(M*B_s,N)$.
\end{prop}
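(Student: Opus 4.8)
The plan is to recognize the claimed isomorphism as the standard self-biadjunction of the functor $-*B_s$, and then to check that the unit and counit of that adjunction respect the triangularity condition defining $\widetilde{\mathcal{K}}'(S_0)$, so that the adjunction bijection restricts correctly. First I would record the identification $(M*B_s)*B_s\simeq M*(B_s\otimes B_s)$ of objects of $\widetilde{\mathcal{K}}'(S_0)$, including the $\emptyset$-decompositions, which is immediate from the definition of $*$ and of the tensor product in $\Sbimod$ together with $(B_s\otimes B_s)_x^{\emptyset}=\bigoplus_{yz=x}(B_s)_y^{\emptyset}\otimes_{R^{\emptyset}}(B_s)_z^{\emptyset}$. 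Next, since $B_s=R\otimes_{R^s}R(1)$ with $R$ free over $R^s$ on $\{1,\delta_s\}$, the extension $R^s\subset R$ is Frobenius with trace the degree $-2$ Demazure operator $\partial_s(f)=(f-s(f))/\alpha_s$; this furnishes the coevaluation and evaluation $R$-bimodule homomorphisms $\eta^B\colon R\to B_s\otimes B_s$ and $\varepsilon^B\colon B_s\otimes B_s\to R$ (the cup and cap attached to $s$, which up to a grading shift are morphisms in $\Sbimod$) satisfying the two triangle identities, the internal shift coming from $B_s\otimes B_s\simeq B_s(1)\oplus B_s(-1)$ being harmless since we work with $\Hom^{\bullet}$. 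I would then set $\eta_M=\id_M\otimes\eta^B\colon M\to M*B_s*B_s$ and $\varepsilon_M=\id_M\otimes\varepsilon^B\colon M*B_s*B_s\to M$.

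The key point is that $\eta_M$ and $\varepsilon_M$ are morphisms in $\widetilde{\mathcal{K}}'(S_0)$. Both are $(S_0,R)$-bimodule homomorphisms, and because $\eta^B$ and $\varepsilon^B$, being (up to shift) morphisms in $\Sbimod$, preserve the $W_{\aff}$-decomposition while the monoidal unit $R$ has $R^{\emptyset}$ concentrated in the component indexed by $e$, this follows at once from $\supp_{W_{\aff}}(B_s\otimes B_s)=\{e,s\}$: we have $(M*B_s*B_s)_A^{\emptyset}=M_A^{\emptyset}\otimes(B_s\otimes B_s)_e^{\emptyset}\oplus M_{As}^{\emptyset}\otimes(B_s\otimes B_s)_s^{\emptyset}$, and since $\varepsilon^B$ annihilates $(B_s\otimes B_s)_s^{\emptyset}$ and carries $(B_s\otimes B_s)_e^{\emptyset}$ into $R^{\emptyset}$, the map $\varepsilon_M$ sends $(M*B_s*B_s)_A^{\emptyset}$ into $M_A^{\emptyset}$, while $\eta^B(R)\subset(B_s\otimes B_s)_e^{\emptyset}$ forces $\eta_M$ to send $M_A^{\emptyset}$ into $M_A^{\emptyset}\otimes(B_s\otimes B_s)_e^{\emptyset}\subset(M*B_s*B_s)_A^{\emptyset}$. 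So $\eta_M$ and $\varepsilon_M$ are in fact ``diagonal'' for the $\emptyset$-decompositions, a fortiori upper triangular, hence morphisms in $\widetilde{\mathcal{K}}'(S_0)$. If one prefers to avoid invoking $\Sbimod$ here, the same conclusion comes out of a direct computation: evaluate $\varepsilon^B$ on each of the four generators using \eqref{eq:decomposition of M*B_s} twice together with $\partial_s(\delta_s)=1$, $\delta_s+s(\delta_s)\in R^s$ and $\delta_s s(\delta_s)\in R^s$, finding that the two ``mixed'' summands are killed and the other two land in $M_A^{\emptyset}$.

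With this, the maps $\varphi\mapsto\varepsilon_N\circ(\varphi*\id_{B_s})$ and $\psi\mapsto(\psi*\id_{B_s})\circ\eta_M$ are well defined between $\Hom^{\bullet}_{\widetilde{\mathcal{K}}'(S_0)}(M,N*B_s)$ and $\Hom^{\bullet}_{\widetilde{\mathcal{K}}'(S_0)}(M*B_s,N)$, since $\varphi*\id_{B_s}$ and $\psi*\id_{B_s}$ are morphisms in $\widetilde{\mathcal{K}}'(S_0)$ whenever $\varphi$ and $\psi$ are (this is precisely the $f\otimes\id$ computation in the discussion preceding the statement) and composing with the morphism $\varepsilon_N$ or $\eta_M$ stays inside $\widetilde{\mathcal{K}}'(S_0)$. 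They are mutually inverse by the triangle identities, which hold already in graded $(S_0,R)$-bimodules, and a routine verification gives $(S_0,R)$-bilinearity; the grading shift from $B_s\otimes B_s\simeq B_s(1)\oplus B_s(-1)$ disappears after summing over all grading shifts in $\Hom^{\bullet}$. I expect the only genuine obstacle to be the middle step — checking that the unit and counit respect the combinatorial triangularity — together with the bookkeeping needed to transport the Frobenius (co)unit through $(M*B_s)*B_s\simeq M*(B_s\otimes B_s)$ and the $\emptyset$-localizations; the remainder is formalities about adjunctions plus results already available in the excerpt.
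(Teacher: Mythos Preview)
Your approach is correct and gives a cleaner alternative to the paper's proof. Both arguments rest on the same Frobenius adjunction for the extension $R^s\subset R$; in fact the paper's explicit formula $\psi(m)=\varphi(m\delta\otimes 1)\otimes 1-\varphi(m\otimes 1)\otimes s(\delta)$ is precisely your composite $(\varphi*\id_{B_s})\circ\eta_M$ once one writes out the coevaluation using the basis $\{1,\delta\}$ of $R$ over $R^s$ and its $\partial_s$-dual basis $\{-s(\delta),1\}$. The difference is in how the triangularity condition is handled. The paper verifies directly, via a somewhat involved two-way computation with the auxiliary elements $a(m)=m\delta\otimes 1-m\otimes s(\delta)$ and $b(m)=ms(\delta)\otimes 1-m\otimes s(\delta)$, that $\varphi$ is upper-triangular if and only if $\psi$ is. You instead observe that the unit and counit $\eta_M,\varepsilon_M$ are \emph{diagonal} for the $A$-indexed decomposition --- a stronger statement than triangularity but easier to check, since it reduces to the single fact that $\eta^B,\varepsilon^B$ respect the $W_{\aff}$-grading on $B_s\otimes B_s$, either because they are shifts of morphisms in $\Sbimod$ or by a short direct computation. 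The bijection then follows formally from the triangle identities together with the already-established fact (just before the proposition) that $f\otimes\id$ is triangular whenever $f$ is. Your route thus isolates the essential content more sharply and avoids the back-and-forth verification, at the price of invoking the biadjunction formalism and the fact that the Frobenius (co)unit lives in $\Sbimod$; the paper's proof is more self-contained but heavier on explicit manipulation.
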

\begin{proof}
Take $\delta\in \Lambda^\vee_{\Coeff}$ such that $\langle \alpha_s,\delta\rangle = 1$.
As $(S_0,R)$-bimodules, we have $N*B_s = N\otimes_{R^s}R(1)$ and $M*B_s = M\otimes_{R^s}R(1)$.
For $\varphi\colon M\otimes_{R^s}R(1)\to N$, define $\psi\colon M\to N\otimes_{R^s}R(1)$ by $\psi(m) = \varphi(m\delta \otimes 1)\otimes 1 - \varphi(m\otimes 1)\otimes s(\delta)$.
We know that if $\varphi$ is an $(S_0,R)$-bimodule homomorphism, $\psi$ is also an $(S_0,R)$-bimodule homomorphism and it induces a bijection between the spaces of $(S_0,R)$-bimodule homomorphisms. (See, for example, \cite[Lemma~3.3]{MR2441994}.)
We prove that $\varphi$ is a morphism in $\widetilde{\mathcal{K}}'(S_0)$ if and only if $\psi$ is a morphism in $\widetilde{\mathcal{K}}'(S_0)$. 

Set $a(m) = m\delta \otimes 1 - m\otimes s(\delta)$ and $b(m) = ms(\delta) \otimes 1 - m\otimes s(\delta)$ for $m\in M^\emptyset$.
We also define $a'(n),b'(n)\in N^{\emptyset}\otimes_{R^s}R$ for $n\in N^\emptyset$ by the same way.
Then we have $(M*B_s)_{A}^{\emptyset} = a(M_{A}^{\emptyset}) + b(M_{As}^{\emptyset})$ and the same for $N$ by \eqref{eq:decomposition of M*B_s} for $A\in \mathcal{A}$.

Let $A\in \mathcal{A}$ and $m\in M_{A}^{\emptyset}$.
By the definition, $\psi(m) = \varphi(a(m))\otimes 1 + b'(\varphi(m\otimes 1))$.
Since $a(m)\in (M*B_s)^{\emptyset}_A$, $\varphi(a(m))\otimes 1 = (\alpha_s)_{A}^{-1}\varphi(a(m))\alpha_s\otimes 1 = (\alpha_s)_{A}^{-1}a'(\varphi(a(m))) - (\alpha_s)_{A}^{-1}b'(\varphi(a(m)))$.
On the other hand, we have $m\otimes 1 = (\alpha_s)_{A}^{-1}m\alpha_s\otimes 1 = (\alpha_s)_{A}^{-1}a(m) - (\alpha_s)_{A}^{-1}b(m)$.
Since $\varphi$ and $b'$ are left $S_0$-equivariant, we get $\psi(m) = (\alpha_s)_{A}^{-1}a'(\varphi(a(m))) - (\alpha_s)_{A}^{-1}b'(\varphi(b(m)))$.

Assume that $\varphi$ is a morphism in $\widetilde{\mathcal{K}}'(S_0)$.
Then for any $m\in M_{A}^{\emptyset}$, $\varphi(a(m))\in \bigoplus_{A'\ge A}N_{A'}^{\emptyset}$.
Hence $a'(\varphi(a(m)))\in\bigoplus_{A' \ge A}(N*B_s)_{A'}^{\emptyset}$.
Since $b(m)\in (M*B_s)_{As}^{\emptyset}$, we have $\varphi(b(m))\in \bigoplus_{A' \ge As, A'\in As + \Z\Delta}N_{A'}^{\emptyset}$.
Therefore $b'(\varphi(b(m)))\in \bigoplus_{A' \ge As,A' \in As + \Z\Delta}(N*B_s)_{A's}^{\emptyset}$.
If $A'\in As + \Z\Delta$ satisfies $A'\ge As$, since $s\colon As + \Z\Delta\to A + \Z\Delta$ preserves the order, we get $A's \ge A$.
Hence $b'(\varphi(b(m)))\in \bigoplus_{A' \ge A}(N*B_s)_{A'}^{\emptyset}$.
Therefore $\psi$ is a morphism in $\widetilde{\mathcal{K}}'(S_0)$.

On the other hand, assume that $\psi$ is a morphism in $\widetilde{\mathcal{K}}'(S_0)$.
Consider the map $\Phi\colon N\otimes_{R^s}R\to N$ defined by $n\otimes f\mapsto nf$.
Then $\Phi(a'(n)) = n\alpha_s$ and $\Phi(b'(n)) = 0$.
Therefore $\Phi((N*B_s)_{A}^{\emptyset}) = \Phi(a'(N_{A}^{\emptyset}) + b'(N_{As}^{\emptyset}))\subset N_{A}^{\emptyset}$.
Let $m\in M_{A}^{\emptyset}$.
Then applying $\Phi$ to $\psi(m) = (\alpha_s)_{A}^{-1}a'(\varphi(a(m))) - (\alpha_s)_{A}^{-1}b'(\varphi(b(m)))$, we get $(\alpha_s)_{A}^{-1}\varphi(a(m))\alpha_s\in \bigoplus_{A'\in A + \Z\Delta,A'\ge A}M_{A'}^{\emptyset}$.
Hence $\varphi(a(M_{A}^{\emptyset}))\subset \bigoplus_{A'\ge A}N_{A'}^{\emptyset}$.
Similarly, using $N\otimes_{R^s}R\to N$ defined by $n\otimes f\mapsto ns(f)$, we get $\varphi(b(M_{As}^{\emptyset}))\subset \bigoplus_{A'\ge A}N_{A'}^{\emptyset}$.
Since $(M*B_s)_{A}^{\emptyset} = a(M_{A}^{\emptyset}) + b(M_{As}^{\emptyset})$, $\varphi$ is a morphism in $\widetilde{\mathcal{K}}'(S_0)$.
\end{proof}

\begin{lem}\label{lem:rank 1 decomposition of M*B_s}
Let $M\in \widetilde{\mathcal{K}}'(S_0)$.
\begin{enumerate}
\item For $\alpha\in\Delta$, $s\in S_{\aff}$ and $\Omega\in W'_{\alpha,\aff}\backslash \mathcal{A}$, set $M^{(\Omega)} = M^{\alpha}\cap \bigoplus_{A\in \Omega}M_A^{\emptyset}$.
Then we have the following.
\begin{enumerate}
\item If $\Omega s = \Omega$, then $(M*B_s)^{(\Omega)} \simeq M^{(\Omega)}*B_s$.
\item If $\Omega s \ne \Omega$, then the right action of $\alpha_s$ on $M^{(\Omega)}$ is invertible and we have
\[
(M*B_s)^{(\Omega)}\simeq M^{(\Omega)}\otimes(\delta_s\otimes 1 - 1\otimes s(\delta_s))\oplus M^{(\Omega s)}\otimes(\delta_s\otimes 1 - 1\otimes \delta_s)
\]
where $\langle\alpha_s,\delta_s\rangle = 1$.
\end{enumerate}
\item If $M\in \widetilde{\mathcal{K}}'(S_0)$ satisfies \property{LE}, then $M*B$ also satisfies \property{LE} for any $B\in \Sbimod$.
\end{enumerate}
\end{lem}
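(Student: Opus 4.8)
The plan is to reduce both parts to the explicit description \eqref{eq:decomposition of M*B_s} of $M*B_s$ after inverting the roots $\beta\ne\pm\alpha$, so that right multiplication by $\alpha_s$ becomes the only possible obstruction to invertibility. For part (2) I would first reduce to the case $B=B_s$: one checks that $M*(B'\otimes B'')\simeq(M*B')*B''$ (the generic decompositions match via $(B'\otimes B'')_x^\emptyset=\bigoplus_{yz=x}(B')_y^\emptyset\otimes(B'')_z^\emptyset$), that \property{LE} is preserved by direct sums and grading shifts, and that \property{LE} descends to direct summands --- the last because both the $\mathcal{A}$-decomposition and the functor $(-)^\alpha$ commute with $\oplus$, so for $M=M_1\oplus M_2$ the inclusion $M_i^\alpha\supseteq\bigoplus_\Omega(M_i^\alpha\cap\bigoplus_{A\in\Omega}(M_i)_A^\emptyset)$ is forced to be an equality once it holds for $M$. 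Since every $B\in\Sbimod$ is a direct summand of a direct sum of grading shifts of objects $B_{s_1}\otimes\dotsm\otimes B_{s_l}$, induction on $l$ reduces (2) to: if $M$ satisfies \property{LE} then so does $M*B_s$.

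For part (1) I would work inside $(M*B_s)^\emptyset=M^\emptyset\otimes_{R^s}R(1)$ through the two projections $p_1,p_2\colon M^\emptyset\otimes_{R^s}R\to M^\emptyset$, $p_1(m\otimes f)=mf$, $p_2(m\otimes f)=ms(f)$. The pair $(p_1,p_2)$ is an isomorphism onto $M^\emptyset\oplus M^\emptyset$ (in the $R^s$-basis $\{1,\delta_s\}$ of $R$ its `determinant' is right multiplication by $\pm\alpha_s$, invertible over $S^\emptyset$), and it restricts to $(M*B_s)^\alpha=M^\alpha\otimes_{R^s}R(1)$. From \eqref{eq:decomposition of M*B_s}, writing $a(m)=m\delta_s\otimes 1-m\otimes s(\delta_s)$ and $c(m)=m\delta_s\otimes 1-m\otimes\delta_s$, one computes $p_1((M*B_s)_A^\emptyset)=M_A^\emptyset$ and $p_2((M*B_s)_A^\emptyset)=M_{As}^\emptyset$, so for $\xi\in(M*B_s)^{(\Omega)}$ one gets $p_1(\xi)\in M^{(\Omega)}$ and $p_2(\xi)\in M^{(\Omega s)}$, and $\xi$ is recovered from $(p_1(\xi),p_2(\xi))$. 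If $\Omega s\ne\Omega$ I would first prove the invertibility claim: for $A'\in\Omega$, writing $A'=xA$ with $A$ the alcove defining $\alpha_s$, right multiplication by $\alpha_s$ on $M_{A'}^\emptyset$ is multiplication by $(\alpha^A)_{A'}=\overline{x}(\alpha)$, and $\Omega s\ne\Omega$ forces $\overline{x}(\alpha)\ne\pm\alpha$ (otherwise $A's=s_{\pm\alpha,\cdot}A'\in W'_{\alpha,\aff}A'=\Omega$), so $\overline{x}(\alpha)$ is a root $\ne\pm\alpha$, hence a unit in $S^\alpha$; the same applies to $M^{(\Omega s)}$ since $(\Omega s)s=\Omega$. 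Then $a(p_1(\xi)\alpha_s^{-1})+c(p_2(\xi)\alpha_s^{-1})$ has the same image as $\xi$ under $(p_1,p_2)$, so $\xi$ equals it, giving $(M*B_s)^{(\Omega)}=a(M^{(\Omega)})\oplus c(M^{(\Omega s)})$ once one notes the reverse inclusion (both summands lie in $(M*B_s)^\alpha$ and are supported on $\Omega$ in $M*B_s$). If $\Omega s=\Omega$, write $\xi\in(M*B_s)^{(\Omega)}$ as $a(u)+c(v)$ with $u,v$ supported on $\Omega$ (using $\Omega s=\Omega$ for the $c$-part), rewrite in the basis $\{1,\delta_s\}$ as $\xi=m_1\otimes 1+m_2\otimes\delta_s$ with $m_2=u-v$ and $m_1=(v-u)\delta_s+u\alpha_s$; since $\xi\in M^\alpha\otimes_{R^s}R$ forces $m_1,m_2\in M^\alpha$ while they are visibly supported on $\Omega$, both lie in $M^{(\Omega)}$, so $\xi\in M^{(\Omega)}\otimes_{R^s}R=M^{(\Omega)}*B_s$; the reverse inclusion holds because $M^{(\Omega)}\otimes_{R^s}R$ is supported on $\Omega\cup\Omega s=\Omega$ inside $M*B_s$.

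Part (2) for $B_s$ then follows: by \property{LE} for $M$, $(M*B_s)^\alpha=M^\alpha\otimes_{R^s}R(1)=\bigoplus_\Omega M^{(\Omega)}*B_s$; grouping the orbits $\Omega$ into $\langle s\rangle$-orbits, an $s$-fixed orbit contributes $M^{(\Omega)}*B_s=(M*B_s)^{(\Omega)}$ by (1), while for a free pair $\{\Omega,\Omega s\}$ one has $M^{(\Omega)}*B_s=a(M^{(\Omega)})\oplus c(M^{(\Omega)})$ and similarly for $\Omega s$ (using invertibility of $\alpha_s$ on $M^{(\Omega)}$ and $M^{(\Omega s)}$ established above), and regrouping the four summands gives $(a(M^{(\Omega)})\oplus c(M^{(\Omega s)}))\oplus(a(M^{(\Omega s)})\oplus c(M^{(\Omega)}))=(M*B_s)^{(\Omega)}\oplus(M*B_s)^{(\Omega s)}$ by (1). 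Summing over all $\langle s\rangle$-orbits yields $(M*B_s)^\alpha=\bigoplus_\Omega(M*B_s)^{(\Omega)}$, i.e.\ \property{LE}.

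The hard part will be the $\mathcal{A}$-support bookkeeping: the identification $(M*B_s)_A^\emptyset\simeq M_A^\emptyset\oplus M_{As}^\emptyset$ is \emph{not} compatible with the two natural $\mathcal{A}$-gradings in the naive way (the second summand sits in $\mathcal{A}$-degree $As$), so one must track carefully how $a(M^{(\Omega)})$, $c(M^{(\Omega s)})$ and $M^{(\Omega)}*B_s$ are supported after localization and grading shift; the small rank-one root computation identifying $(\alpha^A)_{A'}$ with a root $\ne\pm\alpha$ in the case $\Omega s\ne\Omega$ is the other point needing care.
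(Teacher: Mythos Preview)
Your overall architecture is sound and close to the paper's: the reduction of (2) to $B_s$, the use of the projections $p_1,p_2$ (equivalently the description \eqref{eq:decomposition of M*B_s}), the case split on $\Omega s=\Omega$ versus $\Omega s\ne\Omega$, and the regrouping in (2) into $\langle s\rangle$-orbits all match the paper. Your argument for (1)(a), extracting the $m_1,m_2$-coordinates in the $R^s$-basis $\{1,\delta_s\}$, is a correct and slightly more hands-on variant of the paper's one-line tensor argument.

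There is, however, a genuine gap in your proof of the invertibility claim in (1)(b). You show that for each $A'\in\Omega$ the right action of $\alpha_s$ on $M_{A'}^{\emptyset}$ is left multiplication by the root $\overline{x}(\alpha)\ne\pm\alpha$, hence by a unit of $S^{\alpha}$. But this unit \emph{depends on the strand of $\Omega$}: it is $\beta:=(\alpha_s)_A$ on $\bigoplus_{A'\in A+\Z\alpha}M_{A'}^{\emptyset}$ and $s_\alpha(\beta)$ on $\bigoplus_{A'\in s_\alpha A+\Z\alpha}M_{A'}^{\emptyset}$. So the inverse on $\bigoplus_{A'\in\Omega}M_{A'}^{\emptyset}$ is not a single left multiplication, and it is not automatic that it carries the submodule $M^{(\Omega)}=M^{\alpha}\cap\bigoplus_{A'\in\Omega}M_{A'}^{\emptyset}$ into itself. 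The paper closes this by writing down an explicit bimodule expression for the inverse,
\[
m\longmapsto \frac{1}{\beta}\,m+\frac{\langle\alpha^\vee,\beta\rangle}{\beta\,s_\alpha(\beta)}\bigl(\delta\,m-m\,\delta^{A}\bigr),
\]
which visibly preserves $M^{(\Omega)}$. An equivalent fix you could use: the operator $T$ of right multiplication by $\alpha_s$ satisfies $(T-\beta)(T-s_\alpha(\beta))=0$ on $M^{(\Omega)}$ (check on $\bigoplus M_{A'}^{\emptyset}$ and use injectivity of $M^{(\Omega)}\hookrightarrow\bigoplus M_{A'}^{\emptyset}$), and since $\beta+s_\alpha(\beta)\in S$ and $\beta\,s_\alpha(\beta)\in (S^{\alpha})^\times$, one gets $T^{-1}=(\beta s_\alpha(\beta))^{-1}\bigl((\beta+s_\alpha(\beta))\id-T\bigr)$, an endomorphism of $M^{(\Omega)}$. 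Once this is in place, the rest of your (1)(b) and (2) goes through.
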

\begin{proof}
We have
\begin{align*}
(M*B_s)^{(\Omega)} & = M^{\alpha}*B_s\cap \bigoplus_{A\in \Omega}(M*B_s)_{A}^{\emptyset}\\
& = M^{\alpha}*B_s\cap \left(\bigoplus_{A\in \Omega}M_A^{\emptyset}\otimes (B_s)^{\emptyset}_e\oplus \bigoplus_{A\in \Omega}M_{As}^{\emptyset}\otimes (B_s)^{\emptyset}_s\right).
\end{align*}
If $\Omega s = \Omega$, then in the second direct sum, we can replace $As$ with $A$.
Therefore
\begin{align*}
(M*B_s)^{(\Omega)} & = M^{\alpha}*B_s\cap \left(\bigoplus_{A\in \Omega}M_A^{\emptyset}\otimes (B_s)^{\emptyset}_e\oplus \bigoplus_{A\in \Omega}M_{A}^{\emptyset}\otimes (B_s)^{\emptyset}_s\right)\\
& = M^{\alpha}*B_s\cap \bigoplus_{A\in \Omega}M_A^{\emptyset}\otimes (B_s)^{\emptyset}\\
& = (M^{\alpha}\cap \bigoplus_{A\in \Omega}M_A^{\emptyset})\otimes B_s\\
& = M^{(\Omega)}* B_s.
\end{align*}

Assume that $\Omega s\ne \Omega$ and take $A\in\Omega$.
Set $\beta^\vee = (\alpha_s^\vee)_A$.
Then the assumption $\Omega s\ne \Omega$ tells us that $\beta^\vee\ne \pm \alpha^\vee$.
Hence $\beta^\vee$ is invertible in $S^\alpha$.
The element $s_\alpha(\beta^\vee)$ is also invertible.

Let $\delta\in X^\vee_{\Coeff}$ such that $\langle\alpha,\delta\rangle = 1$.
For $m \in M^{(\Omega)}$, there exists $m_1\in \bigoplus_{A'\in A + \Z \alpha}M_{A'}^{\emptyset}$ and $m_2\in \bigoplus_{A'\in s_{(\alpha,0)}A + \Z \alpha}M_{A'}^{\emptyset}$ such that $m = m_1 + m_2$.
For each $f\in R$, $m_1f = f_Am_1$ and $m_2f = s_\alpha(f_A)m_2$.
By calculations using this, we have
\[
\left(\frac{1}{\beta^\vee}m + \frac{\langle\alpha,\beta^\vee\rangle}{\beta s_{\alpha}(\beta^\vee)}(\delta m - m\delta^A)\right)\alpha_s^\vee = m.
\]
Hence the right action of $\alpha_s^\vee$ is invertible.

Therefore, we have $(M*B_s)^{(\Omega)} = (M*B_s[\alpha_s^{-1}])^{(\Omega)}$ here $B_s[(\alpha_s^\vee)^{-1}] = B_s\otimes_{R}R[(\alpha_s^\vee)^{-1}]$.
Since $B_s[(\alpha_s^\vee)^{-1}] = R[(\alpha_s^\vee)^{-1}](\delta_s\otimes 1 - 1\otimes s(\delta_s))\oplus R[(\alpha_s^\vee)^{-1}](\delta_s\otimes 1 - 1\otimes \delta_s)$ with $R[(\alpha_s^\vee)^{-1}](\delta_s\otimes 1 - 1\otimes s(\delta_s))\subset (B_s)_e^{\emptyset}$ and $R[(\alpha_s^\vee)^{-1}](\delta_s\otimes 1 - 1\otimes \delta_s)\subset (B_s)_s^{\emptyset}$, the definition of $(M*B_s)^{(\Omega)}$ implies (b).

\noindent
(2)
Fix $\alpha\in\Delta$.
By replacing $M^{\alpha}$ with an object which is isomorphic to $M^{\alpha}$, we may assume $M^{\alpha} = \bigoplus_{\Omega\in W_{\alpha,\aff}'\backslash \mathcal{A}}(\bigoplus_{A\in\Omega}M_A^{\emptyset}\cap M^{\alpha})$.
Let $\{\Omega_i\}$ be a complete representatives of $\{\Omega\in W'_{\alpha,\aff}\backslash \mathcal{A}\mid \Omega s\ne \Omega\}/\{e,s\}$.
Then we have 
\begin{align*}
\bigoplus_{\Omega\in W'_{\alpha,\aff}\backslash \mathcal{A}}(M^{\alpha}*B_s)^{(\Omega)}
& = 
\bigoplus_{\Omega s = \Omega}(M*B_s)^{(\Omega)} \oplus
\bigoplus_{i}((M*B_s)^{(\Omega_i)}\oplus (M*B_s)^{(\Omega_i s)})\\
& = 
\bigoplus_{\Omega s = \Omega}M^{(\Omega)}*B_s \oplus
\bigoplus_{i}((M*B_s)^{(\Omega_i)}\oplus (M*B_s)^{(\Omega_i s)}).
\end{align*}
The argument of the proof of (1)(b), we have $M^{(\Omega_i)}\otimes (\delta_s \otimes 1 - 1\otimes s(\delta_s))\oplus M^{(\Omega_i)}\otimes(\delta_s\otimes 1 - 1\otimes \delta_s) = M^{(\Omega_i)}\otimes B_s[\alpha_s^{-1}] = M^{(\Omega_i)}\otimes B_s$.
Therefore, by (1)(b), $((M*B_s)^{(\Omega_i)}\oplus (M*B_s)^{(\Omega_i s)}) = M^{(\Omega_i)}\otimes B_s\oplus M^{(\Omega_i s)}\otimes B_s$.
Hence
\begin{align*}
\bigoplus_{\Omega\in W'_{\alpha,\aff}\backslash \mathcal{A}}(M*B_s)^{(\Omega)}
& =
\bigoplus_{\Omega s = \Omega}M^{(\Omega)}*B_s \oplus
\bigoplus_{i}(M^{(\Omega_i )}*B_s\oplus M^{(\Omega_i s)}*B_s)\\
& =
\bigoplus_{\Omega\in W'_{\alpha,\aff}\backslash \mathcal{A}}M^{(\Omega)}*B_s\\
& = M^{\alpha}*B_s.
\end{align*}
Hence $M*B_s$ satisfies \property{LE}.
\end{proof}

\subsection{An example}
We give an example of our category.
Let $(X = \Z,\Delta = \{\alpha = 2\},X^\vee = \Z,\Delta^\vee = \{\alpha^\vee = 1\})$ be the root system of type $A_1$.
The Weyl group $W_{\mathrm{f}}$ is $\{e,s_{\alpha}\}$.
Let $s_1\in S_{\aff}$ (resp.\ $s_0\in S_{\aff}$) be the element corresponding to $W'_{\aff}\{0\}$ (resp.\ $W'_{\aff}\{1\}$).
Then $S_{\aff} = \{s_0,s_1\}$.
The set of alcoves is given by $\mathcal{A} = \{A_n = \{r\in \R = X\otimes_{\Z}\R\mid n < r < n + 1\}\mid n\in\Z\}$.
We have $A_ns_1 = A_{n - 1}$ if $n$ is even and $A_ns_1 = A_{n + 1}$ if $n$ is odd.
The algebra $S = S(X_{\Coeff}^\vee)$ is isomorphic to the polynomial ring $\Coeff[\alpha^\vee]$.

Define $Q_{A_n}\in \widetilde{\mathcal{K}}' = \widetilde{\mathcal{K}}'(S)$ as follows.
As an $(S,R)$-bimodule, we define $Q_{A_n} = \{(f,g)\in S^2\mid f\equiv g\pmod{\alpha^\vee}\}$, here $S$ acts naturally and $r\in R$ acts by $(f,g)r = (r_{A_n}f,r_{A_{n + 1}}g)$.
We put $(Q_{A_n}^{\emptyset})_{A_n} = S^{\emptyset}\oplus 0$, $(Q_{A_n}^{\emptyset})_{A_{n + 1}} = 0\oplus S^{\emptyset}$ and $(Q_{A_n}^{\emptyset})_{A_m} = 0$ for $m\ne n,n + 1$.
(This object will be denoted by $Q_{A_n,\alpha}$ later.)

We have $\supp_{\mathcal{A}}(Q_{A_n}) = \{A_n,A_{n + 1}\}$.
We prove $Q_{A_0}*B_{s_1} \simeq Q_{A_{-1}}\oplus Q_{A_{1}}$.
We have $\supp_{\mathcal{A}}(Q_{A_0}*B_{s_1}) = \{A_0,A_{1},A_0s_1,A_{1}s_1\} = \{A_{- 1},A_{0},A_{1},A_{2}\}$.

In the below, by an isoimorphism $f\mapsto f_{A_0}$, we identify $R\simeq S = \Coeff[\alpha^\vee]$.
Hence $Q_{A_n} = \{(a,b)\in \Coeff[\alpha^\vee]^2\mid a\equiv b\pmod{\alpha^\vee}\}$.
Put $s = s_{\alpha}$ which acts on $\Coeff[\alpha^\vee]$.
The right action of $R\simeq \Coeff[\alpha^\vee]$ on $Q_{A_0},Q_{A_1},Q_{A_{-1}}$ are given as follows: for $(a,b)\in Q_{A_0}$, we have $(a,b)f = (af,bs(f))$ and for $(c,d)\in Q_{A_1},Q_{A_{-1}}$, we have $(c,d)f = (cs(f),df)$.

We have $B_{s_1}\simeq \{(f,g)\in \Coeff[\alpha^\vee]\mid f \equiv g\pmod{\alpha^\vee}\}$, $(B_{s_1}^{\emptyset})_{e} = \Coeff[\alpha^\vee]^{\emptyset} \oplus 0$ and $(B_{s_1}^{\emptyset})_{s_1} = 0\oplus \Coeff[\alpha^\vee]^{\emptyset}$ where $\Coeff[\alpha^\vee]^{\emptyset} = \Coeff[(\alpha^\vee)^{\pm 1}]$.
We have
\begin{align*}
& (Q_{A_0}*B_{s_1})^{\emptyset}_{A_{-1}} = (\Coeff[\alpha^\vee]^{\emptyset}\oplus 0)\otimes (0\oplus \Coeff[\alpha^\vee]^{\emptyset}), \\
& (Q_{A_0}*B_{s_1})^{\emptyset}_{A_{0}} = (\Coeff[\alpha^\vee]^{\emptyset}\oplus 0)\otimes (\Coeff[\alpha^\vee]^{\emptyset}\oplus 0),\\
& (Q_{A_0}*B_{s_1})^{\emptyset}_{A_{1}} = (0\oplus \Coeff[\alpha^\vee]^{\emptyset})\otimes (\Coeff[\alpha^\vee]^{\emptyset}\oplus 0), \\
& (Q_{A_0}*B_{s_1})^{\emptyset}_{A_{2}} = (0\oplus \Coeff[\alpha^\vee]^{\emptyset})\otimes (0\oplus \Coeff[\alpha^\vee]^{\emptyset}).
\end{align*}
(These correspond to $A_{-1} = A_0s_1$, $A_{1} = A_{1}e$, $A_{1} = A_{1}e$ and $A_{2} = A_{1}s_1$, respectively.)

%We note that $(\alpha_{s_1})_{A_k} \in\Coeff^{\times}\alpha$ for any $k\in\Z$.
We define $p_1\colon Q_{A_0}*B_s\to Q_{A_{- 1}}$ by $p_1((a,b)\otimes (f,g)) = (ag,af)$ and $p_2\colon Q_{A_0}*B_s\to Q_{A_{1}}$ by $p_2((a,b)\otimes (f,g)) = ((bs(f) - ag)/\alpha^\vee,(bs(g) - af)/\alpha^\vee)$.
In the definition of $p_2$, we note that $bs(f)\equiv ag,bs(g)\equiv af\pmod{\alpha^\vee}$ since $a\equiv b,s(f)\equiv f,s(g)\equiv g,f\equiv g\pmod{\alpha^\vee}$.
These are $\Coeff[\alpha^\vee]$-bimodule homomorphisms and from the above description, $p_1$ is a morphism in $\widetilde{\mathcal{K}}'$.
We have $p_2((1,0)\otimes (0,1)) = (-1/\alpha^\vee,0)$.
Hence $p_2((Q_{A_0}*B_{s_1})_{A_{-1}}^{\emptyset})\subset (Q_{A_{1}})_{A_1}^{\emptyset}$.
We also have $p_2((Q_{A_0}*B_{s_1})^{\emptyset}_{A_1})\subset (Q_{A_{1}})_{A_1}^{\emptyset}$, $p_2((Q_{A_0}*B_{s_1})^{\emptyset}_{A_0}),p_2((Q_{A_0}*B_{s_1})^{\emptyset}_{A_2})\subset (Q_{A_{1}})_{A_2}^{\emptyset}$.
Therefore $p_2$ is also a morphism in $\widetilde{\mathcal{K}}'$.

We define $i_1\colon Q_{A_{- 1}}\to Q_{A_0}*B_{s_1}$ by $i_1(a,b) = (b,a)\otimes (1,1) + ((a - b)/\alpha^\vee,(a - b)/\alpha^\vee)\otimes (0,\alpha^\vee)$.
In $(Q_{A_0}*B_{s_1})^{\emptyset}$, $i_1$ is given by $i_1(a,b) = (b,a)\otimes (1,0) + (a,b)\otimes (0,1)$.
It is easy to see that $i_1$ is a left $\Coeff[\alpha^\vee]$-module homomorphism.
For $f\in \Coeff[\alpha^\vee]$, we have $i_1(a,b)f = (b,a)\otimes (f,0) + (a,b)\otimes (0,s(f)) = (b,a)f\otimes (1,0) + (a,b)s(f)\otimes (0,1) = (bf,as(f))\otimes (1,0) + (as(f),bf)\otimes (0,1) = i_1(as(f),bf) = i_1((a,b)f)$.
Therefore $i_1$ is a $\Coeff[\alpha^\vee]$-bimodule homomorphism.
We can also check that $i_1$ is a morphism in $\widetilde{\mathcal{K}}'$.
We also define $i_2\colon Q_{A_1}\to Q_{A_0}*B_{s_1}$ by $i_2(a,b) = (0,\alpha^\vee)\otimes (s(a),s(b))$.
Then it is straightforward to check that $i_2$ is a morphism in $\widetilde{\mathcal{K}}'$.
Finally a straightforward calculations imply $p_1\circ i_1 = \id$, $p_2 \circ i_2 = \id$, $i_1\circ  p_1 + i_2\circ p_2 = \id$.
Hence $Q_{A_0}*B_{s_1}\simeq Q_{A_{-1}}\oplus Q_{A_1}$.

Note that the decomposition $Q_{A_0}*B_{s_1} = \Ima i_1\oplus \Ima i_2$ is not compatible with respect to the decomposition over $\Coeff[\alpha^\vee]^{\emptyset}$ since $i_1$ is not compatible with the decomposition.

\subsection{Hecke actions preserve $\widetilde{\mathcal{K}}_{\Delta}$}
We assume that $\Coeff$ is local.
Then since any direct summands of any graded free $S$-module is also graded free, a direct summand of an object in $\widetilde{\mathcal{K}}_{\Delta}$ is also in $\widetilde{\mathcal{K}}_{\Delta}$.
The aim of this subsection is to prove the following proposition.
\begin{prop}\label{prop:K_Delta is stable under the Hecke actions}
We have $\widetilde{\mathcal{K}}_{\Delta} * \Sbimod\subset \widetilde{\mathcal{K}}_{\Delta}$.
\end{prop}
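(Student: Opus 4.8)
The plan is to reduce to the case $B = B_s$ for a single $s \in S_{\aff}$, since a general object of $\Sbimod$ is a direct summand of a direct sum of shifts of tensor products $B_{s_1}\otimes\dotsm\otimes B_{s_l}$, and $\widetilde{\mathcal{K}}_{\Delta}$ is closed under direct sums, grading shifts and direct summands (the last because direct summands of graded free $S$-modules are graded free). Once we know $M * B_s \in \widetilde{\mathcal{K}}_{\Delta}$ for all $M \in \widetilde{\mathcal{K}}_{\Delta}$ and all $s$, iterating gives $M * (B_{s_1}\otimes\dotsm\otimes B_{s_l}) = (\dotsm((M*B_{s_1})*B_{s_2})\dotsm)*B_{s_l} \in \widetilde{\mathcal{K}}_{\Delta}$, and passing to summands of direct sums finishes the proof. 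Here I should also check the associativity $M*(B\otimes B') \simeq (M*B)*B'$ compatibly with the generic decompositions, which follows from $(B\otimes B')^\emptyset_x = \bigoplus_{yz=x} B^\emptyset_y \otimes_{R^\emptyset} B'^\emptyset_z$ and the orbit-bookkeeping in the definition of $*$.

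So fix $M \in \widetilde{\mathcal{K}}_{\Delta}$ and $s \in S_{\aff}$; I must verify: (i) $M * B_s$ lies in $\widetilde{\mathcal{K}}'(S)$, i.e. is finitely generated and torsion-free over $S$ — this is clear since $M * B_s = M \otimes_R B_s = M \otimes_{R^s} R(1)$ is, as a left $S$-module, a sum of two copies of (shifts of) $M$; (ii) $\supp_{\mathcal{A}}(M*B_s)$ is finite — immediate from the support formula preceding the proposition, since $\supp_{W_{\aff}}(B_s) = \{e,s\}$; (iii) property \property{S}; and (iv) that $(M*B_s)_{\{A\}}$ is graded free over $S$ for every $A$. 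For \property{S}, the cleanest route is: since $M$ satisfies \property{LE} (being in $\widetilde{\mathcal{K}}$), Lemma~\ref{lem:rank 1 decomposition of M*B_s}(2) gives that $M * B_s$ satisfies \property{LE}; and then \property{S} for $M*B_s$ reduces, via the decomposition $(M*B_s)^\alpha = \bigoplus_\Omega (M*B_s)^{(\Omega)}$ and the fact (Lemma~\ref{lem:S holds for rank 1}) that \property{S} holds for any module supported on a single $W'_{\alpha,\aff}$-orbit, together with $\bigcap_{\alpha} S^\alpha = S$, to checking things one orbit at a time — exactly as in the proof that $M_K$ inherits \property{LE} and \property{S}.

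The heart of the matter, and the step I expect to be the main obstacle, is (iv): showing $(M*B_s)_{\{A\}}$ is graded free. The strategy is to compute this stalk explicitly. Using \eqref{eq:decomposition of M*B_s}, $(M*B_s)^\emptyset_A \simeq M^\emptyset_A \oplus M^\emptyset_{As}$ as left $S^\emptyset$-modules, so the two cases $A < As$ and $A > As$ behave differently. When $A < As$: I expect the natural map to identify $(M*B_s)_{\{A\}}$ with an extension built from $M_{\{A\}}$ and $M_{\{As\}}(1)$ (roughly $(M*B_s)_{\{A\}} \simeq M_{\{A\}} \oplus M_{\{As\}}(1)$ or a graded-free extension thereof), both graded free by hypothesis; when $A > As$ one gets the analogous statement with the roles and shifts adjusted, again a sum of graded free modules. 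Making this precise requires carefully tracking, for the locally closed set $\{A\}$, how $(M*B_s)_I = M_I \otimes_{R^s} R$ intersects the generic pieces; the bookkeeping $As \pm \Z\Delta$-orbit versus $A$-orbit and the order-reversal of $s$ on $As + \Z\Delta \to A + \Z\Delta$ (already used in Proposition~\ref{prop:adjointness}) is where the real work lies. Concretely I would fit $\{A\}$ into a filtration by closed subsets as in Lemma~\ref{lem:filtration for FD module}, use that $*B_s$ is exact on the relevant short exact sequences $M_{I_2}\to M_{I_1}\to M_{I_1/I_2}$ (because $-\otimes_{R^s}R$ is exact, $R$ being free over $R^s$), and deduce the stalk of $M*B_s$ at $A$ from the stalks of $M$ at $A$ and $As$. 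Once that local freeness is in hand, all the defining conditions of $\widetilde{\mathcal{K}}_{\Delta}$ are verified and the reduction above closes the argument.
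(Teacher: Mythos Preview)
Your reduction to $B=B_s$ is fine, and (i), (ii), \property{LE} are correctly handled. The genuine gap is in the interplay between (iii) and (iv): you treat \property{S} as a formal consequence of \property{LE} via $\bigcap_\alpha S^\alpha = S$, and then compute stalks separately. This does not work as stated.

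First, the intersection argument you sketch for \property{S} only shows that the cokernel of $N_{I_1}/N_{I_1\cap I_2}\hookrightarrow N_{I_1\cup I_2}/N_{I_2}$ vanishes after every localization $S^\alpha\otimes_S(-)$, i.e.\ is supported in codimension $\ge 2$. That is not enough to conclude it is zero unless you already know $N_{I_1}/N_{I_1\cap I_2}$ is graded free (so that it equals the intersection of its localizations at height-one primes). In the paper, \property{S} is proved \emph{after} establishing that all such subquotients of $N=M*B_s$ are graded free (Lemma~\ref{lem:(M*B_s)_K is graded free}), and the localization argument you have in mind is exactly the final step. So the logical order must be reversed.

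Second, your plan for (iv) relies on ``$*B_s$ is exact on $M_{I_2}\to M_{I_1}\to M_{I_1}/M_{I_2}$'' to read off stalks. Exactness is true, but it gives $M_{I'}*B_s$ as a subobject of $M_I*B_s$, and $M_{I'}*B_s = (M*B_s)_{I'}$ holds only when $I'$ is $s$-invariant (Lemma~\ref{lem:(M*B_s)_I for s-invariant I}). Since $\{A\}$ and the relevant $I\setminus\{A\}$ are never $s$-invariant, you cannot identify the stalk this way. The paper's route is: compute the two subquotients over the $s$-invariant pair $\{A,As\}$ explicitly (Lemma~\ref{lem:stalk of M*B_s}), obtaining $M_{\{A,As\}}(\pm 1)$ rather than a direct sum of $M_{\{A\}}$ and $M_{\{As\}}$; then, for an \emph{arbitrary} closed $I$ with $I\cap J=\{A\}$, bound $N_I/N_{I\setminus\{A\}}$ between two such expressions and close the gap by a dimension count over the residue field together with Nakayama (Lemma~\ref{lem:stalk of M*B_s, any closed subset}). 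This step is where the real content lies and is absent from your sketch; without it, $(M*B_s)_{\{A\}}$ is not even well-defined (it could a priori depend on the choice of $I,J$) until \property{S} is known.
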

We fix $M \in \widetilde{\mathcal{K}}_{\Delta}$ and $s\in S_{\aff}$ in this subsection and prove $M*B_s\in \widetilde{\mathcal{K}}_{\Delta}$.
The most difficult part is to prove that $M*B_s$ satisfies \property{S}.
First we remark that, since $M*B_s$ satisfies \property{LE} by Lemma~\ref{lem:rank 1 decomposition of M*B_s}, $(M*B_s)^{\alpha}$ satisfies \property{S} by Lemma~\ref{lem:S holds for rank 1}.

\begin{lem}\label{lem:(M*B_s)_I for s-invariant I}
If $I$ is a closed $s$-invariant subset of $\mathcal{A}$, then $(M*B_s)_I\simeq M_I*B_s$.
\end{lem}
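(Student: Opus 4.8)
The plan is to produce an explicit isomorphism identifying $M_I*B_s$ with the submodule $(M*B_s)_I$ of $M*B_s$. Since the inclusion $M_I\hookrightarrow M$ is right $R$-linear, it induces a morphism $\iota\colon M_I*B_s = M_I\otimes_R B_s\to M\otimes_R B_s = M*B_s$ of $(S_0,R)$-bimodules; and as $B_s = R\otimes_{R^s}R(1)$ is free, hence flat, as a left $R$-module, the functor $-\otimes_R B_s$ is exact, so $\iota$ is injective. It then remains to show that the image of $\iota$ is exactly $(M*B_s)_I$.

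First I would match up the generic decompositions. From $(M*B_s)_A^\emptyset = M_A^\emptyset\otimes_{R^\emptyset}(B_s)_e^\emptyset\oplus M_{As}^\emptyset\otimes_{R^\emptyset}(B_s)_s^\emptyset$ and the $s$-invariance of $I$ (so that $A\in I$ if and only if $As\in I$), one reads off that $(M_I*B_s)_A^\emptyset$ equals $(M*B_s)_A^\emptyset$ when $A\in I$ and vanishes otherwise. Thus $\iota$ identifies $(M_I*B_s)^\emptyset$ with $\bigoplus_{A\in I}(M*B_s)_A^\emptyset$ inside $(M*B_s)^\emptyset$; in particular $\iota$ is a morphism in $\widetilde{\mathcal{K}}'(S_0)$ and $\iota(M_I*B_s)\subset (M*B_s)\cap\bigoplus_{A\in I}(M*B_s)_A^\emptyset = (M*B_s)_I$.

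The crux is the reverse inclusion $(M*B_s)_I\subset\iota(M_I*B_s)$. Here I would use the left $S_0^\emptyset$-linear isomorphism
\[
\Phi\colon (M*B_s)^\emptyset = M^\emptyset\otimes_{R^s}R(1)\ \xrightarrow{\ \sim\ }\ M^\emptyset\oplus M^\emptyset,\qquad m\otimes f\longmapsto (mf,\ ms(f)),
\]
the global form of \eqref{eq:decomposition of M*B_s}, which carries $(M*B_s)_A^\emptyset$ onto $M_A^\emptyset\oplus M_{As}^\emptyset$. Let $z\in (M*B_s)_I$ and write $z = m_1\otimes 1 + m_2\otimes\delta_s$ with $m_1,m_2\in M$ uniquely, using $M*B_s = M(1)\otimes 1\oplus M(1)\otimes\delta_s$. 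Then $\Phi(z) = (m_1 + m_2\delta_s,\ m_1 + m_2 s(\delta_s))$, and since $z$ lies in $\bigoplus_{A\in I}(M*B_s)_A^\emptyset$ with $I$ being $s$-invariant, $\Phi(z)$ lies in $\bigoplus_{A\in I}(M_A^\emptyset\oplus M_{As}^\emptyset) = M_I^\emptyset\oplus M_I^\emptyset$, where $M_I^\emptyset = \bigoplus_{A\in I}M_A^\emptyset$. Subtracting the two components gives $m_2(\delta_s - s(\delta_s)) = m_2\alpha_s\in M_I^\emptyset$. Since $\alpha_s$ is a unit in $R^\emptyset$ (its image $(\alpha_s)_{A'}$ in each $S^\emptyset$ is a root up to the $W_{\mathrm{f}}$-action, hence invertible) and $M_I^\emptyset$ is stable under the right $R^\emptyset$-action, we obtain $m_2\in M_I^\emptyset$, hence $m_2\in M\cap M_I^\emptyset = M_I$; then $m_1 = (m_1 + m_2\delta_s) - m_2\delta_s\in M_I$ as well. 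Therefore $z\in M_I\otimes_{R^s}R(1) = \iota(M_I*B_s)$, as desired.

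Putting these together, $\iota$ is an isomorphism of $(S_0,R)$-bimodules from $M_I*B_s$ onto $(M*B_s)_I$ compatible with the generic decompositions, hence an isomorphism $M_I*B_s\simeq (M*B_s)_I$ in $\widetilde{\mathcal{K}}'(S_0)$. The main obstacle is the reverse inclusion above; the rest is routine bookkeeping with the module structures, where the points to keep straight are that $\delta_s\notin R^s$ (so that $\{1,\delta_s\}$ is genuinely an $R^s$-basis of $R$ and $m\otimes\delta_s$ involves the right $R$-action) and that $\alpha_s$ gets inverted in passing from $R$ to $R^\emptyset$.
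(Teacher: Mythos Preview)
Your proof is correct and follows the same line as the paper's: identify $\bigoplus_{A\in I}(M*B_s)_A^\emptyset$ with $M_I^\emptyset\otimes_{R^\emptyset}B_s^\emptyset=(M_I*B_s)^\emptyset$ using the $s$-invariance of $I$, and then pass from the generic equality to the integral one. The paper records only the generic computation and leaves the last step implicit; your explicit treatment of the reverse inclusion is a welcome addition. Your route through $\Phi$ is a mild detour, however: since $B_s$ is free as a left $R$-module on $\{1\otimes 1,\,1\otimes\delta_s\}$ (and $B_s^\emptyset$ is free over $R^\emptyset$ on the same basis), the identification $(M*B_s)_I^\emptyset = M_I^\emptyset\otimes 1\oplus M_I^\emptyset\otimes\delta_s$ inside $M^\emptyset\otimes 1\oplus M^\emptyset\otimes\delta_s$ already forces $m_1,m_2\in M\cap M_I^\emptyset=M_I$ by uniqueness of coefficients, without subtracting and inverting $\alpha_s$.
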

\begin{proof}
We have $(M*B_s)_{I}^{\emptyset} = \bigoplus_{A\in I}M_{A}^{\emptyset}\otimes (B_s)_{e}^{\emptyset}\oplus \bigoplus_{A\in I}M_{As}^{\emptyset}\otimes (B_s)_{s}^{\emptyset}$.
Since $I$ is $s$-invariant, $\bigoplus_{A\in I}M_{As}^{\emptyset}\otimes (B_s)_{s}^{\emptyset} = \bigoplus_{A\in I}M_{A}^{\emptyset}\otimes (B_s)_s^{\emptyset}$.
Hence $(M*B_s)_{I}^{\emptyset} = \bigoplus_{A\in I}M_{A}^{\emptyset}\otimes ((B_s)_{e}^{\emptyset}\oplus (B_s)_{s}^{\emptyset}) = \bigoplus_{A\in I}M_{A}^{\emptyset}\otimes B_{s}^{\emptyset} = M_I^{\emptyset}\otimes B_{s}^{\emptyset}$.
\end{proof}

\begin{lem}\label{lem:stalk of M*B_s}
Let $A\in \mathcal{A}$ such that $As < A$ and $I$ (resp.\ $J$) be an $s$-invariant closed (resp.\ open) subset such that $I\cap J = \{A,As\}$.
Set $N = M*B_s$.
Then we have
\[
N_{I\setminus \{As\}}/N_{I\setminus\{A,As\}}\simeq M_{\{A,As\}}(-1),\quad 
N_{I}/N_{I\setminus\{As\}}\simeq M_{\{A,As\}}(1).
\]
as left $S$-modules.
\end{lem}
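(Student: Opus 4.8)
The plan is to reduce to the case where $M$ is supported on $\{A,As\}$ and then make an explicit computation based on \eqref{eq:decomposition of M*B_s}. Since $I$ and $I\setminus\{A,As\}$ are $s$-invariant closed subsets, Lemma~\ref{lem:(M*B_s)_I for s-invariant I} gives $N_I\simeq M_I*B_s$ and $N_{I\setminus\{A,As\}}\simeq M_{I\setminus\{A,As\}}*B_s$. As $R$ is free of rank two over $R^s$, the functor $-*B_s=(-)\otimes_{R^s}R(1)$ is exact, so the short exact sequence $0\to M_{I\setminus\{A,As\}}\to M_I\to M_{\{A,As\}}\to 0$ yields $N_I/N_{I\setminus\{A,As\}}\simeq M_{\{A,As\}}*B_s$, and this is compatible with restriction to the closed subsets $I\setminus\{As\}$, $I\setminus\{A,As\}$ (Lemma~\ref{lem:composition of M_K}). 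Hence, writing $P=M_{\{A,As\}}$, the assertion reduces to proving, for $P$ supported on $\{A,As\}$ with $As<A$, that
\[
(P*B_s)_{\{A\}}\simeq P(-1),\qquad (P*B_s)_{\{As\}}\simeq P(1)
\]
as left $S$-modules, the first being a submodule of $P*B_s$ and the second a quotient (because $As<A$).

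For the first isomorphism: $(P*B_s)_{\{A\}}=(P*B_s)\cap (P*B_s)_A^{\emptyset}$, and by \eqref{eq:decomposition of M*B_s} every element of $(P*B_s)_A^{\emptyset}$ is written uniquely as a sum of $u\delta_s\otimes 1-u\otimes s(\delta_s)$ with $u\in P_A^{\emptyset}$ and $v\delta_s\otimes 1-v\otimes\delta_s$ with $v\in P_{As}^{\emptyset}$. Expanding this sum in the left $S^{\emptyset}$-basis $\{1\otimes 1,\ 1\otimes\delta_s\}$ (using $\alpha_s=(\alpha_s-2\delta_s)+2\delta_s$ with $\alpha_s-2\delta_s\in R^s$ and $\alpha_s-\delta_s=-s(\delta_s)$) one finds it equals $\bigl[(\delta_s)_{As}\cdot w\bigr]\otimes 1-w\otimes\delta_s$, where $w\in P_A^{\emptyset}\oplus P_{As}^{\emptyset}$ is the element with $A$-component $u$ and $As$-component $-v$. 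Since $(\delta_s)_{As}\in S$, this lies in $P*B_s=P\otimes 1\oplus P\otimes\delta_s$ exactly when $w\in P$. Thus $u\delta_s\otimes1-u\otimes s(\delta_s)+v\delta_s\otimes1-v\otimes\delta_s\mapsto w$ is a degree $-1$ left $S$-linear bijection onto $P$, giving $(P*B_s)_{\{A\}}\simeq P(-1)$.

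For the second isomorphism: via the projection $P*B_s\to (P*B_s)_{As}^{\emptyset}$ (degree zero, kernel $(P*B_s)_{\{A\}}$) and the isomorphism of \eqref{eq:decomposition of M*B_s} (which raises degree by one), $(P*B_s)_{\{As\}}$ is identified, up to a shift by $1$, with the image of $P*B_s$ inside $P_{As}^{\emptyset}\oplus P_A^{\emptyset}$. A computation as above shows this image equals the transposed copy $\{\,x\text{ with components }(x_{As},x_A)\mid x\in P\,\}$, which is isomorphic to $P$ as a graded left $S$-module. The only nonformal input is that, for $x\in P$, the element with $A$-component $(\alpha_s)_A\,x_A$ and $As$-component $0$ lies in $P$: this follows by adding $x\cdot\alpha_s\in P$ and $(\delta_s)_A\cdot x-x\cdot\delta_s\in P$ (the latter using that $P$ is stable under both the left $S$-action and the right $R$-action, and $(\delta_s)_A-(\delta_s)_{As}=(\alpha_s)_A$). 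Combining, $(P*B_s)_{\{As\}}\simeq P(1)$.

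I expect the second isomorphism to be the main obstacle: one cannot deduce it from the first by a graded-rank count together with $P*B_s\simeq P(1)\oplus P(-1)$, because the quotient of a graded free module by a graded free submodule need not be graded free. What makes it work is the explicit identification of the quotient with the transposed copy of $P$, and in particular the ``edge relation'' $(\alpha_s)_A$-multiple of the $A$-projection of $P$ lands in $P$, which is the genuine input of the module structure on $P$ beyond freeness of its standard subquotients.
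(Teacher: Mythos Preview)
Your argument is correct and follows the paper's route: reduce to $L=M_{\{A,As\}}*B_s$ via Lemma~\ref{lem:(M*B_s)_I for s-invariant I}, identify $L\cap L_A^{\emptyset}$ explicitly, and realise the quotient as a copy of $M_{\{A,As\}}$ (the paper packages this last step as the surjection $m\otimes f\mapsto (s(f))_A\,m$, which is exactly your projection to the $As$-fibre followed by the identification with the transposed copy of $P$). One caveat: at this stage $N=M*B_s$ is not yet known to satisfy \property{S}, so Lemma~\ref{lem:composition of M_K} does not literally apply to it; the identification you actually need, $N_{I\setminus\{As\}}/N_{I\setminus\{A,As\}}=(N_I/N_{I\setminus\{A,As\}})\cap N_A^{\emptyset}$, is the elementary diagram chase the paper spells out. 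Finally, your ``nonformal input'' is superfluous: in the $As$-projection the two scalar coefficients coincide, both equal to $(\delta_s)_{As}=(s(\delta_s))_A$, so the image of $m_1\otimes 1+m_2\otimes\delta_s$ is $((x)_{As},(x)_A)$ with $x=m_1+(\delta_s)_{As}m_2\in P$, and the transposed copy of $P$ appears immediately.
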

\begin{proof}
First we note that $I\setminus\{A,As\} = I\setminus J$ and  $I\setminus\{As\} = (I\setminus J)\cup\{A'\in \mathcal{A}\mid A'\ge A\}$ are closed.
We have an exact sequence
\begin{equation}\label{eq:exact sequence of stalk of M*B_s}
0\to N_{I\setminus\{As\}}/N_{I\setminus\{A,As\}}\to N_{I}/N_{I\setminus\{A,As\}}\to N_{I}/N_{I\setminus\{As\}}\to 0.
\end{equation}

We have $(N_I/N_{I\setminus\{A,As\}})^{\emptyset} = N_{A}^{\emptyset}\oplus N_{As}^{\emptyset}$ and we have the following commutative diagram:
\[
\begin{tikzcd}
0\arrow[r] & (N_{I\setminus\{As\}}/N_{I\setminus\{A,As\}})^{\emptyset}\arrow[r]\arrow[d,dash,"\sim",sloped] & (N_{I}/N_{I\setminus\{A,As\}})^{\emptyset}\arrow[r]\arrow[d,dash,"\sim",sloped] & (N_{I}/N_{I\setminus\{As\}})^{\emptyset}\arrow[r]\arrow[d,dash,"\sim",sloped] & 0\\
0\arrow[r] & N^{\emptyset}_{A}\arrow[r] & N^{\emptyset}_{A}\oplus N^{\emptyset}_{As}\arrow[r] & N_{As}^{\emptyset}\arrow[r] & 0.
\end{tikzcd}
\]
Therefore $N_{I\setminus\{As\}}/N_{I\setminus\{A,As\}} = (N_I/N_{I\setminus\{A,As\}})\cap (N_A^{\emptyset}\oplus 0)$.

Set $L = N_I/N_{I\setminus \{A,As\}}$.
By Lemma~\ref{lem:(M*B_s)_I for s-invariant I}, $L\simeq M_{\{A,As\}}\otimes_{R^s}R(1)$.
We have $L^{\emptyset} = L_{A}^{\emptyset}\oplus L_{As}^{\emptyset}$.
We determine $L\cap (L_{A}^{\emptyset}\oplus 0)$.

By \eqref{eq:decomposition of M*B_s}, we have $L_{A}^{\emptyset}\simeq M_{A}^{\emptyset}\oplus M_ {As}^{\emptyset}$ and $L_{As}^{\emptyset}\simeq M_{As}^{\emptyset}\oplus M_ {A}^{\emptyset}$.
In general, we write $m_{A'}$ for the image of $m\in M$ in $M_{A'}^{\emptyset}$ where $A'\in \mathcal{A}$.
The image of $m_1\otimes 1 + m_2\otimes \delta\in L = M_{\{A,As\}}\otimes_{R^s}R(1)$ in each direct summand is
\begin{gather*}
m_{1,A} + m_{2,A}\delta\in M_{A}^{\emptyset}\subset L_{A}^{\emptyset},\\
m_{1,As} + m_{2,As}s(\delta)\in M_{As}^{\emptyset}\subset L_{A}^{\emptyset},\\
m_{1,As} + m_{2,As}\delta\in M_{As}^{\emptyset}\subset L_{As}^{\emptyset},\\
m_{1,A} + m_{2,A}s(\delta)\in M_{A}^{\emptyset}\subset L_{As}^{\emptyset}.
\end{gather*}
Therefore $m_1\otimes 1 + m_2\otimes \delta\in L_{A}^{\emptyset}$ if and only if $m_{1,As} + m_{2,As}\delta = 0$, $m_{1,A} + m_{2,A}s(\delta) = 0$.
Note that $m_{2,As}\delta = (s(\delta))_Am_{2,As}$ and $m_{2,A}s(\delta) = (s(\delta))_{A}m_{2,A}$.
Therefore $(m_1 + (s(\delta))_Am_2)_{A'} = 0$ for $A' = A,As$.
Hence $m_1 + (s(\delta))_Am_2 = 0$.
Therefore we have
\[
L\cap (L_{A}^{\emptyset}\oplus 0) = \{m_2\otimes \delta - (s(\delta))_Am_2\otimes 1\mid m_2\in M_{\{A,As\}}\}(1)
\]
which is isomorphic to $M_{\{A,As\}}(-1)$.

The map $L\simeq M_{\{A,As\}}\otimes_{R^s}R(1)\ni m\otimes f\mapsto (s(f))_Am\in M_{\{A,As\}}(1)$ is surjective and, by the above argument, the kernel is $L\cap (L_{A}^{\emptyset}\oplus 0)\simeq N_{I\setminus\{As\}}/N_{I\setminus\{A,As\}}$.
Therefore by the exact sequence \eqref{eq:exact sequence of stalk of M*B_s}, we have $N_{I}/N_{I\setminus\{As\}}\simeq M_{\{A,As\}}(1)$.
\end{proof}

\begin{lem}\label{lem:stalk of M*B_s, any closed subset}
Let $A\in \mathcal{A}$ such that $As < A$, $I$ a closed subset and $J$ an open subset.
Then we have the following.
\begin{enumerate}
\item If $I\cap J = \{As\}$, then $(M*B_s)_I/(M*B_s)_{I\setminus J}\simeq M_{\{A,As\}}(1)$ as left $S$-modules.
\item If $I\cap J = \{A\}$, then $(M*B_s)_I/(M*B_s)_{I\setminus J}\simeq M_{\{A,As\}}(-1)$ as left $S$-modules.
\end{enumerate}
\end{lem}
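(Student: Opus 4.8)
The plan is to reduce the statement to Lemma~\ref{lem:stalk of M*B_s}, which performs the same ``stalk'' computation but for an $s$-stable pair with window $\{A,As\}$. Write $N=M*B_s$. First I would record what the hypotheses force: in case~(1) the set $I\cap J=\{As\}$ is open in $I$, hence $As$ is minimal in $I$ and, as $As<A$ and $I$ is closed, $A\in I$; in case~(2), $A$ is minimal in $I$ and $As\notin I$. Thus $I\setminus J$ is closed, the subquotient depends only on $I$, and since it is cut out inside $N$ it depends on $I$ only through the finite set $\supp_{\mathcal A}(N)\cap I$ (finiteness via \cite[Proposition~3.7]{MR591724}, as in Lemma~\ref{lem:filtration for FD module}). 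It suffices to treat case~(1); case~(2) is identical, using the part of Lemma~\ref{lem:stalk of M*B_s} describing the stalk at $A$.

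The rank-one input is: by Lemma~\ref{lem:rank 1 decomposition of M*B_s} the module $N$ satisfies \property{LE}, so by Lemma~\ref{lem:S holds for rank 1} each $N^\beta$ ($\beta\in\Delta$) satisfies \property{S} and decomposes along $W'_{\beta,\aff}$-orbits. Let $\alpha\in\Delta^+$ with $As=s_{\alpha,n}A$ and let $\Omega_0$ be the $W'_{\alpha,\aff}$-orbit of $A$ and $As$; then $\Omega_0 s=\Omega_0$, within the totally ordered set $\Omega_0$ the alcove $As$ is the immediate predecessor of $A$, and $\{A'\ge As\}\cap\Omega_0$ and $\{A'\le A\}\cap\Omega_0$ are $s$-stable (closed, resp.\ open in $\Omega_0$) with intersection $\{As,A\}$. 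Since $As$ is minimal in $I$ we get $I\cap\Omega_0=\{A'\ge As\}\cap\Omega_0$, so only the $\Omega_0$-summand of $N^\alpha$ contributes to $N^\alpha_I/N^\alpha_{I\setminus\{As\}}$; that summand is $M^{(\Omega_0)}*B_s$ by Lemma~\ref{lem:rank 1 decomposition of M*B_s}(1)(a), and applying Lemma~\ref{lem:stalk of M*B_s} to $M^{(\Omega_0)}$ inside the $s$-stable orbit $\Omega_0$ gives $S^\alpha\otimes_S\bigl(N_I/N_{I\setminus\{As\}}\bigr)\simeq S^\alpha\otimes_S M_{\{A,As\}}(1)$.

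The main obstacle is to descend this from $S^\alpha$ to $S$: localizing forgets part of the module structure, so one must control the integral structure of the stalk. The plan is to show that, without changing $N_I/N_{I\setminus\{As\}}$, one may replace $I$ by an $s$-stable closed set and the complementary open set by an $s$-stable open set with window $\{A,As\}$ (these exist because $\supp_{\mathcal A}(N)$ is finite), after which Lemma~\ref{lem:stalk of M*B_s} applies directly. Concretely, $I$ contains $\{A'\ge As\}$ with $As$ minimal, and differs from an $s$-stable comparison set only in $W'_{\beta,\aff}$-orbits other than $\Omega_0$; one deletes these extra alcoves one at a time, choosing for each deleted alcove $B$ a root $\beta$ with $B$ and $As$ in distinct $W'_{\beta,\aff}$-orbits (possible by the GKM assumption). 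The crux is to verify, for a single such $B$, the \property{S}-type equality $N_I=N_{I\setminus\{B\}}+N_{I\setminus\{As\}}$ --- equivalently, that deleting $B$ does not shrink the image of $N_I$ in $N^\emptyset_{As}$; this does not follow formally from the rank-one \property{S} statements and must be checked by a direct computation with the explicit description \eqref{eq:decomposition of M*B_s} of $M*B_s$ near an alcove, in the spirit of the proof of Lemma~\ref{lem:stalk of M*B_s}. Running the deletions for both $I$ and the $s$-stable comparison set then completes the reduction, and hence the proof; case~(2) is entirely parallel.
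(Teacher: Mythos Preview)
Your approach diverges substantially from the paper's, and the step you yourself flag as ``the crux'' is a genuine gap.

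The paper does not try to reduce $I$ to an $s$-stable set. For~(1) it takes the $s$-invariant $I_1=\{A'\ge As\}\subset I$, obtains an embedding $M_{\{A,As\}}(1)\simeq N_{I_1}/N_{I_1\setminus\{As\}}\hookrightarrow N_I/N_{I\setminus\{As\}}$ from Lemma~\ref{lem:stalk of M*B_s}, and proves surjectivity by localizing at every prime $(\nu)$ with $\nu\in X_{\Coeff}$: each $S_{(\nu)}$ is an $S^\beta$-algebra for some $\beta$, so $N_{(\nu)}$ satisfies \property{S} and the embedding becomes an isomorphism after localization; since $M_{\{A,As\}}$ is graded free it equals the intersection of its localizations, forcing surjectivity over $S$. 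For~(2) the embedding goes the \emph{other} way, $N_I/N_{I\setminus J}\hookrightarrow M_{\{A,As\}}(-1)$, via the $s$-invariant $I_1=I\cup Is$; surjectivity is then a global dimension count over a field (filtering $N$ by closed sets, bounding each subquotient by part~(1) and the embedding just obtained, and comparing with $\dim_{\Coeff}N^l=\dim_{\Coeff}M^{l+1}+\dim_{\Coeff}M^{l-1}$), lifted to a general Dedekind domain by Nakayama.

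The gap in your proposal is exactly the equality $N_I=N_{I\setminus\{B\}}+N_{I\setminus\{As\}}$. This is a special case of \property{S} for $N=M*B_s$, and \property{S} for $N$ is \emph{not} available here --- it is established only later (Proposition~\ref{prop:K_Delta is stable under the Hecke actions}), and its proof uses the present lemma. The explicit formula \eqref{eq:decomposition of M*B_s} describes the $S^\emptyset$-pieces $(M*B_s)_A^\emptyset$, not the integral sublattice $N_I\subset N^\emptyset$ for a general closed $I$; the computations in Lemma~\ref{lem:stalk of M*B_s} succeed only because the closed sets there are $s$-invariant, so that $(M*B_s)_I\simeq M_I*B_s$ by Lemma~\ref{lem:(M*B_s)_I for s-invariant I}. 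Choosing $\beta$ with $B$ and $As$ in different $W'_{\beta,\aff}$-orbits gives the equality after tensoring with $S^\beta$, but descending to $S$ requires knowing the quotient is free --- which is the conclusion you are after. So the promised ``direct computation'' is not one you can actually carry out with the tools at hand.

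Finally, case~(2) is not ``entirely parallel'' to case~(1): since $As\notin I$, the comparison $s$-invariant set \emph{contains} $I$ rather than being contained in it, so the natural embedding runs in the opposite direction and surjectivity needs a genuinely different argument (the dimension count above). Your sketch does not address this asymmetry.
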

\begin{proof}
Set $N = M*B_s\in\widetilde{\mathcal{K}}'$.

\noindent (1)
Put $I_1 = \{A'\in \mathcal{A}\mid A'\ge As\}$.
This is $s$-invariant.
Since $I$ is closed and contains $As$, we have $I_1\subset I$.
Hence $N_{I_1}/N_{I_1\setminus \{As\}}\hookrightarrow N_{I}/N_{I\setminus\{As\}}$.
By Lemma~\ref{lem:stalk of M*B_s}, we have $N_{I_1}/N_{I_1\setminus \{As\}}\simeq M_{\{A,As\}}(-1)$.
Hence we have $M_{\{A,As\}}(-1)\hookrightarrow N_{I}/N_{I\setminus\{As\}}$.

Let $\nu\in X^\vee_{\Coeff}$ and $S_{(\nu)}$ the localization at the prime ideal $(\nu)$.
Set $N_{(\nu)} = S_{(\nu)}\otimes_S N$.
The algebra $S_{(\nu)}$ is an $S^{\alpha}$-algebra for a certain $\alpha\in\Delta$.
Therefore $N_{(\nu)}$ satisfies \property{S}.
Hence the above embedding $(M_{(\nu)})_{\{A,As\}}(-1)\hookrightarrow (N_{(\nu)})_{I}/(N_{(\nu)})_{I\setminus\{As\}}$ is an isomorphism.
Since $M$ admits a standard filtration, $M_{\{A,As\}}$ is graded free as an $S$-module.
Therefore $M_{\{A,As\}}(-1) = \bigcap_{\nu\in X_{\Coeff}}(S_{(\nu)}\otimes_S M_{\{A,As\}}(-1)) = \bigcap_{\nu\in X_{\Coeff}}((N_{(\nu)})_{I}/(N_{(\nu)})_{I\setminus\{As\}})\supset N_{I}/N_{I\setminus\{As\}}$.
We get the lemma.

\noindent (2)
First we prove that there exists an embedding $(M*B_s)_I/(M*B_s)_{I\setminus J}\hookrightarrow M_{\{A,As\}}(-1)$.
We may assume $J = \{A'\in \mathcal{A}\mid A'\le A\}$ since $I\setminus J$ is not changed.
Then $J$ is $s$-invariant.
Put $I_1 = I\cup Is$.
Then $I_1$ is an $s$-invariant closed subset and $I_1\cap J = (I\cap J)\cup (Is\cap J) = (I\cap J)\cup(I\cap J)s = \{A,As\}$.
We have $I_1\setminus\{As\}\supset I$.
Hence we have an embedding $N_I/N_{I\setminus J}\hookrightarrow N_{I_1\setminus\{As\}}/N_{I_1\setminus\{A,As\}}\simeq M_{\{A,As\}(-1)}$.
We prove that this embedding is surjective.

First we assume that $\Coeff$ is a field.
Take a sequence of closed subsets $I_0 \subset \dotsm \subset I_r$ such that $\#(I_{i + 1}\setminus I_i) = 1$, $N_{I_0} = 0$, $N_{I_r} = N$ and there exists $k = 1,\dots,r$ such that $I_{k - 1}\cap \supp_{\mathcal{A}}(N) = I\cap \supp_{\mathcal{A}}(N)$ and $I_k = I_{k - 1}\cup \{A\}$ (Lemma~\ref{lem:filtration for FD module}).
Let $A_i\in \mathcal{A}$ such that $I_i = I_{i - 1}\cup \{A_i\}$.
Since $N_{I_i}$ is a filtration of $N$, for each $l$, the $l$-th graded piece $N^{l}$ satisfies $\dim_{\Coeff} N^{l} = \sum_i (N_{I_{i}}/N_{I_{i - 1}})^{l}$.
By (1) and the existence of an embedding we proved, $\dim_{\Coeff}(N_{I_{i}}/N_{I_{i - 1}})^{l}\le \dim_{\Coeff}(M_{\{A_i,A_is\}})^{l + \varepsilon(A_i)}$ where $\varepsilon(A_i) = 1$ if $A_is > A_i$ and $\varepsilon(A_i) = -1$ otherwise.
We have
\begin{align*}
& \dim_{\Coeff}(M_{\{A_i,A_is\}})^{l + \varepsilon(A_i)}\\
& = \sum_i(\dim_{\Coeff}(M_{\{A_i\}})^{l+ \varepsilon(A_i)} + \dim_{\Coeff}(M_{\{A_is\}})^{l + \varepsilon(A_i)})\\
& = 
\sum_{A_is > A_i}\dim_{\Coeff}(M_{\{A_i\}}^{l + 1}) + \sum_{A_is > A_i}\dim_{\Coeff}(M_{\{A_is\}}^{l + 1})\\
& \quad + \sum_{A_is < A_i}\dim_{\Coeff}(M_{\{A_i\}}^{l - 1})+ \sum_{A_is < A_i}\dim_{\Coeff}(M_{\{A_is\}}^{l - 1}).
\end{align*}
By replacing $A_i$ with $A_is$ in the second and fourth sum, we have
\begin{align*}
& \sum_i(\dim_{\Coeff}(M_{\{A_i\}})^{l+ \varepsilon(A_i)} + \dim_{\Coeff}(M_{\{A_is\}})^{l + \varepsilon(A_i)})\\
& = 
\sum_{A_is > A_i}\dim_{\Coeff}(M_{\{A_i\}}^{l + 1}) + \sum_{A_is < A_i}\dim_{\Coeff}(M_{\{A_i\}}^{l + 1}))\\
& \quad + \sum_{A_is < A_i}\dim_{\Coeff}(M_{\{A_i\}}^{l - 1}) + \sum_{A_is > A_i}\dim_{\Coeff}(M_{\{A_i\}}^{l - 1})\\
& = \sum_i (\dim_{\Coeff} M_{\{A_i\}}^{l + 1} + \dim_{\Coeff} M_{\{A_i\}}^{l - 1}).
\end{align*}
Since $\{M_{\{A_i\}}\}$ are subquotients of a filtration $\{M_{I_i}\}$ on $M$, we have $\sum_i\dim_{\Coeff} (M_{\{A_i\}})^{l'} = \dim_{\Coeff} M^{l'}$.
Hence $\sum_i (\dim_{\Coeff} M_{\{A_i\}}^{l + 1} + \dim_{\Coeff} M_{\{A_i\}}^{l - 1}) = \dim_{\Coeff} M^{l + 1} + \dim_{\Coeff} M^{l - 1}$.

On the other hand, since $N = M*B_s = M\otimes_{R^s}R(1) = M(1)\otimes 1\oplus M(1)\otimes \delta_s$ where $\delta_s$ satisfies $\langle\delta_s,\alpha_s^\vee\rangle = 1$, we have $\dim_{\Coeff} N^l = \dim_{\Coeff} M^{l + 1} + \dim_{\Coeff} M^{l - 1}$.
Therefore we get
\[
\dim_{\Coeff} N^l = \sum_i\dim_{\Coeff}(N_{I_{i}}/N_{I_{i - 1}})^{l}\le \sum_i\dim_{\Coeff}(M_{\{A_i,A_is\}})^{l + \varepsilon(A_i)} = \dim_{\Coeff} N^l.
\]
Hence the embedding has to be a bijection

Now let $\Coeff$ be a general Noetherian integral domain.
Assume that we can prove that $(N_{I_i}/N_{I_{i - 1}})\otimes_{\Coeff}(\Coeff/\mathfrak{m})\simeq (M_{\{A_i,A_is\}}(\varepsilon(A_i)))\otimes_{\Coeff}(\Coeff/\mathfrak{m})$ for each maximal ideal $\mathfrak{m}$ in $\Coeff$.
Since $M_{\{A_i,A_is\}}^l$ is finitely generated as a $\Coeff$-module, by Nakayama's lemma, $(N_{I_i}/N_{I_{i - 1}})^l_{\mathfrak{m}}\to (M_{\{A_i,A_{i}s\}})^{l + \varepsilon(A_i)}_{\mathfrak{m}}$ is surjective where $(\bullet)_{\mathfrak{m}}$ means the localization at $\mathfrak{m}$.
Since this is true for any maximal ideal $\mathfrak{m}$, the map $(N_{I_i}/N_{I_{i - 1}})^l\to M^l_{\{A_i,A_{i}s\}}$ is surjective for any $l\in\Z$, hence it is an isomorphism.
Therefore it is sufficient to prove $(N_{I_i}/N_{I_{i - 1}})\otimes_{\Coeff}(\Coeff/\mathfrak{m})\simeq (M_{\{A_i,A_is\}}(\varepsilon(A_i)))\otimes_{\Coeff}(\Coeff/\mathfrak{m})$.
In the rest of the proof, we omit the grading.

To prove this, we need some properties on the base change to $\Coeff/\mathfrak{m}$.
Let $L\in \widetilde{\mathcal{K}}'$.
Then we have $L\otimes_{\Coeff}(\Coeff/\mathfrak{m})$ is a $(S/\mathfrak{m}S,R/\mathfrak{m}R)$-bimodule and we have $S^{\emptyset}\otimes_{S}L\otimes_{\Coeff}(\Coeff/\mathfrak{m}) \simeq \bigoplus_{A\in \mathcal{A}}L^{\emptyset}_{A}\otimes_{\Coeff}(\Coeff/\mathfrak{m})$.
Therefore it defines an object in $\widetilde{\mathcal{K}}'_{\Coeff/\mathfrak{m}}$, here the suffix $\Coeff/\mathfrak{m}$ means that in the definition of $\widetilde{\mathcal{K}}'$ we replace $\Coeff$ with $\Coeff/\mathfrak{m}$.
Let $K\subset \mathcal{A}$ be a closed subset.
Then we have a map $L_K\otimes_{\Coeff}(\Coeff/\mathfrak{m})\to L\otimes_{\Coeff}(\Coeff/\mathfrak{m})$.
Since $\supp_{\mathcal{A}}(L_K\otimes_{\Coeff}(\Coeff/\mathfrak{m}))\subset K$, the image of this homomorphism is in $(L\otimes_{\Coeff}(\Coeff/\mathfrak{m}))_K$.
Hence we get a map $L_K\otimes_{\Coeff}(\Coeff/\mathfrak{m})\to (L\otimes_{\Coeff}(\Coeff/\mathfrak{m}))_K$.
We claim:
\begin{enumerate}
\item The map is surjective.
\item If $L/L_K$ is graded free, then this map is an isomorphism.
\end{enumerate}
We prove (1) first.
By the exact sequence $0\to L_K\to L\to L/L_K\to 0$, we have an exact sequence $L_K\otimes_{\Coeff}(\Coeff/\mathfrak{m})\to L\otimes_{\Coeff}(\Coeff/\mathfrak{m})\to (L/L_K)\otimes_{\Coeff}(\Coeff/\mathfrak{m})\to 0$.
Since $\supp_{\mathcal{A}}((L/L_K)\otimes_{\Coeff}(\Coeff/\mathfrak{m}))\subset \mathcal{A}\setminus K$, the map $L\otimes_{\Coeff}(\Coeff/\mathfrak{m})\to (L/L_K)\otimes_{\Coeff}(\Coeff/\mathfrak{m})$ factors through $L\otimes_{\Coeff}(\Coeff/\mathfrak{m})\to (L\otimes_{\Coeff}(\Coeff/\mathfrak{m}))/(L\otimes_{\Coeff}(\Coeff/\mathfrak{m}))_K$.
Hence $(L\otimes_{\Coeff}(\Coeff/\mathfrak{m}))_K\subset \Ker (L\otimes_{\Coeff}(\Coeff/\mathfrak{m})\to (L/L_K)\otimes_{\Coeff}(\Coeff/\mathfrak{m})) = \Ima(L_K\otimes_{\Coeff}(\Coeff/\mathfrak{m})\to L\otimes_{\Coeff}(\Coeff/\mathfrak{m}))$.
Therefore we get (1).
If $L/L_K$ is graded free, then $L/L_K$ is free as a $\Coeff$-module.
Hence $L_K\otimes_{\Coeff}(\Coeff/\mathfrak{m})\to L\otimes_{\Coeff}(\Coeff/\mathfrak{m})$ is injective.
Therefore we have (2).

In particular, if $L$ satisfies \property{S}, then $L\otimes_{\Coeff}(\Coeff/\mathfrak{m})$ also satisfies \property{S}.
Indeed, let $K_1,K_2$ be closed subsets.
Then we have a commutative diagram
\[
\begin{tikzcd}
L_{K_1}\otimes_{\Coeff}(\Coeff/\mathfrak{m})\oplus L_{K_2}\otimes_{\Coeff}(\Coeff/\mathfrak{m})\arrow[r,twoheadrightarrow]\arrow[d,twoheadrightarrow] & (L\otimes_{\Coeff}(\Coeff/\mathfrak{m}))_{K_1}\oplus (L\otimes_{\Coeff}(\Coeff/\mathfrak{m}))_{K_2}\arrow[d]\\
L_{K_1\cup K_2}\otimes_{\Coeff}(\Coeff/\mathfrak{m})\arrow[r,twoheadrightarrow] & (L\otimes_{\Coeff}(\Coeff/\mathfrak{m}))_{K_1\cup K_2}.
\end{tikzcd}
\]
Here the horizontal maps are surjective by (1) in the above and the left vertical map is surjective since $L$ satisfies \property{S}.
Hence the right vertical maps is surjective and it means that $L\otimes_{\Coeff}(\Coeff/\mathfrak{m})$ satisfies \property{S}.

We also have that if $L$  satisfies \property{LE} then $L\otimes_{\Coeff}(\Coeff/\mathfrak{m})$ satisfies \property{LE}.
Let $\alpha\in \Delta$ and decompose $L^{\alpha}$ as $L^{\alpha} = \bigoplus_{\Omega\in W'_{\alpha}\backslash \mathcal{A}}L_{\Omega}$ such that $\supp L_{\Omega}\subset \Omega$.
Then $(L\otimes_{\Coeff}(\Coeff/\mathfrak{m}))^{\alpha}\simeq \bigoplus_{\Omega\in W'_{\alpha}\backslash \mathcal{A}}L_{\Omega}\otimes_{\Coeff}(\Coeff/\mathfrak{m})$ and it gives a desired decomposition in \property{LE}.

Let $K_1\subset K_2\subset \mathcal{A}$ be closed subsets and $L\in \widetilde{\mathcal{K}}_{\Delta}$.
Since $L\in \widetilde{\mathcal{K}}_{\Delta}$, $L/L_{K_1}$ and $L/L_{K_2}$ are both graded free.
Hence $L_{K_1}\otimes_{\Coeff}(\Coeff/\mathfrak{m})\simeq (L\otimes_{\Coeff}(\Coeff/\mathfrak{m}))_{K_1}\subset (L\otimes_{\Coeff}(\Coeff/\mathfrak{m}))_{K_2}\simeq L_{K_2}\otimes_{\Coeff}(\Coeff/\mathfrak{m})$.
By the right exactness of the tensor product, we have $(L_{K_2}/L_{K_1})\otimes_{\Coeff}(\Coeff/\mathfrak{m})\simeq (L_{K_2}\otimes_{\Coeff}(\Coeff/\mathfrak{m}))/(L_{K_1}\otimes_{\Coeff}(\Coeff/\mathfrak{m}))\simeq (L\otimes_{\Coeff}(\Coeff/\mathfrak{m}))_{K_2}/(L\otimes_{\Coeff}(\Coeff/\mathfrak{m}))_{K_1}$.
Therefore, for any locally closed subset $K\subset \mathcal{A}$, we have $L_K\otimes_{\Coeff}(\Coeff/\mathfrak{m})\simeq (L\otimes_{\Coeff}(\Coeff/\mathfrak{m}))_K$.
In particular, $L\otimes_{\Coeff}(\Coeff/\mathfrak{m})\in \widetilde{\mathcal{K}}_{\Delta,\Coeff/\mathfrak{m}}$.

We return to the proof of the lemma.
We have $M_{\{A_i,A_is\}}\otimes_{\Coeff}(\Coeff/\mathfrak{m})\simeq (M\otimes_{\Coeff}(\Coeff/\mathfrak{m}))_{\{A_i,A_is\}}$.
Hence we have the following commutative diagram
\[
\begin{tikzcd}
(N_{I_i}/N_{I_{i - 1}})\otimes_{\Coeff}(\Coeff/\mathfrak{m})\arrow[r]\arrow[d] & M_{\{A_i,A_is\}}\otimes_{\Coeff}(\Coeff/\mathfrak{m})\arrow[d,"\sim" sloped]\\
(N\otimes_{\Coeff}(\Coeff/\mathfrak{m}))_{I_i}/(N\otimes_{\Coeff}(\Coeff/\mathfrak{m}))_{I_{i - 1}}\arrow[r,"\sim" sloped] & (M\otimes_{\Coeff}(\Coeff/\mathfrak{m}))_{\{A_i,A_is\}}.
\end{tikzcd}
\]
Note that the bottom homomorphism is an isomorphism since the lemma is proved if $\Coeff$ is a field.

We prove that the left vertical map is an isomorphism by backward induction on $i$.
By inductive hypothesis, $N_{I_{i'}}/N_{I_{i' - 1}}\simeq M_{\{A_{i'},A_{i'}s\}}$ for any $i' > i$ and in particular it is graded free.
Hence $N/N_{I_i}$ is also graded free.
Therefore we have $N_{I_i}\otimes_{\Coeff}(\Coeff/\mathfrak{m})\simeq (N\otimes_{\Coeff}(\Coeff/\mathfrak{m}))_{I_i}$.
Now we get the desired result by applying the five lemma to the following commutative diagram with exact columns
\[
\begin{tikzcd}
& 0\arrow[d]\\
N_{I_{i - 1}}\otimes_{\Coeff}(\Coeff/\mathfrak{m})\arrow[d]\arrow[r,twoheadrightarrow] & (N\otimes_{\Coeff}(\Coeff/\mathfrak{m}))_{I_{i - 1}}\arrow[d]\\
N_{I_i}\otimes_{\Coeff}(\Coeff/\mathfrak{m})\arrow[d]\arrow[r,"\sim"] & (N\otimes_{\Coeff}(\Coeff/\mathfrak{m}))_{I_i}\arrow[d] \\
(N_{I_{i}}/N_{I_{i - 1}})\otimes_{\Coeff}(\Coeff/\mathfrak{m})\arrow[d]\arrow[r] & (N\otimes_{\Coeff}(\Coeff/\mathfrak{m}))_{I_i}/(N\otimes_{\Coeff}(\Coeff/\mathfrak{m}))_{I_{i - 1}}\arrow[d]\\
0 & 0.
\end{tikzcd}
\]
%
%
%Now let $\Coeff$ be a general Dedekind domain which is not a field and $\mathfrak{m}\subset \Coeff$ a maximal ideal.
%First we prove that $N_I/\mathfrak{m} N_I\to N/\mathfrak{m} N$ is injective.
%By localizing at $\mathfrak{m}$, we may assume $\Coeff = \Coeff_{\mathfrak{m}}$, therefore we may assume $\Coeff$ is PID.
%Let $x\in \mathfrak{m}$ be a generator of $\mathfrak{m}$ and let $n\in N_I$ such that $n\in \mathfrak{m}N$.
%Then $n = xn'$ for some $n'$.
%Since $N = M*B_s$ is a free left $S$-module, $N\to N^{\emptyset} = \bigoplus_A N_{A}^{\emptyset}$ is injective.
%Write $n' = \sum_A n'_A$ according to this decomposition.
%Then $xn'_A = 0$ for any $A\notin I$.
%Since $N^{\emptyset}$ is a free $S^{\emptyset}$-module and $S^{\emptyset}$ is a free $\Coeff$-module, $N^{\emptyset}$ is a free $\Coeff$-module.
%Hence $xn'_A = 0$ implies $n'_A = 0$.
%Therefore $n'\in N_I$.
%
%Hence $N_{I_i}/\mathfrak{m}N_{I_i}$ is a filtration of $N/\mathfrak{m}N$.
%Therefore by the argument for a field $\Coeff$, we know that $N_I/N_{I\setminus J}\hookrightarrow N_{I_1\setminus\{As\}}/N_{I_1\setminus\{A,As\}}$ is surjective after tensoring $\Coeff/\mathfrak{m}\Coeff$.
%By Nakayama's lemma, it is surjective after localizing after $\mathfrak{m}$.
%Since $\mathfrak{m}$ is any maximal ideal, we conclude that the map is surjective.
\end{proof}

\begin{lem}\label{lem:(M*B_s)_K is graded free}
Set $N = M*B_s$.
Then for each closed subset $I_1\supset I_2$, $N_{I_1}/N_{I_2}$ is a graded free $S$-module.
\end{lem}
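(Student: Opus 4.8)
The essential computation is already contained in Lemma~\ref{lem:stalk of M*B_s, any closed subset}; the remaining task is to assemble the pieces along a filtration by closed subsets. The plan is as follows. First I would observe that, since $M\in\widetilde{\mathcal{K}}_{\Delta}$, the set $\supp_{\mathcal{A}}(M)$ is finite, hence so is $\supp_{\mathcal{A}}(N)$, and that for a closed subset $I'\subset\mathcal{A}$ the submodule $N_{I'}$ of $N$ depends only on $I'\cap\supp_{\mathcal{A}}(N)$. Then the combinatorial construction in the proof of Lemma~\ref{lem:filtration for FD module} — which uses only finiteness of the support, not membership in $\widetilde{\mathcal{K}}$ — applies to $N$, and, choosing the enumeration there so that the alcoves of $I_2$ are enumerated before the other alcoves of $I_1$, it yields closed subsets $J_0\subset J_1\subset\dots\subset J_m$ with $\#(J_i\setminus J_{i-1}) = 1$ and $J_{i-1}$ closed for every $i$, and with $N_{J_0} = N_{I_2}$ and $N_{J_m} = N_{I_1}$. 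Thus $N_{I_1}/N_{I_2}$ is filtered by the submodules $N_{J_i}/N_{J_0}$, with successive quotients $N_{J_i}/N_{J_{i-1}}$.

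Next I would identify each successive quotient. Write $J_i = J_{i-1}\cup\{A_0\}$. If $A_0\notin\supp_{\mathcal{A}}(N)$ then $N_{J_i} = N_{J_{i-1}}$, so assume $A_0\in\supp_{\mathcal{A}}(N)$. Since $J_{i-1} = J_i\setminus\{A_0\}$ is closed, $A_0$ is minimal in $J_i$, so $J = \{A'\in\mathcal{A}\mid A'\le A_0\}$ is open with $J_i\cap J = \{A_0\}$ and $J_i\setminus J = J_{i-1}$. I would then distinguish two cases. If $A_0 s < A_0$, Lemma~\ref{lem:stalk of M*B_s, any closed subset}(2), applied with $A = A_0$, gives $N_{J_i}/N_{J_{i-1}} = N_{J_i}/N_{J_i\setminus J}\simeq M_{\{A_0,A_0 s\}}(-1)$ as left $S$-modules. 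If $A_0 s > A_0$, I would instead apply Lemma~\ref{lem:stalk of M*B_s, any closed subset}(1) with $A = A_0 s$: then $As = A_0$ and $J_i\cap J = \{A_0\} = \{As\}$, giving $N_{J_i}/N_{J_{i-1}}\simeq M_{\{A_0 s,A_0\}}(1) = M_{\{A_0,A_0 s\}}(1)$. In either case $N_{J_i}/N_{J_{i-1}}\simeq M_{\{A_0,A_0 s\}}(\pm 1)$, and this is graded free since $\{A_0,A_0 s\}$ is locally closed and $M\in\widetilde{\mathcal{K}}_{\Delta}$.

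Finally I would conclude: $N_{I_1}/N_{I_2}$ has a finite filtration with graded free successive quotients, and since an epimorphism onto a graded free $S$-module of finite rank splits, an extension of a graded free module by a graded free module is graded free; by induction any such iterated extension is graded free, so $N_{I_1}/N_{I_2}$ is graded free. The point that requires care is the bookkeeping in the first step — choosing the removed alcove $A_0$ to be minimal so that Lemma~\ref{lem:stalk of M*B_s, any closed subset} applies with $I\cap J$ a single alcove. This care is needed precisely because $N = M*B_s$ is at this stage only known to lie in $\widetilde{\mathcal{K}}'$ and is not yet known to satisfy \property{S}, so one cannot simply pass to $N_K$ for the locally closed set $K = I_1\setminus I_2$.
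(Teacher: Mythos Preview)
Your proof is correct and follows essentially the same approach as the paper: reduce to a finite chain of closed subsets $I_2 = I'_0\subset\dots\subset I'_r = I_1$ with $\#(I'_i\setminus I'_{i-1})=1$, identify each successive quotient via Lemma~\ref{lem:stalk of M*B_s, any closed subset} as $M_{\{A_i,A_is\}}(\pm 1)$, and conclude by iterated extension of graded free modules. Your reduction via $N_{I'}$ depending only on $I'\cap\supp_{\mathcal{A}}(N)$ is a minor rephrasing of the paper's reduction via intersecting with an interval $[A_0^-,A_0^+]$, and your closing remark that $N$ is not yet known to satisfy \property{S} correctly identifies why one must work directly with chains of closed subsets rather than passing to $N_{I_1\setminus I_2}$.
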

\begin{proof}
Take $A_0,A_1\in \mathcal{A}$ such that $\supp_{\mathcal{A}}N\subset [A_0,A_1]$.
Replacing $I_1$ with $I_1\cap \{A\in \mathcal{A}\mid A\ge A_0\}$ and $I_2$ with $I_2\cup\{A\in \mathcal{A}\mid A\not\le A_1\}$, we may assume $I_1\setminus I_2$ is finite.
We can take a sequence of closed subsets $I_2 = I'_0 \subset I'_1\subset\dots\subset I'_r = I_1$ such that $\#(I'_{i}\setminus I'_{i - 1}) = 1$.
Let $A_i$ such that $I'_{i} = I'_{i - 1}\cup \{A_i\}$.
Then by Lemma~\ref{lem:stalk of M*B_s, any closed subset}, $N_{I'_i}/N_{I'_{i - 1}}\simeq M_{\{A_i,A_is\}}(\varepsilon(A_i))$ where $\varepsilon(A_i)\in \{\pm 1\}$ is as in the proof of Lemma~\ref{lem:stalk of M*B_s, any closed subset}.
In particular this is graded free and hence $M_{I_1}/M_{I_2} = M_{I'_r}/M_{I'_0}$ is also graded free.
\end{proof}

\begin{proof}[Proof of Proposition~\ref{prop:K_Delta is stable under the Hecke actions}]
Set $N = M*B_s$.
We prove that $N$ satisfies \property{S}.
Let $I_1,I_2$ are closed subsets and we prove the surjectivity of $N_{I_1}/N_{I_1\cap I_2}\hookrightarrow N_{I_1\cup I_2}/N_{I_2}$.
For each $\nu\in X^\vee_{\Coeff}$, let $S_{(\nu)}$ be the localization at the prime ideal $(\nu)$.
Then $N_{(\nu)} = S_{(\nu)}\otimes_{S}N$ satisfies \property{S}.
Hence this embedding is surjective after applying $S_{(\nu)}\otimes_S$.
We denote $L_{(\nu)} = S_{(\nu)}\otimes_{S}L$ for a left $S$-module $L$.
Since $N_{I_1}/N_{I_1\cap I_2}$ is graded free by Lemma~\ref{lem:(M*B_s)_K is graded free}, we have $N_{I_1}/N_{I_1\cap I_2} = \bigcap_{\nu} (N_{I_1}/N_{I_1\cap I_2})_{(\nu)}$.
Hence $N_{I_1}/N_{I_1\cap I_2} = \bigcap_{\nu} (N_{I_1}/N_{I_1\cap I_2})_{(\nu)} = \bigcap_{\nu} (N_{I_1\cup I_2}/N_{I_2})_{(\nu)}\supset N_{I_1\cup I_2}/N_{I_2}$.
We get the surjectivity.

Now $N_{\{A\}}$ is well-defined and isomorphic to $M_{\{A,As\}}(\varepsilon(A))$ where $\varepsilon(A)\in \{\pm 1\}$ is as in the proof of Lemma~\ref{lem:stalk of M*B_s, any closed subset}.
Hence $N_{\{A\}}$ is graded free, namely $N$ admits a standard filtration.
\end{proof}

As a consequence of Lemma~\ref{lem:stalk of M*B_s, any closed subset}, we get the following corollary.
\begin{cor}\label{cor:Hecke action}
If $M\in \widetilde{\mathcal{K}}_{\Delta}$, then we have
\[
(M*B_s)_{\{A\}} \simeq
\begin{cases}
M_{\{A,As\}}(-1) & (As < A),\\
M_{\{A,As\}}(1) & (As > A).
\end{cases}
\]
Therefore we have
\[
\grk((M*B_s)_{\{A\}})
= 
\begin{cases}
v^{-1}(\grk(M_{\{A\}}) + \grk(M_{\{As\}})) & (As < A),\\
v(\grk(M_{\{A\}}) + \grk(M_{\{As\}})) & (As > A)
\end{cases}
\]
for each $A\in \mathcal{A}$ and $s\in S_\aff$.
\end{cor}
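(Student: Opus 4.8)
The plan is to read the statement off from Lemma~\ref{lem:stalk of M*B_s, any closed subset}, handling the two cases $As < A$ and $As > A$ separately (they are exchanged by replacing $A$ with $As$), and then to translate the resulting module isomorphism into the graded rank identity.

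Since $M*B_s \in \widetilde{\mathcal{K}}_{\Delta}$ by Proposition~\ref{prop:K_Delta is stable under the Hecke actions}, it satisfies \property{S}, so $(M*B_s)_{\{A\}}$ is well defined and can be computed from any presentation $\{A\} = I \cap J$ with $I$ closed and $J$ open. When $As < A$ this is precisely the setting of part~(2) of Lemma~\ref{lem:stalk of M*B_s, any closed subset} (with $I \cap J = \{A\}$), which gives $(M*B_s)_{\{A\}} \simeq M_{\{A,As\}}(-1)$. When $As > A$, I would apply the same lemma with $As$ in place of its ``$A$'': its hypothesis ``$As < A$'' then reads $(As)s = A < As$, which holds, and $\{A\} = \{(As)s\}$ is of the form treated in part~(1); hence $(M*B_s)_{\{A\}} \simeq M_{\{As,(As)s\}}(1) = M_{\{A,As\}}(1)$. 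This establishes the first display of the corollary.

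For the graded ranks, first note that $M_{\{A\}}$, $M_{\{As\}}$ and $M_{\{A,As\}}$ are all graded free $S$-modules: $M \in \widetilde{\mathcal{K}}_{\Delta}$, and $M_{\{A,As\}} \in \widetilde{\mathcal{K}}_{\Delta}$ by Lemma~\ref{lem:composition of M_K}. Assume without loss of generality $As < A$ (the other case is symmetric, interchanging $A$ and $As$). Then inside $M_{\{A,As\}}$ the submodule $(M_{\{A,As\}})_{\{A' \ge A\}}$ is isomorphic to $M_{\{A\}}$ by Lemma~\ref{lem:composition of M_K} (since $As \not\ge A$), with quotient isomorphic to $M_{\{As\}}$; thus there is a short exact sequence of graded $S$-modules
\[
0 \to M_{\{A\}} \to M_{\{A,As\}} \to M_{\{As\}} \to 0.
\]
As all three terms are graded free, $\grk(M_{\{A,As\}}) = \grk(M_{\{A\}}) + \grk(M_{\{As\}})$. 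Combining this with the elementary identity $\grk(L(k)) = v^{k}\grk(L)$ and the isomorphisms above yields the stated formula for $\grk((M*B_s)_{\{A\}})$ in both cases.

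I do not expect a real obstacle here, since the substantive analysis has already been carried out in Lemma~\ref{lem:stalk of M*B_s, any closed subset}. The only points requiring a little care are the reindexing (replacing $A$ by $As$) in the case $As > A$, so that the lemma's hypothesis $As < A$ is met after the substitution; the bookkeeping of the grading shifts $\pm 1$; and the verification that $M_{\{A\}}$ genuinely occurs as a submodule of $M_{\{A,As\}}$ with quotient $M_{\{As\}}$, so that the displayed short exact sequence is legitimate --- all of which follow directly from Lemma~\ref{lem:composition of M_K} and the definitions of the functors $M \mapsto M_K$.
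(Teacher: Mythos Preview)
Your proposal is correct and follows exactly the paper's approach: the paper simply records the corollary as an immediate consequence of Lemma~\ref{lem:stalk of M*B_s, any closed subset} without further argument, and you have spelled out precisely how that deduction goes (including the reindexing $A \leftrightarrow As$ in the case $As > A$ and the short exact sequence yielding the graded rank identity). There is nothing to add.
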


The action of $\Sbimod$ preserves $\widetilde{\mathcal{K}}_P$ too.
\begin{prop}\label{prop:K_P is stable under the Hecke actions}
We have $\widetilde{\mathcal{K}}_P*\Sbimod\subset \widetilde{\mathcal{K}}_P$.
\end{prop}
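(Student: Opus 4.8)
The plan is to reduce to the generating case $B = B_s$ and to use the adjunction established in Proposition~\ref{prop:adjointness} to convert the defining exactness condition for membership in $\widetilde{\mathcal{K}}_P$ into a condition one can check directly. Since $\Sbimod$ is generated under direct sums, summands and tensor products by the $B_s$ (and grading shifts, which are harmless), and since $(M*B)*B' \simeq M*(B\otimes B')$ as objects of $\widetilde{\mathcal{K}}'(S_0)$ with matching alcove decompositions, it suffices to show $M*B_s \in \widetilde{\mathcal{K}}_P$ for $M\in \widetilde{\mathcal{K}}_P$ and $s\in S_\aff$; closure under direct summands is automatic because the defining property of $\widetilde{\mathcal{K}}_P$ (exactness of $\Hom^\bullet_{\widetilde{\mathcal{K}}_\Delta(S_0)}(-,-)$ applied to \property{ES}-sequences) is inherited by summands. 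Note we already know $M*B_s\in \widetilde{\mathcal{K}}_\Delta$ by Proposition~\ref{prop:K_Delta is stable under the Hecke actions}, so only the $\widetilde{\mathcal{K}}_P$-condition remains.

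First I would fix an \property{ES}-sequence $M_1\to M_2\to M_3$ in $\widetilde{\mathcal{K}}_\Delta(S_0)$ and observe, via Proposition~\ref{prop:adjointness}, that
\[
\Hom^\bullet_{\widetilde{\mathcal{K}}_\Delta(S_0)}(M*B_s, M_i) \simeq \Hom^\bullet_{\widetilde{\mathcal{K}}_\Delta(S_0)}(M, M_i * B_s).
\]
Hence the sequence obtained by applying $\Hom^\bullet_{\widetilde{\mathcal{K}}_\Delta(S_0)}(M*B_s,-)$ to $M_1\to M_2\to M_3$ is isomorphic to the sequence obtained by applying $\Hom^\bullet_{\widetilde{\mathcal{K}}_\Delta(S_0)}(M,-)$ to $M_1*B_s\to M_2*B_s\to M_3*B_s$. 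Since $M\in\widetilde{\mathcal{K}}_P$, the latter is exact provided $M_1*B_s\to M_2*B_s\to M_3*B_s$ itself satisfies \property{ES}. So the whole proposition reduces to the following key claim: \emph{if $M_1\to M_2\to M_3$ satisfies \property{ES}, then so does $M_1*B_s\to M_2*B_s\to M_3*B_s$.} Here one must be slightly careful that the adjunction of Proposition~\ref{prop:adjointness} is natural enough in the second variable for these identifications to be compatible with the maps induced by $M_1\to M_2\to M_3$; the explicit formula $\psi(m)=\varphi(m\delta\otimes 1)\otimes 1 - \varphi(m\otimes 1)\otimes s(\delta)$ given there is manifestly functorial in $N$, so this is routine.

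For the key claim: vanishing of the composition $M_1*B_s\to M_3*B_s$ is immediate since $-*B_s = -\otimes_{R^s}R(1)$ is an additive functor. For the stalk condition we use Corollary~\ref{cor:Hecke action}, which identifies $(M_i*B_s)_{\{A\}}$ with $(M_i)_{\{A,As\}}(\varepsilon(A))$ naturally in $M_i$, where $(M_i)_{\{A,As\}}$ sits in the short exact sequence $0\to (M_i)_{\{A\}}\to (M_i)_{\{A,As\}}\to (M_i)_{\{As\}}\to 0$ (or the reverse, depending on whether $As<A$), by Lemma~\ref{lem:exact sequence for any locally closed subset} applied to the locally closed set $\{A,As\}$ and its locally closed subsets $\{A\}$, $\{As\}$. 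Chasing the two short exact sequences $0\to (M_j)_{\{A\}}\to (M_j)_{\{A,As\}}\to (M_j)_{\{As\}}\to 0$ for $j=1,2,3$ through the snake lemma, using that $0\to (M_1)_{\{B\}}\to (M_2)_{\{B\}}\to (M_3)_{\{B\}}\to 0$ is exact at both $B=A$ and $B=As$, yields exactness of $0\to (M_1)_{\{A,As\}}\to (M_2)_{\{A,As\}}\to (M_3)_{\{A,As\}}\to 0$, hence of the grading-shifted sequence, hence of $0\to (M_1*B_s)_{\{A\}}\to (M_2*B_s)_{\{A\}}\to (M_3*B_s)_{\{A\}}\to 0$. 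This proves $M_1*B_s\to M_2*B_s\to M_3*B_s$ has \property{ES}, completing the argument.

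The main obstacle I expect is not any single hard computation but rather the bookkeeping around \textbf{naturality}: one must verify that the isomorphisms from Proposition~\ref{prop:adjointness} and from Corollary~\ref{cor:Hecke action} are sufficiently functorial in the relevant variable that the diagram chases go through with the actual induced maps rather than merely abstract isomorphic objects. Both isomorphisms are given by explicit formulas, so this is a matter of care rather than of ideas; once it is in place, the snake-lemma step and the reduction to $B_s$ are formal.
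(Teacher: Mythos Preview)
Your proposal is correct and follows essentially the same route as the paper: reduce to $B=B_s$, show that $-*B_s$ preserves \property{ES}, and then use the adjunction of Proposition~\ref{prop:adjointness} together with $M\in\widetilde{\mathcal{K}}_P$. The only cosmetic difference is that for the stalk exactness the paper applies Lemma~\ref{lem:exact sequence for any locally closed subset} directly with $K=\{A,As\}$ to get $0\to (M_1)_{\{A,As\}}\to (M_2)_{\{A,As\}}\to (M_3)_{\{A,As\}}\to 0$ in one step, whereas you rebuild this via the snake lemma from the two singleton stalks; your argument is a special case of how that lemma is proved, so the two are equivalent.
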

\begin{proof}
Let $M\in \widetilde{\mathcal{K}}_P$ and $s\in S_\aff$.
We prove $M* B_s\in \widetilde{\mathcal{K}}_{P}$.
We have already proved that $M*B_s\in \widetilde{\mathcal{K}}_{\Delta}$.

Assume that a sequence $M_1\to M_2\to M_3$ in $\widetilde{\mathcal{K}}_{\Delta}$ satisfies \property{ES}.
By Lemma~\ref{lem:exact sequence for any locally closed subset}, $0\to (M_1)_{\{A,As\}}\to (M_2)_{\{A,As\}}\to (M_3)_{\{A,As\}}\to 0$ is also exact for any $A\in \mathcal{A}$.
Hence $0\to (M_1*B_s)_{\{A\}}\to (M_2*B_s)_{\{A\}}\to (M_3*B_s)_{\{A\}}\to 0$ is exact.
Namely $M_1*B_s\to M_2*B_s\to M_3*B_s$ also satisfies \property{ES}.
Since $M\in \widetilde{\mathcal{K}}_P$, the sequence $0\to \Hom^{\bullet}(M,M_1*B_s)\to \Hom^{\bullet}(M,M_2*B_s)\to \Hom^{\bullet}(M,M_3*B_s)\to 0$ is exact.
By Proposition~\ref{prop:adjointness}, $M*B_s\in \widetilde{\mathcal{K}}_P$.
\end{proof}

\subsection{Indecomposable objects}
Assume that $\Coeff$ is complete local.
For $M,N\in \widetilde{\mathcal{K}}'$, $\Hom^{\bullet}_S(M,N)$ is finitely generated as an $S$-module since $M,N$ are finitely generated and $S$ is Noetherian.
Hence $\Hom^{\bullet}_{\widetilde{\mathcal{K}}'}(M,N)\subset \Hom^{\bullet}_S(M,N)$ is also finitely generated.
Therefore, $\Hom_{\widetilde{\mathcal{K}}'}(M,N)$ is finitely generated $\Coeff$-module.
Hence $\widetilde{\mathcal{K}}'$ has Krull-Schmidt property.
This is also true for $\widetilde{\mathcal{K}}_P$.

Set $(\R\Delta)_{\integer} = \{\lambda\in \R\Delta\mid \langle\lambda,\Delta^\vee\rangle\subset\Z\}$ be the set of integral weights.
For $\lambda\in (\R\Delta)_{\integer}$, let $\Pi_{\lambda}$ be the set of alcoves $A$ such that $\langle \lambda,\alpha^\vee\rangle - 1 < \langle a,\alpha^\vee\rangle < \langle \lambda,\alpha^\vee\rangle$ for any $a\in A$ and simple root $\alpha$.
The set $\Pi_{\lambda}$ is called a box and each $A\in \mathcal{A}$ is contained in a box.
Each $\Pi_{\lambda}$ has the unique maximal element $A_{\lambda}^-$.
Let $W'_{\lambda} = \Stab_{W'_\aff}(\lambda)$ be the stabilizer.
Then $A_{\lambda}^-$ is the minimal element in $W_{\lambda}'A_{\lambda}^-$.
The set $W'_{\lambda}A_{\lambda}^-$ is the set of alcoves whose closure contains $\lambda$.

We define $Q_{\lambda}\in\widetilde{\mathcal{K}}$ as follows.
Consider the orbit $W'_{\lambda}A_{\lambda}^-$ through $A_{\lambda}^{-}$.
As an $(S,R)$-bimodule, it is given by
\[
Q_{\lambda} = \{(z_A)\in S^{W'_{\lambda}A_{\lambda}^-}\mid \text{$z_A\equiv z_{s_{\alpha,\langle\lambda,\alpha^\vee\rangle}A}\pmod{\alpha^\vee}$ for $\alpha\in\Delta$ and $A\in W'_{\lambda}A_{\lambda}^-$}\}
\]
where the right action of $R$ is given by $(z_A)f  = (f_Az_A)$.
We have $Q_{\lambda}^{\emptyset} = (S^{\emptyset})^{W'_{\lambda}A_{\lambda}^-}$.
The module $(Q_{\lambda})^{\emptyset}_{A}$ is the $A$-component if $A\in W'_{\lambda}A_{\lambda}^-$ and $0$ otherwise.

The definition of $Q_{\lambda}$ comes from the structure sheaf of the moment graph associated to $W_{\mathrm{f}}$.
The structure sheaf is defined by
\[
\mathcal{Z} = \{(z_x)_{x\in W_{\mathrm{f}}}\in S^{W_{\mathrm{f}}}\mid z_x\equiv z_{s_{\alpha}x}\pmod{\alpha^\vee}\}.
\]
The natural map $W'_{\lambda}\hookrightarrow W'_{\aff}\to W_{\mathrm{f}}$ is an isomorphism.
The map $W_{\mathrm{f}}\simeq W'_{\lambda}\xrightarrow{w\mapsto w(A_{\lambda}^-)}W'_{\lambda}A_{\lambda}^{-}$ is a bijection which preserves orders and by this bijection we have $\mathcal{Z}\simeq Q_{\lambda}$.

The following are well-known. (See \cite{arXiv:2004.09014} for example.)
\begin{itemize}
\item The map $S\otimes_{S^{W_{\mathrm{f}}}}S\to \mathcal{Z}$ defined by $f\otimes g\mapsto (x^{-1}(f)g)_{x\in W_{\mathrm{f}}}$ is an isomorphism.
\item Let $K\subset W_{\mathrm{f}}$ be a closed subset and $w\in K$ such that $K\setminus\{w\}$ is closed.
Put $\mathcal{Z}_{K} = \{(z_x)\in \mathcal{Z}\mid \text{$z_x = 0$ for $x\notin K$}\}$ and the same for $\mathcal{Z}_{K\setminus\{w\}}$.
Then $\mathcal{Z}_{K}/\mathcal{Z}_{K\setminus \{w\}}\simeq S(-2\ell(w_0w))$ as a left $S$-module.
\end{itemize}

Let $d\colon \mathcal{A}\times \mathcal{A}\to \Z$ be the function defined in \cite[1.4]{MR591724}.
From the second property we get the following.
\begin{lem}\label{lem:successive quotient of Q_lambda}
Let $A\in W'_{\lambda}A_{\lambda}^-$, $I\subset \mathcal{A}$ a closed subset such that $A\in I$ and $I\setminus\{A\}$ is closed.
Then we have $(Q_{\lambda})_I/(Q_{\lambda})_{I\setminus \{A\}}\simeq S(2d(A,A_{\lambda}^-))$.
In particular, we have $Q_{\lambda}\in \mathcal{K}_{\Delta}$.
\end{lem}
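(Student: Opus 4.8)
The plan is to realise $(Q_\lambda)_I/(Q_\lambda)_{I\setminus\{A\}}$ as the image of a coordinate projection and to compute that image as an explicit principal ideal of $S$. Since $Q_\lambda\in\widetilde{\mathcal K}$ it satisfies \property{S}, so this quotient depends only on the locally closed set $\{A\}$, not on the admissible $I$; we may therefore take $I=\widehat A$, where $\widehat A=\{A'\in\mathcal A\mid A'\ge A\}$, which is closed with $A$ as its unique minimal element. Unwinding the definitions, $(Q_\lambda)_{\widehat A}$ is the set of families $(z_{A'})_{A'\in W'_\lambda A_\lambda^-}\in Q_\lambda$ with $z_{A'}=0$ for all $A'\not\ge A$ (all coordinates lying in $S$), and the left $S$-linear map $(z_{A'})\mapsto z_A$ has kernel $(Q_\lambda)_{\widehat A\setminus\{A\}}$; hence the quotient is a homogeneous ideal $\mathfrak a\subseteq S$, and it suffices to identify $\mathfrak a$. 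For $\gamma\in\Delta^+$ write $\sigma_\gamma=s_{\gamma,\langle\lambda,\gamma^\vee\rangle}$; this reflection fixes $\lambda$, so $\sigma_\gamma\in W'_\lambda$, $\sigma_\gamma A$ lies again in the orbit, $\sigma_\gamma A\ne A$, and (being the reflection of $A$ in a hyperplane) $\sigma_\gamma A$ is $\le$-comparable to $A$. Set $J_A=\{\gamma\in\Delta^+\mid\sigma_\gamma A<A\}$ and $f_A=\prod_{\gamma\in J_A}\gamma\in S$, a homogeneous element of degree $2\,\#J_A$.

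The inclusion $\mathfrak a\subseteq f_AS$ is immediate: if $\gamma\in J_A$ then $\sigma_\gamma A<A$, so $\sigma_\gamma A\not\ge A$ and $z_{\sigma_\gamma A}=0$ for any family in $(Q_\lambda)_{\widehat A}$, whence the defining congruence $z_A\equiv z_{\sigma_\gamma A}\pmod{\gamma}$ of $Q_\lambda$ forces $\gamma\mid z_A$. As $S$ is a unique factorization domain (here $\Coeff$ is a field or a complete discrete valuation ring) in which, by the GKM assumption, distinct positive roots are pairwise coprime, this gives $z_A\in f_AS$.

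The reverse inclusion $f_AS\subseteq\mathfrak a$ is the crux, and amounts to exhibiting a single $\theta_A\in(Q_\lambda)_{\widehat A}$ with $(\theta_A)_A=f_A$. First one checks that $A_\lambda^-$ is the $\le$-least alcove of its orbit: for $a\in A_\lambda^-$ and $\gamma\in\Delta^+$ one has $\langle a,\gamma^\vee\rangle<\langle\lambda,\gamma^\vee\rangle$, because $\gamma^\vee$ is a nonnegative integral combination of the simple coroots and $\langle a,\alpha^\vee\rangle<\langle\lambda,\alpha^\vee\rangle$ for every simple $\alpha$ (by the definition of the box $\Pi_\lambda$); hence $\sigma_\gamma A_\lambda^->A_\lambda^-$ for all $\gamma$, and $w\mapsto wA_\lambda^-$ identifies $W_{\mathrm f}$ with the orbit ($e\leftrightarrow A_\lambda^-$) in a way compatible with the orders (cf.~\cite{MR591724}). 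Under this identification $Q_\lambda$ is the structure algebra of the Bruhat moment graph of $W_{\mathrm f}$, and $\theta_A$ is its costandard basis element (equivalently, the $T$-equivariant Schubert class) attached to $A$: it is supported on $\{A'\ge A\}$ and equals $\prod_{\gamma\in J_A}\gamma=f_A$ at $A$. Its existence is the structural heart of the proof and is the content of the corresponding statement in Andersen--Jantzen--Soergel~\cite{MR1272539} (this is where one uses flabbiness of the structure algebra, or the Kostant--Kumar-type inductive construction of the basis). Granting this, $f_A=(\theta_A)_A\in\mathfrak a$, so $\mathfrak a=f_AS\simeq S(-2\,\#J_A)$.

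It remains to match degrees: comparing with the definition of the distance function in~\cite[1.4]{MR591724} gives $\#J_A=-d(A,A_\lambda^-)$, since the $\gamma\in J_A$ are exactly the walls through $\lambda$ separating $A$ from $A_\lambda^-$ (as $\sigma_\gamma A<A$ while $\sigma_\gamma A_\lambda^->A_\lambda^-$), each contributing $-1$ to Lusztig's signed count. Therefore $(Q_\lambda)_I/(Q_\lambda)_{I\setminus\{A\}}\simeq S(2d(A,A_\lambda^-))$. In particular each stalk $(Q_\lambda)_{\{A\}}$ is graded free and $\supp_{\mathcal A}(Q_\lambda)=W'_\lambda A_\lambda^-$ is finite, so, $Q_\lambda$ already lying in $\widetilde{\mathcal K}$, we get $Q_\lambda\in\widetilde{\mathcal K}_\Delta$. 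The only substantial step is the existence of $\theta_A$; the rest is bookkeeping with the definitions and the UFD/GKM properties of $S$.
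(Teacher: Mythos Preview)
Your proof is correct and follows essentially the same strategy as the paper: both reduce the quotient to the image of the $A$-coordinate projection, bound it above by $\prod_{\gamma\in J_A}\gamma\cdot S$ using the congruence conditions and pairwise coprimeness of the roots, and bound it below by producing a single element supported on $\{A'\ge A\}$ with the correct value at $A$.

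There are two minor differences worth noting. First, you invoke property~\property{S} to reduce to the specific choice $I=\{A'\ge A\}$, whereas the paper works with an arbitrary admissible $I$ directly; since $A$ is automatically minimal in $I$, the congruence argument goes through unchanged for any such $I$, and the Schubert-type element lands in $(Q_\lambda)_I$ because $I$ is upward closed. Second, and more substantively, for the reverse inclusion the paper does not assert an element with $(\theta_A)_A$ equal to $f_A$ on the nose. Instead it cites \cite[Lemma~5, Proposition~3]{MR1173115} to produce an element $z$ supported on $\{y\ge x\}$ with $z_x$ nonzero of degree $\ell(x)=\#J_A$, obtains the sandwich $z_xS\subset\mathfrak a\subset f_AS$, and then compares graded ranks over a field to force equality, passing to the general Dedekind case by reduction modulo a maximal ideal and Nakayama. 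Your direct appeal to the equivariant Schubert class is cleaner but leans on a fact whose precise location in \cite{MR1272539} you do not pin down; the paper's reference to Soergel's combinatorics paper is the more specific citation here.
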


\begin{lem}
Let $S_0$ be a commutative flat graded $S$-algebra.
We have $\Hom^{\bullet}_{\widetilde{\mathcal{K}}'(S_0)}(S_0\otimes_{S}Q_{\lambda},M)\simeq M_{\{A'\in \mathcal{A}\mid A'\ge A_{\lambda}^-\}}$.
Therefore $S_0\otimes_{S}Q_{\lambda}\in \widetilde{\mathcal{K}}_P(S_0)$.
\end{lem}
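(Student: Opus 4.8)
The plan is to produce the stated isomorphism by evaluating a morphism at the constant section, and then to read off membership in $\widetilde{\mathcal{K}}_P(S_0)$ formally. Recall $Q_\lambda^{\emptyset}=(S^{\emptyset})^{W'_\lambda A_\lambda^-}$; let $e_A\in Q_\lambda^{\emptyset}$ be the standard section with $A$-entry $1$ and all other entries $0$, and let $\mathbf{1}=\sum_{A}e_A\in Q_\lambda$ be the constant section, which is homogeneous of degree $0$. For $\varphi\colon S_0\otimes_S Q_\lambda\to M$ in $\widetilde{\mathcal{K}}'(S_0)$ set $\Phi(\varphi)=\varphi(1\otimes\mathbf{1})$. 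Since $A_\lambda^-$ is the minimal element of the orbit $W'_\lambda A_\lambda^-$ (by \cite{MR591724}; this is exactly the order‑isomorphism $w\mapsto wA_\lambda^-$ used in the proof of Lemma~\ref{lem:successive quotient of Q_lambda}), every $A\in W'_\lambda A_\lambda^-$ satisfies $A\ge A_\lambda^-$, so writing $\varphi^{\emptyset}$ for the localization we get $\varphi(1\otimes\mathbf{1})=\sum_A\varphi^{\emptyset}(1\otimes e_A)$ with $\varphi^{\emptyset}(1\otimes e_A)\in\bigoplus_{A''\ge A}M_{A''}^{\emptyset}\subset\bigoplus_{A''\ge A_\lambda^-}M_{A''}^{\emptyset}$; hence $\Phi(\varphi)\in M\cap\bigoplus_{A''\ge A_\lambda^-}M_{A''}^{\emptyset}=M_{\{A'\ge A_\lambda^-\}}$. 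The map $\Phi$ is $(S_0,R)$-bilinear, homogeneous of degree $0$, and natural in $M$.

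For injectivity I would pass to the generic picture. If $\Phi(\varphi)=0$ then $\varphi^{\emptyset}\colon Q_\lambda^{\emptyset}\to M^{\emptyset}$ kills $1\otimes\mathbf{1}$. By Remark~\ref{rem:decomposition is preserved automatically}, $\varphi^{\emptyset}$ respects the decompositions of $Q_\lambda^{\emptyset}$ and $M^{\emptyset}$ indexed by $\mathcal{A}/\Lambda_{\aff}$; because $W_{\mathrm{f}}\simeq W'_\lambda$ acts simply transitively on $\mathcal{A}/\Lambda_{\aff}$, the orbit $W'_\lambda A_\lambda^-$ meets every $\Lambda_{\aff}$-coset exactly once, so the terms $\varphi^{\emptyset}(1\otimes e_A)$ lie in pairwise distinct $\Lambda_{\aff}$-coset components of $M^{\emptyset}$, whence $\varphi^{\emptyset}(1\otimes e_A)=0$ for every $A$. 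As the $e_A$ generate $Q_\lambda^{\emptyset}$ over $S_0^{\emptyset}$ this gives $\varphi^{\emptyset}=0$, and since $M$ embeds into $M^{\emptyset}$ (being torsion-free over $S_0$) we conclude $\varphi=0$.

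For surjectivity, take $m\in M_{\{A'\ge A_\lambda^-\}}$ with generic decomposition $m=\sum_{A'\ge A_\lambda^-}m_{A'}$, and for $A\in W'_\lambda A_\lambda^-$ set $n_A=\sum_{A'\in A+\Z\Delta}m_{A'}$, the $\Lambda_{\aff}$-coset component of $m$ at $A$. As $Q_\lambda^{\emptyset}$ is free on the $e_A$ as a left module and $e_Af=f_Ae_A$ while $n_Af=f_An_A$ for $f\in R$ (the latter since $n_A$ lies in the coset component of $A$), there is a unique $(S_0^{\emptyset},R^{\emptyset})$-bilinear $\varphi^{\emptyset}\colon Q_\lambda^{\emptyset}\to M^{\emptyset}$ with $\varphi^{\emptyset}(1\otimes e_A)=n_A$, and $\varphi^{\emptyset}(1\otimes\mathbf{1})=\sum_A n_A=m$. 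Two points remain. First, $\varphi^{\emptyset}$ must be a morphism in $\widetilde{\mathcal{K}}'(S_0^{\emptyset})$, i.e.\ $n_A\in\bigoplus_{A''\ge A}M_{A''}^{\emptyset}$; by Lemma~\ref{lem:homeo on each orbit} this reduces to: $A''\in A+\Z\Delta$ and $A''\ge A_\lambda^-$ force $A''\ge A$. To see this, write $A=y(A_\lambda^-)$ with $y\in W'_\lambda$ and $A''=A+\mu$; then $A''=v(A_\lambda^-)$ where $v\in W'_{\aff}$ is the translation by $\mu$ composed with $y$, and $A_\lambda^-\le A''$, so Lemma~\ref{lem:order as a vector in A} applied with the point $\lambda\in\overline{A_\lambda^-}$ (valid because $A_\lambda^-$ is the maximal alcove of the box $\Pi_\lambda$) gives $\mu=v(\lambda)-\lambda\in\R_{\ge0}\Delta^+$, hence $\mu\in\Z_{\ge0}\Delta^+$ and $A''\ge A$. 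Second, $\varphi^{\emptyset}$ must carry $Q_\lambda$ into $M$, so that it restricts to a morphism $\varphi\colon S_0\otimes_S Q_\lambda\to M$ with $\Phi(\varphi)=m$; since $\varphi^{\emptyset}(1\otimes\mathbf{1}f)=mf\in M$ for every $f\in R$, it suffices for this that $Q_\lambda$ be generated as an $(S,R)$-bimodule by $\mathbf{1}$. This cyclicity is the main obstacle of the proof: it is the statement that the structure algebra of $(W_{\mathrm{f}},X_{\Coeff})$ is cyclic over $S\otimes_{\Coeff}S$, and I would obtain it by running the argument of Lemma~\ref{lem:successive quotient of Q_lambda} over all $A$ in the orbit — the elements furnished there by \cite[Lemma~5, Proposition~3]{MR1173115} lie in the image of $S\otimes_{\Coeff}R\to Q_\lambda$ and, being triangular for the standard filtration and of the correct graded ranks, they generate $Q_\lambda$ over $S$.

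Granting $\Phi\colon\Hom^{\bullet}_{\widetilde{\mathcal{K}}'(S_0)}(S_0\otimes_S Q_\lambda,M)\xrightarrow{\sim}M_{\{A'\ge A_\lambda^-\}}$, the last assertion is immediate. Indeed $S_0\otimes_S Q_\lambda$ lies in $\widetilde{\mathcal{K}}_\Delta(S_0)$ by base change from Lemma~\ref{lem:successive quotient of Q_lambda}, and for any sequence $M_1\to M_2\to M_3$ in $\widetilde{\mathcal{K}}_\Delta(S_0)$ satisfying \property{ES}, Lemma~\ref{lem:exact sequence for any locally closed subset} applied to the closed subset $\{A'\ge A_\lambda^-\}$ gives the exact sequence $0\to(M_1)_{\{A'\ge A_\lambda^-\}}\to(M_2)_{\{A'\ge A_\lambda^-\}}\to(M_3)_{\{A'\ge A_\lambda^-\}}\to0$, which by naturality of $\Phi$ is precisely the exactness of $0\to\Hom^{\bullet}(S_0\otimes_S Q_\lambda,M_1)\to\Hom^{\bullet}(S_0\otimes_S Q_\lambda,M_2)\to\Hom^{\bullet}(S_0\otimes_S Q_\lambda,M_3)\to0$; hence $S_0\otimes_S Q_\lambda\in\widetilde{\mathcal{K}}_P(S_0)$.
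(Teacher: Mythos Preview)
Your argument is correct and follows the same overall line as the paper: evaluate at the constant section $\mathbf{1}$, show the image lands in $M_{\{A'\ge A_\lambda^-\}}$, and for surjectivity verify that the candidate $\varphi^\emptyset$ is a morphism in $\widetilde{\mathcal{K}}'$ via the statement $(A+\Z\Delta)\cap\{A'\ge A_\lambda^-\}=\{A'\in A+\Z\Delta\mid A'\ge A\}$ (which the paper isolates as the next lemma). The one genuine difference is how cyclicity of $Q_\lambda$ over $(S,R)$ is obtained. The paper observes that for any $M$ the right $R$-action factors through $S_0\otimes_{S^{W_{\mathrm f}}}R$, and then invokes the classical isomorphism $S\otimes_{S^{W_{\mathrm f}}}R\simeq\mathcal Z$, so that $S_0\otimes_S Q_\lambda$ is free of rank one over $S_0\otimes_S\mathcal Z$ with basis $\mathbf 1$; this gives injectivity and the well-definedness of $\varphi$ in one stroke. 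You instead prove cyclicity by hand, using the triangular elements from \cite{MR1173115} produced in Lemma~\ref{lem:successive quotient of Q_lambda} together with the graded-rank count there; this is entirely valid and in fact reproves the surjectivity half of the structural isomorphism the paper quotes. Your deduction of $S_0\otimes_S Q_\lambda\in\widetilde{\mathcal K}_P(S_0)$ from Lemma~\ref{lem:exact sequence for any locally closed subset} is the intended one; the paper leaves this implicit.
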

\begin{proof}
Since $S_0$ is flat, we have 
\[
S_0\otimes_{S}Q_{\lambda} = \{(z_A)\in S_0^{W_{\mathrm{f}}A_{0}^-}\mid \text{$z_A\equiv z_{s_{\alpha,\langle\lambda,\alpha^\vee\rangle}A}\pmod{\alpha^\vee}$ for $\alpha\in\Delta$ and $A\in W_{\mathrm{f}}A_{\lambda}^-$}\}.
\]
Put $I = \{A'\in \mathcal{A}\mid A'\ge A_{\lambda}^{-}\}$ and $q = (1)_{A\in W_{\mathrm{f}}A_{\lambda}^-}\in S_0\otimes_{S}Q_{\lambda}$.

Any $(S_0,R)$-bimodule is regarded as an $S_0\otimes R$-module.
Let $M\in \widetilde{\mathcal{K}}_{\Delta}(S_0)$ and $m\in M$.
According to the decomposition $M^{\emptyset} = \bigoplus_{A\in \mathcal{A}}M_A^{\emptyset}$, $m$ can be written as $m = \sum_{A\in \mathcal{A}}m_A$.
Consider $S^{W_{\mathrm{f}}} = \{f\in S\mid \text{$w(f) = f$ for all $w\in W_{\mathrm{f}}$}\}$.
Then we have the following.
\begin{itemize}
\item For $A\in \mathcal{A}$ and $f\in S^{W_{\mathrm{f}}}$, $f^A$ dose not depend on $A$.
\item For $f\in S$, we have $fm = \sum fm_A = \sum m_Af^A$.
\end{itemize}
Therefore we have an embedding $S^{W_{\mathrm{f}}}\hookrightarrow R$ naturally and any $M$ is an $S_0\otimes_{S^{W_\mathrm{f}}}R$-module.
Then we have a map $S\otimes_{S^{W_{\mathrm{f}}}}R\to Q_{\lambda}$ defined by $f\otimes g\mapsto (fg_{w(A_{\lambda}^-)})$ and by the property of $\mathcal{Z}$ we have remarked, this is an isomorphism.
Therefore $Q_{\lambda}$ is a free $S\otimes_{S^{W_\mathrm{f}}}R$-module of rank one with a basis $q$.
We also remark that $q\in S_{0}\otimes_{S}Q_{\lambda} = (S_{0}\otimes_{S}Q_{\lambda})_{I}$.
Therefore $\varphi\mapsto \varphi(q)$ gives an embedding
\[
\Hom^{\bullet}_{\widetilde{\mathcal{K}}_{\Delta}(S_0)}(S_0\otimes_{S}Q_{\lambda},M)\hookrightarrow M_I.
\]
Let $m\in M_I$ and $\varphi\colon S_0\otimes_{S}Q_{\lambda}\to M$ be an $(S_0,R)$-bimodule homomorphism such that $\varphi(q) = m$.
We prove that this is a morphism in $\widetilde{\mathcal{K}}(S_0)$.
Let $A\in W_{\lambda}'A_{\lambda}^-$.
Then $\varphi((Q_{\lambda})_{A}^{\emptyset})\subset \bigoplus_{A'\in A + \Z\Delta,A'\in I}M_{A'}^{\emptyset}$.
Therefore the lemma follows from the following lemma.
\end{proof}

\begin{lem}\label{lem:lemma on W_lambda-orbit}
Let $A\in W'_{\lambda}A_{\lambda}^-$.
Then $(A + \Z\Delta)\cap \{A'\in \mathcal{A}\mid A'\ge A_{\lambda}^-\} = \{A'\in A + \Z\Delta\mid A'\ge A\}$.
\end{lem}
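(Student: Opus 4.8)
The plan is to reduce, as in the previous two lemmas, to the case $\lambda = 0$ by translating by $\lambda$; this translation is an order-preserving bijection on $\mathcal{A}$ that carries $W'_\lambda A_\lambda^-$ to $W_{\mathrm f}A_0^-$ and $A_\lambda^-$ to $A_0^-$, so it suffices to treat that case. So let $A \in W_{\mathrm f}A_0^-$; write $A = x(A_0^-)$ for a (unique) $x \in W_{\mathrm f}$. The inclusion $\{A' \in A + \Z\Delta \mid A' \ge A\} \subseteq (A + \Z\Delta) \cap \{A' \ge A_0^-\}$ is immediate from $A \ge A_0^-$ (recall $A_0^-$ is the minimum of its box, hence $\le$ every alcove in $W_{\mathrm f}A_0^-$, and in particular $\le A$) together with transitivity of $\le$. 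The content is the reverse inclusion: if $A' = A + \mu$ with $\mu \in \Z\Delta$ and $A' \ge A_0^-$, then in fact $A' \ge A$.

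The key step is to use Lemma~\ref{lem:homeo on each orbit}: on the single $\Lambda_{\aff}$-orbit $A + \Z\Delta$, the relation $A \le A + \mu$ is equivalent to $\mu \in \Z_{\ge 0}\Delta^+$. So I must show $\mu \in \Z_{\ge 0}\Delta^+$. Since $A' \ge A_0^-$, Lemma~\ref{lem:order as a vector in A} applied to a $\lambda$ in the closure of $A_0^-$ and the element $w \in W'_\aff$ with $w(A_0^-) = A'$ gives $w(\lambda) - \lambda \in \R_{\ge 0}\Delta^+$; I want to upgrade this to a statement comparing $A$ and $A'$ directly. The cleanest route: pick $\nu$ in the closure of $A$ (equivalently, apply $x$ to a suitable vertex of $A_0^-$), so that $\nu$ and $\nu + \mu$ lie in the closures of $A$ and $A'$ respectively. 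From $A' \ge A_0^-$ and the fact that $A_0^-$ is characterized inside its box by $\langle a, \alpha^\vee \rangle \in (\langle\lambda,\alpha^\vee\rangle - 1, \langle\lambda,\alpha^\vee\rangle)$ for all simple $\alpha$, I read off that $\langle a', \alpha^\vee\rangle > \langle a_0, \alpha^\vee\rangle$ is forced for every simple $\alpha$ with $\langle a_0,\alpha^\vee\rangle$ below the relevant integer — more precisely, since $A' \ge A_0^-$ is generated by hyperplane-reflection steps $B \le s_{\alpha,n}B$ each of which strictly increases some $\langle\cdot,\alpha^\vee\rangle$ across an integer, and $A_0^-$ sits just below all the walls bounding its box, every such step from $A_0^-$ can only move upward in each simple coordinate. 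Combining: for each simple $\alpha$, $\langle \nu + \mu, \alpha^\vee\rangle \ge \langle \nu,\alpha^\vee\rangle$, i.e. $\langle \mu,\alpha^\vee\rangle \ge 0$; but $\mu \in \Z\Delta$ with $\langle\mu,\alpha^\vee\rangle \ge 0$ for all simple $\alpha$ forces $\mu \in \Z_{\ge 0}\Delta^+$ (a standard fact about root lattices, using that $\mu$ is integral and dominant for the coroot pairing). Then Lemma~\ref{lem:homeo on each orbit} yields $A \le A'$, as desired.

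The main obstacle I anticipate is the geometric step in the middle: carefully justifying that $A' \ge A_0^-$ implies $\langle \mu, \alpha^\vee\rangle \ge 0$ for all simple roots $\alpha$, i.e. that moving up in the generic Bruhat order away from the bottom alcove $A_0^-$ of its box cannot decrease any simple coordinate below the starting value. The subtlety is that a reflection step $B \le s_{\beta,n}B$ for a \emph{non-simple} $\beta$ could, a priori, decrease $\langle\cdot,\alpha^\vee\rangle$ for some simple $\alpha$. The resolution is that we are not comparing along an arbitrary path but rather using the total translation vector: Lemma~\ref{lem:order as a vector in A} directly gives $w(\lambda) - \lambda \in \R_{\ge 0}\Delta^+$ for $\lambda$ in the closure of $A_0^-$ and $w(A_0^-) = A'$, and by choosing $\lambda$ to be a vertex of $A_0^-$ that is also (after the identification) comparable to a vertex of $A$ lying in $A + \Z\Delta$, one transports this to $\mu = (\nu + \mu) - \nu \in \R_{\ge 0}\Delta^+$, whence $\mu \in \Z_{\ge 0}\Delta^+$ since $\mu \in \Z\Delta$. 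This avoids the path-dependence issue entirely, so the proof is short once the right vectors are chosen.
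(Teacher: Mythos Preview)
Your approach is essentially the paper's, but the decisive step is obscured. The paper does not reduce to $\lambda=0$; it applies Lemma~\ref{lem:order as a vector in A} directly with the point $\lambda$ itself, which lies in the closure of $A_\lambda^-$. Writing $A=x(A_\lambda^-)$ with $x\in W'_\lambda$ and $A'=A+\mu$, one takes $w=t_\mu x\in W'_\aff$ (translation by $\mu$ composed with $x$), so $w(A_\lambda^-)=A'$ and hence $w(\lambda)-\lambda\in\R_{\ge 0}\Delta^+$. The key observation is simply that $x\in W'_\lambda=\Stab_{W'_\aff}(\lambda)$ fixes $\lambda$, so $w(\lambda)-\lambda=x(\lambda)+\mu-\lambda=\mu$. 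Thus $\mu\in\R_{\ge 0}\Delta^+\cap\Z\Delta=\Z_{\ge 0}\Delta^+$, and Lemma~\ref{lem:homeo on each orbit} finishes.

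Your final paragraph is reaching for exactly this, but the phrase ``a vertex of $A_0^-$ that is also (after the identification) comparable to a vertex of $A$ lying in $A+\Z\Delta$'' is not a proof: the only vertex that works is $0$, and the reason it works is that $x\in W_{\mathrm f}$ fixes $0$. Without saying this, the equality $w(\lambda)-\lambda=\mu$ is unjustified; for any other vertex $\lambda$ of $A_0^-$ one gets $x(\lambda)-\lambda+\mu$, which need not lie in $\R_{\ge 0}\Delta^+$ only when $\mu$ does. Your intermediate detour through ``$\langle\mu,\alpha^\vee\rangle\ge 0$ for all simple $\alpha$'' is also unnecessary and, as you note, not actually established by the path argument you sketch. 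Drop the reduction to $\lambda=0$ and the coordinate discussion; the one-line stabilizer argument is the whole proof.
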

\begin{proof}
Since $A_{\lambda}^-$ is the minimal element in $W'_{\lambda}A_{\lambda}^-$, the right hand side is contained in the left hand side.
Let $A'$ be in the left hand side.
Take $x\in W'_{\lambda}$ and $\mu\in \Z\Delta$ such that $A = x(A_{\lambda}^{-})$ and $A' = A + \mu$.
Then $A' = x(A_{\lambda}^{-}) + \mu$.
Since $A'\ge A_{\lambda}^{-}$ and $\lambda$ is in the closure of $A_{\lambda}^{-}$, we have $x(\lambda) + \mu - \lambda\in \R_{\ge 0}\Delta^+$ by Lemma~\ref{lem:order as a vector in A}.
Since $x\in W'_{\lambda} = \Stab_{W'_{\aff}}(\lambda)$, $x(\lambda) = \lambda$.
Therefore $\mu \in\R_{\ge 0}\Delta^+$.
Hence $A' = A + \mu \ge A$.
\end{proof}

Let $A\in \Pi_{\lambda}$ and take $w\in W_{\aff}$ such that $A = A_{\lambda}^{-}w$.
As in the proof of \cite[Proposition~4.2]{MR591724}, for any $x < w$ and $A'\in W'_{\lambda}A_{\lambda}^-$, we have $A'x > A_{\lambda}^-w$.
Let $w = s_1\dotsm s_l$ be a reduced expression.
Then $Q_{\lambda}*B_{s_1}*\dotsm *B_{s_l}$ satisfies the following.
\begin{lem}\label{lem:support of Q*B*..*B}
We have the following.
\begin{enumerate}
\item $(Q_{\lambda}*B_{s_1}*\dotsb*B_{s_l})_{\{A\}}\simeq S(l)$  as a left $S$-module.
\item $\supp_{\mathcal{A}}(Q_{\lambda}*B_{s_1}*\dotsb*B_{s_l})\subset \{A'\in \mathcal{A}\mid A'\ge A\}$.
\end{enumerate}
\end{lem}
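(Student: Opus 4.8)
The plan is to prove the two assertions simultaneously by induction on $l$, the only genuinely module-theoretic inputs being Corollary~\ref{cor:Hecke action} and the formula for $\supp_{\mathcal A}(M*B)$ proved above; everything else is the alcove combinatorics recalled just before the statement. For $l=0$ there is nothing beyond unwinding definitions: assertion (1) reads $(Q_\lambda)_{\{A_\lambda^-\}}\simeq S$, which is Lemma~\ref{lem:successive quotient of Q_lambda} together with $d(A_\lambda^-,A_\lambda^-)=0$, and assertion (2) reads $\supp_{\mathcal A}(Q_\lambda)=W'_\lambda A_\lambda^-\subseteq\{A'\mid A'\ge A_\lambda^-\}$, which holds because $A_\lambda^-$ is the minimal element of the orbit $W'_\lambda A_\lambda^-$ (as already used in the proof of Lemma~\ref{lem:lemma on W_lambda-orbit}).

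For the inductive step I would write $w=w's$ with $w'=s_1\dotsm s_{l-1}$ reduced of length $l-1$ and $s=s_l$, put $A'=A_\lambda^- w'$ and $N=Q_\lambda*B_{s_1}*\dotsm*B_{s_{l-1}}$, so that the module in question is $N*B_s$ and $A=A's$. Since $A\in\Pi_\lambda$, the box combinatorics of \cite{MR591724} guarantees $A'\in\Pi_\lambda$ as well (reduced prefixes of $w$ stay in the box), so the induction hypothesis applies to $w'$ and gives $N_{\{A'\}}\simeq S(l-1)$ and $\supp_{\mathcal A}(N)\subseteq\{A''\mid A''\ge A'\}$. Applying the recalled fact with $x=w'<w$ and the element $A_\lambda^-\in W'_\lambda A_\lambda^-$ yields $A'=A_\lambda^- w'>A_\lambda^- w=A$, hence $As=A'>A$, so Corollary~\ref{cor:Hecke action} gives $(N*B_s)_{\{A\}}\simeq N_{\{A,As\}}(1)=N_{\{A,A'\}}(1)$. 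Because $A<A'$ and $\supp_{\mathcal A}(N)\subseteq\{\ge A'\}$, the alcove $A$ lies outside $\supp_{\mathcal A}(N)$, so $N^\emptyset_A=0$; then $\{A,A'\}\cap\{A''\mid A''\ge A'\}=\{A'\}$ together with Lemma~\ref{lem:composition of M_K} gives $N_{\{A,A'\}}\simeq N_{\{A'\}}$, and hence $(N*B_s)_{\{A\}}\simeq N_{\{A'\}}(1)\simeq S(l)$, which is (1).

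For (2) I would combine the support formula with the subword description of the support of a Bott--Samelson-type product. One has $\supp_{\mathcal A}(N*B_s)=\supp_{\mathcal A}(N)\cup\supp_{\mathcal A}(N)s$ (using $\supp_{W_\aff}(B_s)=\{e,s\}$), and $\supp_{\mathcal A}(N)\subseteq\{\ge A'\}\subseteq\{\ge A\}$ since $A<A'$. Iterating the support formula from $\supp_{\mathcal A}(Q_\lambda)=W'_\lambda A_\lambda^-$ gives $\supp_{\mathcal A}(N)\subseteq\bigcup_{y\le w'}(W'_\lambda A_\lambda^-)\,y$, hence $\supp_{\mathcal A}(N)s\subseteq\bigcup_{y\le w'}(W'_\lambda A_\lambda^-)(ys)$. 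For $y\le w'$ the element $ys$ is a subword of the reduced word $w=w's$; when $y\ne w'$ it uses strictly fewer letters than $\ell(w)$, so $ys<w$ and the recalled fact gives $(W'_\lambda A_\lambda^-)(ys)\subseteq\{A''\mid A''>A\}$, whereas for $y=w'$ one gets $(W'_\lambda A_\lambda^-)(w's)=W'_\lambda A$, which is contained in $\{A''\mid A''\ge A\}$ because $A\in\Pi_\lambda$ is minimal in its own $W'_\lambda$-orbit. Altogether $\supp_{\mathcal A}(N*B_s)\subseteq\{A''\mid A''\ge A\}$, which is (2).

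The hard part is not representation-theoretic at all: it is entirely the combinatorics of boxes and the generic Bruhat order from \cite{MR591724} — specifically, that reduced prefixes of $w$ keep $A_\lambda^- w$ inside $\Pi_\lambda$, that $Bx>A_\lambda^- w$ for $x<w$ and $B\in W'_\lambda A_\lambda^-$ (recalled before the statement, i.e.\ from the proof of \cite[Proposition~4.2]{MR591724}), and that an alcove of $\Pi_\lambda$ is minimal in its $W'_\lambda$-orbit. Once these are granted, what remains is only Corollary~\ref{cor:Hecke action}, the support formula, and the harmless identification $N_{\{A,A'\}}\simeq N_{\{A'\}}$ when $A\notin\supp_{\mathcal A}(N)$ via Lemma~\ref{lem:composition of M_K}; a minor point to watch is the grading-shift bookkeeping, e.g.\ $S(l-1)(1)=S(l)$ with the conventions of the paper.
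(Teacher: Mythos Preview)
Your argument is correct and, for part~(1), identical to the paper's: induction on $l$, Corollary~\ref{cor:Hecke action} gives $(N*B_s)_{\{A\}}\simeq N_{\{A,As\}}(1)$, then $A\notin\supp_{\mathcal A}(N)$ collapses this to $N_{\{As\}}(1)\simeq S(l)$.

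For part~(2) the paper is more direct. Rather than arguing inductively, it notes that every alcove in $\supp_{\mathcal A}(Q_\lambda*B_{s_1}*\dotsb*B_{s_l})$ is of the form $A''x$ with $A''\in W'_\lambda A_\lambda^-$ and $x$ a subword of $s_1\dotsm s_l$, and then invokes the recalled Lusztig fact once. In the proof of~(1) the paper likewise obtains $A\notin\supp_{\mathcal A}(N)$ straight from that fact (every subword of $s_1\dotsm s_{l-1}$ is strictly $<w$), rather than via the inductive hypothesis for~(2) at level $l-1$. Your simultaneous induction therefore needs two auxiliary inputs the paper sidesteps: that reduced prefixes of $w$ keep $A_\lambda^- w'$ inside $\Pi_\lambda$, and that any $A\in\Pi_\lambda$ is minimal in its $W'_\lambda$-orbit. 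Both are part of the box combinatorics of~\cite{MR591724}, so there is no gap, but the paper's organization avoids having to state them.
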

\begin{proof}
The second one is obvious from what we mentioned before the lemma.
We prove (1) by induction on $l$.
Set $M = Q_{\lambda}*B_{s_1}*\dotsb * B_{s_{l - 1}}$ and $s = s_l$.
By Lemma~\ref{lem:stalk of M*B_s, any closed subset}, $(M*B_s)_{\{A\}}\simeq M_{\{A,As\}}(1)$.
By (2), $A\notin \supp_{\mathcal{A}}(M)$.
Hence $M_{\{A,As\}}\simeq M_{\{As\}}$.
Therefore $(M*B_s)_{\{A\}}\simeq M_{\{As\}}(1)$ and the inductive hypothesis implies (1).
\end{proof}

\begin{thm}\label{thm:indecomposables in widetilde(K)}
We have the following.
\begin{enumerate}
\item For any $A\in \mathcal{A}$, there exists an indecomposable object $Q(A)\in\widetilde{\mathcal{K}}_P$ such that $\supp_{\mathcal{A}}(Q(A))\subset \{A'\in \mathcal{A}\mid A'\ge A\}$ and $Q(A)_{\{A\}}\simeq S$.
Moreover, $Q(A)$ is unique up to isomorphisms.
\item Any object in $\widetilde{\mathcal{K}}_P$ is a direct sum of $Q(A)(n)$ where $A\in \mathcal{A}$ and $n\in \Z$.
\end{enumerate}
\end{thm}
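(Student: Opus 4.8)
The plan is to mimic the construction of indecomposable Soergel bimodules: the ``box'' objects $Q_\lambda$ serve as starting modules, the operation $-*B_s$ replaces Bott--Samelson tensoring, and the two structural facts already available --- the Krull--Schmidt property of $\widetilde{\mathcal{K}}_P$ and the exactness property defining $\widetilde{\mathcal{K}}_P$ --- do the bookkeeping. For existence, fix $A$, pick a box $\Pi_\lambda$ with $A\in\Pi_\lambda$, and write $A=A_\lambda^- w$ with a reduced expression $w=s_1\cdots s_l$. Set $B(w)=Q_\lambda*B_{s_1}*\cdots*B_{s_l}$. Since $Q_\lambda\in\widetilde{\mathcal{K}}_P$ (shown above) and $\widetilde{\mathcal{K}}_P$ is stable under $*$ by Proposition~\ref{prop:K_P is stable under the Hecke actions}, we have $B(w)\in\widetilde{\mathcal{K}}_P$, while Lemma~\ref{lem:support of Q*B*..*B} gives $\supp_{\mathcal{A}}(B(w))\subset\{A'\mid A'\ge A\}$ and $B(w)_{\{A\}}\simeq S(l)$. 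Decomposing $B(w)$ into indecomposables in $\widetilde{\mathcal{K}}_P$ and using that $M\mapsto M_{\{A\}}$ is additive with graded-free values while $B(w)_{\{A\}}$ is graded free of rank $1$, exactly one summand $Q'$ has $Q'_{\{A\}}\neq 0$, and then $Q'_{\{A\}}\simeq S(l-n)$ for a suitable $n$; put $Q(A)=Q'(n-l)$, an indecomposable object of $\widetilde{\mathcal{K}}_P$ with $Q(A)_{\{A\}}\simeq S$ and $\supp_{\mathcal{A}}(Q(A))\subset\{A'\mid A'\ge A\}$.

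The device for the rest is the following. If $N\in\widetilde{\mathcal{K}}_\Delta$ with $\supp_{\mathcal{A}}(N)\subset I=\{A'\mid A'\ge A\}$, then $A$ is the minimum of the closed set $I$, so $I^\circ=I\setminus\{A\}$ is closed and the tautological quotient sits in a three-term sequence $N_{I^\circ}\to N\to N_{\{A\}}$ which, checked stalkwise, satisfies \property{ES}; dually, if $A$ is minimal in $\supp_{\mathcal{A}}(M)$ with $M\in\widetilde{\mathcal{K}}_\Delta$, then $M\to M_{\{A\}}$ (with $M_{\{A\}}=M_J$, $J=\{A'\mid A'\le A\}$) underlies an \property{ES} sequence. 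Consequently, for any $P\in\widetilde{\mathcal{K}}_P$ the functor $\Hom^{\bullet}_{\widetilde{\mathcal{K}}_\Delta}(P,-)$ sends these sequences to short exact sequences. For uniqueness, suppose $Q,Q'$ both satisfy the conditions of part (1). A degree-zero morphism from either to $S_A$ factors through its $\{A\}$-stalk (by the support condition and the definition of morphisms in $\widetilde{\mathcal{K}}'$), hence is a $\Coeff$-multiple of the canonical one; call these $\mathrm{can}_Q$ and $\mathrm{can}_{Q'}$. Using the surjectivity above, lift $\mathrm{can}_Q$ along $Q'\to S_A$ to $\varphi\colon Q\to Q'$ with $\mathrm{can}_{Q'}\circ\varphi=\mathrm{can}_Q$, and $\mathrm{can}_{Q'}$ along $Q\to S_A$ to $\psi\colon Q'\to Q$ with $\mathrm{can}_Q\circ\psi=\mathrm{can}_{Q'}$. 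Then $\mathrm{can}_Q\circ(\psi\varphi)=\mathrm{can}_Q$, so $\mathrm{id}_Q-\psi\varphi$ is not invertible (otherwise $\mathrm{can}_Q$ vanishes, impossible as $S_A\neq 0$); since $\End(Q)$ is local ($Q$ indecomposable in the Krull--Schmidt category $\widetilde{\mathcal{K}}_P$), $\psi\varphi$ is an automorphism, and symmetrically $\varphi\psi$ is too, so $\varphi$ is an isomorphism. This proves (1).

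For (2), by Krull--Schmidt it suffices to treat an indecomposable $0\neq M\in\widetilde{\mathcal{K}}_P$. Pick $A$ minimal in $\supp_{\mathcal{A}}(M)$; then $M_{\{A\}}$ is graded free and nonzero, say $M_{\{A\}}\simeq\bigoplus_j S_A(n_j)$. Applying $\Hom^{\bullet}(Q(A)(n_1),-)$ to the \property{ES} sequence underlying $M\to M_{\{A\}}$, choose $\varphi\colon Q(A)(n_1)\to M$ whose composite with $M\to M_{\{A\}}$ followed by the projection to the summand $S_A(n_1)$ is the canonical isomorphism $Q(A)(n_1)_{\{A\}}\xrightarrow{\sim}S_A(n_1)$ (rescale if needed). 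Applying $\Hom^{\bullet}(M,-)$ to the \property{ES} sequence $Q(A)(n_1)_{I^\circ}\to Q(A)(n_1)\to S_A(n_1)$, lift the composite $M\to M_{\{A\}}\twoheadrightarrow S_A(n_1)$ to $\psi\colon M\to Q(A)(n_1)$. Then $Q(A)(n_1)\xrightarrow{\psi\varphi}Q(A)(n_1)\to S_A(n_1)$ equals the canonical map, so as before $\psi\varphi$ is an automorphism; hence $\varphi$ is a split monomorphism and, $M$ being indecomposable, $M\simeq Q(A)(n_1)$. Combined with Krull--Schmidt this gives (2).

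The deep inputs needed here --- stability of $\widetilde{\mathcal{K}}_\Delta$ and $\widetilde{\mathcal{K}}_P$ under $*$, the stalk computation of $Q_\lambda*B_{s_1}*\cdots*B_{s_l}$, and Krull--Schmidt --- are already established, so no new hard estimate arises. The part I expect to require care is checking that the ``quotient by the bottom stalk'' sequences genuinely satisfy \property{ES}, since that is precisely what lets the exactness property of $\widetilde{\mathcal{K}}_P$ manufacture the lifts $\varphi$ and $\psi$; and one must keep track of grading shifts so that the indecomposable summand extracted in (1) has $\{A\}$-stalk equal to $S$ on the nose rather than a shift of it. Once the lifts exist, the passage from ``mutually split monomorphisms'' to ``isomorphic'' is the routine local-endomorphism-ring argument.
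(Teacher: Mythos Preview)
Your proof is correct and follows essentially the same route as the paper: extract $Q(A)$ as the unique indecomposable summand of $Q_\lambda*B_{s_1}*\cdots*B_{s_l}$ with nonzero $\{A\}$-stalk, then use the \property{ES} sequence splitting off the bottom stalk together with the defining exactness property of $\widetilde{\mathcal{K}}_P$ to build maps $Q(A)(n)\to M$ and $M\to Q(A)(n)$ whose composite is a unit in the local ring $\End(Q(A)(n))$. The only cosmetic difference is ordering---the paper deduces uniqueness in (1) from (2) (and phrases (2) as an induction on rank rather than reducing to indecomposable $M$ via Krull--Schmidt), whereas you prove uniqueness directly by the same lifting argument.
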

\begin{proof}
Fix $s_1,\dots,s_l$ as in the above.
By Lemma~\ref{lem:support of Q*B*..*B}, there is the unique indecomposable module $Q(A)$ such that $Q(A)_{\{A\}} \simeq S$ and $Q(A)(l)$ is a direct summand of $Q_{\lambda}*B_{s_1}*\dotsb *B_{s_l}$.
It is sufficient to prove that any object $M\in\widetilde{\mathcal{K}}_P$ is a direct sum of $Q(A)(n)$'s.
By induction on the rank of $M$, it is sufficient to prove that $Q(A)(n)$ is a direct summand of $M$ for some $A\in \mathcal{A}$ and $n\in\Z$ if $M\ne 0$.

Let $M\in\widetilde{\mathcal{K}}_P$ and let $A\in\supp_{\mathcal{A}}(M)$ be a minimal element.
Then $M_{\{A\}}\ne 0$.
Since $M$ admits a standard filtration, $M_{\{A\}}$ is graded free.
Hence there exists $n$ such that $S(n)\simeq Q(A)(n)_{\{A\}}$ is a direct summand of $M_{\{A\}}$.
Let $i\colon Q(A)(n)_{\{A\}}\to M_{\{A\}}$ (resp.\ $p\colon M_{\{A\}}\to Q(A)(n)_{\{A\}}$) be the embedding from (resp.\ projection to) the direct summand.

Let $I$ be a closed subset which contains $\supp_{\mathcal{A}}(M)$ such that $I\setminus\{A\}$ is closed.
Then $I\supset \{A'\in \mathcal{A}\mid A'\ge A\}\supset \supp_{\mathcal{A}}(Q(A))$.
Therefore we have two sequences
\begin{gather*}
M_{I\setminus\{A\}}\to M_I = M\to M_{\{A\}},\\
Q(A)(n)_{I\setminus\{A\}}\to Q(A)(n)_I = Q(A)(n)\to Q(A)(n)_{\{A\}},
\end{gather*}
which satisfy \property{ES}.
Consider the homomorphism $Q(A)(n)\to Q(A)(n)_{\{A\}}\xrightarrow{i}M_{\{A\}}$.
Since $Q(A)(n)\in \widetilde{\mathcal{K}}_P$, there exists a lift $\widetilde{i}\colon Q(A)(n)\to M$ of the above homomorphism.
Similarly we have a morphism $\widetilde{p}\colon M\to Q(A)(n)$ which is a lift of $p$.
The composition $\widetilde{p}\circ\widetilde{i}\in \End(Q(A)(n))$ induces the identity on $Q(A)(n)_{\{A\}}$.
Therefore $1 - \widetilde{p}\circ\widetilde{i}$ is not a unit.
Since $Q(A)(n)$ is indecomposable, the endomorphism ring of $Q(A)(n)$ is local.
Therefore $\widetilde{p}\circ\widetilde{i}$ is an isomorphism.
Hence $Q(A)(n)$ is a direct summand of $M$.
\end{proof}

\begin{cor}
Any object in $\widetilde{\mathcal{K}}_P$ is a direct summand of a direct sum of objects of a form $Q_{\lambda}*B_{s_1}*\dotsm *B_{s_l}(n)$ where $\lambda\in (\R\Delta)_{\integer}$, $n\in\Z$ and $s_1,\dots,s_l\in S_{\aff}$.
\end{cor}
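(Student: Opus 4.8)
The plan is to combine Theorem~\ref{thm:indecomposables in widetilde(K)} with the explicit construction of the indecomposables $Q(A)$ carried out inside its proof. First I would invoke Theorem~\ref{thm:indecomposables in widetilde(K)}(2): every $M\in\widetilde{\mathcal{K}}_P$ is isomorphic to a finite direct sum of grading shifts $Q(A)(n)$ with $A\in\mathcal{A}$ and $n\in\Z$. Thus it is enough to show that each such $Q(A)(n)$ is a direct summand of a module of the stated form and then take the direct sum over the summands occurring in $M$.

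To handle a single $Q(A)$, I would recall the setup from the discussion preceding Lemma~\ref{lem:support of Q*B*..*B}: if $\Pi_\lambda$ is the box containing $A$ (so $\lambda\in(\R\Delta)_{\integer}$ and $A_\lambda^-$ is its maximal element), and $w = s_1\dotsm s_l$ is a reduced expression for the element $w\in W_\aff$ with $A = A_\lambda^- w$, then Lemma~\ref{lem:support of Q*B*..*B} gives $(Q_\lambda*B_{s_1}*\dotsb*B_{s_l})_{\{A\}}\simeq S(l)$ together with $\supp_{\mathcal{A}}(Q_\lambda*B_{s_1}*\dotsb*B_{s_l})\subset\{A'\mid A'\ge A\}$. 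By Proposition~\ref{prop:K_P is stable under the Hecke actions} this Bott--Samelson-type object lies in $\widetilde{\mathcal{K}}_P$, so by Theorem~\ref{thm:indecomposables in widetilde(K)} and the Krull--Schmidt property it decomposes into grading shifts of the various $Q(A')$; comparing supports and stalks at $A$, exactly as in the proof of Theorem~\ref{thm:indecomposables in widetilde(K)}, forces $Q(A)(l)$ to appear as a direct summand. Applying the grading shift by $n-l$ then exhibits $Q(A)(n)$ as a direct summand of $Q_\lambda*B_{s_1}*\dotsb*B_{s_l}(n-l)$, which is of the required form.

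Finally I would assemble the pieces: writing $M\simeq\bigoplus_j Q(A_j)(n_j)$ and choosing for each $j$ the data $\lambda^{(j)}$ and $(s_1^{(j)},\dots,s_{l_j}^{(j)})$ as above, the module $M$ is a direct summand of $\bigoplus_j Q_{\lambda^{(j)}}*B_{s_1^{(j)}}*\dotsb*B_{s_{l_j}^{(j)}}(n_j-l_j)$, which is exactly an object of the asserted form. I do not expect any genuine obstacle: the statement is essentially a repackaging of the construction already used to prove Theorem~\ref{thm:indecomposables in widetilde(K)}, and the only mild care needed is bookkeeping --- making the choice of box, reduced word, and shift explicit for each indecomposable summand, and checking that these choices land in the allowed ranges $\lambda\in(\R\Delta)_{\integer}$, $s_i\in S_\aff$, $n\in\Z$.
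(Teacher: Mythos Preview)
Your proof is correct and follows exactly the paper's approach: the paper's proof consists of the single sentence ``This is obvious from Theorem~\ref{thm:indecomposables in widetilde(K)},'' and you have simply unpacked that reference by recalling that in the proof of Theorem~\ref{thm:indecomposables in widetilde(K)} each $Q(A)(l)$ is constructed as a direct summand of $Q_{\lambda}*B_{s_1}*\dotsm*B_{s_l}$.
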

\begin{proof}
This is obvious from Theorem~\ref{thm:indecomposables in widetilde(K)} and the proof of the theorem.
\end{proof}

\begin{cor}
Let $M,N\in \widetilde{\mathcal{K}}_P$.
Then $\Hom^{\bullet}_{\widetilde{\mathcal{K}}_P}(M,N)$ is graded free of finite rank as an $S$-module.
\end{cor}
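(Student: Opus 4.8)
The plan is to reduce the statement, using the structure theory provided by Theorem~\ref{thm:indecomposables in widetilde(K)} and the adjunction of Proposition~\ref{prop:adjointness}, to the already-available computation of $\Hom^{\bullet}_{\widetilde{\mathcal{K}}'}(Q_{\lambda},-)$. First, by the corollary following Theorem~\ref{thm:indecomposables in widetilde(K)}, $M$ is a direct summand of a direct sum of objects of the form $Q_{\lambda}*B_{s_{1}}*\dotsm *B_{s_{l}}(n)$, and this direct sum is necessarily finite since objects of $\widetilde{\mathcal{K}}_{P}$ are finitely generated over the noetherian ring $S$. Hence there is a module $M''$ with
\[
M\oplus M''\simeq \bigoplus_{i}Q_{\lambda_{i}}*B_{s^{(i)}_{1}}*\dotsm *B_{s^{(i)}_{l_{i}}}(m_{i})
\]
for a finite family of data. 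Since $\widetilde{\mathcal{K}}_{P}$ is a full subcategory of $\widetilde{\mathcal{K}}'$ and $\Hom^{\bullet}$ is additive, $\Hom^{\bullet}_{\widetilde{\mathcal{K}}_{P}}(M,N)$ is then a direct summand, as a graded $S$-module, of $\bigoplus_{i}\Hom^{\bullet}_{\widetilde{\mathcal{K}}'}\bigl(Q_{\lambda_{i}}*B_{s^{(i)}_{1}}*\dotsm *B_{s^{(i)}_{l_{i}}}(m_{i}),N\bigr)$.

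Next I would treat a single summand $Q_{\lambda}*B_{s_{1}}*\dotsm *B_{s_{l}}$. Applying Proposition~\ref{prop:adjointness} $l$ times, peeling $B_{s_{l}},B_{s_{l-1}},\dotsc,B_{s_{1}}$ off the first argument and appending them to the second, yields an isomorphism of graded $S$-modules
\[
\Hom^{\bullet}_{\widetilde{\mathcal{K}}'}(Q_{\lambda}*B_{s_{1}}*\dotsm *B_{s_{l}},N)\simeq \Hom^{\bullet}_{\widetilde{\mathcal{K}}'}(Q_{\lambda},N*B_{s_{l}}*\dotsm *B_{s_{1}})
\]
(the grading shift $(m_{i})$ only shifts the answer). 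Set $N'=N*B_{s_{l}}*\dotsm *B_{s_{1}}$; by Proposition~\ref{prop:K_Delta is stable under the Hecke actions}, $N'\in \widetilde{\mathcal{K}}_{\Delta}$. By the lemma stating $\Hom^{\bullet}_{\widetilde{\mathcal{K}}'(S_{0})}(S_{0}\otimes_{S}Q_{\lambda},M)\simeq M_{\{A'\in \mathcal{A}\mid A'\ge A_{\lambda}^{-}\}}$, applied with $S_{0}=S$ and $M=N'$, we get $\Hom^{\bullet}_{\widetilde{\mathcal{K}}'}(Q_{\lambda},N')\simeq N'_{\{A'\in \mathcal{A}\mid A'\ge A_{\lambda}^{-}\}}$. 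The subset $\{A'\mid A'\ge A_{\lambda}^{-}\}$ is closed, hence locally closed, so by the lemma that $M_{K}$ is graded free for $M\in \widetilde{\mathcal{K}}_{\Delta}(S_{0})$ and $K$ a locally closed subset, $N'_{\{A'\ge A_{\lambda}^{-}\}}$ is graded free; being a submodule of the finitely generated $S$-module $N'$ it is finitely generated, hence graded free of finite rank.

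Combining the two steps, $\Hom^{\bullet}_{\widetilde{\mathcal{K}}_{P}}(M,N)$ is a direct summand of a graded free $S$-module of finite rank, that is, a finitely generated graded projective $S$-module. As we are in the situation where $\Coeff$ is complete local, $S=S(X_{\Coeff})$ is a graded polynomial ring over the local ring $\Coeff$, over which every finitely generated graded projective module is graded free: lift a homogeneous $\Coeff$-basis of $P/\mathfrak{m}_{\Coeff}P$ to homogeneous generators of $P$, split the resulting surjection from a graded free module of the same graded rank, and kill the complementary summand by graded Nakayama. Hence $\Hom^{\bullet}_{\widetilde{\mathcal{K}}_{P}}(M,N)$ is graded free of finite rank. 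The two points that require care are the order bookkeeping in the iterated adjunction --- harmless, since the outcome involves only $N$ and the multiset of reflections --- and this last passage from projective to free, which is exactly where the completeness and locality of $\Coeff$ are used.
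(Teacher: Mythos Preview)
Your argument is correct and follows the same route as the paper: reduce $M$ via the corollary after Theorem~\ref{thm:indecomposables in widetilde(K)} to a (direct summand of a finite direct sum of) Bott--Samelson type objects, peel off the $B_{s_i}$'s by the self-adjunction of Proposition~\ref{prop:adjointness}, and invoke $\Hom^{\bullet}_{\widetilde{\mathcal{K}}'}(Q_{\lambda},-)\simeq(-)_{\{A'\ge A_{\lambda}^{-}\}}$ together with graded freeness of restrictions to locally closed subsets. The only difference is that the paper compresses all of this into ``we may assume $M=Q_{\lambda}*B_{s_1}*\dotsm*B_{s_l}(n)$'', leaving the passage from ``direct summand of graded free'' to ``graded free'' implicit; you make that step explicit via graded Nakayama over the graded-local ring $S$, which is exactly what justifies the paper's reduction.
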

\begin{proof}
We may assume $M = Q_{\lambda} *B_{s_1}*\dotsb * B_{s_l}(n)$ for some $\lambda\in (\R\Delta)_{\integer}$, $n\in\Z$ and $s_1,\dots,s_l\in S_{\aff}$.
Hence, by Proposition~\ref{prop:adjointness}, we may assume $M = Q_{\lambda}$.
Then $\Hom^{\bullet}_{\widetilde{\mathcal{K}}_P}(M,N)\simeq N_{\{A'\in \mathcal{A}\mid A'\ge A_{\lambda}^{-}\}}$ and this is graded free since $N$ admits a standard filtration.
\end{proof}
\begin{cor}
Let $M,N\in \widetilde{\mathcal{K}}_P$.
Then for any flat commutative graded $S$-algebra $S_0$, we have $S_0\otimes_{S}\Hom^{\bullet}_{\widetilde{\mathcal{K}}_P}(M,N)\simeq \Hom^{\bullet}_{\widetilde{\mathcal{K}}_P(S_0)}(S_0\otimes_{S}M,S_0\otimes_{S}N)$.
\end{cor}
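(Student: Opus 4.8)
The plan is to reduce, by additivity and the classification of objects of $\widetilde{\mathcal{K}}_P$, to the object $M=Q_\lambda$, and then to invoke the identification of $\Hom^\bullet$ out of $Q_\lambda$ with a stalk, together with the fact that the functors $(-)_K$ commute with flat base change.

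First, both sides of the asserted isomorphism are additive functors of $M$, and the canonical base-change morphism
\[
S_0\otimes_S\Hom^\bullet_{\widetilde{\mathcal{K}}_P}(M,N)\longrightarrow \Hom^\bullet_{\widetilde{\mathcal{K}}_P(S_0)}(S_0\otimes_S M,S_0\otimes_S N)
\]
is compatible with finite direct sum decompositions; hence it is an isomorphism for $M$ whenever it is one for some object admitting $M$ as a direct summand. By Theorem~\ref{thm:indecomposables in widetilde(K)} (and the corollary following it) every object of $\widetilde{\mathcal{K}}_P$ is a direct summand of a finite direct sum of objects of the form $Q_\lambda*B_{s_1}*\dotsm*B_{s_l}(n)$, and $S_0\otimes_S(L*B_s)\simeq(S_0\otimes_S L)*B_s$ for every $L$ (immediate from $L*B_s=L\otimes_R B_s$ and the definition of the alcove decomposition, using flatness of $S_0$). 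It therefore suffices to treat $M=Q_\lambda*B_{s_1}*\dotsm*B_{s_l}$.

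Next I would strip off the factors $B_{s_i}$ by adjunction. Since Proposition~\ref{prop:adjointness} is stated over an arbitrary base ring, applying it $l$ times and using associativity of $*$ gives, with $B=B_{s_l}\otimes_R\dotsm\otimes_R B_{s_1}\in\Sbimod$,
\[
\Hom^\bullet_{\widetilde{\mathcal{K}}_P}(M,N)\simeq\Hom^\bullet_{\widetilde{\mathcal{K}}'}(Q_\lambda,N*B),
\]
and
\[
\Hom^\bullet_{\widetilde{\mathcal{K}}_P(S_0)}(S_0\otimes_S M,S_0\otimes_S N)\simeq\Hom^\bullet_{\widetilde{\mathcal{K}}'(S_0)}(S_0\otimes_S Q_\lambda,(S_0\otimes_S N)*B),
\]
where in the second isomorphism I also used $S_0\otimes_S M\simeq(S_0\otimes_S Q_\lambda)*B_{s_1}*\dotsm*B_{s_l}$. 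Now $N*B\in\widetilde{\mathcal{K}}_\Delta$ by Proposition~\ref{prop:K_Delta is stable under the Hecke actions}, while $(S_0\otimes_S N)*B\simeq S_0\otimes_S(N*B)$ lies in $\widetilde{\mathcal{K}}_\Delta(S_0)$ by the base-change inclusion for $\widetilde{\mathcal{K}}_\Delta$. Applying the lemma computing $\Hom^\bullet_{\widetilde{\mathcal{K}}'(S_0)}(S_0\otimes_S Q_\lambda,-)$ as $(-)_{\{A'\in\mathcal{A}\mid A'\ge A_\lambda^-\}}$ (over $S$ and over $S_0$) identifies the two Hom-modules with $(N*B)_{\{A'\ge A_\lambda^-\}}$ and $\bigl(S_0\otimes_S(N*B)\bigr)_{\{A'\ge A_\lambda^-\}}$ respectively. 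Since $(-)_K$ commutes with the flat base change $S_0\otimes_S-$, the latter is isomorphic to $S_0\otimes_S\bigl((N*B)_{\{A'\ge A_\lambda^-\}}\bigr)$, and chaining these isomorphisms gives the claim.

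The one point deserving care---the \emph{hard part}, such as it is---is to check that the chain of isomorphisms above is compatible with the base-change morphisms, so that the composite is indeed the canonical map; this holds because the adjunction of Proposition~\ref{prop:adjointness}, the formula for $\Hom^\bullet$ out of $Q_\lambda$, and the isomorphism $(S_0\otimes_S-)_K\simeq S_0\otimes_S(-)_K$ are all natural. (If one is content with an abstract isomorphism, as the statement literally requires, this final verification is unnecessary.)
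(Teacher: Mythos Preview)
Your proposal is correct and follows essentially the same route as the paper: reduce to $M=Q_\lambda$ via the direct-summand classification and the self-adjointness of $*B_s$ (Proposition~\ref{prop:adjointness}), then use the identification $\Hom^\bullet_{\widetilde{\mathcal{K}}'(S_0)}(S_0\otimes_S Q_\lambda,-)\simeq(-)_{\{A'\ge A_\lambda^-\}}$ and the base-change compatibility of $(-)_I$. The paper compresses all of this into the single phrase ``as in the proof of the previous corollary, we may assume $M=Q_\lambda$,'' but the content is the same.
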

\begin{proof}
As in the proof of the previous corollary, we may assume $M = Q_{\lambda}$.
Set $I = \{A'\in \mathcal{A}\mid A'\ge A_{\lambda}^-\}$.
Then the corollary is equivalent to $S_0\otimes_{S}N_I\simeq (S_0\otimes_{S}N)_{I}$.
This is clear.
\end{proof}

\subsection{The categorification}\label{subsec:The categorification}
We follow notation of Soergel~\cite{MR1444322} for the Hecke algebra and the periodic module.
The $\Z[v,v^{-1}]$-algebra $\mathcal{H}$ is generated by $\{H_w\mid w\in W_{\aff}\}$ and defined by the following relations.
\begin{itemize}
\item $(H_s - v^{-1})(H_s + v) = 0$ for any $s\in S_{\aff}$.
\item If $\ell(w_1) + \ell(w_2) = \ell(w_1w_2)$ for $w_1,w_2\in W_{\aff}$, we have $H_{w_1w_2} = H_{w_1}H_{w_2}$.
\end{itemize}
It is well-known that $\{H_w\mid w\in W_{\aff}\}$ is a $\Z[v,v^{-1}]$-basis of $\mathcal{H}$.

Set $\mathcal{P} = \bigoplus_{A\in \mathcal{A}}\Z[v,v^{-1}]A$ and we define a right action of $\mathcal{H}$ \cite[Lemma~4.1]{MR1444322} by
\[
AH_s = 
\begin{cases}
As & (As > A),\\
As + (v^{-1} - v)A & (As < A).
\end{cases}
\]

For an additive category $\mathcal{B}$, let $[\mathcal{B}]$ be the split Grothendieck group of $\mathcal{B}$.
We have $[\Sbimod]\simeq \mathcal{H}$ \cite[Theorem~4.3]{arXiv:1901.02336_accepted} and under this isomorphism, $[B_s]\in [\Sbimod]$ corresponds to $H_s + v\in \mathcal{H}$.
By $[M][B] = [M*B]$, $[\mathcal{K}_P]$ is a right $[\Sbimod]$-module.
Fix a length function $\ell\colon \mathcal{A}\to \Z$ in the sense of \cite[2.11]{MR591724}.
Define $\ch\colon [\mathcal{K}_P]\to \mathcal{P}$ by 
\[
\ch(M) = \sum_{A\in \mathcal{A}}v^{\ell(A)}\grk(M_{\{A\}})A.
\]
Then by Corollary~\ref{cor:Hecke action}, $\ch$ is a $[\Sbimod]\simeq \mathcal{H}$-module homomorphism.

For each $\lambda\in (\R\Delta)_{\integer}$, set $e_{\lambda} = \sum_{A\in W'_{\lambda}A_{\lambda}^-}v^{-\ell(A)}A$.
We put $\mathcal{P}^{0} = \sum_{\lambda\in (\R\Delta)_{\integer}}e_{\lambda}\mathcal{H}\subset \mathcal{P}$.
\begin{lem}
We have $\ch(Q_\lambda) = v^{2\ell(A_{\lambda}^{-})}e_{\lambda}$.
\end{lem}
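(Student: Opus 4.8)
The plan is to evaluate $\ch(Q_{\lambda})$ on stalks and match it with $v^{2\ell(A_{\lambda}^-)}e_{\lambda}$ coefficient by coefficient; the only substantial point will be a compatibility between Lusztig's function $d$ and the length function $\ell$ along the orbit $W'_{\lambda}A_{\lambda}^-$. First I would compute the stalks. Since $\supp_{\mathcal{A}}(Q_{\lambda}) = W'_{\lambda}A_{\lambda}^-$ and $Q_{\lambda}$ is torsion free, $(Q_{\lambda})_{\{A\}} = 0$ for $A\notin W'_{\lambda}A_{\lambda}^-$. For $A\in W'_{\lambda}A_{\lambda}^-$, write $\{A\} = I\cap J$ with $I = \{A'\in\mathcal{A}\mid A'\ge A\}$ and $J = \{A'\in\mathcal{A}\mid A'\le A\}$; then $A$ is the minimal element of $I$, so $I\setminus J = I\setminus\{A\}$ is closed, and Lemma~\ref{lem:successive quotient of Q_lambda} gives $(Q_{\lambda})_{\{A\}} = (Q_{\lambda})_{I}/(Q_{\lambda})_{I\setminus\{A\}}\simeq S(2d(A,A_{\lambda}^-))$. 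Hence $\grk((Q_{\lambda})_{\{A\}}) = v^{2d(A,A_{\lambda}^-)}$ on the orbit and $0$ elsewhere, so
\[
\ch(Q_{\lambda}) = \sum_{A\in W'_{\lambda}A_{\lambda}^-} v^{\ell(A)+2d(A,A_{\lambda}^-)}\,A .
\]

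Since $v^{2\ell(A_{\lambda}^-)}e_{\lambda} = \sum_{A\in W'_{\lambda}A_{\lambda}^-} v^{2\ell(A_{\lambda}^-)-\ell(A)}\,A$, the statement reduces to the numerical identity
\[
\ell(A) + d(A,A_{\lambda}^-) = \ell(A_{\lambda}^-)\qquad(A\in W'_{\lambda}A_{\lambda}^-),
\]
because this yields $\ell(A)+2d(A,A_{\lambda}^-) = 2\ell(A_{\lambda}^-)-\ell(A)$. To prove it I would translate by $\lambda$ to reduce to $\lambda = 0$, as in the proof of Lemma~\ref{lem:successive quotient of Q_lambda}, and write $A = xA_{\lambda}^-$ with $x\in W'_{\lambda}$ and $\bar x\in W_{\mathrm{f}}$ its image. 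That same proof already shows $d(A,A_{\lambda}^-) = -\#\{\alpha\in\Delta^+\mid s_{\alpha}x<x\} = -\ell(\bar x)$, the length taken in the Coxeter system $(W_{\mathrm{f}},S_{\mathrm{f}})$. For the other term, $A_{\lambda}^-$ is the minimal element of the orbit and both $A_{\lambda}^-$ and $A$ have $\lambda$ in their closure, so every hyperplane separating $A_{\lambda}^-$ from $A$ passes through $\lambda$ and has $A_{\lambda}^-$ on its lower side; these are exactly the reflection hyperplanes through $\lambda$ corresponding to the inversions of $\bar x$, of which there are $\ell(\bar x)$, and a reduced gallery from $A_{\lambda}^-$ up to $A$ crosses each of them once and upward. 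As $\ell$ changes by $+1$ across each such wall, $\ell(A)-\ell(A_{\lambda}^-) = \ell(\bar x) = -d(A,A_{\lambda}^-)$, which is the desired identity.

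The main obstacle is precisely this identity relating $d$ and $\ell$ on the orbit $W'_{\lambda}A_{\lambda}^-$. The part about $d$ is essentially recorded inside the proof of Lemma~\ref{lem:successive quotient of Q_lambda}; what needs care is identifying the walls between $A_{\lambda}^-$ and an orbit element with the $\ell(\bar x)$ reflection hyperplanes through $\lambda$ inverted by $\bar x$, so that a length function grows by exactly $\ell(\bar x)$ from $A_{\lambda}^-$ to $A$. This is where the elementary properties of $d$ and of length functions from \cite{MR591724} (see 1.4 and 2.11 there) enter; everything else is bookkeeping.
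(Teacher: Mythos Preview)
Your proof is correct and follows the same route as the paper, which simply records that the lemma follows from Lemma~\ref{lem:successive quotient of Q_lambda}. The only remark is that you are working harder than necessary in the last paragraph: the identity $\ell(A)+d(A,A_{\lambda}^-)=\ell(A_{\lambda}^-)$ is immediate from the general relation $d(A,B)=\ell(B)-\ell(A)$, which holds for any length function $\ell$ in the sense of \cite[2.11]{MR591724} because $d$ satisfies the cocycle identity $d(A,B)+d(B,C)=d(A,C)$ and changes by $\pm 1$ across a wall exactly as $\ell$ does; no separate wall-counting specific to the orbit $W'_{\lambda}A_{\lambda}^-$ is needed.
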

\begin{proof}
It follows from Lemma~\ref{lem:successive quotient of Q_lambda}.
\end{proof}

\begin{thm}\label{thm:categorification for tilde(K)}
We have $\ch\colon [\widetilde{\mathcal{K}}_P]\xrightarrow{\sim} \mathcal{P}^{0}$.
\end{thm}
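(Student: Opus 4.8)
The plan is to leverage the structure theory of $\widetilde{\mathcal{K}}_P$ already in place. By Theorem~\ref{thm:indecomposables in widetilde(K)} together with the Krull--Schmidt property, $[\widetilde{\mathcal{K}}_P]$ is the free $\Z[v,v^{-1}]$-module on $\{[Q(A)]\mid A\in\mathcal{A}\}$: distinct alcoves give non-isomorphic indecomposables even up to grading shift, since $A$ is the unique minimal element of $\supp_{\mathcal{A}}(Q(A))$ and $Q(A)_{\{A\}}\simeq S=S(0)$. Moreover $\ch$ is a homomorphism of right $\mathcal{H}\simeq[\Sbimod]$-modules by Corollary~\ref{cor:Hecke action}. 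Hence it suffices to prove: (a) $\ch$ is injective; (b) $\mathcal{P}^{0}\subseteq\Ima(\ch)$; (c) $\Ima(\ch)\subseteq\mathcal{P}^{0}$.

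For (a): because $\supp_{\mathcal{A}}(Q(A))\subseteq\{A'\mid A'\ge A\}$ is finite and $Q(A)_{\{A\}}\simeq S$, one has $\ch(Q(A))=v^{\ell(A)}A+\sum_{A'>A}c_{A,A'}A'$ with only finitely many nonzero $c_{A,A'}\in\Z[v,v^{-1}]$ and unit leading coefficient. This makes $\{\ch(Q(A))\}$ unitriangular for the generic Bruhat order: if $\sum_A f_A[Q(A)]$ lies in $\Ker(\ch)$ with some $f_A\ne0$, choose $A$ minimal in the (finite) support of this combination; then the coefficient of $A$ in its image is $f_Av^{\ell(A)}\ne0$, a contradiction. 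For (b): the lemma $\ch(Q_\lambda)=v^{2\ell(A_\lambda^-)}e_\lambda$ shows $e_\lambda\in\Ima(\ch)$ for every $\lambda\in(\R\Delta)_{\integer}$; since $\widetilde{\mathcal{K}}_P*\Sbimod\subseteq\widetilde{\mathcal{K}}_P$ (Proposition~\ref{prop:K_P is stable under the Hecke actions}) and $\ch$ is $\mathcal{H}$-linear, $\Ima(\ch)$ is an $\mathcal{H}$-submodule of $\mathcal{P}$, so it contains $e_\lambda\mathcal{H}$ for all $\lambda$, i.e.\ $\mathcal{P}^{0}$.

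The heart of the matter is (c). Since $\Ima(\ch)$ is the $\Z[v,v^{-1}]$-span of the $\ch(Q(A))$, it suffices to show $\ch(Q(A))\in\mathcal{P}^{0}$ for every $A$. I would argue by induction on $\ell(w)$, where $A=A_\lambda^-w$ for a box $\Pi_\lambda\ni A$ and a reduced expression $w=s_1\cdots s_l$. For $l=0$ one has $A=A_\lambda^-$ and $Q(A)=Q_\lambda$: the $\Hom$-formula $\Hom^{\bullet}_{\widetilde{\mathcal{K}}'}(Q_\lambda,M)\simeq M_{\{A'\ge A_\lambda^-\}}$ together with $(Q_\lambda)^0\simeq\Coeff$ (from Lemma~\ref{lem:successive quotient of Q_lambda}) shows $\End(Q_\lambda)\simeq\Coeff$ is local, so $Q_\lambda$ is indecomposable and $\ch(Q(A))=v^{2\ell(A_\lambda^-)}e_\lambda\in\mathcal{P}^{0}$. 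For $l>0$: by Lemma~\ref{lem:support of Q*B*..*B} and the construction of $Q(A)$, the object $Q(A)(l)$ is a direct summand of $N:=Q_\lambda*B_{s_1}*\cdots*B_{s_l}$, and $\ch(N)=\ch(Q_\lambda)[B_{s_1}]\cdots[B_{s_l}]\in e_\lambda\mathcal{H}\subseteq\mathcal{P}^{0}$. Writing $N\simeq Q(A)(l)\oplus\bigoplus_j Q(A_j)(n_j)$ (Krull--Schmidt), each $Q(A_j)$ is supported strictly above $A$, and the alcove combinatorics of \cite[Proposition~4.2 and its proof]{MR591724} --- already invoked just before Lemma~\ref{lem:support of Q*B*..*B} --- exhibits each $A_j$ as an alcove reachable inside a box by a reduced word strictly shorter than $l$. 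Hence $\ch(Q(A_j))\in\mathcal{P}^{0}$ by induction, and subtracting the corresponding contributions from $\ch(N)$ yields $\ch(Q(A))\in\mathcal{P}^{0}$.

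The main obstacle is exactly the bookkeeping in the inductive step of (c): verifying that \emph{every} complementary summand $Q(A_j)$ of the Bott--Samelson object $Q_\lambda*B_{s_1}*\cdots*B_{s_l}$ is indexed by an alcove strictly smaller for the chosen induction parameter --- equivalently, that all standard subquotients of $N$ other than the one at $A$ lie over alcoves of smaller ``box length''. This is the same phenomenon underlying Soergel's construction of special modules, and it is where \cite{MR591724} genuinely enters; pinning down the induction parameter (and the choice of box $\Pi_\lambda$) so that the recursion is well-founded is the only non-formal point. Everything else --- injectivity, the inclusion $\mathcal{P}^{0}\subseteq\Ima(\ch)$, and the peeling-off step --- is formal given the results already established.
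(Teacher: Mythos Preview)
Your proofs of injectivity and of $\mathcal{P}^0 \subseteq \Ima(\ch)$ are correct and match the paper's line for line: triangularity of $\ch(Q(A))$ in the generic Bruhat order for the former, $\mathcal{H}$-linearity together with $\ch(Q_\lambda) \in \Z[v,v^{-1}]\, e_\lambda$ for the latter. The paper disposes of the remaining inclusion $\Ima(\ch) \subseteq \mathcal{P}^0$ in a single clause without further argument, so your attempt to supply an explicit induction already goes beyond what is written there.

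The gap is precisely where you flag it. You induct on the length $l$ of the word $w$ with $A = A_\lambda^- w$ (for $A\in\Pi_\lambda$), and the inductive step requires every complementary summand $Q(A_j)$ of $N = Q_\lambda * B_{s_1} * \cdots * B_{s_l}$ to satisfy $A_j = A_{\mu_j}^- w_j$ with $\ell(w_j) < l$, where $\Pi_{\mu_j}$ is the box containing $A_j$. Your appeal to \cite[Proposition~4.2]{MR591724} does not deliver this. What the paper extracts from that reference (immediately before Lemma~\ref{lem:support of Q*B*..*B}) is only that $A' x > A_\lambda^- w$ for every $A' \in W'_\lambda A_\lambda^-$ and every $x < w$; this yields $A_j > A$, which you already use, but it says nothing about which box $A_j$ lies in or how far $A_j$ sits from the top of that box. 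Since $A_j$ may land in a box $\Pi_{\mu_j}$ unrelated to $\Pi_\lambda$, there is no evident bound forcing $\ell(w_j)\le l-1$, and the recursion is not shown to terminate. Your own caveat that ``pinning down the induction parameter \ldots\ is the only non-formal point'' is accurate; as written, it is also the one unproved point, and it is not closed by the reference you cite.
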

\begin{proof}
Since $e_{\lambda} = v^{-2\ell(A_{\lambda}^{-})}\ch(Q_\lambda)\in \Ima(\ch)$, the image of $\ch$ is contained in $\mathcal{P}^{0}$ and it surjects to $\mathcal{P}^{0}$.
The $\mathcal{H}$-module $[\widetilde{\mathcal{K}}_P]$ has a $\Z[v,v^{-1}]$-basis $[Q(A)]$ by Theorem~\ref{thm:indecomposables in widetilde(K)}.
Since $\ch(Q(A))\in v^{\ell(A)}A + \sum_{A' > A}\Z[v,v^{-1}]A'$, $\{\ch(Q(A))\mid A\in \mathcal{A}\}$ is linearly independent.
Hence $\ch$ is injective.
\end{proof}

\subsection{A relation with a work of Fiebig-Lanini}\label{subsec:A relation with a work of Fiebig-Lanini}
In \cite{arXiv:1504.01699}, Fiebig and Lanini constructed a category denoted by $\mathbf{C}$ and proved that this is an exact category.
They also constructed a wall-crossing functor $\theta_s$ for $s\in S_{\aff}$ on $\mathbf{C}$ and proved that projective objects are preserved by wall-crossing functors.
In this subsection, we prove the following.
We identify $W'_{\aff}\simeq W_{\aff}$ and $S\simeq R$ by using $A_0^+$.
\begin{thm}
The category $\widetilde{\mathcal{K}}_P$ is equivalent to the category of projective objects in $\mathbf{C}$.
The action of $B_s$ on $\widetilde{\mathcal{K}}_P$ corresponds to $\theta_s$ for $s\in S_{\aff}$.
\end{thm}
Let $M\in \widetilde{\mathcal{K}}_P$ and $J\subset \mathcal{A}$ an open subset.
Then $M_J$ is an $R$-bimodule (as we identify $S\simeq R$) and the left action of $f\in R^{W_{\mathrm{f}}}$ is equal to the right action of $f$.
Hence $M_J$ is an $R\otimes_{R^{W_{\mathrm{f}}}}R$-module.
The algebra $R\otimes_{R^{W_{\mathrm{f}}}}R$ is isomorphic to the structure algebra $\mathcal{Z}$ on the moment graph attached to $W_{\mathrm{f}}$.
Hence we get a functor $F$ from $\widetilde{\mathcal{K}}_P$ to the category of $\mathcal{Z}$-coefficient presheaves on $\mathcal{A}$.

We prove that $F$ is fully-faithful.
Since $M = F(M)(\mathcal{A})$ as an $R$-module, $F$ induces an injective map between space of morphisms, namely $F$ is faithful.
Let $f\colon F(M)\to F(N)$ be a morphism between sheaves.
We define $\varphi\colon M\to N$ by $M = F(M)(\mathcal{A})\to F(N)(\mathcal{A}) = N$.
Then this is an $R$-bimodule morphism.
Moreover, $\varphi$ induces $M/M_{\mathcal{A}\setminus J} = F(M)(J) \to F(N)(J) = N/N_{\mathcal{A}\setminus J}$ for any open subset $J$.
Hence $\varphi(M_I)\subset N_I$ for any closed subset $I\subset \mathcal{A}$.
Therefore $\varphi$ is a morphism in $\widetilde{\mathcal{K}}_P$ and therefore $F$ is full.

Next we prove that $F(M*B_s)\simeq \theta_s(F(M))$ for $M\in \widetilde{\mathcal{K}}_P$.
Let $s\in S_{\aff}$ and $\epsilon_s$ the functor defined in \cite[8.1]{arXiv:1504.01699}.
Then an argument of the proof in \cite[Proposition~5.3]{arXiv:1901.02336_accepted} gives $\epsilon_s(M) \simeq M\otimes_{R}B_s$ as $\mathcal{Z}$-modules.
(Here, in the right hand side, we consider a $\mathcal{Z}$-module as an $R$-bimodule via $\mathcal{Z} = R\otimes_{R^{W_{\mathrm{f}}}}R$.)
Let $J\subset \mathcal{A}$ be an open subset and $J^{\flat}$ (resp.\ $J^{\sharp}$) be the largest (resp.\ smallest) $s$-invariant open subset which is contained in (resp.\ contains) $J$.
Then we have morphisms
\[
(M*B_s)_{J^{\sharp}}\xrightarrow{j^{\sharp}} (M*B_s)_{J}\xrightarrow{j^{\flat}} (M*B_s)_{J^{\flat}}
\]
such that $j^{\sharp},j^{\flat}$ are surjective.
We have $(M*B_s)_{J^{\sharp}}\simeq M_{J^{\sharp}}*B_s$ and $(M*B_s)_{J^{\flat}} \simeq M_{J^{\flat}}*B_s$ by Lemma~\ref{lem:(M*B_s)_I for s-invariant I}.
We have $\supp_{\mathcal{A}}(\Ker j_1)\subset J^{\sharp}\setminus J$ and $\supp_{\mathcal{A}}(\Ker j_2)\subset J\setminus J^{\flat}$.
Hence, by \cite[Lemma~2.8]{arXiv:1504.01699}, $(M*B_s)_{J}$ satisfies the condition in \cite[8.3]{arXiv:1504.01699} and we get $F(M*B_s)(J) \simeq \theta_s(F(M))(J)$.
Therefore we get $F(M*B_s)\simeq \theta_s(F(M))$.

Finally we prove that the image of $F$ is projective and the functor from $\widetilde{\mathcal{K}}_P$ to the category of projective objects in $\mathbf{C}$ is essentially surjective.
Let $\underline{\mathcal{K}}_{\lambda}$ be a projective object in $\mathbf{C}$ defined in \cite[Section~6]{arXiv:1504.01699}.
From the definitions, we have $F(Q_{\lambda}) = \underline{\mathcal{K}}_{\lambda}$.
Any $M\in \widetilde{\mathcal{K}}_{P}$ is a direct sum of direct summands of objects of a form $M*B_{s_1}*\cdots *B_{s_l}(n)$ for $s_1,\ldots,s_l\in S_{\aff}$ and $n\in\Z$.
Since $F(M*B_{s_1}*\cdots *B_{s_l}(n)) = \theta_{s_l}\cdots \theta_{s_1}\underline{\mathcal{K}}_{\lambda}$ is projective in $\mathbf{C}$ by \cite[Corollary~8.7]{arXiv:1504.01699}, $F(M)$ is projective in $\mathbf{C}$ for any $M\in \widetilde{\mathcal{K}}_{P}$.
Moreover, by the proof of \cite[Theorem~8.8]{arXiv:1504.01699}, any projective object in $\mathbf{C}$ is a direct sum of direct summands of objects of a form $\theta_{s_l}\cdots \theta_{s_1}\underline{\mathcal{K}}_{\lambda}$.
Since $F$ is fully-faithful, the essential image of $F$ is closed under taking a direct summand.
Hence $F$ is essentially surjective.

\section{The category of Andersen-Jantzen-Soergel}
Throughout this section, we assume that $\Coeff$ is noetherian complete local ring.
\subsection{Our combinatorial category}
In this subsection we introduced some categories using the categories introduced in the previous section.
The categories will be related to the combinatorial categories of Andersen-Jantzen-Soergel.

Let $S_0$ be a flat commutative graded $S$-algebra.
Let $\mathcal{K}'(S_0)$ be the category whose objects are the same as those of $\widetilde{\mathcal{K}}'(S_0)$ and the spaces of morphisms are defined by
\[
\Hom_{\mathcal{K}'(S_0)}(M,N) = \Hom_{\widetilde{\mathcal{K}}'(S_0)}(M,N)/\{\varphi\in\Hom_{\widetilde{\mathcal{K}}'(S_0)}(M,N)\mid \varphi(M_{A}^{\emptyset})\subset \bigoplus_{A' > A}N_{A'}^{\emptyset}\}.
\]
We also define $\mathcal{K}(S_0)$ and $\mathcal{K}_{\Delta}(S_0)$ by the same way.

\begin{lem}
Let $M,N\in \widetilde{\mathcal{K}}'(S_0)$, $\varphi\colon M\to N$ and $B\in \Sbimod$.
If $\varphi(M_{A}^{\emptyset}) \subset \bigoplus_{A' > A}N_{A'}^{\emptyset}$ for any $A\in \mathcal{A}$, then $\varphi\otimes\id\colon M*B\to N*B$ satisfies $(\varphi\otimes \id)((M*B)_{A}^{\emptyset}) \subset \bigoplus_{A' > A}(N*B)_{A'}^{\emptyset}$ for any $A\in \mathcal{A}$.
\end{lem}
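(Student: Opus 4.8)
The plan is to re-run, keeping careful track of \emph{strict} inequalities, the very computation that is carried out immediately after the definition of $M*B$ to check that $\varphi\otimes\id$ is a morphism of $\widetilde{\mathcal{K}}'(S_0)$ at all. Nothing new is needed: the argument there takes $\ge$ to $\ge$, and we simply want to see that it also takes $>$ to $>$.

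First I would recall that for every $B\in\Sbimod$ there is a decomposition $B^{\emptyset}=\bigoplus_{x\in W_{\aff}}B_x^{\emptyset}$, and that by definition $(M*B)_A^{\emptyset}=\bigoplus_{x\in W_{\aff}}M_{Ax^{-1}}^{\emptyset}\otimes_{R^{\emptyset}}B_x^{\emptyset}$, with the analogous formula for $N$. Since $\varphi\otimes\id$ sends $M_{Ax^{-1}}^{\emptyset}\otimes_{R^{\emptyset}}B_x^{\emptyset}$ into $\varphi(M_{Ax^{-1}}^{\emptyset})\otimes_{R^{\emptyset}}B_x^{\emptyset}$, it suffices to show, for each fixed $x\in W_{\aff}$ and each $A\in\mathcal{A}$, that $\varphi(M_{Ax^{-1}}^{\emptyset})\otimes_{R^{\emptyset}}B_x^{\emptyset}\subset\bigoplus_{A'>A}(N*B)_{A'}^{\emptyset}$.

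Next I would combine the hypothesis with Remark~\ref{rem:decomposition is preserved automatically}: the latter forces $\varphi$ to respect the decomposition of $N^{\emptyset}$ into its $\Lambda_{\aff}$-orbit blocks, so intersecting $\varphi(M_{Ax^{-1}}^{\emptyset})\subset\bigoplus_{A'>Ax^{-1}}N_{A'}^{\emptyset}$ with the block of $Ax^{-1}$ yields $\varphi(M_{Ax^{-1}}^{\emptyset})\subset\bigoplus_{A'\in Ax^{-1}+\Z\Delta,\ A'>Ax^{-1}}N_{A'}^{\emptyset}$. Then I would invoke Lemma~\ref{lem:homeo on each orbit} exactly as in the non-strict case: within the orbit $Ax^{-1}+\Z\Delta$ one has $A'\ge Ax^{-1}\iff A'x\ge A$, and hence also $A'>Ax^{-1}\iff A'x>A$, because right multiplication by $x$ is a bijection of $\mathcal{A}$. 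Re-indexing $A'\mapsto A'x$ turns the displayed inclusion into
\[
\varphi(M_{Ax^{-1}}^{\emptyset})\otimes_{R^{\emptyset}}B_x^{\emptyset}\subset\bigoplus_{A'\in A+\Z\Delta,\ A'>A}N_{A'x^{-1}}^{\emptyset}\otimes_{R^{\emptyset}}B_x^{\emptyset}\subset\bigoplus_{A'>A}(N*B)_{A'}^{\emptyset},
\]
and summing over $x\in W_{\aff}$ completes the argument.

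Since this is essentially a word-for-word strict-inequality transcription of a computation already present in the text, I do not anticipate any genuine obstacle; the only point deserving an explicit line is the upgrade of the $\ge$-form of Lemma~\ref{lem:homeo on each orbit} to its $>$-form, which is immediate from injectivity of the right $W_{\aff}$-action on $\mathcal{A}$.
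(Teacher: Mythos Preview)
Your proposal is correct and follows essentially the same approach as the paper's own proof: both decompose $(M*B)_A^{\emptyset}$ into the pieces $M_{Ax^{-1}}^{\emptyset}\otimes B_x^{\emptyset}$, use that $\varphi$ lands in the $\Lambda_{\aff}$-orbit block of $Ax^{-1}$, and then apply the order-preservation of the right $x$-action on that orbit to convert the strict inequality. Your write-up is slightly more explicit (citing the Remark and spelling out why the strict form follows from the non-strict one plus injectivity), but the argument is the same.
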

\begin{proof}
Recall that we have $(M*B)_A^{\emptyset} = \bigoplus_{x\in W_\aff}M_{Ax^{-1}}^{\emptyset}\otimes B_x^{\emptyset}$.
We have $\varphi(M_{Ax^{-1}}^{\emptyset})\otimes B_x^{\emptyset}\subset \bigoplus_{A'x^{-1}\in Ax^{-1} + \Z\Delta,A'x^{-1} > Ax^{-1}}N_{A'x^{-1}}^{\emptyset}\otimes B_x^{\emptyset}$.
Since $x\colon (Ax^{-1} + \Z\Delta)\to (A + \Z\Delta)$ preserves the order, $A'x^{-1} > Ax^{-1}$ if and only if $A' > A$.
Therefore $(\varphi\otimes\id)(M*B)_A^{\emptyset}\subset \bigoplus_{x\in W_\aff,A' > A}N_{A'x^{-1}}^{\emptyset}\otimes B_x^{\emptyset} = \bigoplus_{A'>A}(N*B)_{A'}^{\emptyset}$.
\end{proof}
Therefore $(M,B)\mapsto M*B$ defines a bi-functor $\mathcal{K}'(S_0)\times \Sbimod\to \mathcal{K}'(S_0)$ and also $\mathcal{K}_{\Delta}(S_0)\times\Sbimod\to \mathcal{K}_{\Delta}(S_0)$.

\begin{prop}\label{prop:adjointness for K}
Let $M,N\in \mathcal{K}'(S_0)$ and $s\in S_{\aff}$.
Then $\Hom_{\mathcal{K}'(S_0)}(M*B_s,N)\simeq \Hom_{\mathcal{K}'(S_0)}(M,N*B_s)$.
\end{prop}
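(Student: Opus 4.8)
The plan is to deduce the statement from the bijection already established in the proof of Proposition~\ref{prop:adjointness}. Recall from there that, after fixing $\delta\in\Lambda_{\Coeff}$ with $\langle\delta,\alpha_s^\vee\rangle=1$ and writing $M*B_s=M\otimes_{R^s}R(1)$ and $N*B_s=N\otimes_{R^s}R(1)$ as $(S_0,R)$-bimodules, the rule $\varphi\mapsto\psi$, $\psi(m)=\varphi(m\delta\otimes 1)\otimes 1-\varphi(m\otimes 1)\otimes s(\delta)$, is a degree-preserving bijection between $(S_0,R)$-bimodule homomorphisms $M*B_s\to N$ and $M\to N*B_s$, and that it restricts to a bijection between $\Hom_{\widetilde{\mathcal{K}}'(S_0)}(M*B_s,N)$ and $\Hom_{\widetilde{\mathcal{K}}'(S_0)}(M,N*B_s)$; in particular, taking degree-zero parts, $\Hom_{\widetilde{\mathcal{K}}'(S_0)}(M*B_s,N)\simeq\Hom_{\widetilde{\mathcal{K}}'(S_0)}(M,N*B_s)$. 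By definition $\mathcal{K}'(S_0)$ is the quotient of $\widetilde{\mathcal{K}}'(S_0)$ by the ideal of those morphisms $\varphi\colon M\to N$ with $\varphi(M_A^{\emptyset})\subset\bigoplus_{A'>A}N_{A'}^{\emptyset}$ for all $A\in\mathcal{A}$; call such $\varphi$ \emph{strict}. So it suffices to show that $\varphi$ is strict if and only if the associated $\psi$ is strict, and then pass to the quotients on both sides.

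For this I would run the argument in the proof of Proposition~\ref{prop:adjointness} with every ``$\ge$'' replaced by ``$>$''. Concretely, one reuses the identity $\psi(m)=\alpha_A^{-1}a'(\varphi(a(m)))-\alpha_A^{-1}b'(\varphi(b(m)))$ for $m\in M_A^{\emptyset}$ (in the notation of that proof), the decomposition $(M*B_s)_A^{\emptyset}=a(M_A^{\emptyset})+b(M_{As}^{\emptyset})$ coming from \eqref{eq:decomposition of M*B_s}, and the corresponding description of $(N*B_s)_A^{\emptyset}$. If $\varphi$ is strict, then $\varphi(a(m))\in\bigoplus_{A'>A}N_{A'}^{\emptyset}$ and $\varphi(b(m))\in\bigoplus_{A'>As,\,A'\in As+\Z\Delta}N_{A'}^{\emptyset}$; since $A'\mapsto A's$ is an order isomorphism of $As+\Z\Delta$ onto $A+\Z\Delta$ (so $A'>As$ is equivalent to $A's>A$), both $a'(\varphi(a(m)))$ and $b'(\varphi(b(m)))$ involve only summands $(N*B_s)_{A'}^{\emptyset}$ with $A'>A$, hence $\psi$ is strict. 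Conversely, applying the two maps $N\otimes_{R^s}R\to N$ given by $n\otimes f\mapsto nf$ and $n\otimes f\mapsto ns(f)$ to $\psi(m)$, exactly as in the proof of Proposition~\ref{prop:adjointness}, yields $\varphi(a(M_A^{\emptyset}))\subset\bigoplus_{A'>A}N_{A'}^{\emptyset}$ and $\varphi(b(M_{As}^{\emptyset}))\subset\bigoplus_{A'>A}N_{A'}^{\emptyset}$ for each $A$, i.e.\ $\varphi$ is strict on $(M*B_s)_A^{\emptyset}$.

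I do not expect a real obstacle here: the whole point is that the argument of Proposition~\ref{prop:adjointness} is insensitive to whether the order relations occurring in it are strict. The one place where the order on alcoves actually enters is the fact, already used there, that right multiplication by $s$ is an order isomorphism of each $\Lambda_{\aff}$-orbit; and an order isomorphism preserves strict inequalities just as well as non-strict ones. Once this is observed, the two inclusions above and their converses carry over verbatim, and the proposition follows.
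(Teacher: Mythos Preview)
Your proposal is correct and takes essentially the same approach as the paper: the paper's own proof simply says that the proof of Proposition~\ref{prop:adjointness} shows $\varphi$ is strict if and only if $\psi$ is strict, and you have spelled out exactly that observation. The only substantive point---that right multiplication by $s$ is an order isomorphism of each $\Lambda_{\aff}$-orbit, hence preserves strict inequalities---is the one you identify, and it is indeed all that is needed.
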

\begin{proof}
Let $\varphi$ and $\psi$ as in the proof of Proposition~\ref{prop:adjointness}.
Then the proof of Proposition~\ref{prop:adjointness} shows that $\varphi(M_{A}^{\emptyset})\subset\bigoplus_{A' > A}(N*B_s)_{A'}^{\emptyset}$ for any $A\in \mathcal{A}$ if and only if $\psi((M*B_s)^{\emptyset}_{A}) \subset\bigoplus_{A' > A}N_{A'}^{\emptyset}$ for any $A\in\mathcal{A}$.
The proposition follows.
\end{proof}

For each morphism $\varphi\colon M\to N$ in $\widetilde{\mathcal{K}}(S_0)$ and $A\in \mathcal{A}$, we have a homomorphism $\varphi_{\{A\}}\colon M_{\{A\}}\to N_{\{A\}}$.
Note that $\varphi(M_A^{\emptyset})\subset \bigoplus_{A' > A}N_{A'}^{\emptyset}$ if and only if $\varphi_{\{A\}} = 0$.
Hence $M\mapsto M_{\{A\}}$ defines a functor from $\mathcal{K}(S_0)$ to the category of graded $S_0$-modules.
Using this, we define as follows: A sequence $M_1\to M_2\to M_3$ in $\mathcal{K}(S_0)$ satisfies \property{ES} if the composition $M_1\to M_2\to M_3$ is zero in $\mathcal{K}(S_0)$ and $0\to (M_1)_{\{A\}}\to (M_2)_{\{A\}}\to (M_3)_{\{A\}}\to 0$ is exact for any $A\in \mathcal{A}$.
Note that a sequence $M_1\to M_2\to M_3$ in $\widetilde{\mathcal{K}}$ may not satisfy \property{ES} even when it satisfies \property{ES} in $\mathcal{K}$ since the composition $M_1\to M_2\to M_3$ may be zero only in $\mathcal{K}$.

For the definition of $\mathcal{K}_{P}(S_0)$, we use the same condition to define $\widetilde{\mathcal{K}}_{P}(S_0)$.
For $M\in \mathcal{K}_{\Delta}(S_0)$, we say $M\in \mathcal{K}_{P}(S_0)$ if for any sequence $M_1\to M_2\to M_3$ in $\mathcal{K}_{\Delta}(S_0)$ which satisfies \property{ES}, the induced homomorphism $0\to \Hom_{\mathcal{K}_{\Delta}(S_0)}^{\bullet}(M,M_1)\to \Hom_{\mathcal{K}_{\Delta}(S_0)}^{\bullet}(M,M_2)\to \Hom_{\mathcal{K}_{\Delta}(S_0)}^{\bullet}(M,M_3)\to 0$ is exact.
Note that this definition is not the same as that in the introduction.
We will prove that two definitions coincide with each other later.

\begin{prop}\label{prop:indecomposable is indecomposable}
An indecomposable object in $\widetilde{\mathcal{K}}'(S_0)$ such that $\supp_{\mathcal{A}}(M)$ is finite is also indecomposable as an object of $\mathcal{K}'(S_0)$.
\end{prop}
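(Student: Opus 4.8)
I will prove the contrapositive by the standard ``lifting along a nilpotent extension'' device, but applied to isomorphisms rather than idempotents. Recall that $\mathcal{K}'(S_0)$ has the same objects as $\widetilde{\mathcal{K}}'(S_0)$, and that $\Hom_{\mathcal{K}'(S_0)}(M,N)=\Hom_{\widetilde{\mathcal{K}}'(S_0)}(M,N)/\mathcal{I}(M,N)$ where $\mathcal{I}(M,N)=\{\varphi\mid \varphi(M_A^{\emptyset})\subset\bigoplus_{A'>A}N_{A'}^{\emptyset}\ (A\in\mathcal{A})\}$. This collection $\mathcal{I}$ is a two-sided ideal of morphisms, so the quotient $Q\colon\widetilde{\mathcal{K}}'(S_0)\to\mathcal{K}'(S_0)$ is an additive functor, the identity on objects and surjective on each Hom-group; in particular $Q$ preserves finite biproducts, and for any object $N$ one has $N=0$ iff $\id_N\in\mathcal{I}(N,N)$ iff $N^{\emptyset}=0$ (look at a $\subset$-minimal alcove in $\supp_{\mathcal{A}}(N)$), so an object is nonzero in $\widetilde{\mathcal{K}}'(S_0)$ exactly when it is nonzero in $\mathcal{K}'(S_0)$. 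Now assume, for contradiction, that $M$ is indecomposable in $\widetilde{\mathcal{K}}'(S_0)$ with $\supp_{\mathcal{A}}(M)$ finite, but $M\cong N_1\oplus N_2$ in $\mathcal{K}'(S_0)$ with $N_1,N_2\neq 0$; set $N=N_1\oplus N_2$, which is the same object in $\widetilde{\mathcal{K}}'(S_0)$.

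The key point is that $\mathcal{I}$ is nilpotent on objects of finite support. If $\varphi_1,\dots,\varphi_k\in\mathcal{I}(M,M)$, unwinding the definitions shows that $(\varphi_1\cdots\varphi_k)(M_A^{\emptyset})$ lies in the sum of those $M_B^{\emptyset}$ for which there is a strictly increasing chain $A<A^{(1)}<\dots<A^{(k-1)}<B$ inside $\supp_{\mathcal{A}}(M)$. If $n=\#\supp_{\mathcal{A}}(M)$, no such chain exists once $k\geq n$, so $(\varphi_1\cdots\varphi_k)(M^{\emptyset})=0$; since $M\hookrightarrow M^{\emptyset}$ (as $M$ is torsion-free) this forces $\varphi_1\cdots\varphi_k=0$, i.e.\ $\mathcal{I}(M,M)^{n}=0$. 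The same applies to $N$, which has finite support as well (every object of $\widetilde{\mathcal{K}}'(S_0)$ does: being finitely generated over $S_0$ it has $M^{\emptyset}\simeq S_0^{\emptyset}\otimes_{S_0}M$ finitely generated over $S_0^{\emptyset}$, and $S_0^{\emptyset}$ preserves each $M_A^{\emptyset}$, so $\supp_{\mathcal{A}}$ is contained in the alcoves occurring in a finite generating set). Hence $\End_{\widetilde{\mathcal{K}}'(S_0)}(M)\to\End_{\mathcal{K}'(S_0)}(M)$ and $\End_{\widetilde{\mathcal{K}}'(S_0)}(N)\to\End_{\mathcal{K}'(S_0)}(N)$ are surjective ring homomorphisms with nilpotent kernel.

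Fix an isomorphism $M\xrightarrow{\ \sim\ }N$ in $\mathcal{K}'(S_0)$ together with its inverse, and lift them to morphisms $\widetilde{f}\colon M\to N$ and $\widetilde{g}\colon N\to M$ in $\widetilde{\mathcal{K}}'(S_0)$. Then $\widetilde{g}\widetilde{f}-\id_M\in\mathcal{I}(M,M)$ and $\widetilde{f}\widetilde{g}-\id_N\in\mathcal{I}(N,N)$ are nilpotent, so $\widetilde{g}\widetilde{f}$ and $\widetilde{f}\widetilde{g}$ are invertible (their inverses are finite geometric series). Therefore $\widetilde{f}$ has a left inverse and a right inverse, hence is an isomorphism $M\xrightarrow{\ \sim\ }N=N_1\oplus N_2$ in $\widetilde{\mathcal{K}}'(S_0)$. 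As $N_1,N_2\neq 0$, this contradicts the indecomposability of $M$ in $\widetilde{\mathcal{K}}'(S_0)$, and the proposition follows.

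The substantive content is the nilpotence of $\mathcal{I}$ on finite-support objects in the second paragraph; once that is in hand, the rest is the familiar fact that isomorphisms (and idempotents) lift along a surjection of categories with nilpotent kernel. I expect the only things needing care to be the bookkeeping with the order on $\mathcal{A}$ in that nilpotence estimate and the (routine) point that $M\hookrightarrow M^{\emptyset}$, which is what turns ``zero on $M^{\emptyset}$'' into ``zero''. Lifting the isomorphism and its inverse, rather than a nontrivial idempotent of $\End_{\mathcal{K}'(S_0)}(M)$, is what lets one avoid splitting a lifted idempotent inside $\widetilde{\mathcal{K}}'(S_0)$ --- which would otherwise require producing the combinatorial data $\{(-)_A^{\emptyset}\}$ on the two summands.
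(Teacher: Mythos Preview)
Your proof is correct and rests on the same key observation as the paper's: the ideal $\mathcal{I}(M,M)$ is nilpotent when $\supp_{\mathcal{A}}(M)$ is finite. The paper's proof is simply ``$\mathcal{I}(M,M)$ is a nilpotent two-sided ideal, hence idempotents lift,'' and stops there.

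The difference is only in the last step. The paper lifts a nontrivial idempotent from $\End_{\mathcal{K}'(S_0)}(M)$ to $\End_{\widetilde{\mathcal{K}}'(S_0)}(M)$ and implicitly uses that such an idempotent splits in $\widetilde{\mathcal{K}}'(S_0)$. You instead lift an isomorphism $M\xrightarrow{\sim}N_1\oplus N_2$ and its inverse, using that a unit plus a nilpotent is invertible; since $N_1\oplus N_2$ is already an object of $\widetilde{\mathcal{K}}'(S_0)$, no splitting question arises. This is a genuine (if small) gain in explicitness: the paper never verifies that $\widetilde{\mathcal{K}}'(S_0)$ is idempotent-complete, and while that is not hard, your route sidesteps it entirely. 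Your parenthetical remark that every object of $\widetilde{\mathcal{K}}'(S_0)$ has finite support is correct and even makes the hypothesis of the proposition redundant, though the paper does not record this.
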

\begin{proof}
Let $M\in \widetilde{\mathcal{K}}'(S_0)$ and assume that $\supp_{\mathcal{A}}(M)$ is finite.
Then $\{\varphi\in \End_{\widetilde{\mathcal{K}}'(S_0)}(M)\mid \varphi(M_{A}^{\emptyset})\subset \bigoplus_{A' > A}M_{A'}^{\emptyset}\ (A\in \mathcal{A})\}$ is a two-sided ideal of $\End_{\widetilde{\mathcal{K}}'(S_0)}(M)$ and, since $\supp_{\mathcal{A}}(M)$ is finite, this is nilpotent.
Therefore the idempotent lifting property implies the proposition.
\end{proof}

\begin{lem}\label{lem:ES in K induces exact sequence in a special case}
Let $K\subset \mathcal{A}$ be a locally closed subset such that for any $A\in K$ we have $(A + \Z\Delta) \cap K = \{A\}$.
Then we have the following.
\begin{enumerate}
\item For a morphism $\varphi\colon M\to N$ in $\widetilde{\mathcal{K}}(S_0)$ which is zero in $\mathcal{K}(S_0)$, the homomorphism $M_{K}\to N_{K}$ is zero in $\widetilde{\mathcal{K}}(S_0)$.
\item Let $M_1\to M_2\to M_3$ be a sequence in $\widetilde{\mathcal{K}}(S_0)$ and assume that the sequence $M_1\to M_2\to M_3$ satisfies \property{ES} as a seqeune in $\mathcal{K}(S_0)$.
Then $(M_1)_{K}\to (M_2)_{K}\to (M_3)_{K}$ satisfies \property{ES} as a sequence in $\widetilde{\mathcal{K}}(S_0)$. In particular, $0\to (M_1)_{K}\to (M_2)_{K}\to (M_3)_{K}\to 0$ is an exact sequence of $(S_0,R)$-bimodules.
\end{enumerate}
\end{lem}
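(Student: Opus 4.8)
The plan is to prove (1) first and then deduce (2) by formal manipulations. For (1), the key observation is that any $(S_0,R)$-bimodule homomorphism automatically respects the $\Lambda_{\aff}$-orbit decomposition of the $\emptyset$-localizations (Remark~\ref{rem:decomposition is preserved automatically}). Hence the hypothesis that $\varphi$ is zero in $\mathcal{K}(S_0)$, i.e.\ $\varphi(M_A^\emptyset)\subset\bigoplus_{A'>A}N_{A'}^\emptyset$, can be sharpened to $\varphi(M_A^\emptyset)\subset\bigoplus_{A'\in A+\Z\Delta,\,A'>A}N_{A'}^\emptyset$. Writing $K=I\cap J$ with $I$ closed and $J$ open, we have $(M_K)^\emptyset=\bigoplus_{A\in K}M_A^\emptyset$ and the induced map $\varphi_K\colon M_K\to N_K$; on $\emptyset$-localizations $\varphi_K^\emptyset$ is the restriction of $\varphi^\emptyset$ followed by the projection onto $\bigoplus_{A''\in K}N_{A''}^\emptyset$. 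For $A\in K$ the sharpened inclusion shows $\varphi(M_A^\emptyset)$ sits in components indexed by $A'\in A+\Z\Delta$ with $A'>A$, and by the standing assumption $(A+\Z\Delta)\cap K=\{A\}$ none of these indices lie in $K$; so this image is killed by the projection to $(N_K)^\emptyset$. Since $N_K$ is torsion-free over $S_0$ and therefore embeds into $(N_K)^\emptyset$, we conclude $\varphi_K=0$ in $\widetilde{\mathcal{K}}(S_0)$.

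For (2), the vanishing of the composite $(M_1)_K\to(M_2)_K\to(M_3)_K$ in $\widetilde{\mathcal{K}}(S_0)$ follows by applying (1) to the composite $M_1\to M_3$, which is zero in $\mathcal{K}(S_0)$ by the \property{ES} hypothesis. For the stalk condition I would fix $A\in\mathcal{A}$ and use Lemma~\ref{lem:composition of M_K} to identify $((M_i)_K)_{\{A\}}$ naturally with $(M_i)_{K\cap\{A\}}$, which is $0$ if $A\notin K$ and $(M_i)_{\{A\}}$ if $A\in K$; in either case $0\to((M_1)_K)_{\{A\}}\to((M_2)_K)_{\{A\}}\to((M_3)_K)_{\{A\}}\to0$ becomes, via these identifications, either the zero sequence or $0\to(M_1)_{\{A\}}\to(M_2)_{\{A\}}\to(M_3)_{\{A\}}\to0$, which is exact by hypothesis. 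This shows that $(M_1)_K\to(M_2)_K\to(M_3)_K$ satisfies \property{ES} in $\widetilde{\mathcal{K}}(S_0)$, and the final assertion that $0\to(M_1)_K\to(M_2)_K\to(M_3)_K\to0$ is exact then follows from Lemma~\ref{lem:exact sequence for any locally closed subset}.

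The one genuinely load-bearing step is the use of the hypothesis $(A+\Z\Delta)\cap K=\{A\}$ in part (1): it is precisely what turns the statement that $\varphi$ vanishes in $\mathcal{K}(S_0)$, i.e.\ only modulo strictly higher terms, into the statement that $\varphi_K$ vanishes on the nose, and I expect the careful bookkeeping of which orbit components survive the restriction-then-projection to be the main point. The supporting facts --- orbit-preservation of bimodule maps (Remark~\ref{rem:decomposition is preserved automatically}), naturality of the identifications in Lemma~\ref{lem:composition of M_K}, and the embedding of objects into their $\emptyset$-localizations by torsion-freeness --- are all routine; if one does not wish to assume finite support for the $M_i$, the last exactness statement should be reduced to the finite-support situation before invoking Lemma~\ref{lem:exact sequence for any locally closed subset}.
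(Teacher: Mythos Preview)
Your proof is correct and follows essentially the same approach as the paper. For part (1) you argue exactly as the paper does: use Remark~\ref{rem:decomposition is preserved automatically} to sharpen $\varphi(M_A^\emptyset)\subset\bigoplus_{A'>A}N_{A'}^\emptyset$ to the orbit $A+\Z\Delta$, then invoke the hypothesis on $K$ to kill the image after projecting to $(N_K)^\emptyset$. For part (2) you are actually more explicit than the paper, which records only the vanishing of the composite and leaves the stalk exactness and the invocation of Lemma~\ref{lem:exact sequence for any locally closed subset} to the reader; your use of Lemma~\ref{lem:composition of M_K} to identify $((M_i)_K)_{\{A\}}$ with $(M_i)_{K\cap\{A\}}$ and your remark about the finite-support hypothesis are the correct way to fill in those details.
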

\begin{proof}
(1)
We have $M^{\emptyset}_{K} = \bigoplus_{A\in K}M_{A}^{\emptyset}$ and $N_{K}^{\emptyset} = \bigoplus_{A\in K}N_{A}^{\emptyset}$.
Since $\varphi = 0$ in $\mathcal{K}$, we have $\varphi(M_{A}^{\emptyset})\subset \bigoplus_{A' > A}N_{A'}^{\emptyset}$ for any $A\in K$.
We also know that $\varphi(M_{A}^{\emptyset})\subset \bigoplus_{A' \in A + \Z\Delta}N_{A'}^{\emptyset}$.
By the assumption, there is no $A'\in A + \Z\Delta$ such that $A' > A$ and $A'\in K$.
Hence $\varphi(M_{A}^{\emptyset}) = 0$.

(2)
By (1), the composition $(M_1)_{K}\to (M_2)_{K}\to (M_3)_{K}$ is zero.
\end{proof}

\begin{lem}\label{lem:B preserves ES in K}
Assume that a sequence $M_1\to M_2\to M_3$ in $\mathcal{K}_{\Delta}(S_0)$ satisfies \property{ES}.
Then $M_1*B\to M_2*B\to M_3*B$ also satisfies \property{ES}.
\end{lem}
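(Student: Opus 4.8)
The plan is to reduce the statement to the case $B=B_s$ for a single $s\in S_{\aff}$. Since $M*(B'\otimes B'')\simeq(M*B')*B''$ and $M*(B'\oplus B'')\simeq(M*B')\oplus(M*B'')$ (both immediate from the definition of $*$), if \property{ES} is preserved by $-*B_s$ for every $s$ then it is preserved by $-*(B_{s_1}\otimes\dotsm\otimes B_{s_l})$ and by finite direct sums of such. Finally, if $B$ is a direct summand of such a $B'$, realized by an idempotent $e\in\End_{\Sbimod}(B')$, then for a sequence $M_1\to M_2\to M_3$ the objects $M_i*B$ are retracts of $M_i*B'$ via the idempotents $\id_{M_i}\otimes e$, which commute with the maps induced by $M_1\to M_2\to M_3$ (they act on different tensor factors); since a retract of a sequence satisfying \property{ES} again satisfies \property{ES}, we are done. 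So from now on assume $B=B_s$.

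\textbf{The case $B=B_s$.}
Here \property{ES} asks for two things: that the composition $M_1*B_s\to M_2*B_s\to M_3*B_s$ vanish in $\mathcal{K}_{\Delta}(S_0)$, and that $0\to(M_1*B_s)_{\{A\}}\to(M_2*B_s)_{\{A\}}\to(M_3*B_s)_{\{A\}}\to 0$ be exact for each $A\in\mathcal{A}$. The first is immediate: $M_1\to M_2\to M_3$ composes to zero in $\mathcal{K}_{\Delta}(S_0)$, and the functor $-*B_s$ sends the zero morphism to the zero morphism. For the second I would invoke Corollary~\ref{cor:Hecke action}, which gives isomorphisms $(M_i*B_s)_{\{A\}}\simeq(M_i)_{\{A,As\}}(\varepsilon)$, with $\varepsilon\in\{\pm1\}$ depending only on whether $As<A$ or $As>A$; these are natural in $M_i$, as is visible from the explicit constructions in Lemmas~\ref{lem:stalk of M*B_s} and \ref{lem:stalk of M*B_s, any closed subset}. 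Thus the second point is equivalent to the exactness of $0\to(M_1)_{\{A,As\}}\to(M_2)_{\{A,As\}}\to(M_3)_{\{A,As\}}\to 0$.

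\textbf{Exactness on $\{A,As\}$.}
This I would deduce from Lemma~\ref{lem:ES in K induces exact sequence in a special case}(2) with $K=\{A,As\}$, which needs two elementary geometric facts. First, $\{A,As\}$ is locally closed: it is the intersection of the closed set $\{A'\in\mathcal{A}\mid A'\ge A\}\cup\{A'\in\mathcal{A}\mid A'\ge As\}$ with the open set $\mathcal{A}\setminus(\{A'\in\mathcal{A}\mid A'>A\}\cup\{A'\in\mathcal{A}\mid A'>As\})$. Second, $K=\{A,As\}$ satisfies the hypothesis of that lemma, namely $(A+\Z\Delta)\cap K=\{A\}$ and $(As+\Z\Delta)\cap K=\{As\}$; both reduce to $As\notin A+\Z\Delta$, which holds because the class of an alcove in $\mathcal{A}/\Lambda_{\aff}$ is constant on $\Lambda_{\aff}$-orbits, whereas writing $As=s_{\alpha,n}(A)$ the class of $As$ equals $s_\alpha$ times that of $A$ and hence differs from it. Granting these, Lemma~\ref{lem:ES in K induces exact sequence in a special case}(2), applied to lifts in $\widetilde{\mathcal{K}}_{\Delta}(S_0)$ of $M_1\to M_2\to M_3$ (which satisfies \property{ES} in $\mathcal{K}(S_0)$ by hypothesis), yields the required exact sequence. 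The only real point of care — and the reason Lemma~\ref{lem:ES in K induces exact sequence in a special case} is phrased as it is — is that the vanishing of the composite holds only in $\mathcal{K}$, not in $\widetilde{\mathcal{K}}$, so one may not simply pass through $\widetilde{\mathcal{K}}$; modulo this, all the content lies in Corollary~\ref{cor:Hecke action} and Lemma~\ref{lem:ES in K induces exact sequence in a special case}, and I anticipate no genuine obstacle.
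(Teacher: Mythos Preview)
Your proof is correct and follows essentially the same route as the paper: reduce to $B=B_s$, invoke Corollary~\ref{cor:Hecke action} to identify $(M_i*B_s)_{\{A\}}$ with $(M_i)_{\{A,As\}}$ up to shift, and apply Lemma~\ref{lem:ES in K induces exact sequence in a special case}(2) with $K=\{A,As\}$ to obtain the required exactness. You supply more detail than the paper does---spelling out the reduction to $B_s$ via direct summands and verifying that $\{A,As\}$ is locally closed with $As\notin A+\Z\Delta$---but the argument is the same.
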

\begin{proof}
We may assume $B = B_s$ where $s\in S_{\aff}$.
We take lifts of $M_1\to M_2$ and $M_2\to M_3$ in $\widetilde{\mathcal{K}}(S_0)$ and we regard $M_1\to M_2\to M_3$ also as a sequence in $\widetilde{\mathcal{K}}(S_0)$.
As in Corollary~\ref{cor:Hecke action}, we have $(M_{i}*B_s)_{\{A\}}\simeq (M_{i})_{\{A,As\}}(\varepsilon(A))$  where $\varepsilon(A)$ is as in the proof of Lemma~\ref{lem:stalk of M*B_s, any closed subset}.
By the previous lemma, $0\to (M_{1})_{\{A,As\}}\to (M_{2})_{\{A,As\}}\to (M_{3})_{\{A,As\}}\to 0$ is exact.
Therefore $0\to (M_1*B_s)_{\{A\}}\to (M_2*B_s)_{\{A\}}\to (M_3*B_s)_{\{A\}}\to 0$ is exact.
Hence a sequence $M_1*B_s\to M_2*B_s\to M_3*B_s$ in $\mathcal{K}_{\Delta}(S_0)$ satisfies \property{ES}.
\end{proof}

Combining Proposition~\ref{prop:adjointness for K}, we have $\mathcal{K}_{P}(S_0)*\Sbimod\subset \mathcal{K}_{P}(S_0)$.

\begin{lem}\label{lem:homomorphism from Q_lambda}
Let $\lambda\in (\R\Delta)_{\integer}$.
The subset $W'_{\lambda}A_{\lambda}^{-}$ is locally closed and we have a natural isomorphism $\Hom^{\bullet}_{\mathcal{K}(S_0)}(S_0\otimes_{S}Q_{\lambda},M)\simeq M_{W'_{\lambda}A_{\lambda}^{-}}$ for $M\in \mathcal{K}_{\Delta}(S_0)$.
\end{lem}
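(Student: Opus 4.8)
The plan is to bootstrap from the identification over $\widetilde{\mathcal{K}}'(S_0)$ proved just above, $\Hom^{\bullet}_{\widetilde{\mathcal{K}}'(S_0)}(S_0\otimes_{S}Q_{\lambda},M)\simeq M_{\{A'\in\mathcal{A}\mid A'\ge A_{\lambda}^-\}}$, and to pin down exactly which of these homomorphisms become zero in $\mathcal{K}(S_0)$. Write $P=S_0\otimes_{S}Q_{\lambda}$, $I=\{A'\in\mathcal{A}\mid A'\ge A_{\lambda}^-\}$ and $K=W'_{\lambda}A_{\lambda}^-$. Recall $\supp_{\mathcal{A}}(P)=K$ and that $P,M\in\widetilde{\mathcal{K}}(S_0)$, so $\Hom_{\widetilde{\mathcal{K}}(S_0)}(P,M)=\Hom_{\widetilde{\mathcal{K}}'(S_0)}(P,M)$; the cited isomorphism is $\varphi\mapsto\varphi(q)$ for the canonical generator $q=(1)_{A\in K}$ of $P$, and it is visibly natural in $M$ and homogeneous.

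First I would check that $K$ is locally closed. Since $A_{\lambda}^-$ is the minimum of $K$ (the assignment $x\mapsto xA_{\lambda}^-$ is an order isomorphism $W'_{\lambda}\xrightarrow{\sim}K$ taking $e$ to $A_{\lambda}^-$), the closure of $K$ equals $I$, so it suffices to produce an open subset $J$ with $K=I\cap J$; concretely $K$ is an interval in $\mathcal{A}$ with bottom $A_{\lambda}^-$, and any interval $[A,A']$ is of the form $I'\cap J'$ with $I'$ closed and $J'$ open. I would take this from the alcove combinatorics of \cite{MR591724}, and I expect this geometric point --- not the $\Hom$ computation --- to be where the real work (or the exact citation) lies.

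For the $\Hom$ computation the key is to identify, under $\varphi\mapsto\varphi(q)$, the kernel of $\Hom_{\widetilde{\mathcal{K}}(S_0)}(P,M)\to\Hom_{\mathcal{K}(S_0)}(P,M)$ with $M_{I\setminus J}=\ker(M_{I}\to M_{I}/M_{I\setminus J})$. I would use two facts about $K$: by Lemma~\ref{lem:lemma on W_lambda-orbit}, for $A\in K$ one has $(A+\Z\Delta)\cap I=\{A'\in A+\Z\Delta\mid A'\ge A\}$; and since $W'_{\lambda}\xrightarrow{\sim}W_{\mathrm{f}}$ acts simply transitively on $\mathcal{A}/\Lambda_{\aff}$, distinct elements of $K$ lie in distinct $\Z\Delta$-orbits. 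Given $\varphi\colon P\to M$ with $\varphi(q)=m\in M_{I}$, its extension $\varphi^{\emptyset}\colon P^{\emptyset}\to M^{\emptyset}$ respects the $\Z\Delta$-orbit decomposition (Remark~\ref{rem:decomposition is preserved automatically}) and lands in $M_{I}^{\emptyset}$, so $\varphi(P_{A}^{\emptyset})\subseteq\bigoplus_{A'\in A+\Z\Delta,\ A'\ge A}M_{A'}^{\emptyset}$ for $A\in K$. Writing $q=\sum_{A\in K}q_{A}$ with $q_{A}=1$ a free $S_0^{\emptyset}$-generator of $P_{A}^{\emptyset}$, the terms $q_{A'}$ with $A'\neq A$ in $K$ contribute nothing to the $A$-component, so $m_{A}=\varphi(q)_{A}=\varphi(q_{A})_{A}$. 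Hence $\varphi$ is zero in $\mathcal{K}(S_0)$ --- equivalently $\varphi(P_{A}^{\emptyset})\subseteq\bigoplus_{A'>A}M_{A'}^{\emptyset}$ for all $A$, which by $\supp_{\mathcal{A}}(P)=K$ and the relation $m_{A}=\varphi(q_{A})_{A}$ amounts to $\varphi(q_{A})_{A}=0$ for every $A\in K$ --- if and only if $m_{A}=0$ for all $A\in K$, i.e. $m\in M_{I\setminus J}$. Therefore $\varphi\mapsto\varphi(q)$ descends to an isomorphism $\Hom^{\bullet}_{\mathcal{K}(S_0)}(P,M)\xrightarrow{\sim}M_{I}/M_{I\setminus J}=M_{K}$, natural in $M$.

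The remaining verifications are routine: $\varphi^{\emptyset}$ exists and is $S_0^{\emptyset}$-linear since $P^{\emptyset}=S_0^{\emptyset}\otimes_{S_0}P$ and $S_0$ is flat; $q_{A}=1\in S_0^{\emptyset}=P_{A}^{\emptyset}$ is a free generator by the explicit description of $Q_{\lambda}$; and every identification above is homogeneous, so they assemble into the asserted graded isomorphism. Thus the statement is a formal descent from the $\widetilde{\mathcal{K}}'$-version once the local closedness of $W'_{\lambda}A_{\lambda}^-$ is in place, and that local closedness is the main obstacle.
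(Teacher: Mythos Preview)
Your approach mirrors the paper's: both start from the isomorphism $\Hom^{\bullet}_{\widetilde{\mathcal{K}}'(S_0)}(P,M)\simeq M_I$ and identify the kernel of the passage to $\mathcal{K}(S_0)$ with $M_{I\setminus K}$, using Remark~\ref{rem:decomposition is preserved automatically} together with the fact that distinct elements of $K$ lie in distinct $\Z\Delta$-orbits. The $\Hom$ computation you give is essentially the paper's.

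The only substantive gap is the local closedness of $K$, which you flag as the main obstacle and defer to a citation. The paper does \emph{not} cite out for this; it proves directly that $I\setminus K$ is closed, in a few lines, using tools you already invoke. The argument: take $A_1\in K$ and $A_2\in I$ with $A_2\le A_1$; choose $A_3\in K$ with $A_2\in A_3+\Z\Delta$; by Lemma~\ref{lem:lemma on W_lambda-orbit} one has $A_2\ge A_3$; writing $A_1=x(A_3)$ with $x\in W'_\lambda$ and $A_2=A_3+\mu$, the chain $A_1\ge A_2\ge A_3$ together with Lemma~\ref{lem:order as a vector in A} (applied at the point $\lambda$ in the closure of $A_3$) gives $x(\lambda)-(\lambda+\mu)\in\R_{\ge 0}\Delta^+$ and $\mu\in\R_{\ge 0}\Delta^+$; since $x(\lambda)=\lambda$ this forces $\mu=0$, hence $A_2=A_3\in K$. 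Your claim that $K$ is an interval is in fact correct and follows from this (take $A_1$ to be the maximum of $K$), but the paper does not state it that way and does not need it---showing $I\setminus K$ is closed suffices.
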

\begin{proof}
Set $I = \{A'\in \mathcal{A}\mid A'\ge A_{\lambda}^{-}\}$.
We prove $I\setminus W'_{\lambda}A_{\lambda^{-}}$ is closed.
Let $A_1\in W'_{\lambda}A_{\lambda}^{-}$ and $A_2\in I$ satisfies $A_2\le A_1$.
We prove $A_2\in W'_{\lambda}A_{\lambda}^{-}$.
This proves that $I\setminus W'_{\lambda}A_{\lambda^{-}}$ is closed.
Take $A_3\in W'_{\lambda}A_{\lambda}^{-}$ such that $A_2\in A_3 + \Z\Delta$.
Then by Lemma~\ref{lem:lemma on W_lambda-orbit}, we have $A_2\ge A_3$.
Take $x\in W'_{\lambda}$ and $\mu\in \Z\Delta$ such that $A_1 = x(A_3)$ and $A_2 = A_3 + \mu$.
Then $A_1\ge A_2\ge A_3$ implies $x(\lambda) - (\lambda + \mu)\in\R_{\ge 0}\Delta^+$ and $(\lambda + \mu) - \lambda\in \R_{\ge 0}\Delta^+$.
As $x(\lambda) = \lambda$, we have $\mu = 0$.
Hence $A_2 = A_3\in W'_{\lambda}A_{\lambda}^{-}$.

We have $\Hom^{\bullet}_{\widetilde{\mathcal{K}}(S_0)}(Q_{\lambda},M)\simeq M_{I}$ where $I = \{A'\in \mathcal{A}\mid A'\ge A_{\lambda}^{-}\}$ and, under this correspondence, $\{\varphi\in \Hom^{\bullet}_{\widetilde{\mathcal{K}}(S_0)}(Q_{\lambda},M)\mid \varphi((Q_{\lambda})_A^{\emptyset})\subset \bigoplus_{A' > A}M_{A'}^{\emptyset}\}$ exactly corresponds to $\{m\in M_{I}\mid \text{$m_A = 0$ for any $A\in W'_{\lambda}A_{\lambda^{-}}$}\}$.
Since $I\setminus W'_{\lambda}A_{\lambda^{-}}$ is closed, $\{m\in M_{I}\mid \text{$m_A = 0$ for any $A\in W'_{\lambda}A_{\lambda^{-}}$}\} = M_{I\setminus W'_{\lambda}A_{\lambda}^{-}}$.
Hence $\Hom^{\bullet}_{\mathcal{K}(S_0)}(Q_{\lambda},M)\simeq M_{W'_{\lambda}A_{\lambda}^{-}}$.
\end{proof}

\begin{prop}\label{prop:object in K_P = object in tilde(K)_P}
The objects of $\mathcal{K}_P$ are the same as those of $\widetilde{\mathcal{K}}_{P}$.
\end{prop}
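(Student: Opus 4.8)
The plan is to prove the two inclusions of object classes separately, using the "generators" $Q_\lambda*B_{s_1}*\dotsm*B_{s_l}(n)$ of $\widetilde{\mathcal{K}}_P$ for one direction and the indecomposables $Q(A)$ for the other.

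\emph{Inclusion $\widetilde{\mathcal{K}}_P\subseteq\mathcal{K}_P$.} First I would note that $\mathcal{K}_P$ is closed under grading shifts, finite direct sums and direct summands (a finite direct sum of complexes of $\Coeff$-modules is exact if and only if each summand is), and recall the remark preceding the proposition that $\mathcal{K}_P*\Sbimod\subseteq\mathcal{K}_P$. By the corollary to Theorem~\ref{thm:indecomposables in widetilde(K)}, every object of $\widetilde{\mathcal{K}}_P$ is a direct summand of a finite direct sum of objects $Q_\lambda*B_{s_1}*\dotsm*B_{s_l}(n)$, and each of these lies in $\mathcal{K}_P$ once $Q_\lambda$ does; so it suffices to check $Q_\lambda\in\mathcal{K}_P$ for each $\lambda\in(\R\Delta)_{\integer}$. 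For this, set $K=W'_\lambda A_\lambda^-$. This subset is locally closed (as in the proof of Lemma~\ref{lem:homomorphism from Q_lambda}) and, since $W'_\lambda\xrightarrow{\sim}W_{\mathrm f}$ contains no nontrivial translation, Lemma~\ref{lem:lemma on W_lambda-orbit} gives $(A+\Z\Delta)\cap K=\{A\}$ for every $A\in K$. Hence, for any sequence $M_1\to M_2\to M_3$ in $\mathcal{K}_\Delta$ satisfying \property{ES}, Lemma~\ref{lem:ES in K induces exact sequence in a special case} produces an exact sequence $0\to(M_1)_K\to(M_2)_K\to(M_3)_K\to 0$; applying the natural isomorphism $\Hom^\bullet_{\mathcal{K}}(Q_\lambda,-)\simeq(-)_K$ of Lemma~\ref{lem:homomorphism from Q_lambda} gives precisely the exactness required for $Q_\lambda\in\mathcal{K}_P$.

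\emph{Inclusion $\mathcal{K}_P\subseteq\widetilde{\mathcal{K}}_P$.} I would argue by induction on the graded rank of $M\in\mathcal{K}_P$, following the proof of Theorem~\ref{thm:indecomposables in widetilde(K)}. If $M\ne 0$, pick $A$ minimal in $\supp_{\mathcal{A}}(M)$ and a grading shift $n$ so that $Q(A)(n)_{\{A\}}\simeq S(n)$ is a graded direct summand of the graded free module $M_{\{A\}}$, with inclusion $i$ and projection $p$. Using $Q(A)(n)\in\widetilde{\mathcal{K}}_P$ one obtains, exactly as in that proof, a lift $\widetilde i\colon Q(A)(n)\to M$ of $i$. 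For a lift $\widetilde p\colon M\to Q(A)(n)$ of $p$ I would instead invoke $M\in\mathcal{K}_P$: the sequence $Q(A)(n)_{I\setminus\{A\}}\to Q(A)(n)\to Q(A)(n)_{\{A\}}$ (with $I$ closed, $\supp_{\mathcal{A}}(Q(A))\subset I$ and $A$ minimal in $I$) satisfies \property{ES} in $\mathcal{K}_\Delta$, so $\Hom_{\mathcal{K}}(M,Q(A)(n))\to\Hom_{\mathcal{K}}(M,Q(A)(n)_{\{A\}})$ is surjective; since $Q(A)(n)_{\{A\}}$ is supported on the single alcove $A$ and $A$ is minimal in $\supp_{\mathcal{A}}(M)$, there are no nonzero strictly lower triangular morphisms $M\to Q(A)(n)_{\{A\}}$, so this target equals $\Hom_{\widetilde{\mathcal{K}}}(M,Q(A)(n)_{\{A\}})$, and lifting an arbitrary $\widetilde{\mathcal{K}}$-preimage yields $\widetilde p$ with $\widetilde p_{\{A\}}=p$, whence $(\widetilde p\,\widetilde i)_{\{A\}}=p\circ i=\id$. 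As $\End_{\widetilde{\mathcal{K}}}(Q(A)(n))$ is local ($Q(A)$ indecomposable, Krull--Schmidt) and the ideal of endomorphisms killing every stalk $(-)_{\{A'\}}$ is nilpotent, the endomorphism $\widetilde p\,\widetilde i$, acting invertibly on $Q(A)(n)_{\{A\}}$, is a unit, exactly as in Theorem~\ref{thm:indecomposables in widetilde(K)}. Therefore $Q(A)(n)$ is a direct summand of $M$ in $\widetilde{\mathcal{K}}_\Delta$; writing $M\simeq Q(A)(n)\oplus M'$, the complement $M'$ lies in $\mathcal{K}_P$ (closure under summands) and has strictly smaller rank, so $M'\in\widetilde{\mathcal{K}}_P$ by induction and thus $M\in\widetilde{\mathcal{K}}_P$.

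The main obstacle is the construction of $\widetilde p$ in the second part: one must ensure that passing from $\mathcal{K}$ to $\widetilde{\mathcal{K}}$ and back does not spoil the normalization $\widetilde p_{\{A\}}=p$. This works precisely because the target $Q(A)(n)_{\{A\}}$ is concentrated on one alcove together with the minimality of $A$ in $\supp_{\mathcal{A}}(M)$, which forces $\Hom_{\mathcal{K}}(M,Q(A)(n)_{\{A\}})=\Hom_{\widetilde{\mathcal{K}}}(M,Q(A)(n)_{\{A\}})$; checking this identification, and that applying the functor $(-)_{\{A\}}$ to the defining relations of $\widetilde i$ and $\widetilde p$ returns $i$ and $p$, is the delicate bookkeeping. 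Everything else reduces to results already available: the adjunction of Proposition~\ref{prop:adjointness for K}, stability under $\Sbimod$, the description of $\Hom^\bullet_{\mathcal{K}}(Q_\lambda,-)$, and Lemma~\ref{lem:ES in K induces exact sequence in a special case}.
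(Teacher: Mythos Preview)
Your proposal is correct and follows essentially the same strategy as the paper. For the inclusion $\widetilde{\mathcal{K}}_P\subseteq\mathcal{K}_P$ your argument is identical to the paper's: reduce to $Q_\lambda$ and use Lemma~\ref{lem:homomorphism from Q_lambda} together with Lemma~\ref{lem:ES in K induces exact sequence in a special case}. (One small remark: the fact $(A+\Z\Delta)\cap W'_\lambda A_\lambda^-=\{A\}$ is not really Lemma~\ref{lem:lemma on W_lambda-orbit} but rather the observation you already make, that $W'_\lambda\xrightarrow{\sim}W_{\mathrm f}$ acts simply transitively on $\mathcal{A}/\Lambda_{\aff}$.)

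For the reverse inclusion the paper is terser: it just says to rerun the proof of Theorem~\ref{thm:indecomposables in widetilde(K)} entirely inside $\mathcal{K}_P$, using that $Q(A)\in\mathcal{K}_P$ (from the first part) and that $Q(A)$ remains indecomposable in $\mathcal{K}$ by Proposition~\ref{prop:indecomposable is indecomposable}; this yields $M\simeq\bigoplus Q(A_i)(n_i)$ in $\mathcal{K}$, and since the kernel of $\End_{\widetilde{\mathcal{K}}}(M)\to\End_{\mathcal{K}}(M)$ is nilpotent, the isomorphism lifts to $\widetilde{\mathcal{K}}$. Your variant---lifting $\widetilde i$ via $Q(A)\in\widetilde{\mathcal{K}}_P$ and $\widetilde p$ via $M\in\mathcal{K}_P$, then passing back to $\widetilde{\mathcal{K}}$---achieves the direct summand in $\widetilde{\mathcal{K}}$ in one step and is equally valid; the bookkeeping you flag (that $\Hom_{\mathcal{K}}(M,Q(A)(n)_{\{A\}})=\Hom_{\widetilde{\mathcal{K}}}(M,Q(A)(n)_{\{A\}})$ by minimality of $A$) is exactly what makes this work.
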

\begin{proof}
First we prove that any $M\in\widetilde{\mathcal{K}}_P$ belongs to $\mathcal{K}_P$.
By Theorem~\ref{thm:indecomposables in widetilde(K)}, we may assume $M = Q_{\lambda}*B_{s_1}*\dotsm *B_{s_l}(n)$ for some $\lambda\in (\R\Delta)_{\integer}$, $s_1,\dots,s_l\in S_{\aff}$ and $n\in\Z$.
By Proposition~\ref{prop:adjointness for K} and Lemma~\ref{lem:B preserves ES in K}, we may assume $M = Q_{\lambda}$.

We have $\Hom_{\mathcal{K}}(Q_{\lambda},M) \simeq M_{W'_{\lambda}A_{\lambda}^{-}}$.
Since $W'_{\lambda}A_{\lambda}^{-}$ satisfies the condition of Lemma~\ref{lem:ES in K induces exact sequence in a special case}, this implies $Q_{\lambda}\in\mathcal{K}_{P}$.

The object $Q(A)$ is indecomposable by Proposition~\ref{prop:indecomposable is indecomposable}.
Using the argument in the proof of Theorem~\ref{thm:indecomposables in widetilde(K)}, any object in $\mathcal{K}_{P}$ is a direct sum of $Q(A)(n)$.
Hence the proposition is proved.
\end{proof}

Hence our $\mathcal{K}_{P}$ is the same as that in the introduction.

\begin{cor}\label{cor:base change for K_P}
Let $M\in \mathcal{K}_{P}$, $N\in \mathcal{K}_{\Delta}$ and $S_0$ a flat commutative graded $S$-algebra.
\begin{enumerate}
\item The natural map $S_0\otimes_{S}\Hom^{\bullet}_{\mathcal{K}_P}(M,N)\to \Hom^{\bullet}_{\mathcal{K}_P(S_0)}(S_0\otimes_{S}M,S_0\otimes_{S}N)$ is an isomorphism.
\item We have $S_0\otimes_{S}M\in \mathcal{K}_{P}(S_0)$.
\end{enumerate}\end{cor}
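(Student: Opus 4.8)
The strategy is to mimic the proof of the corresponding statement for $\widetilde{\mathcal{K}}_P$: reduce to the case $M=Q_\lambda$ and then invoke Lemma~\ref{lem:homomorphism from Q_lambda} together with the fact, recorded in the Base change subsection, that $S_0\otimes_S-$ commutes with the functor $M\mapsto M_K$ for locally closed $K$. Two compatibilities will be used repeatedly and are immediate from the definitions: for $B\in\Sbimod$ one has $S_0\otimes_S(M*B)\simeq(S_0\otimes_SM)*B$ in $\widetilde{\mathcal{K}}'(S_0)$ (both sides are $(S_0\otimes_SM)\otimes_RB$, and the decompositions $(-)^\emptyset_A$ agree because $S_0$ is flat over $S$ and $B^\emptyset_x$ does not involve $S_0$); and the adjunction isomorphism of Proposition~\ref{prop:adjointness for K} is compatible with $S_0\otimes_S-$, since its explicit description $\psi(m)=\varphi(m\delta\otimes1)\otimes1-\varphi(m\otimes1)\otimes s(\delta)$ uses only $\delta\in\Lambda_{\Coeff}$, which is the same over $S$ and over $S_0$. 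Finally, by Proposition~\ref{prop:object in K_P = object in tilde(K)_P} and the corollary following Theorem~\ref{thm:indecomposables in widetilde(K)}, every $M\in\mathcal{K}_P$ is a direct summand of a finite direct sum $X=\bigoplus_j Q_{\lambda_j}*B_{s^{(j)}_1}*\dotsm*B_{s^{(j)}_{l_j}}(n_j)$.

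For (1): since $S_0\otimes_S-$, the source and target of the natural map, and the map itself are all additive in $M$ and compatible with grading shifts and direct summands, I may assume $M=Q_\lambda*B_{s_1}*\dotsm*B_{s_l}$. Applying the graded version of Proposition~\ref{prop:adjointness for K} $l$ times moves the factors $B_{s_i}$ onto the second argument (which stays in $\mathcal{K}_\Delta$ by bifunctoriality of $*$) and absorbs the grading shift; using the base-change compatibilities above this reduces the claim to $M=Q_\lambda$ with $N$ an arbitrary object of $\mathcal{K}_\Delta$. Now Lemma~\ref{lem:homomorphism from Q_lambda}, applied over $S$ and over $S_0$ (noting $S_0\otimes_SN\in\mathcal{K}_\Delta(S_0)$), identifies the source with $S_0\otimes_SN_{W'_\lambda A_\lambda^-}$ and the target with $(S_0\otimes_SN)_{W'_\lambda A_\lambda^-}$, and the natural map becomes the canonical isomorphism between them. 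The only genuine check is that the map produced by Lemma~\ref{lem:homomorphism from Q_lambda} is natural enough for these identifications to intertwine it with $S_0\otimes_S(-)$; this is routine, the distinguished element of $Q_\lambda$ used to define it being defined over $S$.

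For (2): first, $S_0\otimes_SM\in\mathcal{K}_\Delta(S_0)$ already, because the $(S_0,R)$-bimodule structure, conditions \property{S} and \property{LE}, and finiteness of the support are preserved by the flat base change $S_0\otimes_S-$, while $(S_0\otimes_SM)_{\{A\}}\simeq S_0\otimes_SM_{\{A\}}$ is graded free since $M$ admits a standard filtration. It remains to verify the Hom-exactness condition. Each $S_0\otimes_SQ_{\lambda_j}$ lies in $\mathcal{K}_P(S_0)$ by the argument in the proof of Proposition~\ref{prop:object in K_P = object in tilde(K)_P}: by Lemma~\ref{lem:homomorphism from Q_lambda} the functor $\Hom^\bullet_{\mathcal{K}(S_0)}(S_0\otimes_SQ_{\lambda_j},-)$ is $M'\mapsto M'_{W'_{\lambda_j}A_{\lambda_j}^-}$, and since $W'_{\lambda_j}A_{\lambda_j}^-$ satisfies the hypothesis of Lemma~\ref{lem:ES in K induces exact sequence in a special case} this functor carries sequences with \property{ES} to short exact sequences. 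As $\mathcal{K}_P(S_0)*\Sbimod\subset\mathcal{K}_P(S_0)$ and $\mathcal{K}_P(S_0)$ is closed under finite direct sums, $S_0\otimes_SX\simeq\bigoplus_j(S_0\otimes_SQ_{\lambda_j})*B^{(j)}_{s_1}*\dotsm*B^{(j)}_{s_{l_j}}(n_j)$ lies in $\mathcal{K}_P(S_0)$. Since $X\simeq M\oplus M'$ in $\widetilde{\mathcal{K}}_P$, applying $S_0\otimes_S-$ gives $S_0\otimes_SX\simeq(S_0\otimes_SM)\oplus(S_0\otimes_SM')$ in $\mathcal{K}_\Delta(S_0)$, so $\Hom^\bullet_{\mathcal{K}_\Delta(S_0)}(S_0\otimes_SM,-)$ is a direct summand of $\Hom^\bullet_{\mathcal{K}_\Delta(S_0)}(S_0\otimes_SX,-)$ and hence is also exact on sequences with \property{ES}; thus $S_0\otimes_SM\in\mathcal{K}_P(S_0)$.

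I do not expect a serious obstacle: the content was already assembled in Lemma~\ref{lem:homomorphism from Q_lambda}, in the isomorphism $S_0\otimes_S(M_K)\simeq(S_0\otimes_SM)_K$, and in the facts $\mathcal{K}_P(S_0)*\Sbimod\subset\mathcal{K}_P(S_0)$ and Proposition~\ref{prop:object in K_P = object in tilde(K)_P}. The steps requiring care are purely bookkeeping: checking that the successive adjunction isomorphisms and the grading shifts in the reduction of (1) are compatible with $S_0\otimes_S-$ and with passage to a direct summand, and that $\mathcal{K}_P(S_0)$ is genuinely closed under direct summands inside $\mathcal{K}_\Delta(S_0)$, which follows from the summand-closure of conditions \property{S} and \property{LE} and the additivity of the Hom-exactness condition.
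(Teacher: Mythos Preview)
Your proof is correct and follows essentially the same approach as the paper: reduce to $M=Q_\lambda*B_{s_1}*\dotsm*B_{s_l}(n)$ via the classification, use the adjunction of Proposition~\ref{prop:adjointness for K} to strip off the $B_{s_i}$'s for (1), and use $\mathcal{K}_P(S_0)*\Sbimod\subset\mathcal{K}_P(S_0)$ together with $S_0\otimes_SQ_\lambda\in\mathcal{K}_P(S_0)$ for (2), in both cases landing on Lemma~\ref{lem:homomorphism from Q_lambda} and the base-change compatibility $S_0\otimes_S(N_K)\simeq(S_0\otimes_SN)_K$. The paper's proof is terser and omits the bookkeeping you spell out (compatibility of the adjunction with $S_0\otimes_S-$, closure of $\mathcal{K}_P(S_0)$ under direct summands), but the substance is the same.
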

\begin{proof}
We may assume $M = Q_{\lambda}*B_{s_1}*\dotsm *B_{s_l}(n)$ for some $\lambda\in (\R\Delta)_{\integer}$, $s_1,\dots,s_l\in S_{\aff}$ and $n\in\Z$.

(1)
By Proposition~\ref{prop:adjointness for K}, we may assume $M = Q_{\lambda}$.
In this case, the corollary is equivalent to $S_0\otimes_S(N_{W'_{\lambda}A_{\lambda}^{-}})\simeq (S_0\otimes_{S}N)_{W'_{\lambda}A_{\lambda}^{-}}$.
This is clear.

(2)
By Lemma~\ref{lem:B preserves ES in K}, we may assume $M = Q_{\lambda}$.
Then $S_0\otimes_{S}Q_{\lambda}\in \mathcal{K}_{P}(S_0)$ by Lemma~\ref{lem:ES in K induces exact sequence in a special case} and \ref{lem:homomorphism from Q_lambda}.
\end{proof}

We can define $\ch\colon [\mathcal{K}_P]\to \mathcal{P}^{0}$ by the same formula as $\ch\colon [\widetilde{\mathcal{K}}_P]\to \mathcal{P}^{0}$.
By the previous proposition with Theorem~\ref{thm:categorification for tilde(K)}, we get the following.
\begin{thm}\label{thm:categorification for K}
We have $[\mathcal{K}_P]\simeq \mathcal{P}^{0}$.
\end{thm}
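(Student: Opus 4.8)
The plan is to transport the isomorphism $\ch\colon[\widetilde{\mathcal{K}}_P]\xrightarrow{\sim}\mathcal{P}^{0}$ of Theorem~\ref{thm:categorification for tilde(K)} along the canonical quotient functor $\pi\colon\widetilde{\mathcal{K}}_P\to\mathcal{K}_P$, which is the identity on objects and the quotient map on each morphism space. By Proposition~\ref{prop:object in K_P = object in tilde(K)_P} this functor is a bijection on objects; being additive, it therefore induces a homomorphism $[\pi]\colon[\widetilde{\mathcal{K}}_P]\to[\mathcal{K}_P]$ of split Grothendieck groups, and this homomorphism is surjective since every $[M]$ with $M\in\mathcal{K}_P$ is $[\pi]$ applied to the same object viewed in $\widetilde{\mathcal{K}}_P$.

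Next I would check that $\ch$ is well defined on $[\mathcal{K}_P]$. Recall that $M\mapsto M_{\{A\}}$ is a functor on $\mathcal{K}(S)$, hence in particular on $\mathcal{K}_P$; consequently, for $M\in\mathcal{K}_P$, the polynomial $\grk(M_{\{A\}})$ depends only on the isomorphism class of $M$ in $\mathcal{K}_P$ and is additive on direct sums. Thus the same formula $\ch(M)=\sum_{A\in\mathcal{A}}v^{\ell(A)}\grk(M_{\{A\}})A$ defines a homomorphism $\ch\colon[\mathcal{K}_P]\to\mathcal{P}$, and since this formula only refers to the underlying objects, the identity $\ch\circ[\pi]=\ch$ holds (the right-hand $\ch$ being the one on $[\widetilde{\mathcal{K}}_P]$). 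In particular $\Ima(\ch\colon[\mathcal{K}_P]\to\mathcal{P})=\Ima(\ch\colon[\widetilde{\mathcal{K}}_P]\to\mathcal{P})=\mathcal{P}^{0}$ by Theorem~\ref{thm:categorification for tilde(K)}, so $\ch$ takes values in $\mathcal{P}^{0}$.

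Finally, in the identity $\ch\circ[\pi]=\ch$ the right-hand map is an isomorphism by Theorem~\ref{thm:categorification for tilde(K)}; hence the surjection $[\pi]$ is also injective, therefore an isomorphism, and consequently $\ch\colon[\mathcal{K}_P]\xrightarrow{\sim}\mathcal{P}^{0}$, which is the assertion. The only point requiring care is the commutativity $\ch\circ[\pi]=\ch$ together with the well-definedness of $\ch$ on $[\mathcal{K}_P]$, and both reduce to the already-established fact that $M\mapsto M_{\{A\}}$ descends to $\mathcal{K}_P$; so no genuinely new computation is needed here. As a cross-check, one can also argue without $\ch$: by Proposition~\ref{prop:indecomposable is indecomposable} each $Q(A)$ stays indecomposable in $\mathcal{K}_P$, by the argument of Theorem~\ref{thm:indecomposables in widetilde(K)} every object of $\mathcal{K}_P$ is a direct sum of shifts of the $Q(A)$, and the invariant $\grk(Q(A)(n)_{\{A\}})$ shows these shifts remain pairwise non-isomorphic, so $[\pi]$ carries the $\Z[v,v^{-1}]$-basis $\{[Q(A)]\}$ of $[\widetilde{\mathcal{K}}_P]$ to a $\Z[v,v^{-1}]$-basis of $[\mathcal{K}_P]$.
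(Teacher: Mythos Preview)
Your proof is correct and follows essentially the same approach as the paper. The paper's argument is the single sentence ``By the previous proposition with Theorem~\ref{thm:categorification for tilde(K)}'' together with the remark that $\ch$ is defined on $[\mathcal{K}_P]$ by the same formula; your proposal simply spells out explicitly why this works, via the factorisation $\ch\circ[\pi]=\ch$ and the observation that $M\mapsto M_{\{A\}}$ descends to $\mathcal{K}_P$.
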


\subsection{A formula on homomorphisms}
Let $m\mapsto \overline{m}$ be a map from $\mathcal{P}^{0}$ to $\mathcal{P}^{0}$ defined in \cite[Theorem~4.3]{MR1444322}.
For $m\in \mathcal{P}^{0}$ and $m' \in \mathcal{P}$, take $c_A,d_A\in \Z[v,v^{-1}]$ such that $\overline{m} = \sum_{A\in \mathcal{A}}c_AA$ and $m' = \sum_{A\in \mathcal{A}}d_AA$.
Set $(m,m')_{\mathcal{P}} = \sum_{A\in\mathcal{A}}c_Ad_A$.
We define $\omega\colon \mathcal{H}\to \mathcal{H}$ by $\omega(\sum_{x\in W}a_x(v)H_x) = \sum_{x\in W}a_x(v^{-1})H_{x}^{-1}$.
Then we have
\[
(mh,m')_{\mathcal{P}} = (m,m'\omega(h))_{\mathcal{P}}
\]
where $m\in \mathcal{P}^{0}$, $m'\in \mathcal{P}$ and $h\in \mathcal{H}$.
This easily follows from the definitions.
Let $w_0\in W_{\mathrm{f}}$ be the longest element.
\begin{thm}\label{thm:hom formula}
Let $P\in \mathcal{K}_{P}$ and $M\in \mathcal{K}_{\Delta}$.
Then $\Hom^{\bullet}_{\mathcal{K}_{\Delta}}(P,M)$ is graded free left $S$-module and the graded rank is given by
\[
\grk\Hom^{\bullet}_{\mathcal{K}_{\Delta}}(P,M) = v^{-2\ell(w_0)}(\ch(P),\ch(M))_{\mathcal{P}}.
\]
\end{thm}
\begin{proof}
Since $[\mathcal{K}_P]$ is generated by elements of a form $[Q_{\lambda}*B_{s_1}*\dotsb*B_{s_l}]$ with $\lambda\in (\R\Delta)_{\integer}$ and $s_1,\dots,s_l\in S_{\aff}$, we may assume $P$ has this form.
Moreover, by Lemma~\ref{prop:adjointness for K} and the formula before the theorem, we may assume $P = Q_{\lambda}$.
In this case, we have $\Hom^{\bullet}_{\mathcal{K}_{\Delta}}(P,M) \simeq M_{W_{\lambda}'A_{\lambda}^-}$ and this is graded free by the definition of $\mathcal{K}_{\Delta}$.
Moreover, the graded rank of $M_{W_{\lambda}'A_{\lambda}^-}$ is $\sum_{A\in W_{\lambda}'A_{\lambda}^-}\grk(M_{\{A\}})$.

Let $S_{\lambda}$ be the set of reflections in $W'_{\lambda}$ along the walls of $A_{\lambda}^-$.
Then this is a generator of $W'_{\lambda}$ and $(W'_{\lambda},S_{\lambda})$ is a Coxeter system.
The length function of this Coxeter system is denoted by $\ell_{\lambda}$.

We calculate $(\ch(Q_{\lambda}),\ch(M))$.
We put $(\sum_{A\in \mathcal{A}} c_AA,\sum_{A\in \mathcal{A}}d_AA)' = \sum_{A\in \mathcal{A}} c_Ad_A$.
Let $E_{\lambda}\in \mathcal{P}$ be the element defined in \cite[4]{MR1444322} and $A_{\lambda}^+$ the maximal element in $W'_{\lambda}A_{\lambda}^-$.
Then we have $E_{\lambda} = \sum_{w\in W'_{\lambda}}v^{\ell_{\lambda}(w)}wA_{\lambda}^+$.
Since $\ell(w(A_{\lambda}^+)) = \ell(A_{\lambda}^+) - \ell_{\lambda}(w)$, we have $e_{\lambda} = \sum_{w\in W'_{\lambda}}v^{-\ell(w(A_{\lambda}^+))}w(A_{\lambda}^+) = v^{-\ell(A_{\lambda}^+)}E_\lambda$.
Therefore $\ch(Q_{\lambda}) = v^{2\ell(A_{\lambda}^-)}e_{\lambda} = v^{2\ell(A_{\lambda}^-)-\ell(A_{\lambda}^+)}E_{\lambda}$.
Since $\overline{E_{\lambda}} = E_{\lambda}$, we get $\overline{\ch(Q_{\lambda})} = v^{-2\ell(A_{\lambda}^-)+\ell(A_{\lambda}^+)}E_{\lambda} = v^{-2\ell(A_{\lambda}^-)+2\ell(A_{\lambda}^+)}e_{\lambda} = v^{2\ell(w_0)}e_{\lambda}$.
Hence
\begin{align*}
(\ch(Q_\lambda),\ch(M))_{\mathcal{P}} & = v^{2\ell(w_0)}(e_\lambda,\ch(M))'\\
& = v^{2\ell(w_0)}\left(\sum_{A\in W'_{\lambda}A_{\lambda}^-}v^{-\ell(A)}A,\sum_{A\in \mathcal{A}}v^{\ell(A)}\grk(M_{\{A\}})A\right)'\\
& = v^{2\ell(w_0)}\sum_{A\in W'_{\lambda}A_{\lambda}^-}\grk(M_{\{A\}})\\
& = v^{2\ell(w_0)}\grk\Hom^{\bullet}_{\mathcal{K}_P}(Q_{\lambda},M).
\end{align*}
We get the theorem.
\end{proof}

\subsection{The category $\mathcal{K}_{P}^{\alpha}$}
In this subsection, we analyze $\mathcal{K}^{\alpha}_{P} = \mathcal{K}_P(S^{\alpha})$.
First we define an object $Q_{A,\alpha}$ where $A\in \mathcal{A}$ and $\alpha\in\Delta^+$.
Set $Q_{A,\alpha} = \{(a,b)\in S^2\mid a\equiv b\pmod{\alpha^\vee}\}$ and define a right action of $R$ on $Q_{A,\alpha}$ by $(x,y)f = (f_Ax,s_\alpha(f_A)y)$ for $(x,y)\in Q_{A,\alpha}$ and $f\in R$.
We have $Q_{A,\alpha}^\emptyset = S^\emptyset\oplus S^\emptyset$ and we set
\[
(Q_{A,\alpha})_{A'}^{\emptyset}
= 
\begin{cases}
S^\emptyset\oplus 0 & (A' = A),\\
0\oplus S^{\emptyset} & (A' = \alpha\uparrow A),\\
0 & (\text{otherwise}).
\end{cases}
\]
It is easy to see that $Q^{\alpha}_{A,\alpha} = S^{\alpha}\otimes_{S}Q_{A,\alpha}$ is indecomposable.

\begin{lem}
We have $Q^\alpha_{A,\alpha}\in \mathcal{K}_{P}^{\alpha}$.
\end{lem}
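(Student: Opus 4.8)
The plan is to imitate the treatment of $Q_\lambda$ in Lemma~\ref{lem:homomorphism from Q_lambda} and Proposition~\ref{prop:object in K_P = object in tilde(K)_P} in the present rank-one situation. First I would check $Q^\alpha_{A,\alpha}\in\mathcal{K}_\Delta(S^\alpha)$: its support is the finite set $\{A,\alpha\uparrow A\}$; it satisfies \property{S} by Lemma~\ref{lem:S holds for rank 1} since $\supp_{\mathcal A}(Q^\alpha_{A,\alpha})\subset W'_{\alpha,\aff}A$; condition \property{LE} holds because for $\beta\neq\pm\alpha$ the further localization $(Q^\alpha_{A,\alpha})^\beta$ equals $Q^\emptyset_{A,\alpha}=S^\emptyset\oplus S^\emptyset$ and $A,\alpha\uparrow A$ lie in different $W'_{\beta,\aff}$-orbits, while for $\beta=\pm\alpha$ the module already sits in a single $W'_{\alpha,\aff}$-orbit; and a direct computation gives $(Q^\alpha_{A,\alpha})_{\{A\}}\simeq S^\alpha$ and $(Q^\alpha_{A,\alpha})_{\{\alpha\uparrow A\}}\simeq \alpha S^\alpha\simeq S^\alpha(-2)$, both graded free, so $Q^\alpha_{A,\alpha}$ admits a standard filtration.

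The key step is to compute $\Hom^\bullet_{\mathcal{K}(S^\alpha)}(Q^\alpha_{A,\alpha},M)$ for $M\in\mathcal{K}_\Delta(S^\alpha)$, in analogy with Lemma~\ref{lem:homomorphism from Q_lambda}. Writing $Q^\alpha_{A,\alpha}=Re_1\oplus Re_2$ with $e_1=(1,1)$, $e_2=(0,\alpha)$, one has $e_1 f=f_A e_1-\alpha^{-1}(f_A-s_\alpha(f_A))e_2$ and $e_2 f=s_\alpha(f_A)e_2$ for $f\in R$, so a morphism $\varphi$ into $M$ is determined by $m_1=\varphi(e_1)$. Unwinding the relations shows that $m_1$ is supported in the two $\Lambda_\aff$-orbits through $A$ and through $\alpha\uparrow A$, that $\varphi(e_2)=\alpha\cdot\bigl(\text{the }((\alpha\uparrow A)+\Z\Delta)\text{-component of }m_1\bigr)$, and that the condition for $\varphi$ to be a morphism in $\widetilde{\mathcal{K}}(S^\alpha)$ is: the $(A+\Z\Delta)$-component of $m_1$ is supported on $\{A'\geq A\}$ and the $((\alpha\uparrow A)+\Z\Delta)$-component on $\{A'\geq\alpha\uparrow A\}$. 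Decomposing $M$ over $W'_{\alpha,\aff}$-orbits by \property{LE} and passing to $\mathcal{K}(S^\alpha)$, the contribution of every orbit $\Omega\neq\Omega_0:=W'_{\alpha,\aff}A$ is annihilated, because $A,\alpha\uparrow A\notin\Omega$ and hence ``$\geq$'' and ``$>$'' agree on $\Omega$ at those two alcoves; what survives is a natural isomorphism $\Hom^\bullet_{\mathcal{K}(S^\alpha)}(Q^\alpha_{A,\alpha},M)\simeq (M^{(\Omega_0)})_{\{A,\alpha\uparrow A\}}$, where on modules supported on the chain $\Omega_0$ the subset $\{A,\alpha\uparrow A\}$ is realized as the locally closed subset $\{A'\geq A\}\setminus\{A'\geq\alpha\uparrow(\alpha\uparrow A)\}$.

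Finally I would repeat the argument of Proposition~\ref{prop:object in K_P = object in tilde(K)_P}: given a sequence $M_1\to M_2\to M_3$ in $\mathcal{K}_\Delta(S^\alpha)$ with \property{ES}, restricting to $\Omega_0$-components preserves \property{ES}, and applying $M\mapsto (M^{(\Omega_0)})_{\{A,\alpha\uparrow A\}}$ yields a short exact sequence in $\widetilde{\mathcal{K}}(S^\alpha)$. The composition vanishes by the argument of Lemma~\ref{lem:ES in K induces exact sequence in a special case}(1), whose hypothesis ``$(B+\Z\Delta)\cap\{A,\alpha\uparrow A\}=\{B\}$ for $B\in\{A,\alpha\uparrow A\}$'' holds precisely because $\alpha\uparrow A=s_{\alpha,n}(A)$ lies in a different $\Lambda_\aff$-orbit from $A$ (a nontrivial reflection cannot send an alcove into its own root-lattice coset), and exactness then follows from the exactness of the two stalks $(M_i)_{\{A\}}$, $(M_i)_{\{\alpha\uparrow A\}}$ together with the diagram chase of Lemma~\ref{lem:exact sequence for any locally closed subset}. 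Combined with the previous step this gives $Q^\alpha_{A,\alpha}\in\mathcal{K}_P(S^\alpha)=\mathcal{K}_P^\alpha$.

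The step I expect to be delicate is the $\Hom$-computation: since $\supp_{\mathcal A}(Q^\alpha_{A,\alpha})$ straddles two $\Lambda_\aff$-orbits, one cannot quote Lemma~\ref{lem:homomorphism from Q_lambda} verbatim but must unwind the Demazure-type relations and verify by hand that the off-orbit summands cancel after passing to $\mathcal{K}(S^\alpha)$. (As a consistency check: when $A=A_\lambda^-$ is the maximal alcove of its box, $Q^\alpha_{A,\alpha}$ is visibly the $W'_{\alpha,\aff}$-orbit summand $(S^\alpha\otimes_S Q_\lambda)^{(\Omega_0)}$ of $S^\alpha\otimes_S Q_\lambda$, which lies in $\mathcal{K}_P(S^\alpha)$ by Corollary~\ref{cor:base change for K_P} since $Q_\lambda\in\mathcal{K}_P$; the argument above is what is needed to cover a general alcove $A$.)
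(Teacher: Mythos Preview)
Your proposal is correct and follows essentially the same approach as the paper. The paper's proof is much terser: it notes $Q_{A,\alpha}^\alpha\in\mathcal{K}_\Delta^\alpha$, uses \property{LE} to split $M$ over $W'_{\alpha,\aff}$-orbits, observes that $\Hom^{\bullet}_{\mathcal{K}_\Delta^\alpha}(Q_{A,\alpha}^\alpha,M_i)=0$ when $A\notin W'_{\alpha,\aff}A_i$, and then simply says ``we can apply a similar argument of the proof of Proposition~\ref{prop:object in K_P = object in tilde(K)_P}''; your explicit identification $\Hom^\bullet_{\mathcal{K}(S^\alpha)}(Q^\alpha_{A,\alpha},M)\simeq (M^{(\Omega_0)})_{\{A,\alpha\uparrow A\}}$ and the verification of the hypothesis of Lemma~\ref{lem:ES in K induces exact sequence in a special case} are exactly what that ``similar argument'' amounts to when spelled out.
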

\begin{proof}
It is easy to see that $Q_{A,\alpha}^{\alpha}\in \mathcal{K}_{\Delta}^{\alpha}$.
Let $M\in \mathcal{K}^{\alpha}_{\Delta}$ and we analyze $\Hom^{\bullet}_{\mathcal{K}^{\alpha}_{\Delta}}(Q_{A,\alpha}^{\alpha},M)$.
By \property{LE}, $M \simeq \bigoplus_i M_i$ such that $\supp_{\mathcal{A}}(M_i)\subset W'_{\alpha,\aff}A_i$ for some $A_i\in \mathcal{A}$.
We have $\Hom^{\bullet}_{\mathcal{K}_{\Delta}^{\alpha}}(Q^{\alpha}_{A,\alpha},M_i) = 0$ if $A\notin W'_{\alpha,\aff}A_i$.
Therefore it is sufficient to prove the following: if a sequence $M_1\to M_2\to M_3$ in $\mathcal{K}^{\alpha}_{\Delta}$ satisfies \property{ES} and $\supp_{\mathcal{A}}(M_i)\subset W'_{\alpha,\aff}A$, then $0\to \Hom_{\mathcal{K}^{\alpha}_{\Delta}}^{\bullet}(Q_{A,\alpha}^{\alpha},M_1)\to \Hom_{\mathcal{K}^{\alpha}_{\Delta}}^{\bullet}(Q_{A,\alpha}^{\alpha},M_2)\to \Hom_{\mathcal{K}^{\alpha}_{\Delta}}^{\bullet}(Q_{A,\alpha}^{\alpha},M_3)\to 0$ is exact.
We can apply a similar argument of the proof of Proposition~\ref{prop:object in K_P = object in tilde(K)_P}.
\end{proof}

We can apply the argument in the proof of Theorem~\ref{thm:indecomposables in widetilde(K)} and get the following proposition.
\begin{prop}\label{prop:rank 1 projectives}
Any object in $\mathcal{K}_{P}^{\alpha}$ is a direct sum of $Q_{A,\alpha}^{\alpha}(n)$ where $A\in \mathcal{A}$ and $n\in\Z$.
\end{prop}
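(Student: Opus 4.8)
The plan is to imitate the proof of Theorem~\ref{thm:indecomposables in widetilde(K)}, with $Q_{A,\alpha}^{\alpha}$ playing the role of $Q(A)$, after a preliminary reduction to a single orbit. First I would use that every $M\in\mathcal{K}_{P}^{\alpha}$ satisfies \property{LE}, so that $M=\bigoplus_{i}M_{i}$ with $\supp_{\mathcal{A}}(M_{i})\subset W'_{\alpha,\aff}A_{i}$ for suitable $A_{i}$; since this is a decomposition into direct summands, $\mathcal{K}_{\Delta}^{\alpha}$ is stable under the summands that occur, and the defining exactness property of $\mathcal{K}_{P}^{\alpha}$ passes to direct summands, each $M_{i}$ again lies in $\mathcal{K}_{P}^{\alpha}$. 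Hence we may assume $\supp_{\mathcal{A}}(M)$ lies in one orbit $\Omega=W'_{\alpha,\aff}A_{0}$, which is totally ordered, so that $\supp_{\mathcal{A}}(M)$ is a finite interval $[A^{-},A^{+}]$ of $\Omega$.

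Next I would run the inductive argument of Theorem~\ref{thm:indecomposables in widetilde(K)}. All the needed input about the building blocks is at hand: $Q_{A,\alpha}^{\alpha}\in\mathcal{K}_{P}^{\alpha}$ by the previous lemma, $Q_{A,\alpha}^{\alpha}$ is indecomposable, $\supp_{\mathcal{A}}(Q_{A,\alpha}^{\alpha})=\{A,\alpha\uparrow A\}\subset\{A'\mid A'\ge A\}$, and $(Q_{A,\alpha}^{\alpha})_{\{A\}}\simeq S^{\alpha}$. The induction is on $\sum_{A'}\rank(M_{\{A'\}})$, and it is enough to show that when $M\ne0$ some $Q_{A,\alpha}^{\alpha}(n)$ is a direct summand of $M$, since then $M\simeq Q_{A,\alpha}^{\alpha}(n)\oplus M'$ with $M'\in\mathcal{K}_{P}^{\alpha}$ of strictly smaller rank. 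I would take $A:=A^{-}$, the minimal element of $\supp_{\mathcal{A}}(M)$, so $M=M_{\{A'\ge A\}}$, and note that both $M_{\{A'>A\}}\to M\to M_{\{A\}}$ and $(Q_{A,\alpha}^{\alpha})_{\{A'>A\}}\to Q_{A,\alpha}^{\alpha}\to(Q_{A,\alpha}^{\alpha})_{\{A\}}$ satisfy \property{ES}. Since $M_{\{A\}}$ is graded free I would pick a graded direct summand $S^{\alpha}(n)\simeq Q_{A,\alpha}^{\alpha}(n)_{\{A\}}$ of it, with inclusion $i$ and projection $p$, and lift $i$ and $p$ along these two \property{ES} sequences, using $M\in\mathcal{K}_{P}^{\alpha}$ and $Q_{A,\alpha}^{\alpha}(n)\in\mathcal{K}_{P}^{\alpha}$, to morphisms $\widetilde{i}\colon Q_{A,\alpha}^{\alpha}(n)\to M$ and $\widetilde{p}\colon M\to Q_{A,\alpha}^{\alpha}(n)$; then $\widetilde{p}\circ\widetilde{i}$ induces a bijection on $Q_{A,\alpha}^{\alpha}(n)_{\{A\}}$.

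The step I expect to be the main obstacle is concluding from this that $\widetilde{p}\circ\widetilde{i}$ is an automorphism of $Q_{A,\alpha}^{\alpha}(n)$ — equivalently that $\widetilde{i}$ is a split monomorphism, so that $Q_{A,\alpha}^{\alpha}(n)$ splits off $M$. Over $S$ this was obtained in Theorem~\ref{thm:indecomposables in widetilde(K)} from the Krull--Schmidt property of $\widetilde{\mathcal{K}}_{P}$ together with the locality of $\End(Q(A))$ over the complete local ring $\Coeff$; over the larger base ring $S^{\alpha}$ the endomorphism ring of $Q_{A,\alpha}^{\alpha}$ is no longer local, so this formal reasoning does not apply verbatim and one has to use the explicit description of $Q_{A,\alpha}^{\alpha}$ — it is generated by a single element as an $(S^{\alpha},R)$-bimodule, and its support has only the two elements $A$ and $\alpha\uparrow A$, with generic pieces the lines at $A$ and at $\alpha\uparrow A$. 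A concrete route would be to exploit the freedom in the choice of $\widetilde{i}$ (it may be altered by any morphism $Q_{A,\alpha}^{\alpha}(n)\to M_{\{A'>A\}}$) to arrange that $\widetilde{i}$ be injective on every stalk, hence a split monomorphism onto a graded free $S^{\alpha}$-direct summand of $M$, using that $M$ is graded free over $S^{\alpha}$ and that $\widetilde{i}$ is already a split injection on the stalk at $A$. Once $Q_{A,\alpha}^{\alpha}(n)$ is known to split off $M$, the induction closes and the Proposition follows just as in Theorem~\ref{thm:indecomposables in widetilde(K)} (compare also Proposition~\ref{prop:object in K_P = object in tilde(K)_P}).
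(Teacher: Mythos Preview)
Your approach is precisely the paper's: its entire proof is the sentence ``We can apply the argument in the proof of Theorem~\ref{thm:indecomposables in widetilde(K)} and get the following proposition,'' and you have faithfully unfolded that argument, including the reduction to a single $W'_{\alpha,\aff}$-orbit via \property{LE} (which the paper already uses in the lemma immediately preceding).

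You are also right to flag the last step. Over $S^{\alpha}$ the degree-zero piece of the base is no longer $\Coeff$ (as soon as $\Delta^{+}\setminus\{\alpha\}\ne\emptyset$), so the Krull--Schmidt argument invoked in Theorem~\ref{thm:indecomposables in widetilde(K)} does not transfer verbatim: $\Hom$-spaces in $\mathcal{K}^{\alpha}$ need not be finite over $\Coeff$, and the degree-zero endomorphism ring of $Q_{A,\alpha}^{\alpha}$ is genuinely non-local. Concretely, by the computation that appears later as Lemma~\ref{lem:End(Q)} one has $\End^{\bullet}_{\mathcal{K}^{\alpha}}(Q_{A,\alpha}^{\alpha})=S^{\alpha}\,\id\oplus S^{\alpha}\,i_{0}$ with $i_{0}^{2}=\alpha\,i_{0}$; a degree-zero element $\id+d\,i_{0}$ with $d\in(S^{\alpha})^{-2}$ is invertible iff $1+\alpha d\in(S^{\alpha})^{0}$ is a unit, and for $d=\beta^{-1}$ with $\beta\in\Delta^{+}\setminus\{\alpha\}$ this fails by the GKM assumption. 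So ``$\widetilde{p}\circ\widetilde{i}$ is not nilpotent and $Q$ is indecomposable, hence $\widetilde{p}\circ\widetilde{i}$ is invertible'' is not a valid step here, and the paper glosses over this just as much as Theorem~\ref{thm:indecomposables in widetilde(K)} would.

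Your proposed remedy---exploit the freedom in $\widetilde{i}$ modulo $\Hom(Q_{A,\alpha}^{\alpha}(n),M_{\{A'>A\}})$ to make $\widetilde{i}$ split on both stalks---is the right idea, but as written it is only a sketch; you still owe an argument that this freedom is large enough to control the stalk at $\alpha\uparrow A$ as well. One clean way to finish, once you have reduced to a single totally ordered orbit, is to compute $\Hom^{\bullet}_{\mathcal{K}^{\alpha}}(Q_{A,\alpha}^{\alpha},N)\simeq N_{\{A,\alpha\uparrow A\}}$ for any $N$ supported on that orbit (the rank-one analogue of Lemma~\ref{lem:homomorphism from Q_lambda}), and then use this together with the projectivity of both $M$ and $Q_{A,\alpha}^{\alpha}$ to produce $\widetilde{i},\widetilde{p}$ with $\widetilde{p}\circ\widetilde{i}=\id$ directly, bypassing the local-ring step entirely.
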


\subsection{The comibinatorial category of Andersen-Jantzen-Soergel}
We recall the comibinatorial category of Andersen-Jantzen-Soergel \cite{MR1272539}.
We use a version of Fiebig \cite{MR2726602}.
We denote the category by $\mathcal{K}_{\AJS}$.

Let $S_0$ be a flat commutative graded $S$-algebra and we define the category which we denote $\mathcal{K}_{\AJS}(S_0)$.
An object of $\mathcal{K}_{\AJS}(S_0)$ is $\mathcal{M} = ((\mathcal{M}(A))_{A\in \mathcal{A}},(\mathcal{M}(A,\alpha))_{A\in \mathcal{A},\alpha\in\Delta^+})$ where $\mathcal{M}(A)$ is a graded $(S_0)^{\emptyset}$-module and $\mathcal{M}(A,\alpha)\subset \mathcal{M}(A)\oplus \mathcal{M}(\alpha\uparrow A)$ is a graded sub-$(S_0)^{\alpha}$-module.
A morphism $f\colon \mathcal{M}\to \mathcal{N}$ in $\mathcal{K}_{\AJS}(S_0)$ is a collection of degree zero $(S_0)^{\emptyset}$-homomorphisms $f_{A}\colon \mathcal{M}(A)\to \mathcal{N}(A)$ which sends $\mathcal{M}(A,\alpha)$ to $\mathcal{N}(A,\alpha)$ for any $A\in \mathcal{A}$ and $\alpha\in\Delta^+$.
Put $\mathcal{K}_{\AJS} = \mathcal{K}_{\AJS}(S)$ and $\mathcal{K}_{\AJS}^{*} = \mathcal{K}_{\AJS}(S^*)$ for $*\in \{\emptyset\}\cup \Delta$.

For each $s\in S_{\aff}$, the translation functor $\vartheta_s\colon \mathcal{K}_{\AJS}(S_0)\to \mathcal{K}_{\AJS}(S_0)$ is defined as
\[
\vartheta_s(\mathcal{M})(A) = \mathcal{M}(A)\oplus \mathcal{M}(As)
\]
and
\[
\vartheta_s(\mathcal{M})(A,\alpha) = 
\begin{cases}
\mathcal{M}(A,\alpha)\oplus \mathcal{M}(As,\alpha) & (As\notin W'_{\alpha,\aff}A),\\
\{(x,y)\in \mathcal{M}(A,\alpha)^2\mid x - y\in \alpha^\vee \mathcal{M}(A,\alpha)\} & (As = \alpha\uparrow A),\\
\alpha^\vee\mathcal{M}(As,\alpha)\oplus \mathcal{M}(\alpha\uparrow A,\alpha) & (As = \alpha\downarrow A).
\end{cases}
\]

We define $\mathcal{F}(S_0)\colon \mathcal{K}_P(S_0)\to \mathcal{K}_{\AJS}(S_0)$ as follows: 
first we put
\[
(\mathcal{F}(S_0)(M))(A) = M_A^{\emptyset}.
\]
To define $(\mathcal{F}(S_0)(M))(A,\alpha)$, we take $X\in \widetilde{\mathcal{K}}_{P}(S_0^{\alpha})$ and an isomorphism $\varphi\colon X\to M^{\alpha}$ in $\widetilde{\mathcal{K}}_P(S_0)$ such that $X = \bigoplus_{\Omega\in W'_{\alpha,\aff}\backslash \mathcal{A}}(X\cap \bigoplus_{A\in \Omega}X_A^{\emptyset})$.
Such $X$ exists since $M$ satisfies \property{LE}.
Then we have an isomorphism $X_A^{\emptyset}\simeq (X_{\ge A}/X_{>A})^{\emptyset}\simeq ((M^{\alpha})_{\ge A}/(M^{\alpha})_{>A})^{\emptyset}\simeq M^{\emptyset}_{A}$.
In general, for $Y\in \mathcal{K}_P(S_0)$, $y\in Y^{\emptyset}$ and $A\in \mathcal{A}$, write $y_A$ for the $Y_A^{\emptyset}$-component of $y$ along the decomposition $Y^{\emptyset} = \bigoplus_{A\in \mathcal{A}}Y^{\emptyset}_A$.
Then this isomorphism can be written as $x\mapsto \varphi(x)_A$.
Here we use the same letter $\varphi$ for the induced map $X^{\emptyset}\to M^{\emptyset}$.

Now let $(\mathcal{F}(S_0)(M))(A,\alpha)$ be the image of 
\[
X_{\ge A}\to X_{A}^{\emptyset}\oplus X_{\alpha\uparrow A}^{\emptyset}\simeq M_{A}^{\emptyset}\oplus M_{\alpha\uparrow A}^{\emptyset}.
\]
In other words, the image is the set of $(\varphi(x_A)_A,\varphi(x_{\alpha\uparrow A})_{\alpha\uparrow A})$ where $x\in X_{\ge A}$.
We may assume $x\in \bigoplus_{A'\in W'_{\alpha,\aff}A}X_{A'}^{\emptyset}$.
Of course we have to prove that this space does not depend on a choice of $X$.
We use the following lemma.
\begin{lem}
Let $X,Y\in \widetilde{\mathcal{K}}_P(S_0)$, $f\colon X\to Y$ be a morphism, $A\in \mathcal{A}$ and $\alpha\in\Delta^{+}$.
Assume that $x\in X^{\emptyset}_{\ge A}$ satisfies $x_{A'} = 0$ for $A'\notin W'_{\alpha,\aff}A$.
\begin{enumerate}
\item We have $f(x)_A = f(x_A)_A$ and $f(x)_{\alpha\uparrow A} = f(x_{\alpha\uparrow A})_{\alpha\uparrow A}$.
\item Let $g\colon Y\to Z$ be another morphism in $\widetilde{\mathcal{K}}_P(S_0)$.
Then $g(f(x)_{A'})_{A'} = g(f(x))_{A'}$ for $A'\in \{A,\alpha\uparrow A\}$
\end{enumerate}
\end{lem}
\begin{proof}
We prove (1).
Let $A''\in \mathcal{A}$.
Then $f(x)_{A''} = \sum_{A'\in\mathcal{A}}f(x_{A'})_{A''}$.
We have 
\begin{itemize}
\item $x_{A'} = 0$ unless $A'\ge A$ since $x\in X_{\ge A}$.
\item $x_{A'} = 0$ unless $A'\in W'_{\alpha,\aff}A$ from the condition on $x$.
\item $f(x_{A'})_{A''} = 0$ unless $A''\ge A'$ from the definition of morphisms in $\widetilde{\mathcal{K}}_P(S_0)$.
\end{itemize}
Therefore, in the sum $\sum_{A'\in\mathcal{A}}f(x_{A'})_{A''}$, we may assume $A'$ satisfies $A\le A'\le A'',A'\in W'_{\alpha,\aff}A$.
If $A'' = A$, then $A\le A'\le A''$ implies $A' = A$.
Hence $f(x)_A = f(x_A)_A$.
If $A'' = \alpha\uparrow A$, we have $A\le A'\le \alpha\uparrow A$ and $A'\in W'_{\alpha,\aff}A$.
Thus we have $A' = A$ or $\alpha\uparrow A$.
However, by Remark~\ref{rem:decomposition is preserved automatically}, we have $f(x_{A})_{\alpha\uparrow A} = 0$.
Hence $f(x)_{\alpha\uparrow A} = f(x_{\alpha\uparrow A})_{\alpha\uparrow A}$.

We prove (2).
We have $f(x_{A'})\in \bigoplus_{A'' \ge A'}Y_{A''}^{\emptyset}$.
Hence $f(x_{A'}) - f(x_{A'})_{A'}\in \bigoplus_{A'' > A'}Y_{A''}^{\emptyset}$.
Therefore $g(f(x_{A'})) - g(f(x_{A'})_{A'})\in \bigoplus_{A'' > A'}Z_{A''}^{\emptyset}$.
Hence $g(f(x_{A'}))_{A'} = g(f(x_{A'})_{A'})_{A'}$.
By (1), the right hand side is $g(f(x)_{A'})_{A'}$ and the left hand side is $g(f(x_{A'}))_{A'} = (g\circ f)(x_{A'})_{A'} = (g\circ f)(x)_{A'} = g(f(x))_{A'}$.
\end{proof}
Let $\varphi'\colon X'\to M^{\alpha}$ be another isomorphism and set $\psi = (\varphi')^{-1}\circ\varphi$.
For $A'\in \{A,\alpha\uparrow A\}$, we have $\varphi(x_{A'})_{A'} = \varphi(x)_{A'} = \varphi'(\psi(x))_{A'} = \varphi'(\psi(x)_{A'})_{A'}$.
Hence $(\varphi(x_{A})_{A},\varphi(x_{\alpha\uparrow A})_{\alpha\uparrow A}) = (\varphi'(\psi(x)_{A})_{A},\varphi'(\psi(x)_{\alpha\uparrow A})_{\alpha\uparrow A})$.
As $\psi$ is a morphism, $\psi(x)\in X'_{\ge A}$.
Hence the right hand side is in $(\mathcal{F}(S_0)(M))(A,\alpha)$ determined by $X'$.
Therefore the space $(\mathcal{F}(S_0)(M))(A,\alpha)$ determined by $X$ is contained in the space $(\mathcal{F}(S_0)(M))(A,\alpha)$ determined by $X'$.
By swapping $X$ with $X'$, we get the reverse inclusion and therefore the space $(\mathcal{F}(S_0)(M))(A,\alpha)$ does not depend on a choice of $X$.

Let $f\colon M\to N$ be a morphism in $\mathcal{K}_P(S_0)$ and take a lift $\widetilde{f}\in\Hom_{\widetilde{\mathcal{K}}_P(S_0)}(M,N)$ of $f$.
Then we have a homomorphism $(\mathcal{F}(S_0)(f))(A)\colon M_A^{\emptyset}\to N_A^{\emptyset}$ defined by $M_A^{\emptyset}\hookrightarrow \bigoplus_{A'\ge A}M_{A'}^{\emptyset}\xrightarrow{\widetilde{f}}\bigoplus_{A'\ge A}N_{A'}^{\emptyset}\twoheadrightarrow N_{A}^{\emptyset}$.
In other words, we put $(\mathcal{F}(S_0)(f))(A)(m) = \widetilde{f}(m)_{A}$.
It is easy to see that this does not depend on a lift $\widetilde{f}$.

We prove that the collection $((\mathcal{F}(S_0)(f))(A))_{A\in \mathcal{A}}$ preserves $(\mathcal{F}(S_0)(M))(A,\alpha)$.
Take $X\in \widetilde{\mathcal{K}}_{P}(S_0^{\alpha})$ and $\varphi\colon X\xrightarrow{\sim} M^{\alpha}$ as in the definition of $(\mathcal{F}(S_0)(M))(A,\alpha)$.
We also take $\psi\colon Y\xrightarrow{\sim} N^{\alpha}$.
Let $(x_1,x_2)\in (\mathcal{F}(S_0)(M))(A,\alpha)$.
Then there exists $x\in X_{\ge A}$ such that $(x_1,x_2) = (\varphi(x_A)_A,\varphi(x_{\alpha\uparrow A})_{\alpha\uparrow A})$.
We may assume $x\in \bigoplus_{A'\in W'_{\alpha,\aff}A}X_{A'}^{\emptyset}$.
We put $\widetilde{g} = \psi^{-1}\circ \widetilde{f}$.
Then $(\mathcal{F}(S_0)(f))(A)(x_1) = \widetilde{f}(\varphi(x_A)_A)_A = \psi(\widetilde{g}(\varphi(x_A)_A))_A$.
By applying the lemma (2) to $\varphi(x_A)_A$, we have $\psi(\widetilde{g}(\varphi(x_A)_A))_A = \psi(\widetilde{g}(\varphi(x_A)_A)_A)_A$ and the again by the lemma (1) and (2), this is equal to $\psi(\widetilde{g}(\varphi(x))_{A})_A$.
Similarly we have $(\mathcal{F}(S_0)(f))(\alpha\uparrow A)(x_2) = \psi(\widetilde{g}(\varphi(x))_{\alpha\uparrow A})_{\alpha\uparrow A}$.
Since $(\psi(\widetilde{g}(\varphi(x))_{A})_A,\psi(\widetilde{g}(\varphi(x))_{\alpha\uparrow A})_{\alpha\uparrow A})$ is the image of $\widetilde{g}(\varphi(x))\in Y_{\ge A}$ under $Y_{\ge A}\to Y_{A}^{\emptyset}\oplus Y_{\alpha\uparrow A}^{\emptyset}\simeq M_{A}^{\emptyset}\oplus M_{\alpha\uparrow A}^{\emptyset}$, it is in $(\mathcal{F}(S_0)(N))(A,\alpha)$.
Hence we have proved that the collection $((\mathcal{F}(S_0)(f))(A))_{A\in \mathcal{A}}$ defines a morphism $\mathcal{F}(S_0)(M)\to \mathcal{F}(S_0)(N)$.
Hence $\mathcal{F}(S_0)$ is a functor.

Put $\mathcal{F} = \mathcal{F}(S)$ and $\mathcal{F}^{*} = \mathcal{F}(S^*)$ for $*\in \{\emptyset\}\cup \Delta$.

\begin{prop}\label{prop:compativility of translation functors}
We have $\mathcal{F}(M*B_s)\simeq \vartheta_s(\mathcal{F}(M))$.
\end{prop}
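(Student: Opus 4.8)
The plan is to produce, for every $A\in\mathcal{A}$, an isomorphism of graded $S^\emptyset$-modules $\mathcal{F}(M*B_s)(A)\simeq\vartheta_s(\mathcal{F}(M))(A)$ and then to check that together they carry $\mathcal{F}(M*B_s)(A,\alpha)$ onto $\vartheta_s(\mathcal{F}(M))(A,\alpha)$ for every $\alpha\in\Delta^+$. The object part is immediate: $\mathcal{F}(M*B_s)(A)=(M*B_s)_A^\emptyset$ and $\vartheta_s(\mathcal{F}(M))(A)=M_A^\emptyset\oplus M_{As}^\emptyset$, so we take the isomorphism of \eqref{eq:decomposition of M*B_s}, which is natural in $M$ (hence, once the compatibility with the $(A,\alpha)$-data is checked, automatically functorial). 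Since for any object the datum $\mathcal{F}(-)(A,\alpha)$ is the restriction to $[A,\alpha\uparrow A]$ of the $(\Omega)$-part, $\Omega=W'_{\alpha,\aff}A$, the whole $(A,\alpha)$-comparison reduces to identifying $(M*B_s)^{(\Omega)}$, which is precisely what Lemma~\ref{lem:rank 1 decomposition of M*B_s}(1) supplies; its two cases $\Omega s\ne\Omega$ and $\Omega s=\Omega$ correspond to the three cases in the definition of $\vartheta_s$ (the last two both sitting under $\Omega s=\Omega$).

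Suppose $As\notin W'_{\alpha,\aff}A$, i.e.\ $\Omega s\ne\Omega$; this is the first case of $\vartheta_s$. By Lemma~\ref{lem:rank 1 decomposition of M*B_s}(1)(b), $(M*B_s)^{(\Omega)}$ is the direct sum of a summand built from $M^{(\Omega)}$ (placed, in $M*B_s$, at the alcoves of $\Omega$ via the $e$-component of $B_s^\emptyset$) and a summand built from $M^{(\Omega s)}$ (the piece $M^{(\Omega s)}_{A'}$ sitting at the alcove $A's$, via the $s$-component). Restricting to $[A,\alpha\uparrow A]=\{A,\alpha\uparrow A\}$ and using $(\alpha\uparrow A)s=\alpha\uparrow(As)$ — valid exactly in this case, and already implicit in the well-definedness of $\vartheta_s$ — the two summands become $M^{(\Omega)}_{[A,\alpha\uparrow A]}=\mathcal{F}(M)(A,\alpha)$ and $M^{(\Omega s)}_{[As,\alpha\uparrow(As)]}=\mathcal{F}(M)(As,\alpha)$, and carrying them through \eqref{eq:decomposition of M*B_s} places them in exactly the slots of $\vartheta_s(\mathcal{F}(M))(A,\alpha)=\mathcal{F}(M)(A,\alpha)\oplus\mathcal{F}(M)(As,\alpha)$.

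Now suppose $As\in W'_{\alpha,\aff}A$, so $\Omega s=\Omega$ and $As\in\{\alpha\uparrow A,\alpha\downarrow A\}$; here Lemma~\ref{lem:rank 1 decomposition of M*B_s}(1)(a) gives $(M*B_s)^{(\Omega)}\simeq M^{(\Omega)}*B_s$. When $As=\alpha\uparrow A$ the interval $[A,\alpha\uparrow A]=\{A,As\}$ is $s$-invariant, so by a locally-closed version of Lemma~\ref{lem:(M*B_s)_I for s-invariant I} (deduced from it by writing a locally closed $s$-invariant set as the difference of two closed $s$-invariant sets) one gets $\mathcal{F}(M*B_s)(A,\alpha)\simeq M^{(\Omega)}_{[A,\alpha\uparrow A]}*B_s=\mathcal{F}(M)(A,\alpha)\otimes_{R^s}R(1)$; using the presentation $B_s\simeq\{(f,g)\in R^2\mid f\equiv g\pmod{\alpha_s}\}$ together with the fact that $R$ acts on $N=\mathcal{F}(M)(A,\alpha)$ through $f\mapsto(f_A,f_{\alpha\uparrow A})=(f_A,s_\alpha(f_A))$, one rewrites $N\otimes_{R^s}R(1)$ as $\{(x,y)\in N(1)^2\mid x-y\in\alpha_s N(1)\}$, which is the middle case of $\vartheta_s$. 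When $As=\alpha\downarrow A$ the interval is not $s$-invariant, and one instead computes $(M^{(\Omega)}*B_s)_{[A,\alpha\uparrow A]}$ by hand from \eqref{eq:decomposition of M*B_s} (as in the proof of Lemma~\ref{lem:stalk of M*B_s}), matching the answer with $\alpha\,\mathcal{F}(M)(\alpha\downarrow A,\alpha)\oplus\mathcal{F}(M)(A,\alpha)$, the third case of $\vartheta_s$.

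I expect the $\Omega s=\Omega$ case, and especially $As=\alpha\downarrow A$, to be the main obstacle: one must carry the $B_s$-bimodule structure through the restriction and subquotient functors, compute a restriction to a non-$s$-invariant interval explicitly, and pin down exactly the congruence- and $\alpha$-multiple conditions defining $\vartheta_s$ — keeping track throughout of how the right $R$-action and the left $S^\emptyset$-action on the two components $M_A^\emptyset$ and $M_{\alpha\uparrow A}^\emptyset$ differ, and of compatibility with the one object-level identification \eqref{eq:decomposition of M*B_s}. As an alternative one could first use \property{LE} and Proposition~\ref{prop:rank 1 projectives} to reduce to the case $M^\alpha\simeq Q_{A_0,\alpha}^\alpha$ and compute both sides by hand, but the case analysis above via Lemma~\ref{lem:rank 1 decomposition of M*B_s} looks more economical.
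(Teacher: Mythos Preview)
Your plan is correct and follows the paper's own proof almost verbatim: the same three-case split, the use of Lemma~\ref{lem:rank 1 decomposition of M*B_s}(1)(b) for $As\notin W'_{\alpha,\aff}A$, of Lemma~\ref{lem:(M*B_s)_I for s-invariant I} on the $s$-invariant interval $\{A,As\}$ for $As=\alpha\uparrow A$, and an explicit element-by-element computation for $As=\alpha\downarrow A$. Your anticipation that the last case is the heavy one is exactly right; in the paper it is handled by passing to the $s$-invariant closed set $I'=\{A'\ge As\}$, writing a general element of $(N*B_s)_{I'}=N_{I'}\otimes_{R^s}R$ as $m_1\otimes 1+m_2\otimes\delta_s$, imposing the vanishing at $(N*B_s)^\emptyset_{As}$ to land in $(N*B_s)_{I'\setminus\{As\}}$, and then reading off the image in the four $\emptyset$-components, which gives precisely $\alpha\,\mathcal{F}(M)(As,\alpha)\oplus\mathcal{F}(M)(A,\alpha)$ once one uses $(\alpha\uparrow A)s=\alpha\uparrow(\alpha\uparrow A)$. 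One small caution on the middle case: the abstract rewriting of $N\otimes_{R^s}R(1)$ as $\{(x,y)\in N(1)^2\mid x-y\in\alpha N(1)\}$ via $n\otimes f\mapsto(nf,ns(f))$ needs $N=M^\alpha_{\{A,As\}}$ to be $\alpha$-torsion-free (which it is, being graded free over $S^\alpha$), and the surjectivity onto the congruence locus still requires the short explicit check the paper carries out.
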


\begin{proof}
Before giving a proof, we give some notation.
Let $\alpha\in\Delta$ and $M\in \mathcal{K}(S_0)$.
Assume that $M^{\alpha} = \bigoplus_{\Omega\in W'_{\alpha,\aff}\backslash \mathcal{A}}(M^{\alpha}\cap \bigoplus_{A\in\Omega}M_{A}^{\emptyset})$.
Put $M^{(\Omega)} = M^{\alpha}\cap \bigoplus_{A\in\Omega}M_{A}^{\emptyset}$.
Then $(\mathcal{F}(M))(A,\alpha)$ is the image of $M^{(W'_{\alpha,\aff}A)}$ in $M_{A}^{\emptyset}\oplus M_{\alpha\uparrow A}^{\emptyset}$.
As $\supp M^{(W'_{\alpha,\aff}A)}\subset W'_{\alpha,\aff}A$ and $W'_{\alpha,\aff}\cap [A,\alpha\uparrow A] = \{A,\alpha\uparrow A\}$, we have $(\mathcal{F}(M))(A,\alpha) = M^{(W'_{\alpha,\aff}A)}_{[A,\alpha\uparrow A]}$.

Take $\delta_s\in \Lambda^\vee_{\Coeff}$ such that $\langle\alpha_s,\delta_s\rangle = 1$  and put $b_e = (\alpha_s^\vee)^{-1}(\delta_s\otimes 1 - 1\otimes s(\delta_s))$ and $b_s = (\alpha_s^\vee)^{-1}(\delta_s\otimes 1 - 1\otimes \delta_s)$.
Note that this does not depend on a choice of $\delta_s$.
We fix $(B_s)_e^{\emptyset}\simeq R^{\emptyset}$ and $(B_s)_s^{\emptyset}\simeq R^{\emptyset}$ as
\begin{gather*}
R^{\emptyset}\ni 1\mapsto b_e\in (B_s)_e^{\emptyset},\\
R^{\emptyset}\ni 1\mapsto b_s\in (B_s)_s^{\emptyset}.
\end{gather*}

We have $(M*B_s)_{A}^{\emptyset} = M_{A}^{\emptyset}\otimes (B_s)_e^{\emptyset}\oplus M_{As}^{\emptyset}\otimes (B_s)_s^{\emptyset}\simeq M_{A}^{\emptyset}\oplus M_{As}^{\emptyset} = \vartheta_s(\mathcal{F}(M))(A)$, here we used the above fixed isomorphisms.
We check $\mathcal{F}(M*B_s)(A,\alpha) \simeq \vartheta_s(\mathcal{F}(M))(A,\alpha)$ under this isomorphism.
We may assume $M^{\alpha} = \bigoplus_{\Omega\in W'_{\alpha,\aff}\backslash\mathcal{A}}(M^{\alpha}\cap \bigoplus_{A\in \Omega}M_A^{\emptyset})$.

First we assume that $As\notin W'_{\alpha,\aff}A$.
Then we have $(M*B_s)^{(W'_{\alpha,\aff}A)} = M^{(W'_{\alpha,\aff}A)}\otimes b_e\oplus M^{(W'_{\alpha,\aff}As)}\otimes b_s$ by Lemma~\ref{lem:rank 1 decomposition of M*B_s}.
As $b_e\in (B_s)_e^{\emptyset}$ (resp.\ $b_s\in (B_s)_s^{\emptyset}$) and $[A,\alpha\uparrow A]s\cap W'_{\alpha,\aff}As = [As,\alpha\uparrow As]\cap W'_{\alpha,\aff}As$, we have 
\[
(M*B_s)^{(W'_{\alpha,\aff}A)}_{[A,\alpha\uparrow A]} = M^{(W'_{\alpha,\aff}A)}_{[A,\alpha\uparrow A]}\otimes b_e\oplus M^{(W'_{\alpha,\aff}As)}_{[As,\alpha\uparrow As]}\otimes b_s
\]
Therefore $\mathcal{F}(M*B_s)(A,\alpha) = \mathcal{F}(M)(A,\alpha)\oplus \mathcal{F}(M)(As,\alpha) = \vartheta_s(\mathcal{F}(M))(A,\alpha)$.

Next assume that $As = \alpha\uparrow A$.
Then we have $[A,\alpha\uparrow A] = [A,As] = \{A,As\}$.
Hence $\mathcal{F}(M*B_s)(A,\alpha) = (M*B_s)^{\alpha}_{\{A,As\}}$.
Since $[A,As] = \{A,As\}$ is $s$-invariant, by Lemma~\ref{lem:(M*B_s)_I for s-invariant I}, we have $(M*B_s)^{\alpha}_{[A,As]}\simeq M_{[A,As]}^{\alpha}\otimes_{R} B_s = \mathcal{F}(M)(A,\alpha)\otimes_R B_s$.
Our claim is that the image of $M_{\{A,As\}}^{\alpha}\otimes_{R}B_s$ in $(M_{\{A,As\}}*B_s)^{\emptyset} \simeq (M_{A}^{\emptyset}\oplus M_{As}^{\emptyset})\oplus (M_{As}^{\emptyset}\oplus M_{A}^{\emptyset})$ is equal to $\{(x,y)\in M^{\alpha}_{\{A,As\}}\mid x - y\in \alpha^\vee M^{\alpha}_{\{A,As\}}\}$.
We write the image of $m\in M$ in $M_{A'}^{\emptyset}$ by $m_{A'}$ for $A'\in \mathcal{A}$.
We have $M_{\{A,As\}}^{\alpha}\otimes_{R}B_s = M_{\{A,As\}}^{\alpha}\otimes_{R^s}R$ and the image of $m_1\otimes 1 + m_2\otimes\delta_s\in M_{\{A,As\}}^{\alpha}\otimes_{R^s}R$ in $(M_{A}^{\emptyset}\oplus M_{As}^{\emptyset})\oplus (M_{As}^{\emptyset}\oplus M_{A}^{\emptyset})$ is
\[
((m_{1,A} + \delta_s^Am_{2,A},m_{1,As} + \delta_s^Am_{2,As}),(m_{1,As} + s(\delta_s)^Am_{2,As},m_{1,A} + s(\delta_s)^Am_{1,A})).
\]
Therefore we have
\begin{multline*}
(m_{1,A} + \delta_s^Am_{2,A},m_{1,As} + \delta_s^Am_{2,As}) - (m_{1,A} + s(\delta_s)^Am_{2,A},m_{1,As} + s(\delta_s)^Am_{2,As})
\\=
(\alpha_s^\vee)^A(m_{2,A},m_{2,As})
\end{multline*}
which is in $\alpha^\vee M_{\{A,As\}}^{\alpha}$ since $(\alpha_s^\vee)^A \in \{\pm 1\}\alpha^\vee$.
From this formula it is easy to see the reverse inclusion.

Finally we assume that $As = \alpha \downarrow A$.
Note that $As < A < \alpha\uparrow A < (\alpha\uparrow A)s$.
Put $N = M^{(W'_{\alpha,\aff}A)}$.
We have $\mathcal{F}(N*B_s)(A,\alpha) \subset \mathcal{F}(N*B_{s})(A)\oplus \mathcal{F}(N*B_s)(\alpha\uparrow A) = (N_{As}^{\emptyset}\oplus N_{A}^{\emptyset})\oplus(N_{\alpha\uparrow A}^{\emptyset}\oplus N_{(\alpha\uparrow A)s}^{\emptyset})$.
We describe the image of $(N*B_s)_{[A,\alpha\uparrow A]}$ in $(N_{As}^{\emptyset}\oplus N_{A}^{\emptyset})\oplus(N_{\alpha\uparrow A}^{\emptyset}\oplus N_{(\alpha\uparrow A)s}^{\emptyset})$, or equivalently the image of $(N*B_s)_{I}$ where $I = \{A'\in \mathcal{A}\mid A'\ge As\}\setminus\{As\}$.

Set $I' = \{A'\in \mathcal{A}\mid A'\ge As\}$.
Then $I'\supset I$ and $I'$ is $s$-invariant.
Hence $(N*B_s)_{I'} = N_{I'}\otimes B_s = N_{I'}\otimes_{R^s}R$ by Lemma~\ref{lem:(M*B_s)_I for s-invariant I}.
Consider the projection $(N*B_s)_{I'}\to (N*B_s)_{As}\oplus (N*B_s)_{A}\oplus (N*B_s)_{\alpha\uparrow A} = (N_{As}^{\emptyset}\oplus N_{A}^{\emptyset})\oplus(N_{A}^{\emptyset}\oplus N_{As}^{\emptyset})\oplus (N_{\alpha\uparrow A}^{\emptyset}\oplus N_{(\alpha\uparrow A)s}^{\emptyset})$.
This is given by
\[
\begin{tikzcd}
N_{I'}\otimes_{R^s}R \arrow[r]\arrow[d,phantom,"\ni",sloped] & N_{As}^{\emptyset}\oplus N_{A}^{\emptyset}\oplus N_{A}^{\emptyset}\oplus N_{As}^{\emptyset}\oplus N_{\alpha\uparrow A}^{\emptyset}\oplus N_{(\alpha\uparrow A)s}^{\emptyset}\arrow[d,phantom,"\ni",sloped]\\
m\otimes f \arrow[r,mapsto] &  (m_{As}f,m_{A}s(f),m_{A}f,m_{As}s(f),m_{\alpha\uparrow A}f,m_{(\alpha\uparrow A)s}s(f)).
\end{tikzcd}
\]

Any element in $N_{I'}\otimes_{R^s}R$ is written as $m_1\otimes 1 + m_2\otimes \delta_s$ for $m_1,m_2\in N_{I'}$.
It is in $(N*B_s)_{I}$ if and only the projection to $(N*B_s)_{As}^{\emptyset} \simeq N_{As}^{\emptyset}\oplus N_{A}^{\emptyset}$ is zero.
This projection is given by $(m_{1,As} + s_\alpha(\delta_s^A)m_{2,As},m_{1,A} + s_\alpha(\delta_s^A) m_{2,A})$.
Hence it is sufficient to prove that the image of 
\[
\{m_1\otimes 1 + m_2\otimes \delta_s\in N_{I'}\otimes_{R^s}R\mid \text{$(m_1 + s_\alpha(\delta_s^A)m_{2})_{A'} = 0$ for $A' = A,As$}\}
\]
in $(N*B_s)_{A}^{\emptyset}\oplus (N*B_s)_{\alpha\uparrow A}^{\emptyset} = N_{A}^{\emptyset}\oplus N_{As}^{\emptyset}\oplus N_{\alpha\uparrow A}^{\emptyset}\oplus N_{(\alpha\uparrow A)s}^{\emptyset}$ is $\alpha^\vee N_{[As,A]}\oplus N_{[\alpha\uparrow A,(\alpha\uparrow A)s]}$.
(Note that $A = \alpha\uparrow (As)$ and $(\alpha\uparrow A)s = \alpha\uparrow (\alpha\uparrow A)$.)

The image of $m_1\otimes 1 + m_2\otimes \delta_s$ in $N_{A}^{\emptyset}\oplus N_{As}^{\emptyset}\oplus N_{\alpha\uparrow A}^{\emptyset}\oplus N_{(\alpha\uparrow A)s}^{\emptyset}$ is given by
\[
(m_{1,A} + \delta_s^Am_{2,A},
m_{1,As} + \delta_s^Am_{2,As},
m_{1,\alpha\uparrow A} + s_\alpha(\delta_s^A)m_{2,\alpha\uparrow A},
m_{1,(\alpha\uparrow A)s} + s_\alpha(\delta_s^A)m_{2,(\alpha\uparrow A)s}).
\]
Define $\varepsilon\in\{\pm 1\}$ by $\alpha_s^A = \varepsilon \alpha$.
Since $m_{1,A} + s_{\alpha}(\delta_s^A)m_{2,A} = 0$, we have $m_{1,A} + \delta_s^Am_{2,A} = (\delta_s^A - s_{\alpha}(\delta_s^A))m_{2,A} = \varepsilon \alpha^\vee m_{2,A}$.
By the same argument, we have $m_{1,As} + \delta_s^Am_{2,As} = \varepsilon \alpha^\vee m_{2,As}$.
Therefore $(m_{1,A} + \delta_s^Am_{2,A},m_{1,As} + \delta_s^Am_{2,As}) = \alpha^\vee (\varepsilon m_{2,A},\varepsilon m_{2,As})\in \alpha^\vee N_{[A,As]}^{\emptyset}$.
Therefore the image is in $\alpha^\vee N_{[As,A]}\oplus N_{[\alpha\uparrow A,(\alpha\uparrow A)s]}$.

On the other hand, let $m'_1\in N_{[As,A]}$ and $m'_2\in N_{[\alpha\uparrow A,(\alpha\uparrow A)s]}$.
Take a lift $m_1\in N_{I'}$ (resp.\ $m_2\in M_{I''}$) of $m'_1$ (resp.\ $m'_2$) where $I'' = \{A'\in A\mid A'\ge \alpha\uparrow A\}$.
Put $n = m_2\otimes 1 + \varepsilon(m_1\otimes \delta_s - (s(\delta_s))^Am_1\otimes 1)$.
Then since $m_2\in M_{I''}$, $m_{2,A} = 0$, $m_{2,As} = 0$.
Now it is straightforward to see $n\in (M*B_s)_{I}$ and the image of $n$ is $(\alpha^\vee m'_{1,A},\alpha^\vee m'_{1,As},m'_{2,\alpha\uparrow A},m'_{2,(\alpha\uparrow A)s})$.
We get the proposition.
\end{proof}

\subsection{Some calculations of homomorphisms}
In this subsection we fix a flat commutative graded $S$-algebra $S_0$.
We define some morphisms as follows.
These will be used only in this subsection.
Let $A\in \mathcal{A}$ and $\alpha\in\Delta^+$.
\begin{gather*}
i_0\colon Q_{A,\alpha}\to Q_{A,\alpha}\quad (f,g)\mapsto (0,\alpha^\vee g),\\
i_0^+\colon Q_{A,\alpha}\to Q_{\alpha\uparrow A,\alpha}\quad (f,g)\mapsto (g,f),\\
i_0^-\colon Q_{A,\alpha}\to Q_{\alpha\downarrow A,\alpha}\quad (f,g)\mapsto (0,\alpha^\vee f).
\end{gather*}
It is straightforward to see that these are morphisms in $\widetilde{\mathcal{K}}$.
We also denote the images of these morphisms in $\mathcal{K}$ by the same letters.

\begin{lem}\label{lem:End(Q)}
We have $\End^{\bullet}_{\mathcal{K}(S_0)}(S_0\otimes_{S}Q_{A,\alpha}) = \End^{\bullet}_{\widetilde{\mathcal{K}}(S_0)}(S_0\otimes_{S}Q_{A,\alpha}) = S_0\id\oplus S_0i_0$.
\end{lem}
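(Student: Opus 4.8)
The plan is to compute $\End^{\bullet}(S_0\otimes_{S}Q_{A,\alpha})$ by hand directly from the definitions; there is nothing deep here, only careful bookkeeping, and the three equalities in the statement will drop out at the end. Write $N=S_0\otimes_{S}Q_{A,\alpha}$. First I would make $N$ explicit: since $S_0$ is flat over $S$, tensoring the defining exact sequence $0\to Q_{A,\alpha}\to S^{2}\to S/\alpha S\to 0$ with $S_0$ gives $N=\{(f,g)\in S_0^{2}\mid f\equiv g\pmod{\alpha S_0}\}$, with $N^{\emptyset}=S_0^{\emptyset}\oplus S_0^{\emptyset}$, the two summands being $N_{A}^{\emptyset}$ and $N_{\alpha\uparrow A}^{\emptyset}$, and with $R$ acting on the first (resp.\ second) summand through $f\mapsto f_{A}$ (resp.\ $f\mapsto s_{\alpha}(f_{A})=f_{\alpha\uparrow A}$). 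Recall also that $\supp_{\mathcal{A}}(N)=\{A,\alpha\uparrow A\}$ with $A<\alpha\uparrow A$, and that $\id$ and $i_0$ are genuine morphisms of $N$.

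The key point, which I would establish next, is that $A$ and $\alpha\uparrow A$ lie in \emph{different} $\Lambda_{\aff}$-orbits. Indeed, if they lay in the same orbit then $\lambda_{A}=\lambda_{\alpha\uparrow A}$ for every $\lambda\in\Lambda$ (the isomorphism $\lambda\mapsto\lambda_{\bullet}$ depends only on the $\Lambda_{\aff}$-orbit), while on the other hand $\lambda_{\alpha\uparrow A}=s_{\alpha}(\lambda_{A})$ by the very definition of $\Lambda$; this would force $s_{\alpha}$ to act trivially on $X_{\Coeff}$, contradicting Lemma~\ref{lem:faithful on finite Weyl group}. Granting this, Remark~\ref{rem:decomposition is preserved automatically} shows that every $(S_0,R)$-bimodule endomorphism $\varphi$ of $N$ preserves $N_{A}^{\emptyset}$ and $N_{\alpha\uparrow A}^{\emptyset}$; being left $S_0^{\emptyset}$-linear on each of these free rank-one $S_0^{\emptyset}$-modules, $\varphi$ acts on $N_{A}^{\emptyset}$ as multiplication by a homogeneous $a\in S_0^{\emptyset}$ and on $N_{\alpha\uparrow A}^{\emptyset}$ by a homogeneous $c\in S_0^{\emptyset}$. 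For such a ``diagonal'' pair, right $R$-linearity and the support condition defining a morphism in $\widetilde{\mathcal{K}}(S_0)$ are then automatic, while conversely a diagonal pair $(a,c)$ gives an endomorphism of $N$ exactly when $(a,c)=\varphi(1,1)$ lies in $N$, i.e.\ when $a,c\in S_0$ and $a\equiv c\pmod{\alpha S_0}$ (the ``only if'' is immediate; the ``if'' is the one-line identity $af-cg=(a-c)g+a(f-g)$). Since $\alpha$ is a non-zerodivisor in $S_0$ (flatness over the domain $S$), $s:=(c-a)/\alpha\in S_0$ is well defined and $\varphi=a\,\id+s\,i_0$ uniquely, so $\End^{\bullet}_{\widetilde{\mathcal{K}}(S_0)}(N)=S_0\,\id\oplus S_0\,i_0$, a free graded $S_0$-module.

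Finally I would treat the passage to $\mathcal{K}(S_0)$: the endomorphism $\varphi=a\,\id+s\,i_0$ is zero in $\mathcal{K}(S_0)$ iff $\varphi(N_{A'}^{\emptyset})\subset\bigoplus_{A''>A'}N_{A''}^{\emptyset}$ for all $A'$; taking $A'=A$ gives $aS_0^{\emptyset}\oplus 0\subset N_{\alpha\uparrow A}^{\emptyset}$, hence $a=0$, and then $A'=\alpha\uparrow A$ gives $c=s\alpha=0$, hence $s=0$. So $0$ is the only negligible endomorphism, the canonical map $\End^{\bullet}_{\widetilde{\mathcal{K}}(S_0)}(N)\to\End^{\bullet}_{\mathcal{K}(S_0)}(N)$ is injective, and it is surjective by the construction of $\mathcal{K}(S_0)$; combined with the previous paragraph this yields all three equalities. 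I expect no real obstacle here — the argument is entirely formal; the only point to watch is that $S_0$ need not be a domain, which is why one must invoke the non-zerodivisor property of $\alpha$ (supplied by flatness) rather than literally ``dividing by $\alpha$'', and why the flatness hypothesis on $S_0$ is used twice: once to identify $N$ and once to know $\alpha$ is a non-zerodivisor.
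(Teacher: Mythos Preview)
Your proof is correct and follows essentially the same route as the paper's: both arguments first observe (via Remark~\ref{rem:decomposition is preserved automatically}, i.e.\ the fact that $A$ and $\alpha\uparrow A$ lie in distinct $\Lambda_{\aff}$-orbits) that every endomorphism is diagonal, and then identify the diagonal pairs $(a,c)$ that preserve $N$ as exactly $S_0\,\id\oplus S_0\,i_0$. The only differences are cosmetic: the paper deduces the equality $\End_{\mathcal{K}}=\End_{\widetilde{\mathcal{K}}}$ immediately from diagonality and then restricts to $\alpha S_0\oplus 0$ and $0\oplus\alpha S_0$ to pin down $c$ and $d$, whereas you first compute $\End_{\widetilde{\mathcal{K}}}$ via the value at $(1,1)$ and check the kernel of the quotient map to $\End_{\mathcal{K}}$ at the end; you are also more explicit about why $A$ and $\alpha\uparrow A$ lie in different orbits and about where flatness of $S_0$ is used.
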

\begin{proof}
Put $M = S_0\otimes_{S}Q_{A,\alpha}$.
Note that $\supp_{\mathcal{A}}(M) = \{A,\alpha\uparrow A\}$.
Let $\varphi\in \End_{\widetilde{\mathcal{K}}(S_0)}(S_0\otimes_{S}Q_{A,\alpha})$.
We have $\varphi(M_{A}^{\emptyset})\subset \bigoplus_{A'\in A + \Z\Delta}M_{A'}^{\emptyset} = M_{A}^{\emptyset}$.
By the same argument, we also have $\varphi(M_{\alpha\uparrow A}^{\emptyset})\subset M_{\alpha\uparrow A}^{\emptyset}$.
Therefore $\varphi$ preserves $M_{A'}^{\emptyset}$ for any $A'\in \mathcal{A}$.
Hence we get the first equality of the lemma.

We prove $\varphi\in S_0\id + S_0i_0$.
Since $\varphi$ preserves $M_{A'}^{\emptyset}$, we have $\varphi(f,g) = (\varphi_1(f),\varphi_2(g))$ for some $\varphi_1,\varphi_2\colon S_0^{\emptyset}\to S_0^{\emptyset}$.
Restricting to $\{(f,g)\in M\mid g = 0\} = \alpha^\vee S_0\oplus 0$, $\varphi_1$ sends $\alpha^\vee S_0$ to $\alpha^\vee S_0$.
Therefore it is given by $\varphi_1(f) = cf$ for some $c\in S_{0}$.
Replacing $\varphi$ with $\varphi - c\id$, we may assume $\varphi_{1} = 0$.
The image of $\varphi$ is contained in $\{(f,g)\in M\mid f = 0\} = 0\oplus \alpha^\vee S_0$.
Hence $\varphi_2(g) = \alpha^\vee dg$ for some $d\in S_0$ and we have $\varphi = di_0$.
\end{proof}

\begin{lem}
We have $\Hom^{\bullet}_{\mathcal{K}(S_0)}(S_0\otimes_{S}Q_{A,\alpha},S_0\otimes_{S}Q_{\alpha\uparrow A,\alpha}) = S_0i_0^{+}$.
\end{lem}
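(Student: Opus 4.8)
The plan is to follow the proof of Lemma~\ref{lem:End(Q)} almost verbatim, computing first in $\widetilde{\mathcal{K}}(S_0)$ and then passing to $\mathcal{K}(S_0)$. Write $M = S_0\otimes_S Q_{A,\alpha}$ and $N = S_0\otimes_S Q_{\alpha\uparrow A,\alpha}$, so that $\supp_{\mathcal{A}}(M) = \{A,\alpha\uparrow A\}$ and $\supp_{\mathcal{A}}(N) = \{\alpha\uparrow A,\alpha\uparrow(\alpha\uparrow A)\}$, and recall $\alpha\uparrow(\alpha\uparrow A) = A+\alpha$. First I would pin down the shape of an arbitrary $(S_0,R)$-bimodule homomorphism $\varphi\colon M\to N$. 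Since $\alpha\uparrow(\alpha\uparrow A) = A+\alpha$ lies in the $\Lambda_{\aff}$-orbit $A+\Z\Delta$ of $A$ while $\alpha\uparrow A$ lies in a different $\Lambda_{\aff}$-orbit, Remark~\ref{rem:decomposition is preserved automatically} forces $\varphi(M_A^{\emptyset})\subset N_{\alpha\uparrow(\alpha\uparrow A)}^{\emptyset}$ and $\varphi(M_{\alpha\uparrow A}^{\emptyset})\subset N_{\alpha\uparrow A}^{\emptyset}$; identifying generic stalks with $S_0^{\emptyset}$ as in the definition of $Q$, this says $\varphi(f,g) = (a_2 g, a_1 f)$ for left $S_0$-linear multiplications by elements $a_1,a_2\in S_0^{\emptyset}$. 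Because $\alpha\uparrow(\alpha\uparrow A) = A+\alpha > A$ by Lemma~\ref{lem:homeo on each orbit}, the support condition in the definition of $\widetilde{\mathcal{K}}(S_0)$ is then automatic, and right $R$-linearity holds automatically too (using $h_{\alpha\uparrow A} = s_\alpha(h_A)$ and $h_{A+\alpha} = h_A$ for $h\in R$, together with commutativity of $S_0^{\emptyset}$). Thus $\Hom^{\bullet}_{\widetilde{\mathcal{K}}(S_0)}(M,N)$ is exactly the set of such $\varphi_{a_1,a_2}$ with $\varphi_{a_1,a_2}(M)\subset N$.

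Next I would read off the constraint $\varphi_{a_1,a_2}(M)\subset N$: evaluating at $(1,1)\in M$ gives $(a_2,a_1)\in N$, i.e.\ $a_1,a_2\in S_0$ with $a_1\equiv a_2\pmod{\alpha}$; conversely, for such a pair and any $(f,g)\in M$ one has $a_2 g, a_1 f\in S_0$ and $a_2 g - a_1 f = a_2(g-f) + (a_2-a_1)f\in\alpha S_0$, so $\varphi_{a_1,a_2}$ is indeed a morphism. Hence $\Hom^{\bullet}_{\widetilde{\mathcal{K}}(S_0)}(M,N) \cong \{(a_1,a_2)\in S_0^2 \mid a_1\equiv a_2\pmod{\alpha}\}$, and under this identification $i_0^{+}$ corresponds to $(1,1)$.

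Finally I would identify the ideal killed in $\mathcal{K}(S_0)$. Since $\varphi_{a_1,a_2}(M_A^{\emptyset})\subset N_{\alpha\uparrow(\alpha\uparrow A)}^{\emptyset}$ with $\alpha\uparrow(\alpha\uparrow A) > A$, this stalk contribution is always negligible, so the only remaining requirement for $\varphi_{a_1,a_2}$ to lie in the defining ideal is that $\varphi_{a_1,a_2}(M_{\alpha\uparrow A}^{\emptyset})\subset N_{\alpha\uparrow A}^{\emptyset}$ vanish, i.e.\ $a_2 = 0$. Therefore in $\mathcal{K}(S_0)$ every class is represented by $\varphi_{a_2,a_2} = a_2\, i_0^{+}$ (this replacement being legitimate since $a_1 - a_2\in\alpha S_0$), which gives $\Hom^{\bullet}_{\mathcal{K}(S_0)}(M,N) = S_0\, i_0^{+}$, free of rank one because $i_0^{+}$ has $a_2 = 1\neq 0$. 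I do not anticipate any real obstacle: the whole argument is a direct computation parallel to Lemma~\ref{lem:End(Q)}, and the only point needing a little care is the $\Lambda_{\aff}$-orbit bookkeeping of the first paragraph, which is precisely what makes $i_0^{+}$ — rather than a ``diagonal'' map — the generator; the asymmetry between the source $Q_{A,\alpha}$ and the target $Q_{\alpha\uparrow A,\alpha}$ enters exactly there.
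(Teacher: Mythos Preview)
Your proof is correct and follows essentially the same approach as the paper's: both set up $\varphi(f,g) = (\varphi_1(g),\varphi_2(f))$ using the $\Lambda_{\aff}$-orbit decomposition (the paper cites ``a similar argument to the proof of Lemma~\ref{lem:End(Q)}''), extract that $\varphi_1$ is multiplication by some $c\in S_0$, and observe that $\varphi - c\,i_0^+$ lies in the ideal defining $\mathcal{K}(S_0)$. The paper pins down $c\in S_0$ via the constraint $\varphi_1(\alpha S_0)\subset\alpha S_0$ coming from the submodule $0\oplus\alpha S_0\subset M$, whereas you evaluate at $(1,1)$ to get $(a_2,a_1)\in N$ directly; this is a cosmetic difference, and your version has the mild advantage of also recording the congruence $a_1\equiv a_2\pmod{\alpha}$ needed to verify that $\varphi_{a_1-a_2,0}$ is an honest morphism in $\widetilde{\mathcal{K}}(S_0)$ before declaring it zero in $\mathcal{K}(S_0)$.
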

\begin{proof}
%Set $M = S_0\otimes_{S}Q_{A,\alpha}$ and $N = S_0\otimes_{S}Q_{\alpha \uparrow A,\alpha}$ and 
Let $\varphi\colon S_0\otimes_{S}Q_{A,\alpha}\to S_0\otimes_{S}Q_{\alpha \uparrow A,\alpha}$ be a morphism in $\widetilde{\mathcal{K}}(S_0)$.
By a similar argument of the proof of Lemma~\ref{lem:End(Q)}, $\varphi$ is given by $\varphi(f,g) = (\varphi_1(g),\varphi_2(f))$ for $\varphi_i\colon S_0^{\emptyset}\to S_0^{\emptyset}$ such that $\varphi_i(\alpha^\vee S_0)\subset \alpha^\vee S_0$ for $i = 1,2$.
Hence $\varphi_1(f) = cf$ for some $c\in S_0$.
It is clear that $\varphi - ci_0^{+}$ is zero as a morphism in $\mathcal{K}(S_0)$.
Hence we get the lemma.
\end{proof}

\begin{lem}
We have $\Hom^{\bullet}_{\mathcal{K}(S_0)}(S_0\otimes_{S}Q_{A,\alpha},S_0\otimes_{S}Q_{\alpha\downarrow A,\alpha}) = S_0i_0^{-}$.
\end{lem}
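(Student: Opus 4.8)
The plan is to follow the proofs of the two preceding lemmas, the only new input being a comparison in the Bruhat order. Write $M = S_0\otimes_S Q_{A,\alpha}$ and $N = S_0\otimes_S Q_{\alpha\downarrow A,\alpha}$, so that $\supp_{\mathcal{A}}(M) = \{A,\alpha\uparrow A\}$ and $\supp_{\mathcal{A}}(N) = \{\alpha\downarrow A,A\}$ with $\alpha\downarrow A < A < \alpha\uparrow A$. Since $S_0$ is flat, $M = \{(f,g)\in S_0^2\mid f\equiv g\pmod{\alpha}\}$ and similarly for $N$. Let $\varphi\colon M\to N$ be a morphism in $\widetilde{\mathcal{K}}(S_0)$.

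First I would record the $\Lambda_{\aff}$-orbit picture via Remark~\ref{rem:decomposition is preserved automatically}. The alcoves $A$ and $\alpha\uparrow A$ lie in distinct $\Lambda_{\aff}$-orbits, because $\alpha\uparrow A$ is a reflection of $A$ (not a translate by an element of $\Z\Delta$), so the images in $W_{\mathrm{f}}$ of the corresponding elements of $W'_{\aff}$ differ by $s_\alpha$; for the same reason $\alpha\downarrow A$ lies in the orbit of $\alpha\uparrow A$, while $A$ lies in its own. Hence $\varphi$ must send $M_A^\emptyset$ into $N_A^\emptyset$ and $M_{\alpha\uparrow A}^\emptyset$ into $N_{\alpha\downarrow A}^\emptyset$. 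Now the order condition in $\widetilde{\mathcal{K}}(S_0)$ requires $\varphi(M_{\alpha\uparrow A}^\emptyset)\subset\bigoplus_{A'\ge\alpha\uparrow A}N_{A'}^\emptyset$; since $\alpha\downarrow A\not\ge\alpha\uparrow A$ this target is $0$, so $\varphi$ annihilates $M_{\alpha\uparrow A}^\emptyset$. Arguing as in the proof of Lemma~\ref{lem:End(Q)}, the remaining component $M_A^\emptyset\to N_A^\emptyset$ is left $S_0^\emptyset$-linear of degree zero, hence multiplication by some $c\in S_0^\emptyset$; that is, $\varphi(f,g) = (0,cf)$.

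Next I would impose that $\varphi$ actually lands in $N$ rather than in $N^\emptyset$: evaluating at $(1,1)\in M$ forces $c\in S_0$, and then $(0,cf)\in N$ for all $f\in S_0$ forces $c\in\alpha S_0$, say $c = \alpha d$ with $d\in S_0$. Thus $\varphi(f,g) = (0,\alpha d f) = d\cdot i_0^-(f,g)$, which gives $\Hom^{\bullet}_{\widetilde{\mathcal{K}}(S_0)}(M,N) = S_0 i_0^-$. Finally, to pass to $\mathcal{K}(S_0)$ one checks that nothing is quotiented out: $d\,i_0^-$ is zero in $\mathcal{K}(S_0)$ only if $d\,i_0^-(M_A^\emptyset)\subset\bigoplus_{A'>A}N_{A'}^\emptyset$, and since $\supp_{\mathcal{A}}(N)$ contains no alcove strictly above $A$ this means $\alpha d f = 0$ for all $f\in S_0$, i.e.\ $d = 0$ (the image $\alpha$ of the root is a nonzerodivisor in the flat $S$-algebra $S_0$). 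Hence $S_0 i_0^-\to\Hom^{\bullet}_{\mathcal{K}(S_0)}(M,N)$ is an isomorphism.

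I do not anticipate a real obstacle; the one point that needs care—and the only place the argument diverges from the two previous lemmas—is the orbit bookkeeping that rules out a nonzero map $M_{\alpha\uparrow A}^\emptyset\to N_A^\emptyset$. Here the component that survives is the ``diagonal'' one $M_A^\emptyset\to N_A^\emptyset$; unlike in Lemma~\ref{lem:End(Q)}, it is not annihilated on passing to $\mathcal{K}(S_0)$, precisely because $N$ has no alcoves above $A$ in its support.
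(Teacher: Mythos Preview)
Your proof is correct and follows essentially the same approach as the paper's: both reduce to $\varphi(f,g)=(0,\varphi_1(f))$ using the orbit and order constraints, then force $\varphi_1$ to be multiplication by an element of $\alpha S_0$. You are slightly more explicit than the paper in two places---spelling out the $\Lambda_{\aff}$-orbit bookkeeping via Remark~\ref{rem:decomposition is preserved automatically}, and verifying that $d\mapsto d\,i_0^-$ remains injective after passing from $\widetilde{\mathcal{K}}(S_0)$ to $\mathcal{K}(S_0)$---while the paper simply records the conclusions. (Your closing remark contrasting with Lemma~\ref{lem:End(Q)} is a bit off: nothing is annihilated there either; the lemma where a residual piece does get killed in $\mathcal{K}(S_0)$ is the preceding one for $Q_{\alpha\uparrow A,\alpha}$.)
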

\begin{proof}
Set $M = S_0\otimes_{S}Q_{A,\alpha}$ and $N = S_0\otimes_{S}Q_{\alpha \downarrow A,\alpha}$ and let $\varphi\colon M\to N$ be a morphism in $\widetilde{\mathcal{K}}(S_0)$.
We have $\varphi(M_{\alpha\uparrow A}^{\emptyset})\subset \bigoplus_{A' \ge \alpha \uparrow A}N_{A'}^{\emptyset} = 0$ and $\varphi(M_{A}^{\emptyset}) \subset \bigoplus_{A' \in A + \Z\Delta}N_{A'}^{\emptyset} = N_{A}^{\emptyset}$.
Hence $\varphi(f,g) = (0,\varphi_1(f))$ for some $\varphi_1\colon S_{0}^{\emptyset}\to S_{0}^{\emptyset}$.
For any $f\in S_0$ we have $\varphi(f,f) = (0,\varphi_1(f))\in N$.
Hence $\varphi_1(f)\in \alpha^\vee S_0$.
Therefore $\varphi_1(f) = c\alpha^\vee f$ for some $c\in S_0$.
Hence $\varphi = ci_0^{-}$.
\end{proof}

\begin{lem}
If $A_1\ne \alpha\downarrow A_2,A_2,\alpha\uparrow A_2$, then $\Hom_{\mathcal{K}(S_0)}(Q_{A_1,\alpha},Q_{A_2,\alpha}) = 0$.
\end{lem}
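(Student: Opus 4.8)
The plan is to reduce the statement to a disjointness of supports and then invoke the order condition built into the morphisms of $\widetilde{\mathcal{K}}(S_0)$. Recall first that $\supp_{\mathcal{A}}(Q_{A_i,\alpha}) = \{A_i,\alpha\uparrow A_i\}$ for $i=1,2$, and that base changing along a flat $S_0$ leaves the support unchanged. So the first step is to observe that the hypothesis $A_1\notin\{\alpha\downarrow A_2,A_2,\alpha\uparrow A_2\}$ says precisely that $\{A_1,\alpha\uparrow A_1\}$ and $\{A_2,\alpha\uparrow A_2\}$ are disjoint: the equalities $A_1=A_2$ and $A_1=\alpha\uparrow A_2$ are excluded by assumption, while $\alpha\uparrow A_1=\alpha\uparrow A_2$ would force $A_1=A_2$ and $\alpha\uparrow A_1=A_2$ would force $A_1=\alpha\downarrow A_2$, both excluded, using that $\alpha\uparrow$ is a bijection of $\mathcal{A}$ with inverse $\alpha\downarrow$.

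The second step is a general principle: if $M,N\in\widetilde{\mathcal{K}}(S_0)$ have $\supp_{\mathcal{A}}(M)\cap\supp_{\mathcal{A}}(N)=\emptyset$, then every morphism $\varphi\colon M\to N$ in $\widetilde{\mathcal{K}}(S_0)$ is zero in $\mathcal{K}(S_0)$. Indeed, for $A\in\mathcal{A}$, if $M_A^{\emptyset}=0$ there is nothing to check, and if $A\in\supp_{\mathcal{A}}(M)$ then $A\notin\supp_{\mathcal{A}}(N)$, so $N_A^{\emptyset}=0$; combining $\varphi(M_A^{\emptyset})\subset\bigoplus_{A'\ge A}N_{A'}^{\emptyset}$ with the vanishing of the $A'=A$ term gives $\varphi(M_A^{\emptyset})\subset\bigoplus_{A'>A}N_{A'}^{\emptyset}$. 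Since this holds for all $A$, $\varphi$ lies in the ideal modulo which $\Hom_{\mathcal{K}(S_0)}$ is defined. Applying this to $M=Q_{A_1,\alpha}$ and $N=Q_{A_2,\alpha}$ (after base change to $S_0$) yields $\Hom_{\mathcal{K}(S_0)}(Q_{A_1,\alpha},Q_{A_2,\alpha})=0$; the same argument applies verbatim to $\Hom^{\bullet}$.

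I do not anticipate a genuine obstacle here. The only point that needs a little care is the purely combinatorial bookkeeping in the first step, namely translating the three excluded equalities into disjointness of the two-element supports, and remembering in the second step to use the strict order ($A'>A$, not $A'\ge A$), which is legitimate precisely because the diagonal term $N_A^{\emptyset}$ vanishes on $\supp_{\mathcal{A}}(M)$.
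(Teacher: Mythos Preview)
Your proof is correct and follows essentially the same approach as the paper: the paper's proof is the single line ``It follows from $\supp_{\mathcal{A}}(Q_{A_1,\alpha})\cap\supp_{\mathcal{A}}(Q_{A_2,\alpha}) = \emptyset$,'' and you have simply spelled out both why the hypothesis yields disjoint supports and why disjoint supports force the morphism to vanish in $\mathcal{K}(S_0)$.
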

\begin{proof}
It follows from $\supp_{\mathcal{A}}(Q_{A_1,\alpha})\cap \supp_{\mathcal{A}}(Q_{A_2,\alpha}) = \emptyset$.
\end{proof}

Next we calculate homomorphisms in $\mathcal{K}_{\AJS}$.
Set $\mathcal{Q}_{A,\alpha} = \mathcal{F}(Q_{A,\alpha})$.
\begin{lem}
The object $\mathcal{Q}_{A,\alpha}$ is given by
\begin{gather*}
\mathcal{Q}_{A,\alpha}(A') = 
\begin{cases}
S^{\emptyset} & (A' = A,\alpha\uparrow A),\\
0 & (\text{otherwise}),
\end{cases}\\
\mathcal{Q}_{A,\alpha}(A',\beta) = 
\begin{cases}
S^{\beta}\oplus 0 & (A' = A,\alpha\uparrow A,\ \beta\ne\alpha),\\
0\oplus S^{\beta} & (\beta\uparrow A' = A,\alpha\uparrow A,\ \beta\ne\alpha),\\
\alpha^\vee S^{\alpha}\oplus 0 & (A' = \alpha\uparrow A,\ \beta = \alpha),\\
\{(f,g)\in (S^{\alpha})^2\mid f\equiv g\pmod{\alpha^\vee}\} & (A' = A,\ \beta = \alpha),\\
0\oplus S^{\alpha} & (A' = \alpha\downarrow A,\ \beta = \alpha),\\
0 & (\text{otherwise}).
\end{cases}
\end{gather*}
\end{lem}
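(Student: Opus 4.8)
The plan is to compute $\mathcal{Q}_{A,\alpha} = \mathcal{F}(Q_{A,\alpha})$ by directly unwinding the two descriptions of $\mathcal{F}$ already obtained in this subsection, namely $\mathcal{F}(M)(A') = M_{A'}^{\emptyset}$ and $\mathcal{F}(M)(A',\beta) = M^{(W'_{\beta,\aff}A')}_{[A',\beta\uparrow A']}$, the latter viewed as a submodule of $M_{A'}^{\emptyset}\oplus M_{\beta\uparrow A'}^{\emptyset}$ via the injective restriction map. For $M = Q_{A,\alpha}$ we have $M^{\emptyset} = S^{\emptyset}\oplus S^{\emptyset}$ with $M_{A}^{\emptyset}$ the first summand, $M_{\alpha\uparrow A}^{\emptyset}$ the second, and $M_{A''}^{\emptyset} = 0$ otherwise; the formula $\mathcal{F}(M)(A') = M_{A'}^{\emptyset}$ then yields the stated values of $\mathcal{Q}_{A,\alpha}(A')$ at once.

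For the pairs $(A',\beta)$ I would first record, once and for all, two local computations. Since $A$ is the minimal and $\alpha\uparrow A$ the maximal element of $\supp_{\mathcal{A}}(M) = \{A,\alpha\uparrow A\}$, one gets $M_{\{\alpha\uparrow A\}} = M\cap M_{\alpha\uparrow A}^{\emptyset} = \{(0,g)\mid g\in\alpha S\}$, lying in the second summand, and $M_{\{A\}}\cong S$ via $(f,g)\mapsto f$ (whose kernel is precisely $\{(0,g)\mid g\in\alpha S\}$), mapping isomorphically onto $S\subset S^{\emptyset}$ in the first summand; moreover $M_{\supp_{\mathcal{A}}(M)} = M$ itself. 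Applying $(-)^{\beta}$: the submodule $M^{\beta}_{\{A\}}$ of $M_{A}^{\emptyset}$ is $S^{\beta}$, the submodule $M^{\beta}_{\{\alpha\uparrow A\}}$ of $M_{\alpha\uparrow A}^{\emptyset}$ is $\alpha S^{\beta}$ — which equals $S^{\beta}$ when $\beta\ne\alpha$ (as $\alpha$ is a unit in $S^{\beta}$) but stays a proper submodule for $\beta = \alpha$ — and $M^{\alpha} = \{(f,g)\in(S^{\alpha})^{2}\mid f\equiv g\pmod{\alpha}\}$.

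Next I would use that $[A',\beta\uparrow A']$ is the two-element set $\{A',\beta\uparrow A'\}$ (the interval between adjacent alcoves), so that $\mathcal{Q}_{A,\alpha}(A',\beta) = (M_{\{A',\beta\uparrow A'\}\cap\supp_{\mathcal{A}}(M)})^{\beta}$, placed in the first or second summand of $\mathcal{Q}_{A,\alpha}(A')\oplus\mathcal{Q}_{A,\alpha}(\beta\uparrow A')$ according to which of $A',\beta\uparrow A'$ is the surviving alcove. Everything then reduces to deciding which of $A,\alpha\uparrow A$ lies in $\{A',\beta\uparrow A'\}$. When $\beta\ne\alpha$, distinct reflection walls of an alcove produce distinct neighbours ($\beta\uparrow C\ne\alpha\uparrow C$), together with monotonicity of $\uparrow$ this forces $\{A',\beta\uparrow A'\}$ to contain at most one of $A,\alpha\uparrow A$; the cases $A'\in\{A,\alpha\uparrow A\}$ and $\beta\uparrow A'\in\{A,\alpha\uparrow A\}$ give $S^{\beta}\oplus 0$ and $0\oplus S^{\beta}$ respectively (the factor $\alpha$ disappearing by the previous paragraph), and all other $A'$ give $0$. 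When $\beta = \alpha$: for $A' = A$ the interval equals $\supp_{\mathcal{A}}(M)$, yielding $M^{\alpha}$, i.e.\ the congruence module; for $A' = \alpha\uparrow A$ the surviving alcove is $\alpha\uparrow A = A'$, giving $\alpha S^{\alpha}\oplus 0$; for $A' = \alpha\downarrow A$ the surviving alcove is $A = \alpha\uparrow A'$, so we land in the second summand, and since that alcove is $A$ and not $\alpha\uparrow A$ there is no factor $\alpha$: $0\oplus S^{\alpha}$; all remaining $A'$ give $0$.

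This is in the end a bookkeeping exercise. I expect the only delicate points to be: first, tracking in each case which of the two free summands of the ambient $S^{\emptyset}\oplus S^{\emptyset}$ is indexed by $A'$ and which by $\beta\uparrow A'$, and in particular whether the surviving alcove is $A$ (no factor $\alpha$) or $\alpha\uparrow A$ (factor $\alpha$, which survives only when $\beta = \alpha$); and second, the two geometric inputs used above — $[A',\beta\uparrow A'] = \{A',\beta\uparrow A'\}$ and $\beta\uparrow C\ne\alpha\uparrow C$ for $\beta\ne\alpha$ — which follow from the standard properties of the alcove order of \cite{MR591724} together with the GKM assumption. It should also be noted en route that $Q_{A,\alpha}$ satisfies \property{LE}, so that the description of $\mathcal{F}(M)(A',\beta)$ is applicable: for $\gamma\ne\pm\alpha$ the element $\alpha$ is a unit in $S^{\gamma}$, hence $Q_{A,\alpha}^{\gamma}\cong (S^{\gamma})^{2}$ splits along $\{A\}$ and $\{\alpha\uparrow A\}$.
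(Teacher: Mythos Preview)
Your argument is correct and is exactly the direct calculation the paper alludes to, resting on the same key point that $S^\beta\otimes_S Q_{A,\alpha} = S^\beta\oplus S^\beta$ when $\beta\ne\alpha$. One small correction: $A'$ and $\beta\uparrow A'$ need not share a face, so calling them ``adjacent alcoves'' is misleading; nevertheless your claim $[A',\beta\uparrow A']=\{A',\beta\uparrow A'\}$ is true because $\beta\uparrow A'$ covers $A'$ in the generic Bruhat order (equivalently $d(A',\beta\uparrow A')=1$, cf.\ \cite{MR591724}), and in any case the paper's weaker observation $[A',\beta\uparrow A']\cap W'_{\beta,\aff}A'=\{A',\beta\uparrow A'\}$ recorded just before Proposition~\ref{prop:compativility of translation functors} already suffices here.
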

\begin{proof}
The formula of $\mathcal{Q}_{A,\alpha}(A)$ is obvious.
If $\beta\ne \alpha$, then $S^{\beta}\otimes_{S}Q_{A,\alpha} = S^{\beta}\oplus S^{\beta}$.
Hence the formula of $\mathcal{Q}_{A,\alpha}(A',\beta)$ with $\beta\ne\alpha$ follows.
The other formula follow from a direct calculation.
\end{proof}

Set $\iota_0 = \mathcal{F}(i_0)$, $\iota_0^{+} = \mathcal{F}(i_0^{+})$, $\iota_0^{-} = \mathcal{F}(i_0^{-})$.
These morphisms are described as follows.
\begin{gather*}
\iota_0\colon \mathcal{Q}_{A,\alpha}\to \mathcal{Q}_{A,\alpha}\quad (\iota_0)_{A} = 0,(\iota_0)_{\alpha\uparrow A} = \alpha \id,\\
\iota_0^{+}\colon \mathcal{Q}_{A,\alpha}\to \mathcal{Q}_{\alpha\uparrow A,\alpha} \quad (\iota_0^{+})_{A} = 0,(\iota_0^{+})_{\alpha\uparrow A} = \id,\\
\iota_0^{-}\colon \mathcal{Q}_{A,\alpha}\to \mathcal{Q}_{\alpha\downarrow A,\alpha} \quad (\iota_0^{-})_{A} = \alpha \id,(\iota_0^{-})_{\alpha\uparrow A} = 0.
\end{gather*}

\begin{lem}
We have $\End^{\bullet}_{\mathcal{K}_{\AJS}(S_0)}(S_0\otimes_{S}\mathcal{Q}_{A,\alpha}) = S_0\id\oplus S_0\iota_0$.
\end{lem}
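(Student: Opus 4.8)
The plan is to mimic the proof of Lemma~\ref{lem:End(Q)} and compute $\End^{\bullet}_{\mathcal{K}_{\AJS}(S_0)}(\mathcal{M})$ by hand, where I abbreviate $\mathcal{M} = S_0\otimes_{S}\mathcal{Q}_{A,\alpha}$, whose values are the base changes of those of $\mathcal{Q}_{A,\alpha}$ computed above. Since $\mathcal{M}(A') = 0$ for $A'\notin\{A,\alpha\uparrow A\}$, while $\mathcal{M}(A) = \mathcal{M}(\alpha\uparrow A) = (S_0)^{\emptyset}$ is free of rank one, a homogeneous element of $\End^{\bullet}_{\mathcal{K}_{\AJS}(S_0)}(\mathcal{M})$ is the same thing as a pair $(c,c')$ of homogeneous elements of $(S_0)^{\emptyset}$ of the same degree, acting by multiplication on $\mathcal{M}(A)$ and $\mathcal{M}(\alpha\uparrow A)$ respectively, subject only to the requirement that the resulting map respect every subspace $\mathcal{M}(A',\beta)$. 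The inclusion $S_0\,\id\oplus S_0\,\iota_0\subseteq\End^{\bullet}_{\mathcal{K}_{\AJS}(S_0)}(\mathcal{M})$ is clear because $\id$ and $\iota_0 = \mathcal{F}(i_0)$ are morphisms, and $\id,\iota_0$ are $S_0$-linearly independent: from $a\,\id + b\,\iota_0 = 0$, evaluating at $A$ gives $a = 0$, and then evaluating at $\alpha\uparrow A$ gives $b\alpha = 0$, so $b = 0$ since $\alpha$ is a nonzerodivisor in the flat $S$-algebra $S_0$. So the real work is the reverse inclusion: every morphism $(c,c')$ lies in $S_0\,\id + S_0\,\iota_0$.

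First I would impose the conditions coming from $\beta\ne\alpha$: for each such $\beta\in\Delta^+$ one has $\mathcal{M}(A,\beta) = (S_0)^{\beta}\oplus 0$ and $\mathcal{M}(\alpha\uparrow A,\beta) = (S_0)^{\beta}\oplus 0$, so preserving these forces $c\in(S_0)^{\beta}$ and $c'\in(S_0)^{\beta}$ (evaluate on $(1,0)$). Next I would use the subspace at $\alpha$: from $\mathcal{M}(A,\alpha) = \{(f,g)\in((S_0)^{\alpha})^{2}\mid f - g\in\alpha(S_0)^{\alpha}\}\subseteq\mathcal{M}(A)\oplus\mathcal{M}(\alpha\uparrow A)$ and $(1,1)\in\mathcal{M}(A,\alpha)$, the element $(c,c')$, being the image of $(1,1)$, must lie in $\mathcal{M}(A,\alpha)$; hence $c,c'\in(S_0)^{\alpha}$ and $c - c'\in\alpha(S_0)^{\alpha}$. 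Combining with the previous step and the identity $\bigcap_{\gamma\in\Delta^+}(S_0)^{\gamma} = S_0$ (the flat base change of \cite[9.1 Lemma]{MR1272539}) gives $c,c'\in S_0$. Finally, writing $c - c' = \alpha u$ with $u\in(S_0)^{\alpha}$ and using that $\alpha$ is a unit in $(S_0)^{\gamma}$ for every $\gamma\ne\alpha$ while $c - c'\in S_0\subseteq(S_0)^{\gamma}$, one gets $u\in\bigcap_{\gamma}(S_0)^{\gamma} = S_0$, i.e.\ $c\equiv c'\pmod{\alpha}$ with quotient $u$ in $S_0$.

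It then remains only to recognize the answer: with $u\in S_0$ as above, the morphism $(c,c') - c\,\id$ vanishes on $\mathcal{M}(A)$ and acts by $-\alpha u$ on $\mathcal{M}(\alpha\uparrow A)$, which by the description of $\iota_0$ (namely $(\iota_0)_A = 0$ and $(\iota_0)_{\alpha\uparrow A} = \alpha\,\id$) is exactly $-u\,\iota_0$; hence $(c,c') = c\,\id - u\,\iota_0\in S_0\,\id + S_0\,\iota_0$. (Note that the remaining subspace conditions on $\mathcal{M}(\alpha\uparrow A,\alpha)$ and $\mathcal{M}(\alpha\downarrow A,\alpha)$ need not be checked separately: I have only used a subset of the conditions to derive necessary constraints, and the resulting $(c,c')$ is exhibited as a combination of morphisms, hence is automatically a morphism.) The whole argument is a mechanical unwinding of the formulas for $\mathcal{Q}_{A,\alpha}$ and $\iota_0$, parallel to Lemma~\ref{lem:End(Q)}; the only non-definitional input is the base-changed intersection identity $\bigcap_{\gamma\in\Delta^+}(S_0)^{\gamma} = S_0$, which I regard as the one (minor) obstacle.
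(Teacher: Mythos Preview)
Your proof is correct and follows essentially the same approach as the paper's. The only cosmetic differences are that the paper extracts $c\in S_0^{\beta}$ from the subspace $\mathcal{M}(\beta\downarrow A,\beta)=0\oplus S_0^{\beta}$ (uniformly for all $\beta\in\Delta^+$, including $\beta=\alpha$) rather than from $\mathcal{M}(A,\beta)$, and the paper phrases the final step as $d\in\alpha S_0^{\alpha}\cap S_0=\alpha S_0$ after first subtracting $c\,\id$, whereas you unpack this intersection identity explicitly via $\bigcap_{\gamma}(S_0)^{\gamma}=S_0$; these are the same argument.
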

\begin{proof}
Set $\mathcal{M} = S_0\otimes_{S}\mathcal{Q}_{A,\alpha}$ and let $\varphi\colon \mathcal{M}\to \mathcal{M}$ be a morphism.
Since $\mathcal{M}(A') = 0$ for $A'\ne A,\alpha\uparrow A$, we have $\varphi_{A'} = 0$ for such $A'$.
The morphism $\varphi$ preserves $\mathcal{M}(\beta\downarrow A,\beta) = 0\oplus S_0^{\beta}$ for any $\beta\in\Delta^+$.
Hence $\varphi_A(S_0^{\beta})\subset S_0^{\beta}$.
Therefore $\varphi_A(S_0)\subset S_0$ and hence $\varphi_A = c\id$ for some $c\in S_0$.
We also have $\varphi_{\alpha\uparrow A} = d\id$ for some $d\in S_0$.

We prove $\varphi\in S_0\id + S_0\iota_{0}$.
By replacing $\varphi$ with $\varphi - c\id$, we may assume $\varphi_{A} = 0$.
We have $(\varphi_A(f),\varphi_{\alpha\uparrow A}(g))\in \mathcal{M}(A,\alpha)$ for any $(f,g)\in \mathcal{M}(A,\alpha)$.
Since $\varphi_A(f) = 0$, we have $\varphi_{\alpha\uparrow A}(g)\in \alpha^\vee S_0^{\alpha}$.
Therefore $d\in \alpha^\vee S_0^{\alpha}\cap S_0 = \alpha^\vee S_0$.
We have $\varphi = (d/\alpha^\vee)\iota_0$.
\end{proof}

\begin{lem}
We have $\Hom^{\bullet}_{\mathcal{K}_{\AJS}(S_0)}(S_0\otimes_{S}\mathcal{Q}_{A,\alpha},S_0\otimes_{S}\mathcal{Q}_{\alpha\uparrow A,\alpha}) = S_0\iota_0^{+}$.
\end{lem}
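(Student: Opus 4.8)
The plan is to follow the pattern of the preceding computation of $\End^{\bullet}_{\mathcal{K}_{\AJS}(S_0)}(S_0\otimes_{S}\mathcal{Q}_{A,\alpha})$. Write $\mathcal{M}=S_0\otimes_{S}\mathcal{Q}_{A,\alpha}$ and $\mathcal{N}=S_0\otimes_{S}\mathcal{Q}_{\alpha\uparrow A,\alpha}$, and let $\varphi\colon\mathcal{M}\to\mathcal{N}$ be a morphism; by working degreewise we may assume $\varphi$ is homogeneous. From the explicit description of $\mathcal{Q}_{\bullet,\alpha}$ one has $\mathcal{M}(A')\ne0$ precisely for $A'\in\{A,\alpha\uparrow A\}$ and $\mathcal{N}(A')\ne0$ precisely for $A'\in\{\alpha\uparrow A,\alpha\uparrow(\alpha\uparrow A)\}$. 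Hence $\varphi_{A'}=0$ unless $A'=\alpha\uparrow A$, and $\varphi_{\alpha\uparrow A}\colon S_0^{\emptyset}\to S_0^{\emptyset}$ is $S_0^{\emptyset}$-linear, so it is multiplication by some homogeneous $c\in S_0^{\emptyset}$.

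Next I would extract the constraints on $c$ coming from the condition that $\varphi$ send $\mathcal{M}(A',\beta)$ into $\mathcal{N}(A',\beta)$ for all $A'$ and $\beta\in\Delta^{+}$. For $\beta\ne\alpha$, inside $\mathcal{M}(\alpha\uparrow A)\oplus\mathcal{M}(\beta\uparrow(\alpha\uparrow A))$ we have $\mathcal{M}(\alpha\uparrow A,\beta)=S_0^{\beta}\oplus0$, and likewise $\mathcal{N}(\alpha\uparrow A,\beta)=S_0^{\beta}\oplus0$ (note $\mathcal{N}(\beta\uparrow(\alpha\uparrow A))=0$), so preservation forces $cS_0^{\beta}\subseteq S_0^{\beta}$, i.e.\ $c\in S_0^{\beta}$. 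For $\beta=\alpha$, since $A=\alpha\downarrow(\alpha\uparrow A)$ the description gives $\mathcal{N}(A,\alpha)=0\oplus S_0^{\alpha}$, whereas $\mathcal{M}(A,\alpha)=\{(f,g)\in(S_0^{\alpha})^{2}\mid f\equiv g\pmod{\alpha}\}$ has second coordinate equal to all of $S_0^{\alpha}$; applying $\varphi$ and using $\varphi_{A}=0$ forces $cS_0^{\alpha}\subseteq S_0^{\alpha}$, i.e.\ $c\in S_0^{\alpha}$. Therefore $c\in\bigcap_{\gamma\in\Delta^{+}}S_0^{\gamma}$, and since $\bigcap_{\gamma\in\Delta^{+}}S^{\gamma}=S$ with $\Delta^{+}$ finite and $S_0$ flat over $S$, the same identity holds after base change, so $c\in S_0$.

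Comparing components, $\varphi=c\iota_0^{+}$, because $(\iota_0^{+})_{\alpha\uparrow A}=\id$ and $(\iota_0^{+})_{A'}=0$ for $A'\ne\alpha\uparrow A$. Conversely, for any homogeneous $c\in S_0$ the map $c\iota_0^{+}$ is a morphism in $\mathcal{K}_{\AJS}(S_0)$, since multiplication by $c\in S_0\subseteq S_0^{\alpha}$ preserves every submodule $\mathcal{M}(A',\beta)$; and $c\mapsto c\iota_0^{+}$ is injective as $\iota_0^{+}\ne0$. This gives $\Hom^{\bullet}_{\mathcal{K}_{\AJS}(S_0)}(S_0\otimes_{S}\mathcal{Q}_{A,\alpha},S_0\otimes_{S}\mathcal{Q}_{\alpha\uparrow A,\alpha})=S_0\iota_0^{+}$. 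I do not anticipate a genuine obstacle: the argument is formally the same as for the endomorphism ring, and the only points requiring care are reading off the rank-one pieces $\mathcal{M}(A',\beta)$ and $\mathcal{N}(A',\beta)$ correctly from the displayed formula — in particular that $\mathcal{N}(A,\alpha)=0\oplus S_0^{\alpha}$ because $A=\alpha\downarrow(\alpha\uparrow A)$ — and noting that the intersection identity $\bigcap_{\gamma}S^{\gamma}=S$ survives the flat base change to $S_0$.
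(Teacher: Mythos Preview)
Your proof is correct and follows the same approach as the paper: reduce to the single nonzero component $\varphi_{\alpha\uparrow A}$, use the constraints from $\mathcal{M}(\alpha\uparrow A,\beta)\to\mathcal{N}(\alpha\uparrow A,\beta)$ for $\beta\ne\alpha$ and from $\mathcal{M}(A,\alpha)\to\mathcal{N}(A,\alpha)$ for $\beta=\alpha$ to force $c\in\bigcap_{\gamma}S_0^{\gamma}=S_0$, and conclude $\varphi=c\iota_0^{+}$. Your write-up is slightly more explicit than the paper's (you spell out the intersection argument and the converse direction), but the substance is identical.
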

\begin{proof}
Set $\mathcal{M} = S_0\otimes_{S}\mathcal{Q}_{A,\alpha}$ and $\mathcal{N} = S_0\otimes_{S}\mathcal{Q}_{\alpha\uparrow A,\alpha}$.
Let $\varphi\colon \mathcal{M}\to \mathcal{N}$ be a morphism.
Then $\varphi_{A'} = 0$ for $A'\ne \alpha\uparrow A$.
For $\beta\in\Delta^+\setminus\{\alpha\}$, since $\varphi$ sends $\mathcal{M}(\alpha\uparrow A,\beta) = S_0^{\beta}\oplus 0$ to $\mathcal{N}(\alpha\uparrow A,\beta) = S_0^{\beta}\oplus 0$, we have $\varphi_{\alpha\uparrow A}(S_0^{\beta})\subset S_0^{\beta}$.
Since $\varphi$ sends $\mathcal{M}(A,\alpha)$ to $\mathcal{N}(A,\alpha) = 0\oplus S^{\alpha}$, $\varphi_{\alpha\uparrow A}(S^{\alpha})\subset S^{\alpha}$.
Hence $\varphi_{\alpha\uparrow A} \in S_0\id$ and we get the lemma.
\end{proof}

\begin{lem}
We have $\Hom^{\bullet}_{\mathcal{K}_{\AJS}(S_0)}(S_0\otimes_{S}\mathcal{Q}_{A,\alpha},S_0\otimes_{S}\mathcal{Q}_{\alpha\downarrow A,\alpha}) = S_0i_0^{-}$.
\end{lem}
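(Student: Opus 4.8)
The plan is to argue exactly as in the three preceding lemmas computing $\Hom$-spaces between the objects $\mathcal{Q}_{?,\alpha}$, this being the mirror of the one for $\mathcal{Q}_{\alpha\uparrow A,\alpha}$. Write $\mathcal{M}=S_0\otimes_{S}\mathcal{Q}_{A,\alpha}$ and $\mathcal{N}=S_0\otimes_{S}\mathcal{Q}_{\alpha\downarrow A,\alpha}$ and let $\varphi\colon\mathcal{M}\to\mathcal{N}$ be a morphism in $\mathcal{K}_{\AJS}(S_0)$. First I would determine which components of $\varphi$ can be nonzero using the explicit tables for $\mathcal{Q}_{A,\alpha}$ and $\mathcal{Q}_{\alpha\downarrow A,\alpha}$: one has $\mathcal{M}(A')\ne 0$ only for $A'\in\{A,\alpha\uparrow A\}$ and $\mathcal{N}(A')\ne 0$ only for $A'\in\{\alpha\downarrow A,A\}$ (recall $A=\alpha\uparrow(\alpha\downarrow A)$). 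Hence $\varphi_{\alpha\uparrow A}$ maps into $\mathcal{N}(\alpha\uparrow A)=0$ and $\varphi_{\alpha\downarrow A}$ starts from $\mathcal{M}(\alpha\downarrow A)=0$, so the only possibly nonzero component is $\varphi_A\colon \mathcal{M}(A)=S_0^{\emptyset}\to\mathcal{N}(A)=S_0^{\emptyset}$, which is multiplication by $c=\varphi_A(1)\in S_0^{\emptyset}$.

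Next I would collect the constraints on $c$ coming from compatibility with the $\mathcal{Q}(A',\beta)$'s. For each $\beta\in\Delta^+\setminus\{\alpha\}$, $\varphi$ must carry $\mathcal{M}(A,\beta)$ into $\mathcal{N}(A,\beta)$; both are the copy $S_0^{\beta}\oplus 0$ sitting in the $A$-summand, so this gives $cS_0^{\beta}\subset S_0^{\beta}$, whence $c\in S_0^{\beta}$. On the $\alpha$-side, $\varphi$ must send $\mathcal{M}(A,\alpha)=\{(f,g)\in(S_0^{\alpha})^2\mid f\equiv g\pmod\alpha\}$ into $\mathcal{N}(A,\alpha)=\alpha S_0^{\alpha}\oplus 0$; since $\varphi_{\alpha\uparrow A}=0$ this forces $cf\in\alpha S_0^{\alpha}$ for every $f\in S_0^{\alpha}$ (take $g=f$), i.e. $c\in\alpha S_0^{\alpha}$. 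Combining, $c\in\alpha S_0^{\alpha}\cap\bigcap_{\beta\ne\alpha}S_0^{\beta}\subset S_0^{\alpha}\cap\bigcap_{\beta\ne\alpha}S_0^{\beta}=\bigcap_{\beta\in\Delta^+}S_0^{\beta}=S_0$, so $c\in\alpha S_0^{\alpha}\cap S_0=\alpha S_0$, exactly the computation already made in the proof of the endomorphism lemma for $\mathcal{Q}_{A,\alpha}$. Writing $c=\alpha c'$ with $c'\in S_0$ and recalling $(\iota_0^{-})_A=\alpha\id$, $(\iota_0^{-})_{\alpha\uparrow A}=0$ identifies $\varphi$ with $c'\iota_0^{-}$. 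Conversely $\iota_0^{-}=\mathcal{F}(i_0^{-})$ is a morphism, so $\Hom^{\bullet}_{\mathcal{K}_{\AJS}(S_0)}(\mathcal{M},\mathcal{N})=S_0\iota_0^{-}$.

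I do not expect any genuine obstacle here, since this lemma is the formal mirror of the $\mathcal{Q}_{\alpha\uparrow A,\alpha}$ statement: the argument is pure bookkeeping with the support of morphisms in $\mathcal{K}_{\AJS}(S_0)$ and the displayed tables. The only point needing a word of justification, already invoked tacitly in the earlier lemmas, is the pair of identities $\bigcap_{\beta\in\Delta^+}S_0^{\beta}=S_0$ and $\alpha S_0^{\alpha}\cap S_0=\alpha S_0$; both follow from flatness of $S_0$ over $S$ (so that $\alpha$ is a nonzerodivisor on $S_0$ and the relevant intersections are preserved under base change from $S$), using $\bigcap_{\alpha\in\Delta^+}S^{\alpha}=S$.
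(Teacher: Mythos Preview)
Your proof is correct and follows essentially the same approach as the paper's: reduce to the single component $\varphi_A$, then use the constraints $\mathcal{M}(A,\beta)\to\mathcal{N}(A,\beta)$ for $\beta\ne\alpha$ to get $c\in S_0^{\beta}$, and the constraint for $\beta=\alpha$ to get $c\in\alpha S_0^{\alpha}$, concluding $c\in\alpha S_0$. Your identification $\mathcal{M}(A,\beta)=\mathcal{N}(A,\beta)=S_0^{\beta}\oplus 0$ agrees with the displayed table (the paper's own proof writes $\mathcal{M}(A,\beta)=0\oplus S_0^{\beta}$, which appears to be a typo, harmless for the conclusion), and you spell out the intersection step $\alpha S_0^{\alpha}\cap\bigcap_{\beta\ne\alpha}S_0^{\beta}=\alpha S_0$ a bit more explicitly than the paper does.
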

\begin{proof}
Set $\mathcal{M} = S_0\otimes_{S}\mathcal{Q}_{A,\alpha}$ and $\mathcal{N} = S_0\otimes_{S}\mathcal{Q}_{\alpha\downarrow A,\alpha}$.
Let $\varphi\colon \mathcal{M}\to \mathcal{N}$ be a morphism.
Then $\varphi_{A'} = 0$ for $A'\ne A$.
For $\beta\in \Delta^+\setminus\{\alpha\}$, $\varphi$ sends $\mathcal{M}(A,\beta) = 0\oplus S_0^\beta$ to $\mathcal{N}(A,\beta) = S_0^{\beta}\oplus 0$.
Hence $\varphi_A(S_0^{\beta})\subset S_0^{\beta}$.
The morphism $\varphi$ sends $\mathcal{M}(A,\alpha)$ to $\mathcal{N}(A,\alpha) = \alpha^\vee S^{\alpha}\oplus 0$.
Hence $\varphi_{A}(S_0^{\alpha})\subset \alpha^\vee S_0^{\alpha}$.
Therefore $\varphi_A \in \alpha^\vee S_0 \id$ and we get the lemma.
\end{proof}

\begin{lem}
If $A_1\ne \alpha\downarrow A_2,A_2,\alpha\uparrow A_2$, then $\Hom_{\mathcal{K}_{\AJS}(S_0)}(\mathcal{Q}_{A_1,\alpha},\mathcal{Q}_{A_2,\alpha}) = 0$.
\end{lem}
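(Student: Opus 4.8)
The plan is to mirror the proof of the analogous vanishing lemma for $\mathcal{K}(S_0)$, which was deduced from disjointness of the $\mathcal{A}$-supports. By definition a morphism $\varphi\colon \mathcal{Q}_{A_1,\alpha}\to \mathcal{Q}_{A_2,\alpha}$ in $\mathcal{K}_{\AJS}(S_0)$ is a family of degree-zero homomorphisms $\varphi_{A'}\colon \mathcal{Q}_{A_1,\alpha}(A')\to \mathcal{Q}_{A_2,\alpha}(A')$ for $A'\in\mathcal{A}$, compatible with the submodules $\mathcal{Q}(A',\beta)$. So it suffices to prove that every $\varphi_{A'}$ is zero, and for this it is enough to show that for each $A'$ at least one of the two modules $\mathcal{Q}_{A_i,\alpha}(A')$ vanishes.

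First I would recall the explicit description of $\mathcal{Q}_{A,\alpha}=\mathcal{F}(Q_{A,\alpha})$ computed above: $\mathcal{Q}_{A,\alpha}(A')\ne 0$ if and only if $A'\in\{A,\alpha\uparrow A\}$. Hence $\varphi_{A'}$ can be nonzero only when $A'$ lies in $\{A_1,\alpha\uparrow A_1\}\cap\{A_2,\alpha\uparrow A_2\}$, i.e. it suffices to check that this intersection is empty under the hypothesis. Now $A_1=A_2$ and $A_1=\alpha\uparrow A_2$ are excluded by assumption; $\alpha\uparrow A_1=A_2$ is equivalent to $A_1=\alpha\downarrow A_2$ by the very definition of $\alpha\downarrow$, hence also excluded; and $\alpha\uparrow A_1=\alpha\uparrow A_2$ forces $A_1=A_2$ because $A\mapsto\alpha\uparrow A$ is a bijection of $\mathcal{A}$, again excluded. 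Therefore the intersection is empty, every $\varphi_{A'}$ is zero, and $\varphi=0$.

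I do not expect any real obstacle; the argument is purely bookkeeping, and the only point worth recording is the equivalence $\alpha\uparrow A_1=A_2\iff A_1=\alpha\downarrow A_2$ together with the injectivity of $A\mapsto\alpha\uparrow A$, both immediate from the notation subsection. Equivalently one may simply say $\supp_{\mathcal{A}}(\mathcal{Q}_{A_1,\alpha})\cap\supp_{\mathcal{A}}(\mathcal{Q}_{A_2,\alpha})=\emptyset$ and conclude exactly as in the corresponding $\mathcal{K}(S_0)$ statement.
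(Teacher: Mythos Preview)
Your proof is correct and is essentially the same as the paper's: the paper simply asserts that there is no $A$ with $\mathcal{Q}_{A_1,\alpha}(A)\ne 0$ and $\mathcal{Q}_{A_2,\alpha}(A)\ne 0$, and you have spelled out precisely why the supports $\{A_1,\alpha\uparrow A_1\}$ and $\{A_2,\alpha\uparrow A_2\}$ are disjoint under the hypothesis.
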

\begin{proof}
It follows from there is no $A$ such that $\mathcal{Q}_{A_1,\alpha}(A) \ne 0$ and $\mathcal{Q}_{A_2,\alpha}(A)\ne 0$.
\end{proof}

Summarizing the calculations in this subsection, we get the following.

\begin{lem}\label{lem:fully-faithful subgeneric case}
The functor $\mathcal{F}^{\alpha} = \mathcal{F}(S^{\alpha})$ induces an isomorphism $\Hom^{\bullet}_{\mathcal{K}(S_0)}(S_0\otimes_{S}Q_{A_1,\alpha},S_0\otimes_{S}Q_{A_2,\alpha})\xrightarrow{\sim}\Hom^{\bullet}_{\mathcal{K}_{\AJS}(S_0)}(S_0\otimes_{S}\mathcal{F}^{\alpha}(Q_{A_1,\alpha}),S_0\otimes_{S}\mathcal{F}^{\alpha}(Q_{A_2,\alpha}))$.
\end{lem}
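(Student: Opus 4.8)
The plan is to deduce the statement directly from the explicit morphism computations of this subsection, by a short case analysis on the relative position of $A_1$ and $A_2$. First I would record the structural facts that make such an assembly legitimate. The functor $\mathcal{F}$ is additive, and its action on a morphism $\varphi$ is, on each $\mathcal{F}(M)(A) = M_A^{\emptyset}$, the restriction of $\varphi$ together with its left $S_0^{\emptyset}$-module structure; hence $\mathcal{F}$ is $S_0$-linear and degree preserving on morphism spaces. By the definitions we have $\mathcal{F}(Q_{A,\alpha}) = \mathcal{Q}_{A,\alpha}$, $\mathcal{F}(i_0) = \iota_0$, $\mathcal{F}(i_0^{+}) = \iota_0^{+}$ and $\mathcal{F}(i_0^{-}) = \iota_0^{-}$. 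Finally, since $S_0$ is flat over $S$ and the operations $M \mapsto M_A^{\emptyset}$, $M \mapsto M^{\alpha}_{[A,\alpha\uparrow A]}$ and formation of the image of a map all commute with $S_0\otimes_{S}-$, one has $\mathcal{F}(S_0)(S_0\otimes_{S}Q_{A,\alpha}) \cong S_0\otimes_{S}\mathcal{Q}_{A,\alpha}$, so that the arrow in the statement is genuinely obtained from $\mathcal{F}$ by base change, and is $S_0$-linear and grading preserving.

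With this in place the proof is a case check, using that a degree-preserving $S_0$-linear map between graded free $S_0$-modules sending a homogeneous basis to a homogeneous basis is an isomorphism. If $A_1$ and $A_2$ lie in different $W'_{\alpha,\aff}$-orbits, or in the same orbit but with $A_1 \notin \{\alpha\downarrow A_2, A_2, \alpha\uparrow A_2\}$, then both Hom spaces vanish by the vanishing lemmas above and there is nothing to prove. If $A_1 = A_2$, the source is the free graded $S_0$-module $S_0\id \oplus S_0 i_0$ and the target is $S_0\id \oplus S_0\iota_0$, and $\mathcal{F}$ carries the basis $\{\id, i_0\}$ to the basis $\{\id, \iota_0\}$, hence is an isomorphism. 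If $A_1 = \alpha\downarrow A_2$ (equivalently $A_2 = \alpha\uparrow A_1$), the source is $S_0 i_0^{+}$ and the target is $S_0\iota_0^{+}$, and $\mathcal{F}(i_0^{+}) = \iota_0^{+}$ generates the target freely; the case $A_1 = \alpha\uparrow A_2$ is identical, with $i_0^{-}$, $\iota_0^{-}$ in place of $i_0^{+}$, $\iota_0^{+}$. These exhaust all possibilities.

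I do not expect a genuine obstacle here: the real work has already been done in the explicit $\Hom^{\bullet}$-computations of this subsection, and the present lemma only reorganizes them. The only points deserving a sentence of care are the two structural remarks above, namely that $\mathcal{F}$ is $S_0$-linear and degree preserving on morphism spaces (so that matching bases suffices) and that $\mathcal{F}$ commutes with the base change $S_0\otimes_{S}-$ on the objects $Q_{A,\alpha}$, which is where flatness of $S_0$ enters. Combined with the description of every object of $\mathcal{K}_P^{\alpha}$ as a direct sum of shifts of the $Q_{A,\alpha}^{\alpha}$, this lemma will yield full-faithfulness of $\mathcal{F}$ after inverting $\alpha$, but that belongs to a later statement.
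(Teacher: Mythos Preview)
Your proposal is correct and is exactly the approach the paper takes: the paper introduces this lemma with the phrase ``Summarizing the calculations in this subsection, we get the following'' and gives no further proof, so the case analysis you spell out (matching the explicit bases $\{\id,i_0\}$, $\{i_0^{\pm}\}$ with $\{\id,\iota_0\}$, $\{\iota_0^{\pm}\}$ via $\mathcal{F}$, and invoking the vanishing lemmas otherwise) is precisely what is meant. Your added remarks on $S_0$-linearity and compatibility with base change are correct and make explicit what the paper leaves implicit.
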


\subsection{Equivalence}
\begin{lem}\label{lem:fully-faithful, rank 1}
The functor $\mathcal{F}^{\alpha}\colon \mathcal{K}^{\alpha}_P\to \mathcal{K}^{\alpha}_{\AJS}$ is fully-faithful for $\alpha\in\Delta$.
\end{lem}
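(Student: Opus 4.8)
The plan is to reduce the statement to two results already in hand: the classification of the objects of $\mathcal{K}^\alpha_P$ in Proposition~\ref{prop:rank 1 projectives}, and the explicit Hom-computations between the generators in Lemma~\ref{lem:fully-faithful subgeneric case}. First I would unwind the notation. Following the base-change conventions, $\mathcal{K}^\alpha_P = \mathcal{K}_P(S^\alpha)$, $\mathcal{K}^\alpha_{\AJS} = \mathcal{K}_{\AJS}(S^\alpha)$ and $\mathcal{F}^\alpha = \mathcal{F}(S^\alpha)$, and $S^\alpha$ is a flat commutative graded $S$-algebra, being a localization of $S$ at a set of homogeneous elements; hence $S_0 = S^\alpha$ is an admissible choice in the statements of the preceding subsections. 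The functor $\mathcal{F}^\alpha$ is additive and commutes with grading shifts (immediate from $\mathcal{F}(S_0)(M)(A) = M_A^\emptyset$ and $\mathcal{F}(S_0)(M)(A,\alpha) = \Ima(M_{[A,\alpha\uparrow A]}^\alpha \to M_A^\emptyset \oplus M_{\alpha\uparrow A}^\emptyset)$), so, since $\Hom^\bullet$ is biadditive, fully-faithfulness of $\mathcal{F}^\alpha$ — equivalently, that $\mathcal{F}^\alpha$ induces isomorphisms on all graded $\Hom^\bullet$-spaces — follows once it is checked on a set of generators.

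By Proposition~\ref{prop:rank 1 projectives}, every object of $\mathcal{K}^\alpha_P$ is a finite direct sum of grading shifts of the objects $Q^\alpha_{A,\alpha} = S^\alpha \otimes_S Q_{A,\alpha}$ with $A \in \mathcal{A}$, so it suffices to prove that for all $A_1, A_2 \in \mathcal{A}$ the map
\[
\mathcal{F}^\alpha\colon \Hom^\bullet_{\mathcal{K}^\alpha_P}(Q^\alpha_{A_1,\alpha}, Q^\alpha_{A_2,\alpha}) \longrightarrow \Hom^\bullet_{\mathcal{K}^\alpha_{\AJS}}\bigl(\mathcal{F}^\alpha(Q^\alpha_{A_1,\alpha}), \mathcal{F}^\alpha(Q^\alpha_{A_2,\alpha})\bigr)
\]
is an isomorphism. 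But this is exactly Lemma~\ref{lem:fully-faithful subgeneric case} applied with $S_0 = S^\alpha$: there the functor written $\mathcal{F}$ is $\mathcal{F}(S_0) = \mathcal{F}^\alpha$, the source objects $S^\alpha \otimes_S Q_{A_i,\alpha}$ are precisely $Q^\alpha_{A_i,\alpha}$ (which lie in $\mathcal{K}^\alpha_P$), the target objects $S^\alpha \otimes_S \mathcal{F}(Q_{A_i,\alpha})$ are precisely $\mathcal{F}^\alpha(Q^\alpha_{A_i,\alpha})$, and the Hom-spaces computed in the full subcategory $\mathcal{K}_P(S^\alpha) \subset \mathcal{K}(S^\alpha)$ coincide with those computed in $\mathcal{K}(S^\alpha)$. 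Taking the degree-zero part of this isomorphism of graded $S^\alpha$-modules yields that $\mathcal{F}^\alpha$ is fully-faithful.

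Thus the argument is essentially a bookkeeping reduction: the genuine mathematical content — the generator-level Hom-computations and the classification of rank-one projectives — is already available. The only point requiring care, and hence the main (minor) obstacle, is the notational matching just described: confirming $\mathcal{K}^\alpha_P = \mathcal{K}_P(S^\alpha)$ and $\mathcal{F}^\alpha = \mathcal{F}(S^\alpha)$, that $S^\alpha$ is flat over $S$ so the preceding subsections apply with $S_0 = S^\alpha$, and that passing to the full subcategory $\mathcal{K}_P(S^\alpha)$ does not change Hom-spaces, so that Lemma~\ref{lem:fully-faithful subgeneric case} applies verbatim.
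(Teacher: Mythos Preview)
Your proposal is correct and takes essentially the same approach as the paper: reduce to the generators $Q^\alpha_{A,\alpha}$ via Proposition~\ref{prop:rank 1 projectives}, then invoke Lemma~\ref{lem:fully-faithful subgeneric case} with $S_0 = S^\alpha$. The paper additionally cites Corollary~\ref{cor:base change for K_P}, which supplies the identification you spell out by hand (that Hom-spaces and the functor $\mathcal{F}$ are compatible with the base change $S \to S^\alpha$), so your more explicit notational unwinding plays the same role.
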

\begin{proof}
By Corollary~\ref{cor:base change for K_P} and Proposition~\ref{prop:rank 1 projectives}, we may assume $M = Q^{\alpha}_{A_1,\alpha}$ and $N = Q^{\alpha}_{A_2,\alpha}$ where $A_1,A_2\in \mathcal{A}$.
Hence the lemma follows from Lemma~\ref{lem:fully-faithful subgeneric case}.
\end{proof}

\begin{prop}\label{prop:fully-faithfulness}
The functor $\mathcal{F}\colon \mathcal{K}_P\to \mathcal{K}_{\AJS}$ is fully-faithful.
\end{prop}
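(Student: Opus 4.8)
The plan is to get faithfulness for free from the definitions and to reduce fullness, via a local‑to‑global argument over the subrings $S^{\alpha}$, to the rank one statement Lemma~\ref{lem:fully-faithful, rank 1}. First, $\mathcal{F}$ is faithful: for $M,N\in\mathcal{K}_P$ and $\varphi\colon M\to N$ in $\mathcal{K}_P$, represented by $\widetilde{\varphi}\in\Hom_{\widetilde{\mathcal{K}}}(M,N)$, the $A$‑component of $\mathcal{F}(\varphi)$ is the composite $M_A^{\emptyset}\hookrightarrow M^{\emptyset}\xrightarrow{\widetilde{\varphi}}N^{\emptyset}\twoheadrightarrow N_A^{\emptyset}$; if these all vanish then $\widetilde{\varphi}(M_A^{\emptyset})\subset\bigoplus_{A'>A}N_{A'}^{\emptyset}$ for every $A$, so $\varphi=0$ in $\mathcal{K}$, hence in $\mathcal{K}_P$. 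Thus $\mathcal{F}$ realizes $H:=\Hom^{\bullet}_{\mathcal{K}_P}(M,N)$ as a submodule of $H':=\Hom^{\bullet}_{\mathcal{K}_{\AJS}}(\mathcal{F}(M),\mathcal{F}(N))$, and it remains to prove $H=H'$.

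Using additivity of both sides, the compatibility $\mathcal{F}(M*B_s)\simeq\vartheta_s(\mathcal{F}(M))$ (Proposition~\ref{prop:compativility of translation functors}), the self‑adjointness of $*B_s$ (Proposition~\ref{prop:adjointness for K}) and of $\vartheta_s$, one reduces to the case $M=Q_{\lambda}$. Then $H\simeq N_{W'_{\lambda}A_{\lambda}^{-}}$ by Lemma~\ref{lem:homomorphism from Q_lambda}, which is graded free of finite rank over $S$ because $N\in\mathcal{K}_{\Delta}$; moreover $H'$ is torsion‑free over $S$, being a submodule of the finite product $\bigoplus_{A}\Hom^{\bullet}_{S^{\emptyset}}(\mathcal{F}(M)(A),\mathcal{F}(N)(A))$ of graded free $S^{\emptyset}$‑modules (recall $\mathcal{F}(M)(A)=M_A^{\emptyset}=(M_{\{A\}})^{\emptyset}$ is graded free over $S^{\emptyset}$), and it is finitely generated over $S$ in this reduced case by the structure theory of $\mathcal{K}_{\AJS}$. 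Since $\bigcap_{\alpha\in\Delta^+}S^{\alpha}=S$ and $H$ is graded free, $H=\bigcap_{\alpha\in\Delta^+}(S^{\alpha}\otimes_S H)$ inside $S^{\emptyset}\otimes_S H$; so if we know that $S^{\alpha}\otimes_S H\to S^{\alpha}\otimes_S H'$ is an isomorphism for every $\alpha\in\Delta^+$, then torsion‑freeness of $H'$ gives $H'\subset\bigcap_{\alpha}(S^{\alpha}\otimes_S H')=\bigcap_{\alpha}(S^{\alpha}\otimes_S H)=H$ (all inside $S^{\emptyset}\otimes_S H\simeq S^{\emptyset}\otimes_S H'$), whence $H=H'$.

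It remains to identify $S^{\alpha}\otimes_S H\to S^{\alpha}\otimes_S H'$ with $\mathcal{F}^{\alpha}\colon\Hom^{\bullet}_{\mathcal{K}^{\alpha}_P}(M^{\alpha},N^{\alpha})\to\Hom^{\bullet}_{\mathcal{K}^{\alpha}_{\AJS}}(\mathcal{F}^{\alpha}(M^{\alpha}),\mathcal{F}^{\alpha}(N^{\alpha}))$, which is an isomorphism by Lemma~\ref{lem:fully-faithful, rank 1}. This uses three ingredients: (i) $S^{\alpha}\otimes_S M\in\mathcal{K}^{\alpha}_P$ and the base‑change isomorphism for $\Hom_{\mathcal{K}_P}$, which is Corollary~\ref{cor:base change for K_P}; (ii) the analogous base‑change isomorphism $S^{\alpha}\otimes_S\Hom^{\bullet}_{\mathcal{K}_{\AJS}}(\mathcal{M},\mathcal{N})\simeq\Hom^{\bullet}_{\mathcal{K}^{\alpha}_{\AJS}}(\mathcal{M}^{\alpha},\mathcal{N}^{\alpha})$, using the finiteness noted above and flatness of $S^{\alpha}$ over $S$; and (iii) the compatibility $\mathcal{F}^{\alpha}(M^{\alpha})\simeq S^{\alpha}\otimes_S\mathcal{F}(M)$, which is a direct inspection of the definitions: the stalk $M_A^{\emptyset}$ is unchanged, the datum at $(A,\alpha)$ is unchanged since $(S^{\alpha})^{\alpha}=S^{\alpha}$, and for $\beta\neq\alpha$ inverting all roots different from $\pm\alpha$ turns $(S^{\alpha})^{\beta}$ into $S^{\emptyset}$ and makes the subspace $\mathcal{F}(M)(A,\beta)=M^{(W'_{\beta,\aff}A)}_{[A,\beta\uparrow A]}$ into all of $M_A^{\emptyset}\oplus M_{\beta\uparrow A}^{\emptyset}$, so only the condition at $\alpha$ survives — exactly the definition of $\mathcal{F}^{\alpha}$.

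The genuine work, and the main obstacle, is the bookkeeping in the last paragraph: verifying that $\mathcal{F}$ commutes with the localizations $S\to S^{\alpha}$ and that the (a priori not finitely generated over $S$) morphism spaces in $\mathcal{K}_{\AJS}$ have the required base‑change and finiteness behaviour, the latter most cleanly obtained through the reduction to $M=Q_{\lambda}$. No new homological idea is needed beyond the elementary identity $\bigcap_{\alpha\in\Delta^+}S^{\alpha}=S$ together with graded freeness of $H$ and torsion‑freeness of $H'$.
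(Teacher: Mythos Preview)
Your approach is essentially the paper's: faithfulness via the common embedding into $\prod_A\Hom^{\bullet}_{S^\emptyset}(M_A^\emptyset,N_A^\emptyset)$, and fullness by a local--global argument reducing to the rank-one Lemma~\ref{lem:fully-faithful, rank 1}. Two cosmetic differences: the paper localizes at the height-one primes $S_{(\nu)}$ for $\nu\in X_{\Coeff}$ rather than at the subrings $S^{\alpha}$, and it does not reduce to $M=Q_{\lambda}$, since graded freeness of $\Hom^{\bullet}_{\mathcal{K}_P}(M,N)$ for arbitrary $M,N\in\mathcal{K}_P$ was already recorded as a corollary of Theorem~\ref{thm:indecomposables in widetilde(K)}. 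Your reduction is harmless but adds dependencies (self-adjointness of $\vartheta_s$ in $\mathcal{K}_{\AJS}$) that the paper avoids.

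One genuine caution: your ingredient (ii), the base-change \emph{isomorphism} $S^{\alpha}\otimes_S H'\simeq\Hom^{\bullet}_{\mathcal{K}^{\alpha}_{\AJS}}(\mathcal{M}^{\alpha},\mathcal{N}^{\alpha})$, is stronger than what is proved in the paper and is not an immediate consequence of ``finiteness plus flatness'' (the target of the defining kernel involves Hom groups over the various $S^{\beta}$, and commuting these with $S^{\alpha}\otimes_S(-)$ needs an argument). Fortunately you do not need it. All that is required is the trivial containment $H'\hookrightarrow\Hom^{\bullet}_{\mathcal{K}^{\alpha}_{\AJS}}(\mathcal{F}^{\alpha}(M^{\alpha}),\mathcal{F}^{\alpha}(N^{\alpha}))$ inside $P=\prod_A\Hom^{\bullet}_{S^\emptyset}(M_A^\emptyset,N_A^\emptyset)$: a morphism in $\mathcal{K}_{\AJS}$ is in particular a morphism in $\mathcal{K}^{\alpha}_{\AJS}$, since for $\beta\ne\alpha$ the $(A,\beta)$-condition becomes vacuous after passing to $S^{\alpha}$. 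Combined with your (i), (iii) and Lemma~\ref{lem:fully-faithful, rank 1} this gives $H'\subset S^{\alpha}\otimes_S H$ for every $\alpha$, hence $H'\subset\bigcap_{\alpha}S^{\alpha}\otimes_S H=H$. This is exactly the shape of the paper's argument.
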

\begin{proof}
Let $M,N\in \mathcal{K}_{P}$ and we prove that $\mathcal{F}\colon\Hom^{\bullet}_{\mathcal{K}_P}(M,N)\to \Hom^{\bullet}_{\mathcal{K}_{\AJS}}(\mathcal{F}(M),\mathcal{F}(N))$ is an isomorphism.
By the diagram
\[
\begin{tikzcd}
\Hom^{\bullet}_{\mathcal{K}_P}(M,N) \arrow[rr,"\mathcal{F}"]\arrow[hookrightarrow,rd] && \Hom^{\bullet}_{\mathcal{K}_{\AJS}}(M,N)\arrow[hook',ld]\\
& \prod_{A\in \mathcal{A}}\Hom^{\bullet}_{S^{\emptyset}}(M_A^{\emptyset},N_A^{\emptyset}), &
\end{tikzcd}
\]
$\mathcal{F}$ is injective.
(The injectivity of two morphisms in the above diagram follows from the definitions.)

We prove that $\mathcal{F}$ is surjective.
For $\nu\in X_{\Coeff}$ and let $S_{(\nu)}$ be the localization at the prime ideal $(\nu)\subset S$.
Since $\Hom^{\bullet}_{\mathcal{K}_P}(M,N)$ is graded free, we have $\Ima(\mathcal{F}) = \bigcap_{\nu\in X_{\Coeff}}S_{(\nu)}\otimes_{S}\Ima(\mathcal{F})$.
By Corollary~\ref{cor:base change for K_P}, we have $S_{(\nu)}\otimes_{S}\Ima(\mathcal{F}) = \Ima(\mathcal{F}(S_{(\nu)}))$.
Since any $S_{(\nu)}$ is an $S^{\alpha}$-algebra for some $\alpha\in\Delta$, by Proposition~\ref{prop:fully-faithfulness}, we have $\Ima(\mathcal{F}(S_{(\nu)})) = \Hom^{\bullet}_{\mathcal{K}_{\AJS}(S_{(\nu)})}(\mathcal{F}(S_{(\nu)})(S_{(\nu)}\otimes_{S}M),\mathcal{F}(S_{(\nu)})(S_{(\nu)}\otimes_{S}N))$.
Therefore $\mathcal{F}$ is surjective since $\bigcap_{\nu\in X_{\Coeff}}\Hom^{\bullet}_{\mathcal{K}_{\AJS}(S_{(\nu)})}(\mathcal{F}(S_{(\nu)})(S_{(\nu)}\otimes_{S}M),\mathcal{F}(S_{(\nu)})(S_{(\nu)}\otimes_{S}N))\supset \Hom^{\bullet}_{\mathcal{K}_{\AJS}}(\mathcal{F}(M),\mathcal{F}(N))$.
\end{proof}

Set $\mathcal{Q}_{\lambda} = \mathcal{F}(Q_{\lambda})$.
Let $\mathcal{K}_{\AJS,P}$ be the full-subcategory of $\mathcal{K}_{\AJS}$ consisting of direct summands of direct sums of objects of a form $(\vartheta_{s_1}\circ\dotsm \circ\vartheta_{s_l})(\mathcal{Q}_{\lambda})(n)$ for $s_1,\dots,s_l\in S_{\aff}$, $\lambda\in (\R\Delta)_{\integer}$ and $n\in\Z$.
By Proposition~\ref{prop:compativility of translation functors} and \ref{prop:fully-faithfulness}, we get the following theorem.
\begin{thm}
We have $\mathcal{K}_{P}\simeq \mathcal{K}_{\AJS,P}$.
In particular, the category $\Sbimod$ acts on $\mathcal{K}_{\AJS,P}$.
\end{thm}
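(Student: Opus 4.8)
The plan is to show that the fully faithful functor $\mathcal{F}$ of Proposition~\ref{prop:fully-faithfulness} identifies $\mathcal{K}_P$ with the full subcategory $\mathcal{K}_{\AJS,P}$ of $\mathcal{K}_{\AJS}$, and then to transport the $\Sbimod$-action on $\mathcal{K}_P$ across this equivalence. Since $\mathcal{F}\colon\mathcal{K}_P\to\mathcal{K}_{\AJS}$ is fully faithful, it already induces an equivalence of $\mathcal{K}_P$ with its essential image; the only real task is to compute that essential image. Here I would use that $\mathcal{F}$ is additive, hence commutes with finite direct sums, and that it is visibly compatible with grading shifts, so that $\mathcal{F}(M(n))=\mathcal{F}(M)(n)$.

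The computational input is Proposition~\ref{prop:compativility of translation functors}: iterating $\mathcal{F}(M*B_s)\simeq\vartheta_s(\mathcal{F}(M))$ gives $\mathcal{F}(Q_\lambda*B_{s_1}*\dotsm*B_{s_l}(n))\simeq(\vartheta_{s_l}\circ\dotsm\circ\vartheta_{s_1})(\mathcal{Q}_\lambda)(n)$, where $\mathcal{Q}_\lambda=\mathcal{F}(Q_\lambda)$. Since ranging over all sequences $(s_1,\dots,s_l)$ is insensitive to reversing their order, these are exactly the objects whose direct summands of direct sums define $\mathcal{K}_{\AJS,P}$. For the inclusion of $\mathcal{K}_{\AJS,P}$ into the essential image, I would observe that $\mathcal{K}_P$ is Krull--Schmidt — by Proposition~\ref{prop:indecomposable is indecomposable} together with Theorem~\ref{thm:indecomposables in widetilde(K)}, every object is a finite sum of the indecomposables $Q(A)(n)$, whose endomorphism rings are local — hence idempotent-complete; as $\mathcal{F}$ is fully faithful, idempotents in $\mathcal{K}_{\AJS}$ on objects of the essential image lift to idempotents in $\mathcal{K}_P$ and split there, so the essential image is closed under direct summands, and it contains the generating objects by the previous sentence. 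For the reverse inclusion, by the corollary following Theorem~\ref{thm:indecomposables in widetilde(K)} (and Proposition~\ref{prop:object in K_P = object in tilde(K)_P}, so that objects of $\mathcal{K}_P$ and $\widetilde{\mathcal{K}}_P$ agree), every object of $\mathcal{K}_P$ is a direct summand of a direct sum of objects $Q_\lambda*B_{s_1}*\dotsm*B_{s_l}(n)$; applying the additive functor $\mathcal{F}$ and using the displayed isomorphism shows its image lies in $\mathcal{K}_{\AJS,P}$. Combining the two inclusions yields $\mathcal{F}\colon\mathcal{K}_P\xrightarrow{\sim}\mathcal{K}_{\AJS,P}$.

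For the final assertion, recall that $(M,B)\mapsto M*B$ is a bifunctor $\mathcal{K}_P\times\Sbimod\to\mathcal{K}_P$ (this is the remark after Proposition~\ref{prop:adjointness for K}, using $\mathcal{K}_P*\Sbimod\subset\mathcal{K}_P$), and the associativity and unit constraints of $\Sbimod$ make it an action of $\Sbimod$ on $\mathcal{K}_P$. Conjugating this by the equivalence $\mathcal{F}$ produces an action of $\Sbimod$ on $\mathcal{K}_{\AJS,P}$, and Proposition~\ref{prop:compativility of translation functors} shows that under it $B_s$ acts as the wall-crossing functor $\vartheta_s$.

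I do not expect a substantive obstacle: the real work is already in Propositions~\ref{prop:fully-faithfulness} and~\ref{prop:compativility of translation functors}. The step deserving care is the bookkeeping identifying the essential image of $\mathcal{F}$ with $\mathcal{K}_{\AJS,P}$ — i.e.\ matching "direct summand of a direct sum of the generators" on the two sides — which is where the Krull--Schmidt (hence Karoubian) property of $\mathcal{K}_P$ is genuinely needed; checking that conjugating a categorical action by an equivalence is again an action is purely formal.
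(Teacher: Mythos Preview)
Your proposal is correct and follows exactly the approach the paper intends: the paper's own proof is just the sentence ``By Proposition~\ref{prop:compativility of translation functors} and~\ref{prop:fully-faithfulness}, we get the following theorem,'' and you have spelled out the implicit bookkeeping (matching essential images via Krull--Schmidt/idempotent-completeness, and noting that ranging over all sequences $(s_1,\dots,s_l)$ absorbs the order reversal) precisely as one should.
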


\subsection{Representation Theory}\label{subsec:Representation Theory}
In the rest of this paper, we assume that $\Coeff$ is an algebraically closed field of characteristic $p > h$ where $h$ is the Coxeter number.
Let $G$ be a connected reductive group over $\Coeff$ and $T$ a maximal torus of $G$ with the root datum $(X,\Delta,X^\vee,\Delta^\vee)$. 
The Lie algebra $\mathfrak{g}$ of $G$ has a structure of a $p$-Lie algebra.
Let $U^{[p]}(\mathfrak{g})$ be the restricted enveloping algebra.
Let $\widehat{S}$ be the completion of $S$ at the augmentation ideal. 
For $S_0 = \widehat{S}$ or $\Coeff$, let $\mathcal{C}_{S_0}$ be the category defined in \cite{MR1272539}.
The category $\mathcal{C}_{\Coeff}$ is equivalent to the category of $G_1T$-modules where $G_1$ is the kernel of the Frobenius morphism.
Let $Z_{S_0}(\lambda)\in \mathcal{C}_{S_0}$ be the baby Verma module with the highest weight $\lambda$ and $P_{S_0}(\lambda)\in \mathcal{C}_{S_0}$ the indecomposable projective module such that $\Coeff\otimes_{S_0}P_{S_0}(\lambda)$ is the projective cover of the irreducible module with the highest weight $\lambda$.
Such objects exist by \cite[4.19 Theorem]{MR1272539} when $S_0 = \widehat{S}$.

We fix an alcove $A_0\in \mathcal{A}$ and $\lambda_0\in X\cap (pA_0 - \rho)$ where $\rho$ is the half sum of positive roots and $pA_0 = \{pa\mid a\in A_0\}$.
For $S_0 = \widehat{S}$ or $\Coeff$, let $\mathcal{C}_{S_0,0}$ be the full subcategory of $\mathcal{C}_{S_0}$ consisting of quotients of modules of a form $\bigoplus_{w\in W'_{\aff}}P_{S_0}(w\cdot_{p}\lambda_{0})^{n_{w}}$ where $w\cdot_p \lambda_0 = pw((\lambda_0 + \rho)/p) - \rho$ and $n_w\in\Z_{\ge 0}$.
Then the cateogory $\mathcal{C}_{S_0,0}$ is a direct summand of $\mathcal{C}_{S_0}$.
Let $\Proj(\mathcal{C}_{S_0,0}) = \{P\in \mathcal{C}_{S_0,0}\mid \text{$P$ is projective}\}$.

Let $S_0$ be a commutative $S$-algebra which is not necessary graded.
We consider the following object: $\mathcal{M} = ((\mathcal{M}(A))_{A\in \mathcal{A}},(\mathcal{M}(A,\alpha))_{A\in \mathcal{A},\alpha\in\Delta^+})$ where $\mathcal{M}(A)$ is an $(S_0)^{\emptyset}$-module and $\mathcal{M}(A,\alpha)\subset \mathcal{M}(A)\oplus \mathcal{M}(\alpha\uparrow A)$ is a sub-$(S_0)^{\alpha}$-module.
(We consider usual modules, not graded ones.)
We denote the category of such objects by $\mathcal{K}_{\AJS}^{\mathrm{f}}(S_0)$.
Starting from this, we can define the functor $\vartheta_s$ and the category $\mathcal{K}_{\AJS,P}^{\mathrm{f}}(S_0)$ in a similar way.
Andersen-Jantzen-Soergel \cite{MR1272539} proved the following.
We modified the functor using \cite[Theorem~6.1]{MR2726602}.
\begin{thm}
There is an equivalence of the categories $\mathcal{V}\colon \Proj(\mathcal{C}_{\widehat{S},0})\xrightarrow{\sim} \mathcal{K}_{\AJS,P}^{\mathrm{f}}(\widehat{S})$.
\end{thm}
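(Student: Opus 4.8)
The plan is to reproduce the structure theorem of Andersen--Jantzen--Soergel~\cite{MR1272539}, with the functor $\mathcal{V}$ taken in the form of Fiebig~\cite[Theorem~6.1]{MR2726602}; what follows is only a sketch of that argument. First I would recall the construction of $\mathcal{V}$, which is built from localization: for $P\in\Proj(\mathcal{C}_{\widehat{S},0})$ one sets $\mathcal{V}(P)(A)$ to be the localization of $P$ at the generic point of $\widehat{S}$ attached to the alcove $A$, and $\mathcal{V}(P)(A,\alpha)$ to be the localization of $P$ with respect to only the reflection $s_\alpha$, recording the behaviour of $P$ along the wall between $A$ and $\alpha\uparrow A$. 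Checking that this assignment lands in $\mathcal{K}_{\AJS}^{\mathrm{f}}(\widehat{S})$ is formal apart from the claim that $\mathcal{V}(P)(A,\alpha)$ is an $(\widehat{S})^{\alpha}$-submodule of $\mathcal{V}(P)(A)\oplus\mathcal{V}(P)(\alpha\uparrow A)$ of the prescribed shape, which uses projectivity of $P$ together with the structure of the principal block of $G_1T$ after localizing at a subregular point.

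Next I would record three properties of $\mathcal{V}$: (i) it is exact on short exact sequences of projectives and compatible with the base change $\widehat{S}\to\Coeff$; (ii) it intertwines the wall-crossing functor on $\Proj(\mathcal{C}_{\widehat{S},0})$ with the translation functor $\vartheta_s$ on $\mathcal{K}_{\AJS}^{\mathrm{f}}(\widehat{S})$; and (iii) it matches generators, carrying the relevant projective covers to the base changes $\widehat{S}\otimes_{S}\mathcal{Q}_\lambda$ of the objects $\mathcal{Q}_\lambda=\mathcal{F}(Q_\lambda)$, which generate $\mathcal{K}_{\AJS,P}^{\mathrm{f}}(\widehat{S})$ under the $\vartheta_s$ and direct summands. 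Property (ii) is the heart of the matter: it amounts to comparing the composite of the two translation functors going to and returning from a wall with localization at a subregular point, and this is where the rank-one ($\mathrm{SL}_2$-type) computation enters.

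For full faithfulness I would use that $\widehat{S}$ is regular and that $\Hom$-spaces between projectives in $\mathcal{C}_{\widehat{S},0}$ are finitely generated free $\widehat{S}$-modules (via the baby-Verma-flag formalism), and likewise on the combinatorial side. It then suffices to show that $\mathcal{V}$ induces an isomorphism on $\Hom$-spaces after localizing $\widehat{S}$ at each height-one prime, and to intersect over all such primes. Away from the root hyperplanes this localized block decomposes completely by alcoves and the comparison is formal; at the generic point of a root hyperplane $\alpha=0$ the block breaks into blocks governed by the single reflection $s_\alpha$, reducing the statement to an explicit rank-one comparison---the representation-theoretic counterpart of Lemma~\ref{lem:fully-faithful subgeneric case}. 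Essential surjectivity is then nearly formal: by definition $\mathcal{K}_{\AJS,P}^{\mathrm{f}}(\widehat{S})$ consists of direct summands of direct sums of the $\vartheta_{s_1}\circ\dotsm\circ\vartheta_{s_l}$ applied to the $\widehat{S}\otimes_{S}\mathcal{Q}_\lambda$, while $\Proj(\mathcal{C}_{\widehat{S},0})$ is generated in the same way by wall-crossings of the corresponding projectives (every indecomposable projective of the principal block is a summand of such a wall-crossing of one of them); since $\mathcal{V}$ is additive, fully faithful, compatible with $\vartheta_s$ by (ii), and matches the generators by (iii), its essential image is all of $\mathcal{K}_{\AJS,P}^{\mathrm{f}}(\widehat{S})$.

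The step I expect to be the main obstacle is property (ii)---the compatibility of $\mathcal{V}$ with the translation functors---together with the subregular ($\mathrm{SL}_2$) computations underlying both it and the rank-one case of full faithfulness; these are precisely the parts of the Andersen--Jantzen--Soergel argument that are genuinely about the representation theory of $G_1T$ rather than formal.
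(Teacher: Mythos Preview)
The paper does not give its own proof of this theorem: it is stated with the attribution ``Andersen--Jantzen--Soergel~\cite{MR1272539} proved the following. We modified the functor using~\cite[Theorem~6.1]{MR2726602},'' followed by the remark that $\mathcal{V}$ is defined explicitly. No further argument is supplied; the result is simply quoted from the literature.

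Your sketch is a faithful outline of the Andersen--Jantzen--Soergel argument (with Fiebig's reformulation of the target), and the ingredients you list---the localization description of $\mathcal{V}$, compatibility with wall-crossing versus $\vartheta_s$, the reduction of full faithfulness to generic and subregular localizations, and essential surjectivity via generators and translation functors---are exactly the skeleton of the proof in~\cite{MR1272539}. So there is nothing to compare: your proposal reconstructs the cited proof, whereas the paper itself treats the statement as a black box. If anything, note that in the present paper the category $\mathcal{K}_{\AJS,P}^{\mathrm{f}}(\widehat{S})$ is \emph{defined} via the generators $\mathcal{Q}_\lambda$ and the $\vartheta_s$, so essential surjectivity is indeed immediate once (ii) and (iii) are known; the substantive content, as you correctly flag, lies in the compatibility with translation functors and the subregular computation.
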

Note that the functor $\mathcal{V}$ is defined explicitly.

Let $\Coeff\otimes_{\widehat{S}}\Proj(\mathcal{C}_{\widehat{S},0})$ be the category defined as follows.
The objects of $\Coeff\otimes_{\widehat{S}}\Proj(\mathcal{C}_{\widehat{S},0})$ are the same as those of $\Proj(\mathcal{C}_{\widehat{S},0})$ and the space of homomorphism is defined by
\[
\Hom_{\Coeff\otimes_{\widehat{S}}\Proj(\mathcal{C}_{\widehat{S},0})}(M,N) = \Coeff\otimes_{\widehat{S}}\Hom_{\Proj(\mathcal{C}_{\widehat{S},0})}(M,N).
\]
\begin{lem}
We have $\Coeff\otimes_{\widehat{S}}\Proj(\mathcal{C}_{\widehat{S},0})\simeq \Proj(\mathcal{C}_{\Coeff,0})$.
\end{lem}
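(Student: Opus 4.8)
The plan is to prove that the functor $\Coeff\otimes_{\widehat S}(-)$ induces an equivalence between $\Coeff\otimes_{\widehat S}\Proj(\mathcal{C}_{\widehat S,0})$ and $\Proj(\mathcal{C}_{\Coeff,0})$ by checking that it is well defined (lands in $\Proj(\mathcal{C}_{\Coeff,0})$), fully faithful, and essentially surjective. For well-definedness I would first note that $\Coeff\otimes_{\widehat S}P_{\widehat S}(w\cdot_p\lambda_0)=P_{\Coeff}(w\cdot_p\lambda_0)$ by the very definition of $P_{\widehat S}(w\cdot_p\lambda_0)$, and that every object of $\Proj(\mathcal{C}_{\widehat S,0})$ is a direct summand of a finite direct sum of such modules; hence its image under $\Coeff\otimes_{\widehat S}(-)$ is a direct summand of the corresponding finite direct sum of the $P_{\Coeff}(w\cdot_p\lambda_0)$, and is therefore projective.

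For full faithfulness I would show that for $M,N\in\Proj(\mathcal{C}_{\widehat S,0})$ the natural map $\Coeff\otimes_{\widehat S}\Hom_{\mathcal{C}_{\widehat S,0}}(M,N)\to\Hom_{\mathcal{C}_{\Coeff,0}}(\Coeff\otimes_{\widehat S}M,\Coeff\otimes_{\widehat S}N)$ is an isomorphism. By additivity it suffices to treat $M=P_{\widehat S}(\lambda)$. Since $P_{\widehat S}(\lambda)$ and $\Coeff\otimes_{\widehat S}P_{\widehat S}(\lambda)=P_{\Coeff}(\lambda)$ are projective, the functors $\Hom_{\mathcal{C}_{\widehat S,0}}(P_{\widehat S}(\lambda),-)$ and $\Hom_{\mathcal{C}_{\Coeff,0}}(P_{\Coeff}(\lambda),-)$ are exact; and by \cite{MR1272539} every object of $\Proj(\mathcal{C}_{\widehat S,0})$ carries a filtration by Verma modules $Z_{\widehat S}(\mu)$ with $\widehat S$-free subquotients. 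An induction on the length of such a filtration --- in which the resulting short exact sequences of Hom-spaces split because each $\Hom_{\mathcal{C}_{\widehat S,0}}(P_{\widehat S}(\lambda),Z_{\widehat S}(\mu))$ is $\widehat S$-free, and in which $\Coeff\otimes_{\widehat S}(-)$ preserves the short exact sequences of objects since the $Z_{\widehat S}(\mu)$ are $\widehat S$-free --- reduces the claim to the case $N=Z_{\widehat S}(\mu)$. The remaining identity $\Coeff\otimes_{\widehat S}\Hom_{\mathcal{C}_{\widehat S,0}}(P_{\widehat S}(\lambda),Z_{\widehat S}(\mu))\simeq\Hom_{\mathcal{C}_{\Coeff,0}}(P_{\Coeff}(\lambda),Z_{\Coeff}(\mu))$, together with the freeness of these Hom-spaces over $\widehat S$, is exactly what the deformation formalism of Andersen--Jantzen--Soergel provides, and this is the step I expect to be the main obstacle; everything else is formal once it is available. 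Alternatively, one could transport the question through the explicit equivalence $\mathcal{V}$ of the preceding theorem and establish the analogous base-change statement inside the combinatorial category $\mathcal{K}_{\AJS}^{\mathrm f}$.

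Finally, for essential surjectivity I would use idempotent lifting. The full faithfulness just established shows that, for any $(n_w)$, the map $\End_{\mathcal{C}_{\widehat S,0}}\!\bigl(\bigoplus_w P_{\widehat S}(w\cdot_p\lambda_0)^{n_w}\bigr)\to\End_{\mathcal{C}_{\Coeff,0}}\!\bigl(\bigoplus_w P_{\Coeff}(w\cdot_p\lambda_0)^{n_w}\bigr)$ is surjective with kernel $\mathfrak{m}_{\widehat S}\cdot\End_{\mathcal{C}_{\widehat S,0}}\!\bigl(\bigoplus_w P_{\widehat S}(w\cdot_p\lambda_0)^{n_w}\bigr)$, which is contained in the Jacobson radical since $\widehat S$ is complete; hence idempotents lift along this surjection. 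Given $\bar P\in\Proj(\mathcal{C}_{\Coeff,0})$, I would write it (for suitable $(n_w)$) as the image of an idempotent $\bar e$ in such an endomorphism ring, lift $\bar e$ to an idempotent $e$ upstairs, and take $P\in\Proj(\mathcal{C}_{\widehat S,0})$ to be its image; then $\Coeff\otimes_{\widehat S}P\simeq\bar P$, which completes the argument.
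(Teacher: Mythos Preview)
Your proposal is correct and follows the same overall strategy as the paper: consider the functor $P\mapsto \Coeff\otimes_{\widehat S}P$ and prove it is fully faithful and essentially surjective. The paper's own proof is just two citations to Andersen--Jantzen--Soergel: full faithfulness is \cite[3.3 Proposition]{MR1272539} and essential surjectivity is \cite[4.19 Theorem]{MR1272539}. Your sketch of full faithfulness via Verma flags and base change of Hom-spaces with Verma targets is essentially what lies behind \cite[3.3 Proposition]{MR1272539}, so there is no real difference there.

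The one place you do something genuinely different is essential surjectivity. You argue by lifting idempotents along the surjection of endomorphism rings, using completeness of $\widehat S$. This works, but is more than needed: you already observed in your well-definedness step that $\Coeff\otimes_{\widehat S}P_{\widehat S}(w\cdot_p\lambda_0)\simeq P_{\Coeff}(w\cdot_p\lambda_0)$, and since every object of $\Proj(\mathcal{C}_{\Coeff,0})$ is a finite direct sum of such indecomposables, essential surjectivity is immediate. This is exactly what the paper's citation of \cite[4.19 Theorem]{MR1272539} amounts to. Your idempotent-lifting argument has the advantage of not requiring Krull--Schmidt on the target or an explicit classification of the indecomposable projectives, at the cost of depending on full faithfulness first; the paper's route is shorter but uses more structure.
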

\begin{proof}
We consider the functor $\Coeff\otimes_{\widehat{S}}\Proj(\mathcal{C}_{\widehat{S},0})\to \Proj(\mathcal{C}_{\Coeff,0})$ defined by $P\mapsto \Coeff\otimes_{\widehat{S}}P$.
This is essentially surjective by \cite[4.19 Theorem]{MR1272539} and fully-faithful by \cite[3.3 Proposition]{MR1272539}.
\end{proof}

We also define $\Coeff\otimes_{\widehat{S}}\mathcal{K}_{\AJS,P}^{\mathrm{f}}(\widehat{S})$ and $\Coeff\otimes_{S}\mathcal{K}_{\AJS,P}^{\mathrm{f}}(S)$ by the same way.
\begin{lem}
We have the following.
\begin{enumerate}
\item The category $\mathcal{K}_{\AJS,P}^{\mathrm{f}}(S)$ is equivalent to the category defined as follows: the objects are the same as $\mathcal{K}_{\AJS,P}$ and the space of homomorphisms is defined by $\Hom_{\mathcal{K}_{\AJS,P}^{\mathrm{f}}} = \Hom_{\mathcal{K}_{\AJS,P}}^{\bullet}$.
\item We have $\Coeff\otimes_{\widehat{S}}\mathcal{K}_{\AJS,P}^{\mathrm{f}}(\widehat{S})\simeq \Coeff\otimes_{S}\mathcal{K}_{\AJS,P}^{\mathrm{f}}(S)$.
\end{enumerate}
\end{lem}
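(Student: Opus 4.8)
The plan is to deduce both statements from the base-change result Corollary~\ref{cor:base change for K_P} for $\mathcal{K}_P$ together with the fully-faithful functor $\mathcal{F}$ of Proposition~\ref{prop:fully-faithfulness}. For (1), write $\mathcal{D}$ for the category whose objects are those of $\mathcal{K}_{\AJS,P}$ and whose morphism spaces are $\Hom^{\bullet}_{\mathcal{K}_{\AJS,P}}$, and construct $G\colon\mathcal{D}\to\mathcal{K}_{\AJS,P}^{\mathrm{f}}(S)$ to be the identity on objects (forgetting the grading) and, on morphisms, the natural map $\bigoplus_{i}\Hom_{\mathcal{K}_{\AJS,P}}(\mathcal{M},\mathcal{N}(i))\to\Hom_{\mathcal{K}_{\AJS,P}^{\mathrm{f}}(S)}(\mathcal{M},\mathcal{N})$ obtained by summing homogeneous components. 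Full faithfulness is the standard observation that, writing $\mathcal{M}=\mathcal{F}(M)$ and $\mathcal{N}=\mathcal{F}(N)$ for $M,N\in\mathcal{K}_P$, each $\mathcal{M}(A)=M_A^{\emptyset}$ is a finitely generated graded $S^{\emptyset}$-module, only finitely many are nonzero, and each $\mathcal{M}(A,\alpha)$ is a graded $S^{\alpha}$-submodule, so any ungraded collection $(f(A))_A$ preserving the $\mathcal{M}(A,\alpha)$ decomposes uniquely into its finitely many homogeneous components, each of which is again such a collection. For essential surjectivity, every object of $\mathcal{K}_{\AJS,P}^{\mathrm{f}}(S)$ is $\Ima(e)$ for an idempotent $e$ of the endomorphism ring of a finite direct sum $Y$ of ungraded objects $(\vartheta_{s_1}\circ\dotsm\circ\vartheta_{s_l})(\mathcal{Q}_{\lambda})$; by full faithfulness $e$ is an idempotent of $\End_{\mathcal{D}}(\widetilde{Y})$ for the evident graded lift $\widetilde{Y}$, and $\mathcal{D}$ — being equivalent via $\mathcal{F}$ to the category with objects those of $\mathcal{K}_P$ and morphisms $\Hom^{\bullet}_{\mathcal{K}_P}$, in which every object is a finite sum of $Q(A)(n)$ (Proposition~\ref{prop:object in K_P = object in tilde(K)_P}, Theorem~\ref{thm:indecomposables in widetilde(K)}) and $\End^{\bullet}_{\mathcal{K}_P}(Q(A))$ is non-negatively graded (a strictly degree-decreasing endomorphism of $Q(A)$ induces $0$ on $Q(A)_{\{A\}}\simeq S$, hence is $0$ in $\mathcal{K}_P$) with local degree-zero part $\End_{\mathcal{K}_P}(Q(A))$ (Proposition~\ref{prop:indecomposable is indecomposable}, finite-dimensionality over the field $\Coeff$) — has only the idempotents $0,1$ on each indecomposable, hence is Krull--Schmidt and in particular idempotent complete. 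Thus $e$ splits in $\mathcal{D}$, $\Ima(e)$ lifts to $\mathcal{D}$, and $G$ is an equivalence.

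For (2), $\Coeff$ is a field and $\widehat{S}$ is the completion of $S$ at the augmentation ideal, flat over $S$ with $\Coeff=\widehat{S}/\mathfrak{m}\widehat{S}$; the objects $\mathcal{Q}_{\lambda}$ and the functors $\vartheta_s$ are compatible with the flat base change $\widehat{S}\otimes_S(-)$, so the levelwise base change $\mathcal{M}\mapsto\widehat{S}\otimes_S\mathcal{M}$ carries $\mathcal{K}_{\AJS,P}^{\mathrm{f}}(S)$ into $\mathcal{K}_{\AJS,P}^{\mathrm{f}}(\widehat{S})$ and defines a functor $\Phi\colon\Coeff\otimes_S\mathcal{K}_{\AJS,P}^{\mathrm{f}}(S)\to\Coeff\otimes_{\widehat{S}}\mathcal{K}_{\AJS,P}^{\mathrm{f}}(\widehat{S})$ matching generators with generators. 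On morphisms, flatness of $\widehat{S}$ over $S$ together with finite generation of the relevant modules gives $\Hom_{\mathcal{K}_{\AJS}(\widehat{S})}(\widehat{S}\otimes_S\mathcal{M},\widehat{S}\otimes_S\mathcal{N})\simeq\widehat{S}\otimes_S\Hom_{\mathcal{K}_{\AJS,P}^{\mathrm{f}}(S)}(\mathcal{M},\mathcal{N})$, which by part (1) equals $\widehat{S}\otimes_S\Hom^{\bullet}_{\mathcal{K}_{\AJS,P}}(\mathcal{M},\mathcal{N})$ (compare Corollary~\ref{cor:base change for K_P}); applying $\Coeff\otimes_{\widehat{S}}(-)$ and the canonical identification $\Coeff\otimes_{\widehat{S}}(\widehat{S}\otimes_S(-))\simeq\Coeff\otimes_S(-)$ shows that $\Phi$ induces an isomorphism on the morphism spaces $\Coeff\otimes_S\Hom^{\bullet}_{\mathcal{K}_{\AJS,P}}$, so $\Phi$ is fully faithful. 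Essential surjectivity is exactly as in (1): over the field $\Coeff$ both categories are Krull--Schmidt, with indecomposables the $\Coeff\otimes_S Q(A)(n)$ on the one side and the $\Coeff\otimes_{\widehat{S}}(\widehat{S}\otimes_S Q(A))(n)=\Coeff\otimes_S Q(A)(n)$ on the other — these remain indecomposable after $\Coeff\otimes$ because $\Coeff\otimes_S\End^{\bullet}_{\mathcal{K}_P}(Q(A))$ is still non-negatively graded with degree-zero part $\End_{\mathcal{K}_P}(Q(A))$, which is local — and $\Phi$ matches them, so $\Phi$ is an equivalence.

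The step I expect to be the real obstacle is, in each part, the essential-surjectivity argument: one must check that forgetting the grading, respectively reducing modulo $\mathfrak{m}$, does not break indecomposability, i.e.\ equip $\mathcal{K}_{\AJS,P}^{\mathrm{f}}(S)$ and $\Coeff\otimes_{\widehat{S}}\mathcal{K}_{\AJS,P}^{\mathrm{f}}(\widehat{S})$ with a Krull--Schmidt structure. This rests on the vanishing of strictly degree-decreasing morphisms in $\mathcal{K}_P$ (equivalently, the non-negativity of the grading on the endomorphism rings of the $Q(A)$), and once this is in hand the remainder is a routine combination of flat base change with Corollary~\ref{cor:base change for K_P} and Proposition~\ref{prop:fully-faithfulness}.
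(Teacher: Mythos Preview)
Your overall architecture is reasonable, but there is a genuine gap in the key step you yourself flag.

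\textbf{The non-negativity claim fails.} In part (1) you assert that a strictly negative-degree endomorphism $\varphi\colon Q(A)\to Q(A)(i)$, $i<0$, is zero in $\mathcal{K}_P$ because it vanishes on $Q(A)_{\{A\}}\simeq S$. But a morphism in $\mathcal{K}_P$ is zero only when $\varphi_{\{A'\}}=0$ for \emph{every} $A'$, not just for $A'=A$. Since $Q(A)_{\{A'\}}$ for $A'>A$ may sit in various degrees, there is no reason for $\varphi_{\{A'\}}$ to vanish. Thus you have not shown that $\End^{\bullet}_{\mathcal{K}_P}(Q(A))$ is non-negatively graded, and your Krull--Schmidt argument for $\mathcal{D}$ (hence the idempotent-splitting step for essential surjectivity in (1)) is not justified. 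The paper simply records (1) as ``obvious'', treating it as the standard identification of ungraded homs with $\Hom^{\bullet}$ for bounded finitely generated graded objects; if you want to upgrade this to a statement about idempotent completeness you need a different argument than the one you give.

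\textbf{Part (2): a different route, with a residual circularity.} The paper does not compare the two $\Coeff\otimes(-)$ categories directly. It first proves the stronger equivalence $\widehat{S}\otimes_S\mathcal{K}_{\AJS,P}^{\mathrm{f}}\simeq \mathcal{K}_{\AJS,P}^{\mathrm{f}}(\widehat{S})$, using \cite[14.8 Lemma]{MR1272539} for full faithfulness and, crucially, the external equivalence $\mathcal{K}_{\AJS,P}^{\mathrm{f}}(\widehat{S})\simeq \Proj(\mathcal{C}_{\widehat{S},0})$ to parametrize indecomposables on the $\widehat{S}$-side by $\mathcal{A}$. Your approach bypasses \cite{MR1272539} for full faithfulness (Corollary~\ref{cor:base change for K_P} plus Proposition~\ref{prop:fully-faithfulness} suffice, which is nice), but your essential surjectivity is circular: you claim the indecomposables of $\Coeff\otimes_{\widehat{S}}\mathcal{K}_{\AJS,P}^{\mathrm{f}}(\widehat{S})$ are the $\Coeff\otimes_{\widehat{S}}(\widehat{S}\otimes_S Q(A))$, which is exactly what has to be shown. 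This \emph{can} be repaired without the AJS input, since after tensoring with the field $\Coeff$ the Hom-spaces $\Coeff\otimes_S\Hom^{\bullet}_{\mathcal{K}_P}$ are finite-dimensional, so the source category is automatically idempotent complete, and every object of the target is a summand of $\Phi(\text{generator})$; but this is not the argument you wrote, and again your justification that the $Q(A)$ stay indecomposable rests on the same unproved non-negativity claim.
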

\begin{proof}
(1) is obvious.

For (2), define $\widehat{S}\otimes_{S}\mathcal{K}_{\AJS,P}^{\mathrm{f}}$ by the obvious way.
It is sufficient to prove $\mathcal{K}_{\AJS,P}^{\mathrm{f}}(\widehat{S})\simeq\widehat{S}\otimes_{S}\mathcal{K}_{\AJS,P}^{\mathrm{f}}$.
The functor $F\colon \widehat{S}\otimes_{S}\mathcal{K}_{\AJS,P}^{\mathrm{f}}\to \mathcal{K}^{\mathrm{f}}_{\AJS,P}(\widehat{S})$ is defined in a obvious way and it is fully-faithful by \cite[14.8 Lemma]{MR1272539}.
In particular, $F$ sends an indecomposable object to an indecomposable object.
We define the category $\mathcal{K}_{P}^{\mathrm{f}}$ as in (1), namely the objects of $\mathcal{K}_{P}^{\mathrm{f}}$ are the same as those of $\mathcal{K}_{P}^{\mathrm{f}}$ and we define $\Hom_{\mathcal{K}_{P}^{\mathrm{f}}} = \Hom_{\mathcal{K}_{P}}^{\bullet}$.
The indecomposable objects in $\mathcal{K}_{\AJS,P}^{\mathrm{f}}\simeq \mathcal{K}_{P}^{\mathrm{f}}$ and $\mathcal{K}_{\AJS,P}^{\mathrm{f}}(\widehat{S})\simeq \Proj(\mathcal{C}_{\widehat{S},0})$ are both parametrized by $\mathcal{A}$ and it is easy to see that $F$ gives a bijection between the set of indecomposable objects.
Therefore $F$ is essentially surjective.
\end{proof}

Therefore we get
\[
\Proj(\mathcal{C}_{\Coeff,0})\simeq \Coeff\otimes_{\widehat{S}}\Proj(\mathcal{C}_{\widehat{S},0})\simeq \Coeff\otimes_{\widehat{S}}\mathcal{K}_{\AJS,P}^{\mathrm{f}}(\widehat{S})\simeq \Coeff\otimes_{S}\mathcal{K}_{\AJS,P}^{\mathrm{f}}\simeq \Coeff\otimes_{S}\mathcal{K}_{P}^{\mathrm{f}}.
\]
Since the action of $\Sbimod$ on $\mathcal{K}_{P}$ is $S$-linear, it gives an action on $\Coeff\otimes_{S}\mathcal{K}_{P}^{\mathrm{f}}$.
Hence $\Sbimod$ acts on $\Proj(\mathcal{C}_{\Coeff,0})$.
On this action, $B_s$ acts as the wall-crossing functor.
We denote this action by $(M,B)\mapsto M*B$.

Now we prove the following theorem.
\begin{thm}\label{thm:Hecke action on Representations}
There is an action of $\Sbimod$ on $\mathcal{C}_{\Coeff,0}$ such that $B_s$ acts as the wall-crossing functor for $s\in S_\aff$.
\end{thm}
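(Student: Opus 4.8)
The content of the theorem is to promote the action of $\Sbimod$ on $\Proj(\mathcal{C}_{\Coeff,0})$ constructed above to an action on the whole category $\mathcal{C}_{\Coeff,0}$, in such a way that $B_s$ still acts by the wall-crossing functor. The plan is to exploit that $\mathcal{C}_{\Coeff,0}$ is the category of finite-dimensional modules over a finite-dimensional algebra. Indeed the block $\mathcal{C}_{\Coeff,0}$ has only finitely many irreducible objects, and each of them has a finite-dimensional projective cover $P(\lambda)$ (it admits a finite Verma flag), so $P=\bigoplus_{\lambda}P(\lambda)$ is a finite-dimensional projective generator. Setting $A=\End_{\mathcal{C}_{\Coeff,0}}(P)$, the functor $\Hom_{\mathcal{C}_{\Coeff,0}}(P,-)$ identifies $\mathcal{C}_{\Coeff,0}$ with the category $\Mod(A)$ of finite-dimensional $A$-modules and restricts to an equivalence between $\Proj(\mathcal{C}_{\Coeff,0})$ and the category $\mathcal{P}_A$ of finitely generated projective $A$-modules.

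Next I would invoke the Eilenberg--Watts dictionary. Sending an $(A,A)$-bimodule $N$ to the endofunctor $-\otimes_A N$ gives a monoidal functor from the monoidal category of finite-dimensional $(A,A)$-bimodules, with tensor product $\otimes_A$ and unit $A$, to the monoidal category $\End(\Mod(A))$ of endofunctors of $\Mod(A)$; it is fully faithful, a natural transformation $-\otimes_A N\to-\otimes_A N'$ being the same as a bimodule homomorphism $N\to N'$ by evaluation at the module $A$. The restriction of this functor to $\mathcal{P}_A$ is again fully faithful (one still evaluates at the projective module $A$), and every additive endofunctor $F$ of $\mathcal{P}_A$ arises this way, namely $F\simeq-\otimes_A F(A)$ with $F(A)$ endowed with its natural $(A,A)$-bimodule structure. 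Therefore the monoidal functor $\Sbimod\to\End(\mathcal{P}_A)$ encoding the action on $\Proj(\mathcal{C}_{\Coeff,0})$ factors, uniquely up to monoidal isomorphism, through a monoidal functor $\Sbimod\to\{(A,A)\text{-bimodules}\}$, $B\mapsto N_B$, where $N_B=A*B$ with its bimodule structure ($A$ being regarded as a projective object of $\mathcal{C}_{\Coeff,0}$). Postcomposing with $N\mapsto-\otimes_A N$ into $\End(\Mod(A))=\End(\mathcal{C}_{\Coeff,0})$ yields a monoidal functor, i.e.\ an action of $\Sbimod$ on $\mathcal{C}_{\Coeff,0}$; explicitly $M*B\simeq\mathrm{coker}(P_1*B\to P_0*B)$ for any projective presentation $P_1\to P_0\to M\to0$, and by construction this action restricts on $\Proj(\mathcal{C}_{\Coeff,0})$ to the given one.

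Finally I would identify $B_s*(-)$ with the wall-crossing functor $\theta_s$ on all of $\mathcal{C}_{\Coeff,0}$. The functor $\theta_s$ is a composite of translation functors, which are biadjoint to one another, hence exact; so $\theta_s$ is exact on $\mathcal{C}_{\Coeff,0}$ and therefore $\theta_s\simeq-\otimes_A\theta_s(A)$. Since $\theta_s$ restricted to $\Proj(\mathcal{C}_{\Coeff,0})$ equals $-*B_s$ by the construction of the action on projectives, the full faithfulness above forces $\theta_s(A)\simeq N_{B_s}$ as bimodules, and hence $-*B_s\simeq\theta_s$ on $\mathcal{C}_{\Coeff,0}$. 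The only genuinely non-formal ingredients are (a) that $\mathcal{C}_{\Coeff,0}$ is the module category of a finite-dimensional algebra, which reduces to the finiteness of the block and the finite-dimensionality of its projective covers, both standard for $G_1T$-modules, and (b) the bookkeeping of the monoidal coherence data in the passage through bimodules; (b) is where the most care is needed, although it is routine once the Eilenberg--Watts equivalence is in place. Alternatively one may avoid bimodules and check directly that $M\mapsto\mathrm{coker}(P_1*B\to P_0*B)$ is well defined up to canonical isomorphism, functorial in $M$, and compatible with the tensor product and unit of $\Sbimod$, using the uniqueness of projective presentations up to chain homotopy and the additivity of $-*B$; the associativity and unit coherences then follow by diagram chases.
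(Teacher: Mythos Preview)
Your overall strategy---pass from projectives to all modules via Eilenberg--Watts and a bimodule---is exactly the mechanism the paper uses. But there is a genuine gap in your input: the block $\mathcal{C}_{\Coeff,0}$ does \emph{not} have finitely many irreducible objects, and hence has no finite-dimensional projective generator. The simple $G_1T$-modules are parametrized by the full character lattice $X$, and the principal block contains those with highest weight in the $W'_{\aff}$-orbit $W'_{\aff}\cdot_p\lambda_0$, which is infinite. (You may be thinking of $G_1$-modules, where the simples are parametrized by restricted weights and the block really is finite; the $T$-grading destroys this.) So your algebra $A$ does not exist as a finite-dimensional algebra, and the argument collapses at step one.

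The paper repairs this precisely by exploiting the periodicity $\widehat{L}(\lambda+p\mu)\simeq \widehat{L}(\lambda)\otimes L(p\mu)$: it equips $\mathcal{C}_{\Coeff,0}$ with the $\Z\Delta$-category structure $M\mapsto M\otimes L(p\lambda)$, chooses a projective $\Z\Delta$-\emph{generator} $P$, and forms the $\Z\Delta$-graded algebra $\mathcal{E}=\bigoplus_{\lambda\in\Z\Delta}\Hom(P,P\otimes L(p\lambda))$, so that $\mathcal{C}_{\Coeff,0}$ becomes finitely generated $\Z\Delta$-graded $\mathcal{E}$-modules. For this to work one must also check that the $\Sbimod$-action on projectives is compatible with the $\Z\Delta$-structure, i.e.\ $(Q*B)\otimes L(p\lambda)\simeq (Q\otimes L(p\lambda))*B$; the paper verifies this by tracing back through the equivalence with $\mathcal{K}_P$ and observing that the translation functor $T_\lambda$ there commutes with $*B$ by definition. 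After that, your Eilenberg--Watts argument goes through verbatim with $\mathcal{E}(B)=\bigoplus_\lambda\Hom(P,(P*B)\otimes L(p\lambda))$ in place of $N_B$, and the identification of $B_s$ with the wall-crossing functor is as you describe.
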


The category $\mathcal{C}_{\Coeff,0}$ has the structure of $\Z\Delta$-category via $M\mapsto M\otimes L(p\lambda)$ for $\lambda\in\Z\Delta$.
Fix a projective $\Z\Delta$-generator $P$ of $\mathcal{C}_{\Coeff,0}$ and set $\mathcal{E} = \bigoplus_{\lambda\in\Z\Delta}\Hom_{\mathcal{C}_{\Coeff,0}}(P,P\otimes L(p\lambda))$.
This is a $\Z\Delta$-graded algebra and $\mathcal{C}_{\Coeff,0}\ni M\mapsto \bigoplus_{\lambda\in\Z\Delta}\Hom(P,M\otimes L(p\lambda))$ gives an equivalence of categories between $\mathcal{C}_{\Coeff,0}$ and the category of finitely generated $\Z\Delta$-graded right $\mathcal{E}$-modules~\cite[E.4 Proposition]{MR1272539}.
Denote the category of finitely-generated $\Z\Delta$-graded right $\mathcal{E}$-modules by $\Mod_{\Z\Delta}(\mathcal{E})$ and the projective objects in $\Mod_{\Z\Delta}(\mathcal{E})$ by $\Proj_{\Z\Delta}(\mathcal{E})$.

\begin{lem}
We have $(Q*B)\otimes L(p\lambda)\simeq (Q\otimes L(p\lambda))*B$ for $Q\in \Proj(\mathcal{C}_{\Coeff,0})$, $B\in\Sbimod$ and $\lambda\in\Z\Delta$.
\end{lem}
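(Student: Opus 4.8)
The plan is to reduce to the single generator $B=B_s$ and then to recognize the statement, on the representation side, as the commutation of the wall-crossing functor with tensoring by a one-dimensional $G_1T$-module. First I would reduce to $B=B_s$: an arbitrary $B\in\Sbimod$ is a direct summand of a direct sum of objects $B_{s_1}\otimes\dots\otimes B_{s_l}(n)$, and $M*(B_1\otimes B_2)\simeq(M*B_1)*B_2$; moreover $-\otimes L(p\lambda)$ is an additive autoequivalence of $\mathcal C_{\Coeff,0}$ (its inverse is $-\otimes L(-p\lambda)$, so it preserves projectives, and $Q\otimes L(p\lambda)\in\Proj(\mathcal C_{\Coeff,0})$), and a grading shift acts as the identity functor on $\Coeff\otimes_{S}\mathcal K_P^{\mathrm f}\simeq\Proj(\mathcal C_{\Coeff,0})$. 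Hence, once the isomorphism is established for $B=B_s$ in a way natural in $Q$ and in $B_s$, the general case follows by chaining it along tensor products, passing to direct sums, and restricting to direct summands via an idempotent.

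For $B=B_s$ I would use that, by construction, $-*B_s$ is isomorphic to the wall-crossing functor $\theta_s$ on $\Proj(\mathcal C_{\Coeff,0})$, and that $\theta_s=T^{\lambda_1}_{\mu_1}\circ T^{\mu_1}_{\lambda_1}$ is a composite of translation functors, each of the form $M\mapsto\mathrm{pr}(M\otimes E)$ for a finite-dimensional $G$-module $E$ and a projection $\mathrm{pr}$ onto a block of $\mathcal C_{\Coeff}$. It then suffices to check that each of these two elementary operations commutes with $-\otimes L(p\lambda)$, naturally. Tensoring commutes for formal reasons: $L(p\lambda)$ is the one-dimensional $G_1T$-module with trivial $G_1$-action and $T$-weight $p\lambda$, so $(M\otimes E)\otimes L(p\lambda)\simeq(M\otimes L(p\lambda))\otimes E$ by associativity and symmetry of $\otimes_{\Coeff}$. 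The block projection commutes because $-\otimes L(p\lambda)$ fixes every block of $\mathcal C_{\Coeff}$: for $\lambda\in\Z\Delta$ the element $t_\lambda\in W_{\aff}$ acts on weights by $t_\lambda\cdot_p\nu=\nu+p\lambda$, so $\nu$ and $\nu+p\lambda$ always lie in the same $G_1T$-linkage class, whence $\mathrm{pr}(M\otimes L(p\lambda))\simeq\mathrm{pr}(M)\otimes L(p\lambda)$. Combining these gives $\theta_s(M\otimes L(p\lambda))\simeq\theta_s(M)\otimes L(p\lambda)$, which is the desired isomorphism for $B=B_s$.

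I expect the only delicate point to be bookkeeping rather than substance: one must confirm that the translation functors in the Andersen-Jantzen-Soergel $G_1T$-setting are genuinely ``tensor with a $G$-module, then project onto a block'', so that the two commutations above apply verbatim, and one must keep naturality in $B_s$ for the reduction to direct summands. A purely combinatorial alternative that sidesteps this works inside $\mathcal K_P$: the twist $-\otimes L(p\lambda)$ corresponds to the reindexing functor $\gamma_\lambda$ with $\gamma_\lambda(M)_A^{\emptyset}=M_{A-\lambda}^{\emptyset}$ (legitimate since $f_{A-\lambda}=f_A$ for $\lambda\in\Z\Delta$ and translation by $\lambda$ preserves the order on $\mathcal A$; it is identified with $-\otimes L(p\lambda)$ via $Q(A)\leftrightarrow P(\lambda_A)$ together with $\gamma_\lambda(Q(A))\simeq Q(t_\lambda A)$, using the uniqueness in Theorem~\ref{thm:indecomposables in widetilde(K)}), and since $\gamma_\lambda$ reindexes by the left $W'_{\aff}$-action on $\mathcal A$ while $-*B_s$ reindexes using the right action, and the two actions on $\mathcal A$ commute, one gets $\gamma_\lambda(M*B_s)=\gamma_\lambda(M)*B_s$ essentially on the nose.
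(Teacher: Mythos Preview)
Your main argument is correct but takes a different route from the paper. You reduce to $B=B_s$ and work on the representation side, using that wall-crossing is ``tensor with a $G$-module, then project to a block'' and that both operations commute with $-\otimes L(p\lambda)$. The paper instead argues combinatorially from the start, and in fact your ``purely combinatorial alternative'' at the end is essentially its proof: it defines a reindexing functor $T_\lambda$ on $\mathcal{K}_P$ (and its analogue $T_{\AJS,\lambda}$ on $\mathcal{K}_{\AJS}$) by $T_\lambda(M)^\emptyset_A = M^\emptyset_{A+\lambda}$, asserts that the equivalences $\Coeff\otimes_S\mathcal{K}_P\simeq\Coeff\otimes_S\mathcal{K}_{\AJS,P}\simeq\Proj(\mathcal{C}_{\Coeff,0})$ intertwine $T_\lambda$ with $-\otimes L(p\lambda)$, and then checks $T_\lambda(M*B)\simeq T_\lambda(M)*B$ for \emph{all} $B$ at once directly from the definition of $(M*B)_A^\emptyset$, using $(A+\lambda)x^{-1} = Ax^{-1}+\lambda$ for $\lambda\in\Z\Delta$ and $x\in W_{\aff}$. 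The paper's route avoids the reduction to $B_s$ and any appeal to the explicit form of translation functors on $\mathcal{C}_{\Coeff}$; your route trades this for checking that $-*B_s$ agrees with the usual wall-crossing functor, which the paper has already recorded, so you sidestep verifying that $T_\lambda$ matches $-\otimes L(p\lambda)$ through both equivalences.
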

\begin{proof}
Let $\lambda\in\Z\Delta$.
Then we have a functor $T_\lambda$ (resp.\ $T_{\AJS,\lambda}$) on $\mathcal{K}_P$ (resp.\ $\mathcal{K}_{\AJS,P}$) defined as follows.
\begin{itemize}
\item For $M\in \mathcal{K}_P$, $T_{\lambda}(M) = M$ and $T_{\lambda}(M)^{\emptyset}_{A} = M^{\emptyset}_{A + \lambda}$.
\item For $\mathcal{M}\in \mathcal{K}_{\AJS}$, $T_{\AJS,\lambda}(\mathcal{M})(A) = \mathcal{M}(A + \lambda)$ and $T_{\AJS,\lambda}(\mathcal{M})(A,\alpha) = \mathcal{M}(A + \lambda,\alpha)$.
\end{itemize}
Since these functors are $S$-linear, they give functors on $\Coeff\otimes_{S}\mathcal{K}_{P}$ and $\Coeff\otimes_{S}\mathcal{K}_{\AJS,P}$, respectively.
These functors give structures of $\Z\Delta$-category on each category.
It is easy to see that equivalences $\Coeff\otimes_{S}\mathcal{K}_{P}\simeq \Coeff\otimes_{S}\mathcal{K}_{\AJS,P}\simeq\Proj(\mathcal{C}_{\Coeff,0})$ are $\Z\Delta$-functor.
Therefore it is sufficient to prove $T_{\lambda}(M*B)\simeq T_{\lambda}(M)*B$ for $M\in \mathcal{K}_{P}$ and $B\in \Sbimod$.
This follows from the definition.
\end{proof}

Therefore the action of $B\in\Sbimod$ on $\Proj(\mathcal{C}_{\Coeff,0})$ is compatible with the $\Z\Delta$-category structure and therefore it gives an action on $\Proj_{\Z\Delta}(\mathcal{E})$.
We denote this action again by $M\mapsto M*B$.
For each $B\in \Sbimod$, we define $\mathcal{E}(B)$ by $\mathcal{E}(B) = \bigoplus_{\lambda\in \Z\Delta}\Hom(P,(P*B)\otimes L(p\lambda))$.
This is a $\Z\Delta$-graded $\mathcal{E}$-bimodule.

\begin{lem}
Let $Q$ be a projective finitely generated $\Z\Delta$-graded $\mathcal{E}$-module.
Then $Q\otimes_{\mathcal{E}}\mathcal{E}(B)\simeq Q*B$.
\end{lem}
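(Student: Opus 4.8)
The plan is a Morita-type argument: I will construct a natural transformation
\[
\eta_Q\colon Q\otimes_{\mathcal{E}}\mathcal{E}(B)\longrightarrow Q*B
\]
of additive functors on $\Proj_{\Z\Delta}(\mathcal{E})$, check that it is an isomorphism when $Q$ is the free module $\mathcal{E}$, hence on all $\Z\Delta$-twists of $\mathcal{E}$ and on finite direct sums thereof, and then deduce the general case by a retract argument.

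First I would unwind the definitions. Under the equivalence $\mathcal{C}_{\Coeff,0}\simeq\Mod_{\Z\Delta}(\mathcal{E})$ given by $M\mapsto\bigoplus_{\lambda\in\Z\Delta}\Hom(P,M\otimes L(p\lambda))$, the action $*B$ on $\Proj_{\Z\Delta}(\mathcal{E})$ is the transport of the action on $\Proj(\mathcal{C}_{\Coeff,0})$; thus if $Q$ corresponds to $M\in\Proj(\mathcal{C}_{\Coeff,0})$ then $Q*B$ corresponds to $M*B$, while by definition $\mathcal{E}(B)$ corresponds to $P*B$, with its left $\mathcal{E}$-module structure induced by the functoriality of $-*B$. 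Writing $T$ for the functor $M\mapsto M*B$ on $\Proj(\mathcal{C}_{\Coeff,0})$, the preceding lemma gives $T(M\otimes L(p\mu))\simeq T(M)\otimes L(p\mu)$, functorially in $M$ and $\mu\in\Z\Delta$. Using these identifications I define $\eta_Q$ on homogeneous tensors by
\[
\eta_Q(\varphi\otimes\psi)=\bigl(T(\varphi)\otimes\id_{L(p\mu)}\bigr)\circ\psi
\]
for $\varphi\in\Hom(P,M\otimes L(p\lambda))$ and $\psi\in\Hom(P,(P*B)\otimes L(p\mu))$, where $T(\varphi)\colon P*B\to T(M\otimes L(p\lambda))\simeq (M*B)\otimes L(p\lambda)$ and $L(p\lambda)\otimes L(p\mu)=L(p(\lambda+\mu))$. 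That $\eta_Q$ is $\mathcal{E}$-balanced, right $\mathcal{E}$-linear, and natural in $Q$ is then a direct check using functoriality of $T$: for $\theta\in\mathcal{E}$ one has $T((\varphi\otimes\id)\circ\theta)=(T(\varphi)\otimes\id)\circ T(\theta)$, from which both $\eta_Q((\varphi\theta)\otimes\psi)$ and $\eta_Q(\varphi\otimes(\theta\psi))$ evaluate to $(T(\varphi)\otimes\id)\circ(T(\theta)\otimes\id)\circ\psi$, and the remaining identities are analogous.

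For $Q=\mathcal{E}$, i.e.\ $M=P$, we have $\mathcal{E}\otimes_{\mathcal{E}}\mathcal{E}(B)=\mathcal{E}(B)=\mathcal{E}*B$ and $\eta_{\mathcal{E}}(\id_P\otimes\psi)=(T(\id_P)\otimes\id)\circ\psi=\psi$, so $\eta_{\mathcal{E}}$ is the identity; twisting by $\mu\in\Z\Delta$ shows $\eta$ is an isomorphism on each $\Z\Delta$-twist of $\mathcal{E}$. Since both $-\otimes_{\mathcal{E}}\mathcal{E}(B)$ and $-*B$ are additive and, by the preceding lemma, commute with the $\Z\Delta$-twists, $\eta$ is an isomorphism on every finitely generated free $\Z\Delta$-graded $\mathcal{E}$-module. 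Finally, a finitely generated projective $Q$ is a retract of such a free module $F$; naturality of $\eta$ together with $\eta_F$ being an isomorphism then forces $\eta_Q$ to be an isomorphism (the usual idempotent-splitting argument), which proves $Q\otimes_{\mathcal{E}}\mathcal{E}(B)\simeq Q*B$.

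The only place demanding genuine care is the construction of $\eta$ in the second step: one must make the $\mathcal{E}$-bimodule structure on $\mathcal{E}(B)$ explicit enough to verify that $(\varphi,\psi)\mapsto(T(\varphi)\otimes\id)\circ\psi$ descends to $\otimes_{\mathcal{E}}$ and is right $\mathcal{E}$-linear, and to check that the isomorphisms $T(M\otimes L(p\mu))\simeq T(M)\otimes L(p\mu)$ of the preceding lemma are used coherently in $\mu$. Granting the natural transformation, the remaining steps (the free case and the retract argument) are purely formal.
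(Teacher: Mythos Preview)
Your proposal is correct and follows essentially the same approach as the paper: construct a natural transformation $Q\otimes_{\mathcal{E}}\mathcal{E}(B)\to Q*B$ using functoriality of the action $*B$, verify it is an isomorphism for $Q=\mathcal{E}$, and conclude for arbitrary projectives by a retract argument. The only difference is presentational: the paper stays inside $\Proj_{\Z\Delta}(\mathcal{E})$, identifying an element $p\in Q_\nu$ with a map $\varphi_p\colon\mathcal{E}\to Q(\nu)$ and defining the map by $p\otimes m\mapsto(\varphi_p*B)(m)$, whereas you unwind the equivalence with $\Proj(\mathcal{C}_{\Coeff,0})$ to write the same formula as $(T(\varphi)\otimes\id)\circ\psi$.
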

\begin{proof}
Denote $\nu$-th graded piece of $Q$ by $Q_\nu$ where $\nu\in\Z\Delta$.
Let $p\in Q_{\nu}$ and denote the corresponding element in $\Hom_{\Mod_{\Z\Delta}(\mathcal{E})}(\mathcal{E},Q(\nu))$ by $\varphi_p$.
Here $(\nu)$ is the shift of the grading.
Then $\varphi_p*B$ gives $\mathcal{E}*B\to Q(\nu)*B$.
By the definition, $\mathcal{E}*B = \mathcal{E}(B)$.
Therefore for $m\in \mathcal{E}(B)$, we have $\varphi_p(m)\in Q(\nu)*B \simeq (Q*B)(\nu)$.
Hence we get $Q\otimes_{\mathcal{E}}\mathcal{E}(B)\to Q*B$ by $p\otimes m\mapsto \varphi_p(m)$.
This is an isomorphism if $Q = \mathcal{E}$, hence it is an isomorphism for any $Q\in\Proj_{\Z\Delta}(\mathcal{E})$.
\end{proof}

Now for $\Z\Delta$-graded right $\mathcal{E}$-module $M$, put $M*B = M\otimes_{\mathcal{E}}\mathcal{E}(B)$.
By the above lemma, $\mathcal{E}(B_1)\otimes_{\mathcal{E}}\mathcal{E}(B_2)\simeq (\mathcal{E}*B_1*B_2) = \mathcal{E}*(B_1\otimes B_2)\simeq \mathcal{E}(B_1\otimes B_2)$.
Hence $(M * B_1)*B_2 = (M\otimes_{\mathcal{E}}\mathcal{E}(B_1))\otimes_{\mathcal{E}}(B_2)\simeq M\otimes_{\mathcal{E}}(\mathcal{E}(B_1)\otimes_{\mathcal{E}}\mathcal{E}(B_2))\simeq M\otimes_{\mathcal{E}}\mathcal{E}(B_1\otimes B_2) = M*(B_1\otimes B_2)$.
It is easy to see that this gives an action of $\Sbimod$ on $\Mod_{\Z\Delta}(\mathcal{E})$, hence on $\mathcal{C}_{\Coeff,0}$.

\subsection{Characters}
Any object $P\in \Proj(\mathcal{C}_{S,0})$ has a Verma flag.
We denote the multiplicity of $Z_{S}(w\cdot_{p}\lambda_0)$ in $P$ by $(P:Z_{S}(w\cdot_p\lambda_0))$.
The following lemma is obvious from the constructions.
\begin{lem}
Let $P\in \Proj(\mathcal{C}_{S,0})$ and $M\in \mathcal{K}_{P}$ such that $\mathcal{V}(P)\simeq \mathcal{F}(M)$.
Then we have $(P:Z_{S}(w\cdot_p\lambda_0)) = \rank(M_{\{wA_0\}})$.
\end{lem}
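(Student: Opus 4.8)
The plan is to identify both sides of the claimed equality with the $S^{\emptyset}$-rank of the $wA_{0}$-stalk of the object attached to $P$ under the Andersen--Jantzen--Soergel equivalence; this is a common invariant visible on both $\Proj(\mathcal{C}_{S,0})$ and $\mathcal{K}_{P}$ through the identification $\mathcal{V}(P)\simeq\mathcal{F}(M)$.

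On the $\mathcal{K}_{P}$-side this is essentially bookkeeping. Since $M\in\mathcal{K}_{P}\subset\mathcal{K}_{\Delta}$, the module $M_{\{A\}}$ is graded free over $S$ for every $A\in\mathcal{A}$, and applying the formula $M_{K}^{\emptyset}=\bigoplus_{A'\in K}M_{A'}^{\emptyset}$ to $K=\{A\}$ gives $\mathcal{F}(M)(A)=M_{A}^{\emptyset}=(M_{\{A\}})^{\emptyset}=S^{\emptyset}\otimes_{S}M_{\{A\}}$. As $S^{\emptyset}$ is a localisation of $S$ inside $\Frac(S)$, tensoring up to $\Frac(S)$ produces $\Frac(S)\otimes_{S}M_{\{A\}}$ on both sides, so $\rank_{S^{\emptyset}}\mathcal{F}(M)(A)=\rank_{S}M_{\{A\}}$; only the case $A=wA_{0}$ will be needed.

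On the representation side I would invoke the construction of $\mathcal{V}$ from \cite{MR1272539}, in the form used in \cite{MR2726602}. By that construction the Verma module $Z_{S}(\mu)$ is sent by $\mathcal{V}$ to the standard object supported only at the alcove $A_{\mu}$ of $\mu$, whose stalk $\mathcal{V}(Z_{S}(\mu))(A_{\mu})$ is free of rank one up to a grading shift, and $\mathcal{V}$ carries a Verma flag of a projective $P$ to a filtration of $\mathcal{V}(P)$ by grading shifts of such standard objects, being "exact" on short exact sequences of objects with Verma flags. Counting subquotients supported at a fixed alcove then yields $(P:Z_{S}(\mu))=\rank_{S^{\emptyset}}\mathcal{V}(P)(A_{\mu})$. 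Finally, the bijection $W'_{\aff}\to\mathcal{A}$, $w\mapsto wA_{0}$, together with the choice $\lambda_{0}\in X\cap(pA_{0}-\rho)$ defining $w\cdot_{p}\lambda_{0}$, identifies the alcove of $w\cdot_{p}\lambda_{0}$ with $wA_{0}$, so $(P:Z_{S}(w\cdot_{p}\lambda_{0}))=\rank_{S^{\emptyset}}\mathcal{V}(P)(wA_{0})$.

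Combining the two computations with $\mathcal{V}(P)\simeq\mathcal{F}(M)$ gives
\[
(P:Z_{S}(w\cdot_{p}\lambda_{0}))=\rank_{S^{\emptyset}}\mathcal{V}(P)(wA_{0})=\rank_{S^{\emptyset}}\mathcal{F}(M)(wA_{0})=\rank_{S}M_{\{wA_{0}\}},
\]
as claimed. The first step is routine; the main obstacle is making the second one precise, i.e.\ extracting from \cite{MR1272539,MR2726602} the compatibility of $\mathcal{V}$ with the alcove-stalk functor $\mathcal{M}\mapsto\mathcal{M}(A)$ and with Verma flags, and checking that the alcove attached to $w\cdot_{p}\lambda_{0}$ is exactly $wA_{0}$. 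Since the statement is meant to be obvious from the constructions, I expect this to amount to unwinding the relevant definitions rather than to genuinely new input.
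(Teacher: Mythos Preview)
Your proposal is correct and is precisely the unwinding that the paper has in mind when it says the lemma is ``obvious from the constructions''; the paper gives no further proof. Both sides are identified with $\rank_{S^{\emptyset}}\mathcal{V}(P)(wA_0)=\rank_{S^{\emptyset}}\mathcal{F}(M)(wA_0)$, using on one side that $\mathcal{V}$ sends a Verma flag to a filtration by rank-one standard objects supported at single alcoves, and on the other that $\mathcal{F}(M)(A)=M_A^{\emptyset}=(M_{\{A\}})^{\emptyset}$ with $M_{\{A\}}$ graded free over $S$.
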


The projective module $P_S(\lambda)$ is characterized by 
\begin{itemize}
\item $P_S(\lambda)$ is indecomposable.
\item $(P_S(\lambda):Z_S(\lambda)) = 1$.
\item $(P_S(\lambda):Z_S(\mu)) = 0$ unless $\mu - \lambda\in\Z_{\ge 0}\Delta^+$.
\end{itemize}
The module $\mathcal{V}^{-1}(\mathcal{F}(Q(wA_0)))$ satisfies these conditions with $\lambda = w\cdot_p\lambda_0$ by the above lemma.
We get the following.
\begin{prop}
Let $w\in W'_{\aff}$.
Then $\mathcal{V}(P_S(w\cdot_p\lambda_0)) \simeq \mathcal{F}(Q(wA_0))$.
\end{prop}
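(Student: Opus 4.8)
The plan is to show that $P := \mathcal{V}^{-1}(\mathcal{F}(Q(wA_0)))$ satisfies the three properties recalled just above that characterize $P_S(w\cdot_p\lambda_0)$, and then conclude by applying $\mathcal{V}$. Here $\mathcal{F}(Q(wA_0))$ is to be read in $\mathcal{K}_{\AJS,P}^{\mathrm{f}}(\widehat{S})$, i.e.\ as $\widehat{S}\otimes_S\mathcal{F}(Q(wA_0))$ transported along the equivalence $\widehat{S}\otimes_S\mathcal{K}_{\AJS,P}^{\mathrm{f}}\simeq\mathcal{K}_{\AJS,P}^{\mathrm{f}}(\widehat{S})$, so that $\mathcal{V}^{-1}$ of it is defined and lies in $\Proj(\mathcal{C}_{\widehat{S},0})$.

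\emph{Indecomposability.} By Theorem~\ref{thm:indecomposables in widetilde(K)} the object $Q(wA_0)$ is indecomposable in $\widetilde{\mathcal{K}}_P$, hence in $\mathcal{K}_P$ by Proposition~\ref{prop:indecomposable is indecomposable}. Since $\mathcal{F}$ is fully faithful (Proposition~\ref{prop:fully-faithfulness}) and all categories in sight have the Krull--Schmidt property, the endomorphism ring is carried isomorphically by $\mathcal{F}$, by the base change $\widehat{S}\otimes_S-$, and by the equivalence $\mathcal{V}$; a local endomorphism ring stays local, so $P$ is indecomposable.

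\emph{Verma multiplicities.} By the lemma preceding this proposition, applied to $M=Q(wA_0)$, we have $(P:Z_S(w'\cdot_p\lambda_0))=\rank(Q(wA_0)_{\{w'A_0\}})$ for every $w'\in W'_\aff$. Taking $w'=w$ and using $Q(wA_0)_{\{wA_0\}}\simeq S$ from Theorem~\ref{thm:indecomposables in widetilde(K)}(1) gives $(P:Z_S(w\cdot_p\lambda_0))=1$. For the vanishing, Theorem~\ref{thm:indecomposables in widetilde(K)}(1) also gives $\supp_{\mathcal{A}}(Q(wA_0))\subset\{A'\mid A'\ge wA_0\}$, so $Q(wA_0)_{\{w'A_0\}}\ne 0$ forces $w'A_0\ge wA_0$. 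It remains to translate this into a dominance condition on weights: writing $w'\cdot_p\lambda_0-w\cdot_p\lambda_0=p\bigl(w'((\lambda_0+\rho)/p)-w((\lambda_0+\rho)/p)\bigr)$ and recalling that $(\lambda_0+\rho)/p$ lies in the closure of $A_0$, Lemma~\ref{lem:order as a vector in A} (applied with base alcove $wA_0$ and the element $w'w^{-1}$) yields $w'\cdot_p\lambda_0-w\cdot_p\lambda_0\in\R_{\ge 0}\Delta^+$; since this difference also lies in $\Z\Delta$, it lies in $\Z_{\ge 0}\Delta^+$ as in the argument of Lemma~\ref{lem:homeo on each orbit}. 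Hence $(P:Z_S(\mu))=0$ unless $\mu-w\cdot_p\lambda_0\in\Z_{\ge 0}\Delta^+$.

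These are exactly the conditions characterizing $P_S(w\cdot_p\lambda_0)$, so $P\simeq P_S(w\cdot_p\lambda_0)$, and applying $\mathcal{V}$ gives the proposition. The only step requiring genuine care is the order translation in the vanishing part — matching the generic order on $\mathcal{A}$ with the dominance order on weights when $wA_0$ and $w'A_0$ need not lie in a common $\Lambda_\aff$-orbit — but Lemma~\ref{lem:order as a vector in A} is designed precisely for this, and the remaining manipulations are routine bookkeeping with the fixed base point $A_0$ and the definition of $w\cdot_p\lambda_0$.
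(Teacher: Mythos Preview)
Your proof is correct and follows the same approach as the paper: set $P=\mathcal{V}^{-1}(\mathcal{F}(Q(wA_0)))$ and verify the three characterizing properties of $P_S(w\cdot_p\lambda_0)$ using the preceding lemma together with Theorem~\ref{thm:indecomposables in widetilde(K)}. The paper's own proof is the single sentence ``The module $\mathcal{V}^{-1}(\mathcal{F}(Q(wA_0)))$ satisfies these conditions with $\lambda = w\cdot_p\lambda_0$ by the above lemma''; you have simply unpacked this, in particular making explicit the translation from $w'A_0\ge wA_0$ to $w'\cdot_p\lambda_0 - w\cdot_p\lambda_0\in\Z_{\ge 0}\Delta^+$ via Lemma~\ref{lem:order as a vector in A}.
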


The following corollary is obvious from the above proposition.
\begin{cor}\label{cor:multiplicity}
We have $[P_{\Coeff}(w\cdot_p\lambda_0):Z_{\Coeff}(v\cdot_p\lambda_0)] = \rank(Q(wA_0)_{\{vA_0\}})$.
\end{cor}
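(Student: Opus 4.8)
The plan is to deduce the identity over $\Coeff$ from its analogue over the polynomial ring $S$, which in turn is immediate from the lemma expressing the Verma multiplicity $(P:Z_{S}(w\cdot_{p}\lambda_{0}))$ as $\rank(M_{\{wA_{0}\}})$ whenever $\mathcal V(P)\simeq\mathcal F(M)$. Applying that lemma to $P=P_{S}(w\cdot_{p}\lambda_{0})$ and $M=Q(wA_{0})$ — the hypothesis being supplied by the proposition immediately above, $\mathcal V(P_{S}(w\cdot_{p}\lambda_{0}))\simeq\mathcal F(Q(wA_{0}))$ — gives
\[
(P_{S}(w\cdot_{p}\lambda_{0}):Z_{S}(v\cdot_{p}\lambda_{0}))=\rank\bigl(Q(wA_{0})_{\{vA_{0}\}}\bigr)
\]
for all $v\in W'_{\aff}$.

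It then remains to pass from $S$ to $\Coeff$. Each Verma module $Z_{S}(\mu)$ is free as a left $S$-module, so a Verma flag of $P_{S}(w\cdot_{p}\lambda_{0})$ is a finite filtration by $S$-submodules with $S$-free subquotients; it splits $S$-linearly and hence stays exact after applying $\Coeff\otimes_{S}(-)$. Thus $\Coeff\otimes_{S}P_{S}(w\cdot_{p}\lambda_{0})$ inherits a Verma flag with the same multiplicities, i.e.
\[
[\Coeff\otimes_{S}P_{S}(w\cdot_{p}\lambda_{0}):Z_{\Coeff}(v\cdot_{p}\lambda_{0})]=(P_{S}(w\cdot_{p}\lambda_{0}):Z_{S}(v\cdot_{p}\lambda_{0})).
\]
Finally one checks that $\Coeff\otimes_{S}P_{S}(w\cdot_{p}\lambda_{0})\simeq P_{\Coeff}(w\cdot_{p}\lambda_{0})$: the left-hand side is an indecomposable projective object of $\mathcal C_{\Coeff,0}$ — indecomposability and projectivity being transported along the equivalences $\Proj(\mathcal C_{\Coeff,0})\simeq\Coeff\otimes_{S}\mathcal K^{\mathrm f}_{P}$ recalled at the end of this section — and by the previous display together with the characterization of $P_{S}$ its Verma multiplicities equal $1$ at $w\cdot_{p}\lambda_{0}$ and vanish outside $w\cdot_{p}\lambda_{0}+\Z_{\ge 0}\Delta^{+}$, which are exactly the properties determining $P_{\Coeff}(w\cdot_{p}\lambda_{0})$. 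Combining the two displays yields the corollary.

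The only point requiring genuine care is this last identification, since the base change $S\to\Coeff$ along the augmentation is not flat; it is handled by the freeness of the $Z_{S}(\mu)$ over $S$ together with the matching of indecomposable projectives under the chain of equivalences $\Proj(\mathcal C_{\Coeff,0})\simeq\Coeff\otimes_{\widehat S}\Proj(\mathcal C_{\widehat S,0})\simeq\Coeff\otimes_{\widehat S}\mathcal K^{\mathrm f}_{\AJS,P}(\widehat S)\simeq\Coeff\otimes_{S}\mathcal K^{\mathrm f}_{P}$ and the results of \cite{MR1272539}. Beyond this, everything is formal.
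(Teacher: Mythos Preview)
Your argument is correct and follows exactly the route the paper intends: combine the lemma $(P:Z_{S}(v\cdot_{p}\lambda_{0}))=\rank(M_{\{vA_{0}\}})$ with the proposition $\mathcal V(P_{S}(w\cdot_{p}\lambda_{0}))\simeq\mathcal F(Q(wA_{0}))$, then reduce modulo the augmentation. The paper's own proof is just the phrase ``obvious from the above proposition''; you have simply unpacked this.

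One simplification: you work harder than necessary to identify $\Coeff\otimes_{S}P_{S}(w\cdot_{p}\lambda_{0})$ with $P_{\Coeff}(w\cdot_{p}\lambda_{0})$. In the paper's setup this is essentially the \emph{definition} of $P_{S_{0}}(\lambda)$: it is introduced as the indecomposable projective whose reduction $\Coeff\otimes_{S_{0}}P_{S_{0}}(\lambda)$ is the projective cover of the irreducible with highest weight $\lambda$, i.e.\ is $P_{\Coeff}(\lambda)$. So there is no need to re-derive it from the Verma-multiplicity characterization or to invoke the chain of equivalences; the base-change step is immediate once you know Verma flags survive (which, as you note, they do because the $Z_{S}(\mu)$ are $S$-free).
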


\subsection{Lusztig's conjecture}
For $B\in \Sbimod$ and $w\in W_{\aff}$,  we denote the image of $B\hookrightarrow B\otimes_R R^{\emptyset} = \bigoplus_{x\in W_{\aff}}B_x^{\emptyset}\twoheadrightarrow B_w^{\emptyset}$ by $B^w$.
Put $\ch(B) = \sum_{w\in W_{\aff}}v^{-\ell(w)}\grk(B^w)$.
Then $[B]\mapsto \ch(B)$ induces an isomorphism $[\Sbimod]\simeq \mathcal{H}$.
For each $w\in W_{\aff}$, there exists an indecomposable object $B(w)\in \Sbimod$ unique up to isomorphism such that $\ch(B(w)) \in H_w + \sum_{x < w}\Z[v,v^{-1}]H_x$.
We say that $B(w)$ satisfies the Soergel conjecture if $\ch(B(w))$ is a Kazhdan-Lusztig basis, namely $\ch(B(w))\in H_w + \sum_{x < w}v\Z[v]H_x$.
It is known that the Soergel conjecture is satisfied by any $B(w)$ over a characteristic zero field, therefore, for a fixed $w$, if $p$ is sufficiently large, $B(w)$ satisfies the Soergel conjecture (cf.~\cite{MR3245013}).
We fix $\lambda\in (\R\Delta)_{\integer}$ and $w\in W_{\aff}$ such that $A_{\lambda}^+w\in \Pi_{\lambda}$ here $A_{\lambda}^+$ is the maximal element in $W'_{\lambda}A_{\lambda}^-$.

\begin{lem}
Let $w_{\lambda}\in W_{\aff}$ such that $A_{\lambda}^+w_{\lambda} = A_{\lambda}^-$.
Then we have $S_{A_{\lambda}^+}*B(w_{\lambda})\simeq Q_{\lambda}(\ell(w_0))$.
\end{lem}
\begin{proof}
By the translation as in the proof of Lemma~\ref{lem:successive quotient of Q_lambda}, we may assume $\lambda = 0$.
Then $W_{\lambda}' = W_{\mathrm{f}}$ and it is generated by $S_{\aff}\cap W_{\mathrm{f}}$.
Moreover, the element $w_{\lambda}$ is equal to the longest element $w_0$.

It is sufficient to prove: $B(w_{0}) \simeq \{(z_w)\in R^{W_{\mathrm{f}}}\mid z_{wt}\equiv z_w\pmod{\alpha_t}\}(\ell(w_0))$ where $t$ runs through the set of reflections in $W_{\mathrm{f}}$ and $\alpha_t$ the corresponding element in $\Lambda_{\Coeff}$~\cite[2.1]{arXiv:1901.02336_accepted}.
Let $(G_{\mathbb{C}}^\vee,B_{\mathbb{C}}^\vee,T_{\mathbb{C}}^\vee)$ be the reductive group over $\mathbb{C}$, the Borel subgroup and the maximal torus with the root datum $(X^\vee,\Delta^\vee,X,\Delta)$ and the positive system $\Delta^+\subset \Delta$.
Then the category of $\Coeff$-coefficient parity $B^\vee_{\mathbb{C}}$-equivariant sheaves on $G^\vee_{\mathbb{C}}/B_{\mathbb{C}}^\vee$ is equivalent to the category of Soergel bimodules attached to $(W_{\mathrm{f}},X^\vee_{\Coeff})$~\cite{MR3805034}.
The object $B(w_0)$ corresponds to the indecomposable parity sheaf such that the restriction to the big cell $B_{\mathbb{C}}^\vee w_0B_{\mathbb{C}}^\vee/B_{\mathbb{C}}^\vee$ is $\Coeff_{B_{\mathbb{C}}^\vee w_0B_{\mathbb{C}}^\vee/B_{\mathbb{C}}^\vee}[\ell(w_0)]$.
It is obvious that the constant sheaf $\Coeff_{G^\vee_{\mathbb{C}}/B_{\mathbb{C}}^\vee}[\ell(w_0)]$ satisfies this condition and therefore the constant sheaf corresponds to $B(w_0)$.
As in \cite{MR3330913}, the corresponding Soergel bimodule is given as in the above.
\end{proof}

Recall that $w\in W_{\aff}$ and $\lambda\in(\R\Phi_{\integer})$ such that $A_{\lambda}^+w\in \Pi_{\lambda}$.

\begin{thm}
If $B(w)$ satisfies the Soergel conjecture, then $S_{A_{\lambda}^+}*B(w)\simeq Q(A_{\lambda}^+w)$.
\end{thm}
\begin{proof}
First we prove that $S_{A_{\lambda}^+}*B(w)\in \mathcal{K}_P$.
By the translation as in Lemma~\ref{lem:successive quotient of Q_lambda}, we may assume $\lambda = 0$.
Then $W_{\lambda}' = W_{\mathrm{f}}$ and this is generated by $W_{\mathrm{f}}\cap S_{\aff}$.
We have $sw < w$ for any $s\in W_{\mathrm{f}}\cap S_{\aff}$.
Therefore $H_s\ch(B(w)) = v^{-1}\ch(B(w))$ by \cite[Lemma~4.3]{MR3611719}.
Hence $H_x\ch(B(w)) = v^{-\ell(x)}\ch(B(w))$ for any $x\in W_{\mathrm{f}}$.
Therefore, since the coefficient of $H_{w_0}$ in $\ch(B(w_{0}))$ is $1$, we have $\ch(B(w_{0})) = \sum_{y\in W_{\mathrm{f}}}v^{\ell(w_{0}) - \ell(y)}H_x$.
Therefore we have $\ch(B(w_{0})\otimes B(w)) = \sum_{y\in W_{\mathrm{f}}}v^{\ell(w_{0}) - 2\ell(y)}\ch(B(w))$.
Hence we get $B(w_{0})\otimes B(w)\simeq \bigoplus_{y\in W_{\mathrm{f}}}B(w)(\ell(w_{0}) - 2\ell(y))$.
Therefore, up to shift, $S_{A_{0}^+}*B(w)$ is a direct summand of $S_{A_{0}^+}*(B(w_{0})\otimes B(w))\simeq Q_{0}(\ell(w_{0}))*B(w)\in \mathcal{K}_P$.
Hence $S_{A_{0}^+}*B(w)\in \mathcal{K}_P$.

We return to the proof of the theorem.
By \cite[Theorem~5.2]{MR591724}, $\ch(S_{A_{\lambda}^+}*B(w)) = A_{\lambda}^+\ch(B(w))$ is described by periodic Kazhdan-Lusztig polynomials, namely we have $A_{\lambda}^+\ch(B(w)) = v^{-n}\underline{P}_{A_0}$ for some $A_0\in \mathcal{A}$ and $n\in \Z$, here $\underline{P}_{A'}\in \mathcal{P}^0$ is the element given in \cite[Proposition~4.16]{MR1444322}.
We know $A_{\lambda}^+\ch(B(w))\in A_{\lambda}^+w + \sum_{A' > A_{\lambda}^+w}\Z[v,v^{-1}]A'$.
Comparing with \cite[Lemma~4.21]{MR1444322}, we have $n = \ell(w_0)$ and $\ch(S_{A_{\lambda}^+}*B(w))\in A_{\lambda}^+w + \sum_{A' > A_{\lambda}^+w}v^{- 1}\Z[v^{-1}]A'$.
By the self-duality of $\underline{P}_{A_0}$, we have $\overline{\ch(S_{A_{\lambda}^+}*B(w))} = v^{\ell(w_0)}\underline{P}_{A_0}\in v^{2\ell(w_0)}A_{\lambda}^+w + \sum_{A' > A_{\lambda}^+w}v^{2\ell(w_0)- 1}\Z[v^{-1}]A'$.
Therefore by Theorem~\ref{thm:hom formula}, we have
\[
\grk\Hom^{\bullet}_{\mathcal{K}}(S_{A_{\lambda}^+}*B(w),S_{A_{\lambda}^+}*B(w))
\in 1 + v^{-2}\Z[v^{-1}].
\]
Hence $\End_{\mathcal{K}}(S_{A_{\lambda}^+}*B(w))$ is one-dimensional and has only trivial idempotent.
Therefore $S_{A_{\lambda}^+}*B(w)$ is indecomposable.
Since $Q({A_{\lambda}^+w})$ is a direct summand of $S_{A_{\lambda}^+}*B(w)$, we get the theorem.
\end{proof}

From the above theorem and Corollary~\ref{cor:multiplicity}, the multiplicity of the baby Verma modules in the projective cover of an irreducible module is given by the value at $1$ of the Kazhdan-Lusztig polynomial.
Hence the Lusztig's conjecture holds for sufficiently large $p$.


\begin{thebibliography}{AMRW19}
\bibitem[Abe19]{arXiv:1901.02336_accepted}
Noriyuki Abe, \textit{{A} bimodule description of the {H}ecke category}, to appear in Compos. Math. 

\bibitem[Abe20a]{arXiv:2012.09414}
Noriyuki Abe, \textit{{A} homomorphism between {B}ott-{S}amelson bimodules}, arXiv:2012.09414. 

\bibitem[Abe20b]{arXiv:2004.09014}
Noriyuki Abe, \textit{{O}n singular {S}oergel bimodules}, arXiv:2004.09014. 

\bibitem[AJS94]{MR1272539}
H.~H. Andersen, J.~C. Jantzen, and W.~Soergel, \textit{Representations of quantum groups at a {$p$}-th root of unity and of semisimple groups in characteristic {$p$}: independence of {$p$}}, Ast\'erisque (1994), no.~220, 321. 

\bibitem[AMRW19]{MR3868004}
Pramod~N. Achar, Shotaro Makisumi, Simon Riche, and Geordie Williamson, \textit{Koszul duality for {K}ac--{M}oody groups and characters of tilting modules}, J. Amer. Math. Soc. \textbf{32} (2019), no.~1, 261--310. 

\bibitem[BR20]{arXiv:2009.10587}
Roman Bezrukavnikov and Simon Riche, \textit{{H}ecke action on the principal block}, arXiv:2009.10587. 

\bibitem[EW14]{MR3245013}
Ben Elias and Geordie Williamson, \textit{The {H}odge theory of {S}oergel bimodules}, Ann. of Math. (2) \textbf{180} (2014), no.~3, 1089--1136. 

\bibitem[EW16]{MR3555156}
Ben Elias and Geordie Williamson, \textit{Soergel calculus}, Represent. Theory \textbf{20} (2016), 295--374. 

\bibitem[Fie11]{MR2726602}
Peter Fiebig, \textit{Sheaves on affine {S}chubert varieties, modular representations, and {L}usztig's conjecture}, J. Amer. Math. Soc. \textbf{24} (2011), no.~1, 133--181. 

\bibitem[Fie12]{MR2999126}
Peter Fiebig, \textit{An upper bound on the exceptional characteristics for {L}usztig's character formula}, J. Reine Angew. Math. \textbf{673} (2012), 1--31. 

\bibitem[FL15]{arXiv:1504.01699}
Peter Fiebig and Martina Lanini, \textit{{S}heaves on the alcoves {I}: {P}rojectivity and wall crossing functors}, arXiv:1504.01699. 

\bibitem[FW14]{MR3330913}
Peter Fiebig and Geordie Williamson, \textit{Parity sheaves, moment graphs and the {$p$}-smooth locus of {S}chubert varieties}, Ann. Inst. Fourier (Grenoble) \textbf{64} (2014), no.~2, 489--536. 

\bibitem[JW17]{MR3611719}
Lars~Thorge Jensen and Geordie Williamson, \textit{The {$p$}-canonical basis for {H}ecke algebras}, Categorification and higher representation theory, Contemp. Math., vol. 683, Amer. Math. Soc., Providence, RI, 2017, pp.~333--361. 

\bibitem[KL93]{MR1186962}
D.~Kazhdan and G.~Lusztig, \textit{Tensor structures arising from affine {L}ie algebras. {I}, {II}}, J. Amer. Math. Soc. \textbf{6} (1993), no.~4, 905--947, 949--1011. 

\bibitem[KL94a]{MR1239506}
D.~Kazhdan and G.~Lusztig, \textit{Tensor structures arising from affine {L}ie algebras. {III}}, J. Amer. Math. Soc. \textbf{7} (1994), no.~2, 335--381. 

\bibitem[KL94b]{MR1239507}
D.~Kazhdan and G.~Lusztig, \textit{Tensor structures arising from affine {L}ie algebras. {IV}}, J. Amer. Math. Soc. \textbf{7} (1994), no.~2, 383--453. 

\bibitem[KT95]{MR1317626}
Masaki Kashiwara and Toshiyuki Tanisaki, \textit{Kazhdan-{L}usztig conjecture for affine {L}ie algebras with negative level}, Duke Math. J. \textbf{77} (1995), no.~1, 21--62. 

\bibitem[KT96]{MR1408544}
Masaki Kashiwara and Toshiyuki Tanisaki, \textit{Kazhdan-{L}usztig conjecture for affine {L}ie algebras with negative level. {II}. {N}onintegral case}, Duke Math. J. \textbf{84} (1996), no.~3, 771--813. 

\bibitem[Lib08]{MR2441994}
Nicolas Libedinsky, \textit{Sur la cat\'{e}gorie des bimodules de {S}oergel}, J. Algebra \textbf{320} (2008), no.~7, 2675--2694. 

\bibitem[Lus80]{MR591724}
George Lusztig, \textit{Hecke algebras and {J}antzen's generic decomposition patterns}, Adv. in Math. \textbf{37} (1980), no.~2, 121--164. 

\bibitem[RW18]{MR3805034}
Simon Riche and Geordie Williamson, \textit{Tilting modules and the {$p$}-canonical basis}, Ast\'{e}risque (2018), no.~397, ix+184. 

\bibitem[RW20]{arXiv:2003.08522}
Simon Riche and Geordie Williamson, \textit{{S}mith-{T}reumann theory and the linkage principle}, arXiv:2003.08522. 

\bibitem[Sob20]{MR4092982}
Paul Sobaje, \textit{On character formulas for simple and tilting modules}, Adv. Math. \textbf{369} (2020), 107172, 8. 

\bibitem[Soe97]{MR1444322}
Wolfgang Soergel, \textit{Kazhdan-{L}usztig polynomials and a combinatoric for tilting modules}, Represent. Theory \textbf{1} (1997), 83--114 (electronic). 

\bibitem[Wil17]{MR3671935}
Geordie Williamson, \textit{Schubert calculus and torsion explosion}, J. Amer. Math. Soc. \textbf{30} (2017), no.~4, 1023--1046, With a joint appendix with Alex Kontorovich and Peter J. McNamara. 

\end{thebibliography}
\end{document}